\numberwithin{equation}{section}
\newcommand{\R}{\mathbb{R}}
\newcommand{\E}{\mathbb{E}}
\newcommand{\dd}{\text{d}}
\newtheorem{theorem}{Theorem}[section]
\newtheorem{lemma}[theorem]{Lemma}
\newtheorem{assumption}{Assumption}[section]
\begin{document}
\title{Antithetic multilevel Monte Carlo method for approximations of SDEs with non-globally Lipschitz continuous  coefficients
\footnotemark[1]}

\author{
Chenxu Pang$^{1}$, Xiaojie Wang$^{1,}$\footnotemark[2]
\\
\footnotesize $^{1}$ School of Mathematics and Statistics, HNP-LAMA, Central South University, Changsha, Hunan, P. R. China\\
}

       \maketitle

       \footnotetext{\footnotemark[1] This work was supported by Natural Science Foundation of China (12471394, 12071488, 12371417). 
                }
        \footnotetext{\footnotemark[2]Corresponding author.
        \newline E-mail addresses: x.j.wang7@csu.edu.cn, c.x.pang@csu.edu.cn}
       \begin{abstract}
          {\rm\small
{\color{black}
             In the field of computational finance,
             one is commonly interested in the expected value of a financial derivative whose payoff depends on the solution of stochastic differential equations (SDEs).
             For  multi-dimensional SDEs with non-commutative diffusion coefficients in the globally Lipschitz setting,  
              a kind of one-half order truncated Milstein-type scheme without
            Lévy areas was recently introduced by Giles and Szpruch (2014), which combined with
              the antithetic multilevel Monte Carlo (MLMC) gives the optimal overall computational cost $\mathcal{O}(\epsilon^{-2})$ for the required target accuracy $\epsilon$.
              Nevertheless, many nonlinear SDEs in applications have non-globally  Lipschitz continuous coefficients and the corresponding theoretical guarantees for antithetic MLMC are absent in the literature.
              In the present work, we aim to fill the gap and analyze antithetic MLMC in a non-globally Lipschitz setting.
              First, we propose a family of modified Milstein-type schemes without Lévy areas to approximate SDEs with non-globally Lipschitz continuous coefficients. The expected one-half order of strong convergence is recovered in a non-globally Lipschitz setting,
              where even the diffusion coefficients are allowed to grow superlinearly. This then helps us to analyze the relevant variance of the multilevel estimator
              and the optimal computational cost is finally 
              achieved for the antithetic MLMC.
              Since getting rid of the Lévy areas destroys the martingale properties of the scheme, 
              the analysis of both the convergence rate and the desired variance becomes highly non-trivial in the non-globally Lipschitz setting.
              By introducing an auxiliary approximation process, we develop non-standard arguments to overcome the essential difficulties.
              Numerical experiments are provided to confirm the theoretical findings.
}              
} 

\textbf{AMS subject classification: } {\rm\small 65C05, 60H15, 65C30.}\\

\textbf{Key Words: }{\rm\small}  stochastic differential equations,  modified Milstein scheme,
non-globally Lipschitz condition,  antithetic multilevel Monte Carlo
\end{abstract}

%
\section{Introduction}   \label{section:introduction}
Throughout this paper, we consider It\^o  stochastic differential equations (SDEs) 
as follows:
\begin{equation} \label{equation:sde-in-introduction}
\left\{
\begin{array}{l}
\mathrm{d} X_{t}= \mu\left( X_{t} \right) \mathrm{d} t+\sigma \left( X_{t} \right) \mathrm{d} W_{t}, \quad t \in(0, T] \\
X_{0}=x_{0},
\end{array}\right.
\end{equation}
where  the drift coefficient function $\mu: \mathbb{R}^{d} \rightarrow \mathbb{R}^{d}$ and the diffusion coefficient 
function $\sigma: \mathbb{R}^{d} \rightarrow \mathbb{R}^{d \times m}$ are assumed to be non-globally Lipschitz continuous. 
Here $W_{\cdot} = \left(W_{1,\cdot}, \dots, W_{m,\cdot} 
\right)^{T} :[0, T] \times \Omega \rightarrow \mathbb{R}^{m}$ 
denotes the $\mathbb{R}^{m}$-valued standard Brownian motion
with respect to $\left\{\mathcal{F}_{t}\right\}_{t \in [0, T]}$
and the initial data $x_{0}: \Omega \rightarrow \mathbb{R}^{d}$
is assumed to be $\mathcal{F}_{0}$-measurable.

In recent years, SDEs have become a crucial instrument in a wide range of scientific fields and there has been, especially in computational finance, an increased emphasis on the quantification of expectations of some functions of the solution to SDEs \eqref{equation:sde-in-introduction}.
More accurately, one aims to compute
\begin{equation} \label{introduction:expectation-of-functional-of-sde}
    \mathbb{E} 
    \left[
    \phi\left( X_{T}\right) 
    \right]
\end{equation}
for some given function $\phi$.
To this end, the Monte Carlo (MC) method, combined with the Euler-Maruyama time-stepping scheme, is commonly used for this quantification at yet a high computational cost. 
Indeed, to achieve  a root-mean-square error of $\epsilon$ 
of approximating the quantity \eqref{introduction:expectation-of-functional-of-sde}, the computational cost will be $\mathcal{O}(\epsilon^{-3})$ \cite{duffie1995efficient}.

Recently, a breakthrough was brought by Giles who generalized the multigrid idea in \cite{kebaier2005statistical} and proposed the multilevel Monte Carlo (MLMC) method \cite{giles2008multilevel}. The MLMC method is regarded as a more effective algorithm with the aim to achieve a sharper bound for the mean-squared-error for a given computational cost. In the MLMC framework, 
some strongly convergent numerical scheme is used for
the time discretization of SDEs \eqref{equation:sde-in-introduction} and the MC method is used to simulate a sequence of levels of approximations with different timestep sizes (see \eqref{equation:main-idea-of-mlmc}). 
Evidently, 
higher level results in finer estimates but greater cost.
The main idea to minimize the overall computational complexity
of the MLMC is that more simulations are done on the lower level and relatively fewer simulations are needed on the upper level \cite{giles2008multilevel}, leading to a significant reduction in the overall computational cost to $\mathcal{O}(\epsilon ^{-2} (\log(1/\epsilon))^{2})$ when combined with the Euler-Maruyama discretization, and to $\mathcal{O}(\epsilon ^{-2})$ when 
combined with the Milstein scheme for scalar SDEs \cite{giles2008improved}.
Accordingly,  the Milstein discretization with 
ﬁrst-order strong convergence is superior in the case
of dealing with scalar SDEs. This also holds true for 
SDEs with commutative diffusion coefficients, where
the Milstein scheme relies on only Brownian increments.

However,
for multi-dimensional SDEs that do not satisfy the commutativity condition, the Milstein scheme requires additional efforts to simulate the iterated It\^o integrals, also known as Lévy areas (see \eqref{equation:levy-area}), which unavoidably
reduces the efficiency of the scheme. So one may ask if
any scheme with ﬁrst-order strong convergence can be 
constructed without the Lévy areas.
Unfortunately, the authors of \cite{clark1980maximum}
give a negative answer and assert that $\mathcal{O}(h^{\frac{1}{2}})$ strong convergence is the best that can be achieved using only Brownian increments.
Another natural question thus arises whether the optimal complexity $\mathcal{O}(\epsilon ^{-2})$ can be achieved without the simulation of the Lévy areas.
%
By constructing a suitable antithetic estimator despite the $\mathcal{O}(h^{\frac{1}{2}})$ strong convergence,
Giles and Szpruch \cite{giles2014antithetic} have very recently 
answered this question to the positive when 
both the drift and diffusion coefficients are 
assumed to be globally Lipschitz continuous.

What happens if the globally Lipschitz condition is violated?
As asserted by \cite{hutzenthaler2011strong}, the basic Euler-Maruyama method using step-size $h>0$ fails to converge in a weak or strong sense 
in the asymptotic limit $h \rightarrow 0$
when used to solve SDEs with super-linearly growing coefficients.
In \cite{hutzenthaler2011convergence}, Hutzenthaler and Jentzen showed that the Monte Carlo combined with the Euler–Maruyama can achieve convergence almost surely in cases where the underlying Euler–Maruyama scheme diverges.
This can happen when the events causing Euler–Maruyama to diverge are so rare that they are extremely unlikely
to impact on any of the Monte Carlo samples (see \cite{higham2015introduction} for relevant comments).
%
However, as shown by \cite{hutzenthaler2013divergence}, 
the MLMC method combined with the Euler–Maruyama scheme fails to converge to the desired quantity 
\eqref{introduction:expectation-of-functional-of-sde} 
for SDEs with super-linearly growing nonlinearities.
In \cite{hutzenthaler2013divergence},  a so-called tamed Euler was also shown to recover convergence in the multilevel setting.
In other words, the MLMC method seems sensitive to the divergence of the numerical schemes and
%
a strongly convergent numerical method for SDEs with non-globally Lipschitz continuous coefﬁcients is necessary. 
Recent years have witnessed a growing interest in 
construction and analysis of convergent schemes 
in non-globally Lipschitz settings (see 
\cite{higham2002strong,2010Strong,kelly2021adaptive,kumar2019milstein,li2019explicit,neuenkirch2014first,sabanis2016euler,wang2020mean1,wang2013tamed,wang2020mean,hu96semi-implicit} and references therein). 
{\color{black}In particular, we are aware of a recent work \cite{derouich2022interpolated} treating the problem of using MLMC methods in the setting of one-dimensional diffusions
that may have non-globally Lipschitz diffusion coefficient.}

Nevertheless, for multidimensional SDEs whose coefficients 
do not obey the global Lipschitz condition,
whether the antithetic MLMC method 
\cite{giles2014antithetic} can
be adapted to achieve the optimal complexity $\mathcal{O}(\epsilon^{-2})$ is, as far as we know, still an unsolved problem.
%
To solve this problem, we firstly propose strongly convergent  Milstein-type methods without the Lévy areas
for SDEs \eqref{equation:sde-in-introduction} with non-commutative diffusion coefficients.
More formally in this paper we establish a general framework 
for a family of modified Milstein (MM) scheme  without Lévy areas, given by
\begin{equation} \label{introduction:modified-milstein-method}
\left\{
\begin{array}{l}
Y_{n+1}
=\mathscr{P}(Y_{n}) 
+  \mu_{h}\big(\mathscr{P}(Y_{n}) \big) h 
+ \sigma_{h}\big(\mathscr{P}(Y_{n}) \big) \Delta W_{n}
+\sum_{j_{1}, j_{2}=1}^{m} 
\left(
\mathcal{L}^{j_{1}}\sigma_{j_{2}}
\right)_{h} 
\big( \mathscr{P}(Y_{n}) \big) 
\Pi_{j_{1}, j_{2}}^{t_{n}, t_{n+1}}, \\
Y_{0} = x_{0},
\end{array}\right.
\end{equation} 
where $h=\frac{T}{N}$, $N \in \mathbb{N}$, is the uniform timestep size, 
$\mathscr{P}: \mathbb{R}^{d} \rightarrow \mathbb{R}^{d}$ denotes some projection operator and $\mu_{h}(\cdot): \mathbb{R}^{d}\rightarrow \mathbb{R}^{d}$, $\sigma_{h}(\cdot): \mathbb{R}^{d}\rightarrow \mathbb{R}^{d\times m}$, $\left(\mathcal{L}^{j_{1}} \sigma_{j_{2}}\right)_{h}(\cdot): \mathbb{R}^{d}\rightarrow \mathbb{R}^{d}$, $j_{1}, j_{2}\in \{1, \dots,m\}$, can be regarded as certain modifications to the coefficients $\mu, \sigma$ and $ \mathcal{L}^{j_{1}}\sigma_{j_{2}} $ defined by \eqref{equation:milstein-term}.
Moreover, we denote
$\Delta W_{ n}:=W_{ t_{n+1}} - W_{ t_{n}}$, $n\in \{0,1, \dots,N-1 \}$,
$\Delta W_{j, n}:=W_{j, t_{n+1}} - W_{j, t_{n}}, j \in \{ 1, \dots,m\}$
and
\begin{equation} \label{introduction:truncated-process}
\Pi_{j_{1}, j_{2}}^{t_{n}, t_{n+1}}
:= \tfrac{1}{2} \left(\Delta W_{j_{1}, n} \Delta W_{j_{2}, n}-\Omega_{j_{1} j_{2}} h \right) 
\quad \text{with} \quad 
  \Omega_{j_{1} j_{2}} 
  = \left\{
  \begin{array}{l}
  1, \quad j_{1} = j_{2} \\
  0, \quad j_{1} \neq j_{2}
  \end{array} \right.,
  \,
  j_{1}, j_{2}\in \{1, \dots,m\}.
\end{equation}
%

Even neglecting the modifications to the coefficients 
and the projection operator,
the proposed schemes 
\eqref{introduction:modified-milstein-method},
only relying on Brownian increments,
are different from the classical Milstein scheme that involves 
the iterated It\^o integrals and can be viewed as a
truncation version of the Milstein scheme. 
Such a truncation 
to get rid of the Lévy areas 
destroys the martingale properties of the scheme
and prevents the continuous-time extension
of \eqref{introduction:modified-milstein-method} from being
an It\^o process \cite{mao2007stochastic}. Then the 
It\^o formula is not applicable. 
{\color{black}
In the globally Lipschitz setting, the loss of the  It\^o formula does not cause any problem in the analysis, because one can obtain the strong convergence rate and moment bounds of numerical approximations following standard arguments without the use of the It\^o formula \cite[Lemma 4.2]{giles2014antithetic}.
However, the loss of the  It\^o formula and nonglobally Lipschitz coefficients pose substantial challenges in deriving the convergence rate, moment bounds, and the relevant variance in the following analysis.
}

Indeed, when treating SDEs with superlinearly growing coefficients 
by explicit time-stepping schemes, 
it becomes a standard way in the literature to work with 
continuous-time extensions of  the explicit schemes 
and carry out the analysis based on the It\^o formula 
(see, e.g., \cite{sabanis2016euler,kumar2019milstein,hutzenthaler2012strong,wang2013tamed}). In this work, 
non-standard arguments are developed to overcome these difficulties in the analysis.
%
%
%
For example, to obtain  high-order moments of the schemes, 
we work with the discrete scheme \eqref{introduction:modified-milstein-method}
and
rely on discrete strategies based on Taylor expansions 
(see the proof of Theorem \ref{theorem:moments-bound-of-MM}). In the analysis of the strong convergence rate, 
by introducing an auxiliary process 
$\{\widetilde{Y}_{n}\}_{0\leq n \leq N}$, defined by 
\begin{equation}
\label{equation:intro-auxiliary-process}
\widetilde{Y}_{n+1} 
= \widetilde{Y}_{n} 
+ \int_{t_{n}}^{t_{n+1}} \! \mu(X_{s}) \ \mathrm{d} s 
+ \int_{t_{n}}^{t_{n+1}}  \! \sigma(X_{s}) \ \mathrm{d} W_{s} 
+ \sum_{j_{1},j_{2}=1}^{m} 
\!
\left(
\mathcal{L}^{j_{1}} \sigma_{j_{2}}
\right)_{h}
\big(
\mathscr{P}(Y_{n})
\big) 
\Pi_{j_{1}, j_{2}}^{t_{n}, t_{n+1}},
\,
\widetilde{Y}_{0} = x_{0},
\end{equation}
we separate the strong error $\|X_{t_{n}}-Y_{n} \|_{L^{2p}(\Omega, \mathbb{R}^{d})}$ into two parts:
\begin{equation} \label{introduction:strong-convergence-decomposition}
 \|X_{t_{n}}-Y_{n} \|_{L^{2p}(\Omega, \mathbb{R}^{d})} 
 \leq \|X_{t_{n}}-\widetilde{Y}_{n} \|_{L^{2p}(\Omega, \mathbb{R}^{d})} 
  + \|\widetilde{Y}_{n}-Y_{n} \|_{L^{2p}(\Omega, \mathbb{R}^{d})}.
\end{equation}
Here  the first error item 
$
\|X_{t_{n}}-\widetilde{Y}_{n} \|_{L^{2p}(\Omega, \mathbb{R}^{d})} 
$
can be directly and easily estimated (see the proof of Lemma \ref{lemma:strong-convergence-rate-of-sde-and-auxiliary-process}). From \eqref{introduction:modified-milstein-method}
and \eqref{equation:intro-auxiliary-process}, one can easily 
observe that $\widetilde{Y}_{n}-Y_{n} $ can be continuously extended to be an It\^o process and thus
the remaining item $\|\widetilde{Y}_{n}-Y_{n} \|_{L^{2p}(\Omega, \mathbb{R}^{d})}$ can be estimated with 
the aid of the It\^o formula (see the proof of Lemma \ref{lemma:strong-convergence-rate-of-MM-and-auxiliary-process}). Then a strong convergence rate of order $\tfrac12$
is derived for the scheme \eqref{introduction:modified-milstein-method}.

%
Based on the numerical scheme \eqref{introduction:modified-milstein-method} applied to discretize SDEs \eqref{equation:sde-in-introduction}, we adopt the antithetic MLMC approach
originally introduced by \cite{giles2014antithetic} to 
approximate the expectations and propose
the antithetic MLMC-modified Milstein method 
to approximate 
$
\E [
    \phi\left( X_{T}\right) 
    ]
$
for SDEs  with non-globally Lipschitz coefficiets.
In order to show the optimal complexity, the key element is to derive the $\mathcal{O}(h^{\beta}), \beta > 1$ variance of the multilevel estimator, in view of the well-known MLMC complexity theorem (see Theorem \ref{theorem:mlmc-complexity-theorem}).
However, this can not be achieved by trivially extending 
the analysis in \cite{giles2014antithetic} for a globally Lipschitz setting  to the non-globally Lipschitz setting in this work.
More precisely, some arguments used in \cite{giles2014antithetic} relying on the  
globally Lipschitz setting do not work in our setting.
To address this issue, we employ the previously obtained 
one-half convergence order to arrive at a sub-optimal estimate 
$\mathbb{E}
 [
 \big\|
 \overline{Y}_{n}^{f}   - Y_{n}^{c}
 \big\|^{2} 
 ]
 \leq
 C h,$
 which leads to  the $\mathcal{O}(h)$ variance (see Lemma \ref{lemma:general-error-estimate}) first.
Using this sub-optimal estimate and  carrying out 
more careful error estimates  on the mesh grids, 
we can improve the convergence rate to be order $1$, i.e., 
$\mathbb{E}
 [
 \big\|
 \overline{Y}_{n}^{f}   - Y_{n}^{c}
 \big\|^{2} 
 ]
 \leq
 C h^2,$
and hence deduce the $\mathcal{O}(h^{2})$ variance
as required (see Lemma \ref{lemma:final-variance-estimate}).


The contribution of the present article can be summarized 
as follows: 
{\color{black}
\begin{itemize}
\item A general framework of modified Milstein-type
schemes without Lévy areas
is established for SDEs with non-globally Lipschitz
coefficients and the strong convergence rate 
is revealed under a coupled monotonicity condition and certain polynomial growth conditions. The framework covers 
the tamed Milstein scheme and projected Milstein scheme
without Lévy areas as special cases;

\item Combining the proposed  schemes with 
the  antithetic multilevel Monte Carlo, we analyze the 
variance of the multilevel estimator  and obtain the order two
variance so that the optimal complexity $\mathcal{O}(\epsilon^{-2} )$ can be derived. This justifies the use of
antithetic MLMC combined with the newly proposed scheme 
for SDEs with non-globally Lipschitz coefficients.
\end{itemize}
}
As already mentioned above, the analysis of both the strong convergence rate and the desired variance is highly non-trivial 
and essential difficulties are caused by 
the non-globally Lipschitz setting, where the diffusion coefficients are allowed to grow polynomially.

The rest of this article is organized as follows.
The next section revisits the fundamentals of antithetic MLMC for SDEs with globally Lipschitz drift and diffusion. 
In Section \ref{Antithetic MLMC in non-globally Lipschitz setting}, we give the modified Milstein (MM) scheme and  the main result of this article under a non-globally Lipschitz setting.  
Then the strong convergence analysis of our numerical method is presented in Section \ref{section:strong-convergence-order-of-the-MM}.
In Section \ref{section:variance-analysis}, we reveal the variance analysis of the multilevel estimator constructed by the MM scheme.
Section \ref{section:numerical-exapmles} shows some numerical tests to illustrate our findings.
Finally, the Appendix contains the detailed proof of several lemmas and theorems.

\section{The antithetic MLMC revisited}
\label{section: antithetic mlmc}

The following setting is used throughout this paper. 
Let $\mathbb{N}$  be the set of all positive integers and $d,m \in \mathbb{N}$, $T \in (0,\infty)$ as given. Let $\|\cdot\|$ and $\langle\cdot, \cdot\rangle$ denote the Euclidean norm and the inner product of vectors in $\mathbb{R}^{d}$, respectively. 
{\color{black}
We denote $\|A\|_{F}:=\sqrt{\operatorname{trace}\left(A^{T} A\right)}$ as the trace norm of a matrix $A \in \mathbb{R}^{d \times m}$, where  $A^{T}\in \mathbb{R}^{m \times d}$ is represented  as
the transpose of a matrix $A \in \mathbb{R}^{d \times m}$. 
}
Let $(\Omega, \mathcal{F},\left\{\mathcal{F}_{t}\right\}_{t \in[0, T]}, \mathbb{P})$ be a  filtered probability space that satisfies the usual conditions.  
 Also, we use $L^{r}\left(\Omega, \mathbb{R}^{d}\right), r \in \mathbb{N}$, to denote a family of $\mathbb{R}^{d}$-valued random variables $\xi$ satisfying $\mathbb{E}\left[\|\xi\|^{r}\right]<\infty$
{\color{black} and use $L^{r}\left(\Omega, \mathbb{R}^{d \times m}\right), r \in \mathbb{N}$
to denote the $\mathbb{R}^{d \times m}$-valued random variables
$\Xi$ with 
$\mathbb{E}\left[\|\Xi\|_{F}^{r}\right]<\infty$.
}
For two real numbers $a$ and $b$, we denote $a \vee b = \max (a,b)$.  
In the following, we use $\frac{\partial \zeta}{\partial x}$ to denote the Jacobian matrix of the vector function $\zeta: \mathbb{R}^{d} \rightarrow \mathbb{R}^{d}$ 
{\color{black}
as
$\frac{\partial \zeta}{\partial x} 
:= \big(
\frac{\partial \zeta_{i}}{\partial x_{j}}
\big)_{1 \leq i, j \leq d}$.
}
For the real-valued function $\varphi: \mathbb{R}^{d} \rightarrow \mathbb{R}$, we use
{\color{black}
notations $\nabla \varphi$ and
$\nabla^{2} \varphi$
to denote its gradient vector and Hessian matrix, respectively, as
$
\nabla \varphi:=
(\tfrac{\partial \varphi}{\partial x_{1}},
\dots
\tfrac{\partial \varphi}{\partial x_{d}}
)$
and
$\nabla^{2} \varphi
:=
\big(
\frac{\partial^{2} \varphi}{ \partial x_{i} \partial x_{j}}
\big)_{1 \leq i, j \leq d}
$
.
} 
Moreover, let $\textbf{1}_{D}$ be the indicative function of a set $D$ and 
{\color{black}
let $C$ be a generic positive constant, which may be different for each appearance but is independent of the timestep.
}

%
In computational finance, it is common for the quantity of interest to be
\[
\mathbb{E} 
    [
    \phi\left( X_{T}\right)
    ],
\]
where $X_T$ is the solution of SDEs \eqref{equation:sde-in-introduction} at the final time $T$
and $\phi \in C_{b}^{2}(\mathbb{R}^{d}, \mathbb{R})$ is 
some smooth payoff function with first 
and second bounded derivatives, i.e., 
{\color{black}
$
\| 
\nabla \phi
\| 
\vee 
\| 
\nabla^{2} \phi 
\|_{F} 
   < \infty.
$
}
For simplicity, we denote
\begin{equation} \label{def:functional-P-in-mlmc}
P := \phi(X_{T}).
\end{equation} 
In addition, we denote $\hat{P}_{\ell}$ as the approximation of $P$ using a numerical discretization with timestep $h_{\ell} = \frac{T}{2^{\ell}}$, $\ell \geq 1$.
For some $L > 1$, the simulation of $\mathbb{E} [\hat P_{L}]$ can be split into a series of levels of 
{\color{black}resolution
}
as
\begin{equation} \label{equation:main-idea-of-mlmc}
\mathbb{E}
\big[
\hat{P}_{L}
\big]
=
\mathbb{E}
\big[
\hat{P}_{0}
\big]
+
\sum_{\ell=1}^{L}
\mathbb{E}
\big[
\hat{P}_{\ell}-\hat{P}_{\ell-1}
\big] .
\end{equation}

The idea behind MLMC is to independently estimate each of the expectations on the right-hand side of \eqref{equation:main-idea-of-mlmc}  in a way which minimises the overall variance for a given computational cost. Further,
let ${Z}_{0}$ be an estimator of $\mathbb{E}[\hat{P}_{0}]$ with $N_{0}$ Monte Carlo samples and ${Z}_{\ell}$ be an estimator of $\mathbb{E}[\hat{P}_{\ell}-\hat{P}_{\ell-1}]$ with $N_{\ell}$ Monte Carlo samples, i.e.
\begin{equation} \label{definition:estimator-of-multilevel-estimator}
Z_{\ell}
=\left\{
  \begin{array}{ll}
  N_{0}^{-1}\sum_{i=1}^{N_{0}}\hat{P}_{0}^{(i)}, & \ell=0, \\
  N_{\ell}^{-1}\sum_{i=1}^{N_{\ell}}
  \left(
  \hat{P}_{\ell}^{(i)}-\hat{P}_{\ell -1}^{(i)} 
  \right), 
  & \ell>0.
  \end{array}\right.
\end{equation}
Moreover, the final multilevel estimator $Z$ is given by the sum of the level estimators \eqref{definition:estimator-of-multilevel-estimator} as
 \begin{equation} \label{equation:final-multilevel-estimator}
Z=\sum_{\ell=0}^{L} Z_{\ell}  .
\end{equation}
The key point here is that $\hat{P}_{\ell}^{(i)}-\hat{P}_{\ell -1}^{(i)} $ 
should come from two discrete approximations for the same underlying stochastic sample, so that on finer levels of resolution the diﬀerence is small (due to strong convergence) and so the variance is also small. Hence very few samples will be required on ﬁner levels to accurately estimate the expected value.
%
Next we recall the MLMC complexity theorem in \cite{giles2008multilevel}.
\begin{theorem} \label{theorem:mlmc-complexity-theorem}
Let $P$, $Z_{\ell}$ and  $Z$ be defined as  \eqref{def:functional-P-in-mlmc}, \eqref{definition:estimator-of-multilevel-estimator} and \eqref{equation:final-multilevel-estimator}, respectively. 
Let 
{\color{black}
$\hat{P}_{\ell}$
}be the corresponding level $\ell$ numerical approximation. If there exist positive constants $\alpha $, $\beta$, $\theta$, $c_{1}$, $c_{2}$, $c_{3}$ such that $\alpha \geq \frac{1}{2}\min \{\beta , \theta\}$ and:
\begin{equation} 
\begin{aligned}
&\text { (i) }
  \left|
    \mathbb{E}\big[\hat{P}_{\ell}-P\big]
  \right| 
  \leq c_{1} h_{\ell}^{\alpha},  \\  \notag
&\text { (ii) } 
\mathbb{E}\left[Z_{\ell}\right]
=\left\{
  \begin{array}{ll}
  \mathbb{E}\big[\hat{P}_{0}\big], & \ell=0 \\
  \mathbb{E}\big[\hat{P}_{\ell}-\hat{P}_{\ell-1}\big], & \ell>0
  \end{array}\right.  \\
&\text { (iii) } 
  \text{Var} \left[Z_{\ell}\right] \leq c_{2} N_{\ell}^{-1} h_{\ell}^{\beta},  \\
&\text { (iv) } 
  \mathcal{C}_{\ell} \leq c_{3} N_{\ell} h_{\ell}^{-\theta}, 
\end{aligned}
\end{equation}
where $\mathcal{C}_{\ell}$ is the  computational complexity of $Z_{\ell}$, then  there exists a positive constant $c_{4}$ such that for any $\epsilon < e^{-1}$ there are values $L$ and $N_{\ell}$ \hspace{0.05em} for which the multilevel estimator \eqref{equation:final-multilevel-estimator}
has a mean-square-error with bound 
\begin{equation}
M S E \equiv \mathbb{E}\left[(Z-\mathbb{E}[P])^{2}\right]
<\epsilon^{2}  \notag
\end{equation}
with a  computational complexity $\mathcal{C}$ with bound
\begin{equation}  
\mathcal{C} 
\leq
\left\{
\begin{array}{ll}
c_{4} \epsilon^{-2}, & \beta>\theta \\
c_{4} \epsilon^{-2}(\log (1 / \epsilon))^{2}, & \beta=\theta \\  \notag
c_{4} \epsilon^{-2-(\theta-\beta) / \alpha}, & 0<\beta<\theta.
\end{array}\right.
\end{equation}
\end{theorem}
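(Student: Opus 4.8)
The plan is to reproduce the classical argument of Giles \cite{giles2008multilevel}: split the mean-square error into a squared-bias part and a variance part, fix the number of levels $L$ from the bias hypothesis (i), and then allocate the per-level sample sizes $N_{\ell}$ so as to minimise the total cost under a prescribed variance budget. First I would use the telescoping identity \eqref{equation:main-idea-of-mlmc} together with hypothesis (ii) to see that $Z$ is an unbiased estimator of $\mathbb{E}[\hat{P}_{L}]$, so that
\[
\mathbb{E}\big[(Z-\mathbb{E}[P])^{2}\big]
= \big(\mathbb{E}[\hat{P}_{L}]-\mathbb{E}[P]\big)^{2} + \text{Var}[Z].
\]
By (i) the first term is at most $c_{1}^{2}h_{L}^{2\alpha}$ with $h_{L}=T2^{-L}$, so choosing $L=\big\lceil \alpha^{-1}\log_{2}(\sqrt{2}\,c_{1}T^{\alpha}/\epsilon)\big\rceil$ makes it no larger than $\tfrac{1}{2}\epsilon^{2}$. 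The ceiling simultaneously gives $L=\mathcal{O}(\log(1/\epsilon))$ and, through the matching lower bound on $h_{L}$, the estimate $h_{L}^{-1}=\mathcal{O}(\epsilon^{-1/\alpha})$; the assumption $\epsilon<e^{-1}$ ensures $L\geq 1$ and $\log(1/\epsilon)\geq 1$, keeping all subsequent $\mathcal{O}$-bounds clean. It then remains to arrange $\text{Var}[Z]\leq\tfrac{1}{2}\epsilon^{2}$ at minimal cost.

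Since $Z_{0},\dots,Z_{L}$ are independent, (iii) gives $\text{Var}[Z]=\sum_{\ell=0}^{L}\text{Var}[Z_{\ell}]\leq c_{2}\sum_{\ell=0}^{L}N_{\ell}^{-1}h_{\ell}^{\beta}$, while (iv) gives $\mathcal{C}=\sum_{\ell=0}^{L}\mathcal{C}_{\ell}\leq c_{3}\sum_{\ell=0}^{L}N_{\ell}h_{\ell}^{-\theta}$. Minimising $\sum_{\ell}N_{\ell}h_{\ell}^{-\theta}$ over real $N_{\ell}>0$ subject to $\sum_{\ell}N_{\ell}^{-1}h_{\ell}^{\beta}$ being fixed is a one-line Lagrange-multiplier computation with solution $N_{\ell}\propto h_{\ell}^{(\beta+\theta)/2}$; guided by this I would take
\[
N_{\ell}=\Big\lceil\, 2\,\epsilon^{-2}c_{2}\,h_{\ell}^{(\beta+\theta)/2}\ \smallsum_{k=0}^{L}h_{k}^{(\beta-\theta)/2}\,\Big\rceil,
\qquad \ell=0,1,\dots,L.
\]
Discarding the ceiling in the variance bound yields $\text{Var}[Z]\leq\tfrac{1}{2}\epsilon^{2}$ outright, while in the cost bound the ceiling contributes an additive $1$ per level, so that, writing $S:=\sum_{k=0}^{L}h_{k}^{(\beta-\theta)/2}$,
\[
\mathcal{C}\leq 2\,c_{2}c_{3}\,\epsilon^{-2}S^{2} + c_{3}\smallsum_{\ell=0}^{L}h_{\ell}^{-\theta}.
\]
Both sums are geometric in $2^{\ell}$ (with ratios $2^{(\theta-\beta)/2}$ and $2^{\theta}$), so the remaining work is to sum them and substitute $h_{L}^{-1}=\mathcal{O}(\epsilon^{-1/\alpha})$.

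Finally I would split into the three regimes. If $\beta>\theta$ the series defining $S$ converges, so $S=\mathcal{O}(1)$ and the leading term is $\mathcal{O}(\epsilon^{-2})$; if $\beta=\theta$ then $S=L+1=\mathcal{O}(\log(1/\epsilon))$, giving $\mathcal{O}(\epsilon^{-2}(\log(1/\epsilon))^{2})$; and if $\beta<\theta$ then $S$ is dominated by its last term $h_{L}^{(\beta-\theta)/2}=\mathcal{O}(\epsilon^{-(\theta-\beta)/(2\alpha)})$, giving $\mathcal{O}(\epsilon^{-2-(\theta-\beta)/\alpha})$. In every case the second sum is $\mathcal{O}(h_{L}^{-\theta})=\mathcal{O}(\epsilon^{-\theta/\alpha})$, and the role of the standing hypothesis $\alpha\geq\tfrac{1}{2}\min\{\beta,\theta\}$ is exactly to make this term subdominant: for $\beta\geq\theta$ it reads $\alpha\geq\theta/2$, i.e. $\epsilon^{-\theta/\alpha}\leq\epsilon^{-2}$, while for $\beta<\theta$ it reads $\alpha\geq\beta/2$, i.e. $\theta/\alpha\leq 2+(\theta-\beta)/\alpha$. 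Collecting these bounds gives the asserted complexity. I do not anticipate a genuine obstacle — the proof is essentially bookkeeping — so the only points demanding care are the honest handling of the ceilings $\lceil\cdot\rceil$ (both the lower bound on $h_{L}$ and the additive $1$ in $\mathcal{C}$) and verifying that $\alpha\geq\tfrac{1}{2}\min\{\beta,\theta\}$ is invoked precisely where the $\sum_{\ell}\mathcal{C}_{\ell}$ contribution must be absorbed.
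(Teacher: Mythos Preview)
Your argument is correct and is precisely the classical Giles proof. Note, however, that the paper does not actually prove this theorem: it is stated as a quoted result from \cite{giles2008multilevel} (``Next we recall the MLMC complexity theorem\ldots'') and no proof is given in the paper, so there is nothing to compare against beyond the original reference, which your proposal faithfully reproduces.
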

Usually, $\theta = 1$ and as indicated by Theorem \ref{theorem:mlmc-complexity-theorem},
the case $\beta > \theta = 1$ can
promise the optimal computational complexity $O(\epsilon^{-2})$.
Note that the strong Euler-type scheme gives one-half 
order of strong convergence and thus $\beta = \theta = 1$.
However, the Milstein scheme has strong convergence rate of
order $1$ so that $\beta = 2 > \theta = 1$ and the optimal computational complexity can be obtained.

For the underlying SDEs \eqref{equation:sde-in-introduction}, the classical Milstein scheme $\{\hat{X}_{n}\}_{0\leq n\leq N}$ using a uniform timestep $h=\frac{T}{N}$, $N\in \mathbb{N}$, takes the following form:
\begin{equation}
\left\{
\begin{array}{ll}
\hat{X}_{n+1} 
& =
\hat{X}_{n} + \mu(\hat{X}_{n})h + \sigma(\hat{X}_{n}) \Delta W_{n} +   \sum_{j_{1}, j_{2}=1}^{m} 
\left(
\mathcal{L}^{j_{1}} \sigma_{j_{2}}
\right)
(\hat{X}_{n})
\int^{t_{n+1}}_{t_n} \int^{s_2}_{t_n} \dd W_{s_1}^{j_1}
\dd W_{s_2}^{j_2}
\\
\hat{X}_{0} & = x_{0},
\end{array}\right.
\end{equation}
where $\Delta W_{n}:=W_{t_{n+1}}-W_{t_{n}}$, $n \in\{0,1,2, \ldots, N-1\}$, and the vector function $\mathcal{L}^{j_{1}} \sigma_{j_{2}}: \mathbb{R}^{d} \rightarrow \mathbb{R}^{d}$
is defined by
\begin{equation} \label{equation:milstein-term}
\mathcal{L}^{j_{1}} \sigma_{j_{2}}(x)
:=\sum_{k=1}^{d} \sigma_{k, j_{1}} 
\tfrac{
\partial \sigma_{j_{2}}(x)
}
{
\partial x^{k}
}
=\tfrac{
\partial \sigma_{j_{2}}
}
{
\partial x
}
(x) \sigma_{j_{1}}(x), 
\quad x \in \mathbb{R}^{d},
\quad j_{1}, j_{2}\in \{1, \dots ,m \} .
\end{equation}
Equivalently, the Milstein scheme can be rewritten as
\begin{equation} \label{equation:milstein-method}
\left\{
\begin{array}{ll}
\hat{X}_{n+1} 
& = \hat{X}_{n} + \mu(\hat{X}_{n})h + \sigma(\hat{X}_{n}) \Delta W_{n} +   \sum_{j_{1}, j_{2}=1}^{m} 
\left(
\mathcal{L}^{j_{1}} \sigma_{j_{2}}
\right)
(\hat{X}_{n}) 
\left(
\Pi_{j_{1}, j_{2}}^{t_{n}, t_{n+1}} + L_{j_{1}, j_{2}}^{t_{n}, t_{n+1}} 
\right)\\
\hat{X}_{0} & = x_{0},
\end{array}\right.
\end{equation}
where
the process $\Pi_{j_{1}, j_{2}}^{t_{n}, t_{n+1}} $ is given by
\begin{equation} \label{equation:truncated-process}
\Pi_{j_{1}, j_{2}}^{t_{n}, t_{n+1}}:= \tfrac{1}{2} \left(\Delta W_{j_{1}, n} \Delta W_{j_{2}, n}-\Omega_{j_{1} j_{2}} h \right), 
\end{equation}
with $\Delta W_{j, n}:=W_{j, t_{n+1}} - W_{j, t_{n}}$ 
for $j\in \{1,\dots,m \}$ and 
for $j_{1}, j_{2} \in \{1, \ldots, m\}$,
\begin{equation}
\Omega_{j_{1} j_{2}} 
= \left\{
\begin{array}{l}
1, \quad j_{1} = j_{2}, \\
0, \quad j_{1} \neq j_{2} .
\end{array} \right.
\end{equation}
Moreover, the term $L_{j_{1}, j_{2}}^{t_{n}, t_{n+1}}$ is the Lévy area denoted as
\begin{equation} \label{equation:levy-area}
    L_{j_{1}, j_{2}}^{t_{n}, t_{n+1}}
    := \int_{t_n}^{t_{n+1}}
    \left(
    W_{j_{1},t}-W_{j_{1},t_{n}}
    \right) 
    \mathrm{d} W_{j_{2},t}
    -\int_{t_n}^{t_{n+1}}
    \left(
    W_{j_{2},t}-W_{j_{2},t_{n}}
    \right) 
    \mathrm{d} W_{j_{1},t}.
\end{equation}

For SDEs with commutative diffusion coefficients 
(i.e. $d=m=1$),  
the  Milstein scheme \eqref{equation:milstein-method}, 
relying on only Brownian increments and thus easily implementable, can be combined with the multilevel Monte Carlo (MLMC) method to give $O(h^2)$ variance in $(iii)$ of 
Theorem \ref{theorem:mlmc-complexity-theorem} 
(i.e., $\beta = 2$) so that the optimal overall
computational cost $\mathcal{O}(\epsilon^{-2})$ can be achieved \cite{giles2008improved}.
However, for multi-dimensional SDEs without commutativity condition, to obtain the first order strong convergence, simulation of Lévy areas \eqref{equation:levy-area} is unavoidable but expensive. 
Once setting \eqref{equation:levy-area} to be zero, the best strong convergence order of such a truncated Milstein method is $\frac{1}{2}$, which only gives $O(h)$ variance in $(iii)$ of 
Theorem \ref{theorem:mlmc-complexity-theorem} 
(i.e., $\beta = 1$)  and the overall computational cost
is reduced to $\mathcal{O}(\epsilon^{-2}(\log (1 / \epsilon))^{2})$.
A natural question thus arises: can one obtain the optimal complexity without improving the strong convergence order of the numerical scheme?

To this end,  one needs to exploit some flexibility in the construction of the multilevel estimator. 
{
\color{black}
Instead of using
the same estimator for  $\hat{P}_{\ell}$ on every level $\ell$, as done in  \eqref{definition:estimator-of-multilevel-estimator}, 
Giles \cite{giles2008improved} 
showed that it could be more beneficial to use different estimators for the finer and coarser of the two levels being considered.
Inspired by this idea, we let $\hat{P}_{\ell}^f$ be the estimate of $P$ when level $\ell$ is
the finer level, and let $\hat{P}_{\ell}^c$ be the estimate of $P$ when level $\ell$ is the 
coarser level. 
}
In this setting we let $\hat{P}_{0}^{f} = \hat{P}_{0}^{c}$ and require
%
\begin{equation} \label{equation:requirement-of-modified-mlmc}
\mathbb{E} [\hat{P}_{\ell}^{f}] 
= \mathbb{E} [\hat{P}^{c}_{\ell}],
\quad \ell = 1, \dots, L,
\end{equation} 
so that the telescoping summation \eqref{equation:main-idea-of-mlmc} remains valid and turns to be
\begin{equation} \label{equation:idea-of-the-modified-mlmc}
\mathbb{E}
\left[
\hat{P}_{L}^{f}
\right]
=\mathbb{E}
\left[
\hat{P}_{0}^{f}
\right]
+
\sum_{\ell=1}^{L} 
\mathbb{E}
\left[
\hat{P}_{\ell}^{f}-\hat{P}_{\ell-1}^{c}
\right].
\end{equation}
With this modified estimator, the MLMC complexity theorem (cf. Theorem \ref{theorem:mlmc-complexity-theorem}) is still applicable and it gives the flexibility to construct approximations for which  $\hat{P}_{\ell}^{f}-\hat{P}_{\ell-1}^{c}$ is much smaller than the original $\hat{P}_{\ell}-\hat{P}_{\ell-1}$, giving a larger rate $\beta$ of  the relevant variance.

Following this idea, Giles and Szpruch \cite{giles2014antithetic} proposed 
an antithetic treatment which achieves the optimal complexity despite the $\frac{1}{2}$ strong convergence with globally Lipschitz coefficients. More precisely, using the coarser timestep $h \propto 2^{-\ell}$, the coarser path approximation $\{X^{c}\}$ is defined by a truncated Milstein method without Lévy areas as
\begin{equation}
\label{equation:truncated-milstein-method-in-a-coarse-step}
X^{c}_{n+1} 
= X^{c}_{n} + \mu(X^{c}_{n}) h 
+ \sigma(X^{c}_{n}) \Delta W_{n} 
+  \sum_{j_{1}, j_{2}=1}^{m} 
\left(
\mathcal{L}^{j_{1}} \sigma_{j_{2}}
\right)(X^{c}_{n}) 
\Pi_{j_{1}, j_{2}}^{t_{n}, t_{n+1}}.   
\end{equation}
The finer approximation $\{X^{f}\}$ consists of two steps,
first of which uses the same discretization as $\{ X^{c} \}$ with time stepsize $\frac{h}{2}$,
\begin{equation}
\label{equation:truncated-milstein-method-in-the-first-fine-step}
X^{f}_{n+1/2} 
= X^{f}_{n} + \mu(X^{f}_{n})\tfrac{h}{2} 
 + \sigma(X^{f}_{n}) \delta W_{n} 
 + \sum_{j_{1}, j_{2}=1}^{m} 
 \left(
 \mathcal{L}^{j_{1}} \sigma_{j_{2}}
 \right)(X^{f}_{n}) 
 \Pi_{j_{1}, j_{2}}^{t_{n}, t_{n+1/2}},  
\end{equation}
and the second of which reads
\begin{equation}
\label{equation:truncated-milstein-method-in-the-second-fine-step}
X^{f}_{n+1} 
= X^{f}_{n+1/2} + \mu(X^{f}_{n+1/2})\tfrac{h}{2} 
 + \sigma(X^{f}_{n}) \delta W_{n+1/2} 
 + \sum_{j_{1}, j_{2}=1}^{m} 
  \left(
  \mathcal{L}^{j_{1}} \sigma_{j_{2}}
  \right)(X^{f}_{n+1/2}) 
  \Pi_{j_{1}, j_{2}}^{t_{n+1/2}, t_{n+1}},  
\end{equation}
where
\begin{equation}
    \delta W_{n}=W_{t_{n+1/2}}-W_{t_{n}}, \quad 
    \delta W_{n+1/2}=W_{t_{n+1}}-W_{t_{n+1/2}}. \notag
\end{equation}
The antithetic counterpart $\{X^{a}\}$ is 
defined by exactly the same discretization as $\{X^{f}\}$,
with the exception that the Brownian increments $\delta W_{n}$ and $\delta W_{n+1/2}$ are swapped,
\begin{align} 
\label{equation: antithetictruncated-milstein-method-in-the-first-fine-step}
X^{a}_{n+1/2} 
& = X^{a}_{n} + \mu(X^{a}_{n})\tfrac{h}{2} 
 + \sigma(X^{a}_{n}) \delta W_{n+1/2} 
 + \sum_{j_{1}, j_{2}=1}^{m} 
 \left(
 \mathcal{L}^{j_{1}} \sigma_{j_{2}}
 \right)(X^{a}_{n}) 
 \Pi_{j_{1}, j_{2}}^{t_{n+1/2}, t_{n+1}},  
\\
\label{equation:antithetic-truncated-milstein-method-in-the-second-fine-step}
X^{a}_{n+1} 
& = X^{a}_{n+1/2} + \mu(X^{a}_{n+1/2})\tfrac{h}{2} 
 + \sigma(X^{a}_{n}) \delta W_{n} 
 + \sum_{j_{1}, j_{2}=1}^{m} 
 \left(
  \mathcal{L}^{j_{1}} \sigma_{j_{2}}
  \right)(X^{a}_{n+1/2}) 
  \Pi_{j_{1}, j_{2}}^{t_{n}, t_{n+1/2}}. 
\end{align}
Further, we let
\begin{equation} \label{definition:modified-estimator-Pf-and-Pc}
\hat{P}_{\ell}^{f} 
= \tfrac{1}{2} \big(
 \phi(X^{f}_{2^{\ell-1}}) + \phi(X^{a}_{2^{\ell-1}})  
 \big), 
 \quad 
 \hat{P}_{\ell-1}^{c} = \phi(X^{c}_{2^{\ell-1}}), 
 \quad \ell = 1, 2, \dots, L.
\end{equation}
As $\delta W_{n}$ and $\delta W_{n+1/2}$ are independent and identically distributed, $\{X^{a}\}$ has the same distribution as $\{X^{f}\}$ so that \eqref{equation:requirement-of-modified-mlmc} can be easily checked. 
With the help of 
Theorem \ref{theorem:mlmc-complexity-theorem}, Giles and Szpruch \cite{giles2014antithetic} deduced the optimal 
computational cost $\mathcal{O}(\epsilon ^{-2})$ for the 
 antithetic estimator they proposed under globally Lipschitz 
 conditions, which is summarized as follows.
\begin{theorem} 
\label{theorem:complexity-of-antithetic-mlmc-with-lipschitz-sde}
Let both drift and diffusion coefficients of SDE \eqref{equation:sde-in-introduction} be globally Lipschitz continuous with first and second uniformly bounded derivative.
Let $P$, $\hat{P}^{f}_{\ell}$  and $\hat{P}^{c}_{\ell-1}$, $\ell = 1, 2, \dots, L$, be defined by \eqref{def:functional-P-in-mlmc} and \eqref{definition:modified-estimator-Pf-and-Pc}. Moreover, let ${Z}_{0}$ be an estimator of $\mathbb{E}[\hat{P}^{f}_{0}]$ with $N_{0}$ Monte Carlo samples and ${Z}_{\ell}$ be an estimator of $\mathbb{E}[\hat{P}^{f}_{\ell}-\hat{P}^{c}_{\ell-1}]$ with $N_{\ell}$ Monte Carlo samples, i.e.
\begin{equation} 
Z_{\ell}
=\left\{
 \begin{array}{ll}
 N_{0}^{-1}\sum_{i=1}^{N_{0}}\hat{P}_{0}^{c,(i)}, & \ell=0, \\
 N_{\ell}^{-1}\sum_{i=1}^{N_{\ell}}
 \left(
  \hat{P}_{\ell}^{f,(i)}-\hat{P}_{\ell -1}^{c,(i)} 
  \right), & \ell>0.
\end{array}\right.
\end{equation}
The final multilevel estimator $Z$ is given by
 \begin{equation} 
Z=\sum_{\ell=0}^{L} Z_{\ell}  .
\end{equation}
Then, there exists a positive constant $C$ such that
\begin{equation}
    \text{Var}\big[Z_{\ell} \big] 
    \leq C N_{\ell}^{-1} h^{2}_{\ell}.
\end{equation}
Given the mean-square-error of $Z$ with bound
\begin{equation}
M S E 
\equiv 
\mathbb{E}
\left[
(Z-\mathbb{E}[P])^{2}
\right]
<\epsilon^{2}, \notag
\end{equation}
there exists a constant $C$ such that the complexity $\mathcal{C}_{antitheticMLMC}$ has the bound
\begin{equation}
   \mathcal{C}_{antitheticMLMC} \leq C \epsilon^{-2}.
\end{equation}
\end{theorem}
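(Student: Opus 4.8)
The plan is to verify the four hypotheses of the MLMC complexity theorem (Theorem~\ref{theorem:mlmc-complexity-theorem}) with $\alpha = 1$, $\beta = 2$, $\theta = 1$, and then invoke its first case ($\beta > \theta$) to conclude $\mathcal{C}_{antitheticMLMC} \leq C\epsilon^{-2}$. Three of these are routine. Hypothesis (ii) (the telescoping identity) holds because $\delta W_n$ and $\delta W_{n+1/2}$ are i.i.d., so interchanging them leaves the joint law of the driving increments invariant; hence the antithetic path defined by \eqref{equation: antithetictruncated-milstein-method-in-the-first-fine-step}--\eqref{equation:antithetic-truncated-milstein-method-in-the-second-fine-step} has the same law as $\{X^f\}$, whence $\mathbb{E}[\hat P_\ell^f] = \mathbb{E}[\phi(X^f_{2^{\ell-1}})] = \mathbb{E}[\hat P_\ell^c]$ and the sum collapses as in \eqref{equation:idea-of-the-modified-mlmc}. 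Hypothesis (iv) holds with $\theta = 1$: one sample of $\hat P_\ell^f - \hat P_{\ell-1}^c$ costs $2\cdot 2^\ell$ substeps for $\{X^f\},\{X^a\}$ together plus $2^{\ell-1}$ steps for $\{X^c\}$, i.e.\ $\mathcal{O}(h_\ell^{-1})$ work. Hypothesis (i) is the weak error $|\mathbb{E}[\phi(X^f_{2^{\ell-1}})] - \mathbb{E}[\phi(X_T)]| \leq C h_\ell$ of the truncated Milstein scheme, which is of order one under the stated globally Lipschitz assumptions with bounded second derivatives (the mean-zero correction terms $\Pi_{j_1,j_2}^{\cdot,\cdot}$ do not degrade weak order one); in any event only $\alpha \geq \tfrac12\min\{\beta,\theta\} = \tfrac12$ is required.

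The heart of the matter is hypothesis (iii), $\text{Var}[Z_\ell] \leq C N_\ell^{-1} h_\ell^2$. Since $\text{Var}[Z_\ell] \leq N_\ell^{-1}\mathbb{E}[(\hat P_\ell^f - \hat P_{\ell-1}^c)^2]$, it suffices to show $\mathbb{E}[(\hat P_\ell^f - \hat P_{\ell-1}^c)^2] \leq C h^2$ with $h := h_{\ell-1}$. Write $\overline{X}^f := \tfrac12(X^f + X^a)$ pathwise. Taylor expanding the $C_b^2$ payoff $\phi$ about $X^c_{2^{\ell-1}}$ in both $\phi(X^f_{2^{\ell-1}})$ and $\phi(X^a_{2^{\ell-1}})$ and averaging, the first-order terms combine to $\tfrac{\partial \phi}{\partial x}(X^c_{2^{\ell-1}})\,(\overline{X}^f_{2^{\ell-1}} - X^c_{2^{\ell-1}})$, while the quadratic remainders are bounded, using $\|\tfrac{\partial^2\phi}{\partial x^2}\| < \infty$, by $C(\|X^f_{2^{\ell-1}} - X^c_{2^{\ell-1}}\|^2 + \|X^a_{2^{\ell-1}} - X^c_{2^{\ell-1}}\|^2)$. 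Consequently
\begin{equation}
\mathbb{E}\big[(\hat P_\ell^f - \hat P_{\ell-1}^c)^2\big]
\leq C\,\mathbb{E}\big[\|\overline{X}^f_{2^{\ell-1}} - X^c_{2^{\ell-1}}\|^2\big]
+ C\,\mathbb{E}\big[\|X^f_{2^{\ell-1}} - X^c_{2^{\ell-1}}\|^4\big]
+ C\,\mathbb{E}\big[\|X^a_{2^{\ell-1}} - X^c_{2^{\ell-1}}\|^4\big] . \notag
\end{equation}
The last two terms are $\mathcal{O}(h^2)$ by the standard one-half-order strong convergence of the truncated Milstein scheme (raised to the fourth power) together with uniform $L^p$ moment bounds for $\{X^f\},\{X^a\},\{X^c\}$, both of which follow routinely from the globally Lipschitz and linear growth assumptions.

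What remains --- and what the $\tfrac12$-order strong coupling alone cannot give --- is the antithetic cancellation estimate $\mathbb{E}[\|\overline{X}^f_n - X^c_n\|^2] \leq C h^2$ for $0 \leq n \leq 2^{\ell-1}$; this is the main obstacle. I would obtain a one-step recursion for $\overline{X}^f_{n+1}$ by averaging \eqref{equation:truncated-milstein-method-in-the-first-fine-step}--\eqref{equation:truncated-milstein-method-in-the-second-fine-step} against \eqref{equation: antithetictruncated-milstein-method-in-the-first-fine-step}--\eqref{equation:antithetic-truncated-milstein-method-in-the-second-fine-step} and compare it with the coarse step \eqref{equation:truncated-milstein-method-in-a-coarse-step}. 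Since $\delta W_n + \delta W_{n+1/2} = \Delta W_n$, the linear diffusion contributions of the average already match those of the coarse step, while the antithetic swap reverses the sign of the asymmetric pieces (the difference $\delta W_n - \delta W_{n+1/2}$ and the half-step surrogates $\Pi^{t_n,t_{n+1/2}}_{j_1,j_2} - \Pi^{t_{n+1/2},t_{n+1}}_{j_1,j_2}$); these therefore cancel in the average up to terms quadratic in the increments and in $X^f_n - X^a_n$. To close the argument one needs the auxiliary bound $\mathbb{E}[\|X^f_n - X^a_n\|^2] \leq C h$, derived from its own recursion.

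Setting $D_n := \overline{X}^f_n - X^c_n$, expanding $\mu$, $\sigma$ and $\mathcal{L}^{j_1}\sigma_{j_2}$ to first order (boundedness of their derivatives enters here), taking conditional expectations to exploit the martingale structure of the increment terms, and applying a discrete Gr\"onwall inequality to the resulting coupled system for $\mathbb{E}[\|D_n\|^2]$ and $\mathbb{E}[\|X^f_n - X^a_n\|^2]$ yields $\mathbb{E}[\|D_n\|^2] \leq C h^2$. Combining this with the preceding paragraph gives $\beta = 2 > \theta = 1$, and Theorem~\ref{theorem:mlmc-complexity-theorem} then delivers $\text{Var}[Z_\ell] \leq C N_\ell^{-1} h_\ell^2$ and $\mathcal{C}_{antitheticMLMC} \leq C\epsilon^{-2}$, completing the proof.
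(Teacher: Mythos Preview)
The paper does not actually prove Theorem~\ref{theorem:complexity-of-antithetic-mlmc-with-lipschitz-sde}: it is stated in Section~\ref{section: antithetic mlmc} as a summary of the main result of Giles and Szpruch \cite{giles2014antithetic} (see the sentence immediately preceding the theorem), with no proof given. So there is no ``paper's own proof'' to compare your attempt against.

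That said, your sketch is essentially the Giles--Szpruch argument and is correct in outline. Two remarks on how it aligns with what the present paper does later in the non-Lipschitz setting. First, your Taylor decomposition expanding $\phi(X^f)$ and $\phi(X^a)$ about $X^c$ leads to remainder terms $\mathbb{E}[\|X^f - X^c\|^4] + \mathbb{E}[\|X^a - X^c\|^4]$; the paper (Lemma~\ref{lemma:variance-expression}, quoted from \cite{giles2014antithetic}) instead packages the second-order remainder as $\mathbb{E}[\|X^f - X^a\|^4]$. The two are equivalent up to constants via the triangle inequality and the fact that $X^f - X^c$ and $X^a - X^c$ have the same law, so this is a cosmetic difference. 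Second, your route to the key estimate $\mathbb{E}[\|\overline{X}^f_n - X^c_n\|^2] \le C h^2$ --- average the two fine recursions, exploit the sign flip under the antithetic swap to cancel the leading asymmetric pieces, control the residual by the auxiliary bound $\mathbb{E}[\|X^f_n - X^a_n\|^2] \le Ch$, then close with discrete Gr\"onwall --- is precisely the structure the paper adopts in Lemmas~\ref{lemma:expression-of-yf-in-a-coarse-timestep}--\ref{lemma:final-variance-estimate} for the modified scheme, and matches the original Giles--Szpruch proof in the Lipschitz case.
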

In the forthcoming sections, 
we turn to SDEs with non-globally Lipschitz continuous coefficients and try to recover the above results in a
non-globally Lipschitz setting. To this aim, we first need to
introduce a modified and strongly convergent Milstein-type numerical scheme and later construct 
antithetic multilevel estimators following the same idea of
\cite{giles2014antithetic}. 
However, the analysis for both the convergence rate and 
the desired variance  is highly non-trivial and non-standard arguments are developed to overcome essential difficulties 
caused by the super-linearly growing coefficients.

\section{Antithetic MLMC in a non-globally Lipschitz setting} \label{Antithetic MLMC in non-globally Lipschitz setting}
In this section, we set up a non-globally Lipschitz framework by presenting some assumptions.
\subsection{Settings and the modified Milstein scheme}
We consider the following assumptions required to establish our main result.
\begin{assumption} \label{assumption:coercivity-condition}
(Coercivity condition) For some $p_{0} \geq 1$, there exists a positive constant $C$ such that
\begin{equation} 
2\langle x, \mu(x) \rangle
+(2p_{0}-1)
{\color{black}
\|\sigma(x)\|_{F}^{2} 
}
\leq C\left(1+\|x\|^{2}\right), 
\quad \forall x \in \mathbb{R}^{d}.
\end{equation}
\end{assumption}
\begin{assumption}\label{assumption:polynomial-growth-condition}
(A coupled {\color{black} monotonicity} condition and globally polynomial growth conditions). Assume the drift coefficients $\mu:\mathbb{R}^{d}\rightarrow \mathbb{R}^{d}$ of the SDEs \eqref{equation:sde-in-introduction} are continuously differentiable and the diffusion coefficients $\sigma_{j}:\mathbb{R}^{d}\rightarrow \mathbb{R}^{d}$ are twice continuously differentiable. Moreover, there exist some positive constants $\gamma \in [1,\infty)$, $p_{1} \in (1, \infty)$, 
such that
\begin{equation}
2\langle 
x-\tilde{x}, \mu(x) - \mu(\tilde{x})
\rangle
+ (2p_{1}-1) 
{\color{black}
\|\sigma(x) - \sigma(\tilde{x})\|_{F}^{2}
}
\leq C\|x - \tilde{x}\|^{2}, 
\quad \forall x , \tilde{x} \in \mathbb{R}^{d} ,
\end{equation}
and for $j \in\{1, \ldots, m\}$,
\begin{equation} \label{equation:growth-of-partial-drift-and-diffusion-in-assumption}
 \begin{aligned}
\left\|
 \left(
 \tfrac{\partial \mu}{\partial x}(x)
 -\tfrac{\partial \mu}{\partial x}(\tilde{x}) 
 \right) y 
\right\| 
&\leq 
C(1+\|x\|+\|\tilde{x}\|)^{\gamma-2}
\|x-\tilde{x}\| 
\cdot \| y \| , 
\quad \forall x, \tilde{x}, y \in \mathbb{R}^{d} ,\vspace{3ex} \\
\left\|
\left(
 \tfrac{\partial \sigma_{j}}{\partial x}(x)-\tfrac{\partial \sigma_{j}}{\partial x}(\tilde{x}) 
 \right) y 
 \right\|^{2} 
 &\leq 
 C(1+\|x\|+\|\tilde{x}\|)^{\gamma-3}
 \|x-\tilde{x}\|^{2} 
 \cdot \| y \|^{2} , 
 \vspace{3ex} 
 \quad \forall x, \tilde{x}, y \in \mathbb{R}^{d}.\\
\end{aligned}
\end{equation}
Assume that the vector functions $ \mathcal{L}^{j_{1}} \sigma_{j_{2}}: \mathbb{R}^{d} \rightarrow \mathbb{R}^{d},  j_{1}, j_{2} \in\{1, \ldots, m\}$ are continuously differentiable and
\begin{equation} \label{equation:growth-of-partial-milstein-diffusion-term-in-assumption}
 \left\|
 \left(
 \tfrac{
 \partial
 \left(
\mathcal{L}^{j_{1}} \sigma_{j_{2}}
\right)
     }
{\partial x}(x)
-\tfrac{
\partial
\left(
\mathcal{L}^{j_{1}} \sigma_{j_{2}}
\right)
      }
{\partial x}(\tilde{x})
\right) y 
\right\| 
\leq 
C\left(1+\|x\|+\|\tilde{x}\|\right)^{\gamma-2}\|x-\tilde{x}\| \cdot\| y \| , 
 \quad \forall x, \tilde{x}, y \in \mathbb{R}^{d}.
\end{equation}
Further, the initial data $X_{0}$ is supposed to be $\mathcal{F}_{0}$-measurable, satisfying
\begin{equation}
\mathbb{E}\left[\left\|X_{0}\right\|^{2p_{0} }\right] 
< \infty,
\end{equation}
where $p_0$ is determined by Assumption \ref{assumption:coercivity-condition}.
\end{assumption} 
Assumption \ref{assumption:polynomial-growth-condition} is considered as a kind of polynomial growth condition and in proofs which follow we will need some implications of  this assumption. It follows immediately that, for $j \in\{1, \ldots, m\}$,
\begin{equation} \label{equation:growth-of-partial-drift-and-diffusion}
\begin{aligned}
&\left\|
 \tfrac{\partial \mu}{\partial x}(x) y 
 \right\| 
 \leq C(1+\|x\|)^{\gamma-1} \| y \|, 
 \quad \forall x, y \in \mathbb{R}^{d} , \\
& \left\|
 \tfrac{\partial \sigma_{j}}{\partial x}(x) y 
 \right\|^{2} 
 \leq C(1+\|x\|)^{\gamma-1} \| y \|^{2}, 
 \quad \forall x ,y \in \mathbb{R}^{d},   \\
\end{aligned}
\end{equation}
which in turn gives
\begin{equation} \label{equation:growth-of-drift}
\begin{aligned}
\|\mu(x)-\mu(\tilde{x})\| 
& \leq C(1+\|x\|+\|\tilde{x}\|)^{\gamma-1}\|x-\tilde{x}\|, 
\quad \forall x, \tilde{x} \in \mathbb{R}^{d}, \\
\|\mu(x)\| 
& \leq C(1+\|x\|)^{\gamma}, 
\quad \forall x \in \mathbb{R}^{d},  \\
\end{aligned}
\end{equation}
and, for $j_{1}, j_{2} \in\{1, \ldots, m\}$,
\begin{equation} \label{equation:growth-of-diffusion}
\begin{aligned}
\left\|
 \sigma_{j}(x)-\sigma_{j}(\tilde{x})
\right\|^{2}
&\leq 
C(1+\|x\|+\|\tilde{x}\|)^{\gamma-1}\|x-\tilde{x}\|^{2}, 
 \quad \forall x, \tilde{x} \in \mathbb{R}^{d},    \\
\|\sigma_{j}(x)\|^{2} 
&\leq C(1+\|x\|)^{\gamma+1}, 
\quad \forall x \in \mathbb{R}^{d}.  \\
\end{aligned}
\end{equation}
Similarly, \eqref{equation:growth-of-partial-milstein-diffusion-term-in-assumption} in Assumption \ref{assumption:polynomial-growth-condition} yields, 
\begin{equation} \label{equation:growth-of-milstein-diffusion-term}
\begin{aligned}
\left\|
\mathcal{L}^{j_{1}} \sigma_{j_{2}}(x)
 -\mathcal{L}^{j_{1}} \sigma_{j_{2}}(\tilde{x})
\right\| 
& \leq 
C(1+\|x\|+\|\tilde{x}\|)^{\gamma-1}\|x-\tilde{x}\|, 
\quad \forall x, \tilde{x} \in \mathbb{R}^{d}, \\
\left\|
\mathcal{L}^{j_{1}} \sigma_{j_{2}}(x)
\right\|
& \leq 
C(1+\|x\|)^{\gamma}, 
\quad \forall x \in \mathbb{R}^{d}.  \\
\end{aligned}
\end{equation}
Note that Assumption \ref{assumption:polynomial-growth-condition} with Assumption \ref{assumption:coercivity-condition} suffices to imply that SDE \eqref{equation:sde-in-introduction} possesses a unique $\{\mathcal{F}_{t} \}_{t\in [0,T]}$-adapted solution with continuous sample paths and 
\begin{equation} 
\mathbb{E} 
\left[
 \left\|X_{t}\right\|^{2p}
 \right]
 \leq C
   \left(
    1+ \mathbb{E} \left[\left\|X_{0}\right\|^{2p}\right] 
    \right)
  <\infty , 
  \quad p\in [1,p_{0}], \quad t\in [0,T].
\end{equation}

These assumptions cover a much broader class of SDEs than the globally Lipschitz setting.
{\color{black}
However, under Assumptions \ref{assumption:coercivity-condition}, \ref{assumption:polynomial-growth-condition},  both drift and diffusion coefficients of  SDEs \eqref{equation:sde-in-introduction} are allowed to grow polynomially and violate the globally Lipschitz condition. In this situation, as revealed by \cite{hutzenthaler2011strong}, the usual explicit time-stepping scheme such as the Euler-Maruyama (EM) scheme would produce numerical approximations with divergent moments, when the time-step size $h>0$ shrinks to zero.
This leads to divergence of the explicit scheme in both strong and weak senses and also divergence of the multilevel Monte Carlo Euler method (see \cite{hutzenthaler2011strong} and \cite{hutzenthaler2013divergence} for relevant divergence results).
Taking these findings into account and noting that the truncated Milstein scheme \eqref{equation:truncated-milstein-method-in-a-coarse-step} coincides with the EM scheme for SDEs with additive noise, one can not expect to obtain convergence of the truncated Milstein scheme and the antithetic MLMC method in the above non-globally Lipschitz setting.
}
To remedy it, we propose a family of 
modified Milstein (MM) scheme on a uniform timestep $h=\frac{T}{N}$, $N \in \mathbb{N}$, given by
\begin{equation} \label{equation:modified-milstein-method}
\left\{
\begin{array}{l}
Y_{n+1}
= \mathscr{P}(Y_{n}) +  \mu_{h}\big(\mathscr{P}(Y_{n}) \big) h + \sigma_{h}\big(\mathscr{P}(Y_{n}) \big) \Delta W_{n}+\sum_{j_{1}, j_{2}=1}^{m} \left(\mathcal{L}^{j_{1}} \sigma_{j_{2}}\right)_{h}\big(\mathscr{P}(Y_{n}) \big) \Pi_{j_{1}, j_{2}}^{t_{n}, t_{n+1}} \\
Y_{0} = x_{0}.
\end{array}\right.
\end{equation}
where  $\mathscr{P}:\mathbb{R}^{d} \rightarrow \mathbb{R}^{d}$
is 
{\color{black}
a kind of modification function}
that can be customised and
$\mu_{h}(x)$, $\left(\mathcal{L}^{j_{1}} \sigma_{j_{2}}\right)_{h}(x)$, $j_{1}, j_{2}\in \{1, \dots,m \}$,
$\sigma_{h}(x)$ are measurable functions taking values in $\mathbb{R}^{d}$ or $\mathbb{R}^{d\times m}$.
In addition, 
we need some conditions imposed on the framework for 
the time-stepping scheme.
\begin{assumption} \label{assumption:condition-on-the-MM}
(1) For any $ x\in \mathbb{R}^{d}$ and $j_{1}, j_{2}
\in \{1, \dots,m \}$,
\begin{equation}
\begin{aligned}
\|\mathscr{P}(x) \| 
&\leq \| x\| , \\
\left\|
\mu_{h}
\left(
\mathscr{P}(x)
\right) 
\right\| 
& \leq 
\color{black}
Ch^{-\frac{1}{2}}(1+\| x\|), 
\\
\color{black}
\|\sigma_{h}(x) \|_{F} 
& \leq  
\color{black}
\| \sigma(x)\|_{F} , \\
\left\| 
\left(
\mathcal{L}^{j_{1}} \sigma_{j_{2}}
\right)_{h}
\left(
\mathscr{P}(x)
\right)
\right\| 
&\leq 
\color{black}
Ch^{-\frac{1}{2}} (1+\| x\|).
\end{aligned}
\end{equation}
(2) For any $ x, y \in \mathbb{R}^{d}$, 
\begin{equation}
 \|\mathscr{P}(x) - \mathscr{P}(y) \| 
 \leq \|x-y \|.
\end{equation}
(3) There exists some constant $\mathbf{a} \in [1, \infty)$ such that, for some $p \in [1, \infty)$ and $\zeta \in L^{2\mathbf{a}p}(\Omega, \mathbb{R}^{d})$,
\begin{equation}
\|\zeta-\mathscr{P}(\zeta) \|_{L^{2p}(\Omega, \mathbb{R}^{d})} 
\leq 
Ch^{2} \|\zeta \|_{L^{2\mathbf{a}p}(\Omega, \mathbb{R}^{d})}.
\end{equation}
(4) For any $x \in \mathbb{R}^{d}$,
\begin{equation}
2\langle x, \mu_{h}(x)\rangle
+(2p_{0}-1)
{\color{black}
\|\sigma_{h}(x)\|_{F}^{2}
}
\leq C\left(1+\|x\|^{2}\right).
\end{equation}
(5) There exist some constant $\alpha_{1}, \alpha_{2}, \alpha_{3} \in [0,\infty)$ such that, for  $j_{1}, j_{2}\in \{1, \dots,m \}$ and $\forall x \in \mathbb{R}^{d}$,
\begin{equation}
\begin{aligned}
\|\mu(x) - \mu_{h}(x) \| 
&\leq 
Ch (1+ \| x\|^{\alpha_{1}}), \\
\color{black}
\|\sigma(x) - \sigma_{h}(x) \|_{F} 
&\leq
Ch (1+ \| x\|^{\alpha_{2}}),\\
\left\|
\mathcal{L}^{j_{1}} \sigma_{j_{2}}(x) - \left(\mathcal{L}^{j_{1}} \sigma_{j_{2}}\right)_{h}(x)
\right\| 
&\leq 
Ch (1+ \| x\|^{\alpha_{3}}).
\end{aligned}
\end{equation}
(6)There exists some constant $\mathbf{b} \in [1, \infty)$ such that, for some $p \in [1, \infty)$ and $\zeta,\upsilon \in L^{2\mathbf{b}p}(\Omega, \mathbb{R}^{d})$,
\begin{equation}
\left\|
\tfrac{1}{2}
\big(\mathscr{P}(\zeta)+\mathscr{P}(\upsilon) \big)
- \mathscr{P}\big(\tfrac{1}{2}(\zeta+\upsilon)\big)
\right\|_{L^{2p}(\Omega, \mathbb{R}^{d})} 
\leq 
Ch^{2} 
\left(
\|\zeta \|_{L^{2\mathbf{b}p}(\Omega, \mathbb{R}^{d})}
+ \|\upsilon \|_{L^{2\mathbf{b}p}(\Omega, \mathbb{R}^{d})}
\right).
\end{equation}
\end{assumption}
The framework given by Assumption \ref{assumption:condition-on-the-MM} is general and covers the tamed Milstein scheme and projected Milstein scheme
without Lévy areas as special cases.

\textbf{Example 1:  Tamed Milstein schemes  without Lévy areas.} 

\textit{Model 1 (TMS1):}
Set
\begin{equation}
\label{equation:tamed-milstein-scheme-model-1}
\begin{aligned}
\mathscr{P}&:=I,\\
\mu_{h}\left(\mathscr{P}(x)\right) 
:= \tfrac{\mu(x)}{1+ \| \mu(x) \|^{2}h}, \quad
\sigma_{h}\left(\mathscr{P}(x)\right) 
&:= \tfrac{\sigma(x)}{1+ \| \mu(x) \|^{2}h}, \quad \left(\mathcal{L}^{j_{1}} \sigma_{j_{2}}\right)_{h}\left(\mathscr{P}(x)\right) := \tfrac{\mathcal{L}^{j_{1}} \sigma_{j_{2} }\left(x\right)}{1+ \| \mathcal{L}^{j_{1}} \sigma_{j_{2} } (x)\|^{2}h} ,
\end{aligned}
\end{equation}
where we denote $I$ as the identity operator.
In this case, one observes  that conditions \textit{(1)}-\textit{(3)} and \textit{(6)} in Assumption \ref{assumption:condition-on-the-MM} is obviously satisfied with $\mathbf{a}=\mathbf{b}=1$. Moreover, it is worth noting that such tamed coefficients $\mu_{h}(x)$ and $\sigma_{h}(x)$ can preserve the condition \textit{(4)} as, for any $x\in \mathbb{R}^{d}$,
\begin{equation} 
\begin{aligned}
2\langle  x, \mu_{h} (x) \rangle 
+  (2p_{0}-1)
{\color{black}\| \sigma_{h}(x) \|_{F}^{2} }
&= \tfrac{ 2 \langle  x, \mu(x) \rangle}
{ 1+ \| \mu(x) \|^{2} h } 
+ 
\tfrac{
(2p_{0}-1)
{\color{black}
\|\sigma(x)\|_{F}^{2}
}
}
{
\left(1+ \| \mu(x) \|^{2}h\right)^{2}
}  
& \leq 
C \left(1+\|x\|^{2}\right).
\end{aligned}
\end{equation}
By Assumption \ref{assumption:polynomial-growth-condition}, we are able to check that \textit{(5)} is met with $\alpha_{1}=\alpha_{3}=3\gamma$, $\alpha_{2}=\frac{5\gamma+1}{2}$.

\textit{Model 2 (TMS2): }
Set
\begin{equation} 
\begin{aligned}
\mathscr{P}&:=I,\\
\mu_{h}\left(\mathscr{P}(x)\right) 
:= \tfrac{\mu(x)}{1+ \| x \|^{2(\gamma-1)}h}, \quad
\sigma_{h}\left(\mathscr{P}(x)\right) 
&:= \tfrac{\sigma(x)}{1+ \| x \|^{2(\gamma-1)}h}, \quad
\left(
\mathcal{L}^{j_{1}} \sigma_{j_{2}}
\right)_{h}
\left(\mathscr{P}(x)\right) 
:= \tfrac{
\mathcal{L}^{j_{1}} \sigma_{j_{2} }\left(x\right)
}{
1+ \| x\|^{2(\gamma-1)}h
} .
\end{aligned}
\end{equation}
This model (\textit{TMS2}) above is derived from \cite{kumar2019milstein}. Similar to the analysis for \textit{Model 1 (TMS1)}, all conditions in Assumption \ref{assumption:condition-on-the-MM} are satisfied with $\mathbf{a}=\mathbf{b}=1$, $\alpha_{1}=\alpha_{3}=3\gamma-2$ and $\alpha_{2}=\frac{5\gamma-3}{2}$.

\textbf{Example 2:  Projected Milstein schemes without Lévy areas.} 

{\color{black}
\textit{Model 3 (PMS):}
}
Set
\begin{equation} \label{equation:projected-Milstein-scheme}
\begin{aligned}
\mathscr{P}(x)&:= \min\{1, h^{-\frac{1}{2\gamma}} \| x\|^{-1} \}x, \\
\mu_{h}\left(\mathscr{P}(x)\right)
:=\mu\left(\mathscr{P}(x)\right), \quad
\sigma_{h}\left(\mathscr{P}(x)\right)
&:=\sigma\left(\mathscr{P}(x)\right), \quad
\left(\mathcal{L}^{j_{1}} \sigma_{j_{2}}\right)_{h}\left(\mathscr{P}(x)\right) 
:= \mathcal{L}^{j_{1}} \sigma_{j_{2}}\left(\mathscr{P}(x)\right) .
\end{aligned}
\end{equation}
Evidently, conditions \textit{(1)}, \textit{(4)} and \textit{(5)} can be fulfilled by \eqref{equation:growth-of-drift}-\eqref{equation:growth-of-milstein-diffusion-term} with $\alpha_{1}=\alpha_{2}=\alpha_{3}=0$ since the coefficients are not modified. Conditions \textit{(2)} and \textit{(3)} can be easily derived with  $\textbf{a}=4\gamma+1$, by following similar ideas in \cite{beyn2016stochastic,beyn2017stochastic}.
Now it remains to check  the condition \textit{(6)}.
%
For simplicity we denote $\overline{\zeta}:=\frac{1}{2}(\zeta+\upsilon)$
and for two random variables $\zeta, \upsilon$ 
we introduce two measurable sets:
\begin{equation}
\mathcal{A}_{h} 
:= 
\{
\omega \in \Omega : \|\zeta(\omega) \| \leq h^{-\frac{1}{2\gamma}} \ \text{and} \ 
\|\upsilon(\omega) \| \leq h^{-\frac{1}{2\gamma}} 
\}, 
\quad \mathcal{A}_{h}^{c} 
:= \Omega \backslash \mathcal{A}_{h} .
\end{equation}
Then, by using the H\"older inequality, we have, for $p\in [1,\infty)$ and $q, q' \in (1,\infty)$ with $\tfrac{1}{q}+\tfrac{1}{q'}=1$, 
\begin{equation}
\begin{aligned}
&\mathbb{E}
\left[ \left\|
\tfrac{1}{2} \big(\mathscr{P}(\zeta)+\mathscr{P}(\upsilon) \big)
- \mathscr{P}\big(\overline{\zeta}\big)
\right\|^{2p}\right] \\
&=\mathbb{E}
\left[ \left\|
\tfrac{1}{2} \big(\mathscr{P}(\zeta)+\mathscr{P}(\upsilon) \big)
- \mathscr{P}\big(\overline{\zeta}\big)
\right\|^{2p} \textbf{1}_{\mathcal{A}_{h}^{c}}
\right] \\
& \leq \left\|
\tfrac{1}{2} \big(\mathscr{P}(\zeta)+\mathscr{P}(\upsilon) \big)
- \mathscr{P}\big(\overline{\zeta}\big)
\right\|_{L^{2pq}(\Omega, \mathbb{R}^{d})}^{2p} \times
\|\textbf{1}_{\mathcal{A}_{h}^{c}} \|^{2p}_{L^{q'} (\Omega, \mathbb{R})},
\end{aligned}
\end{equation}
where the Markov inequality implies
\begin{equation}
\begin{aligned}
\|\textbf{1}_{\mathcal{A}_{h}^{c}} \|_{L^{q'} (\Omega, \mathbb{R})} 
&= \bigg( 
\mathbb{P} 
\left( 
\omega \in \Omega :
\|\zeta(\omega) \| > h^{-\frac{1}{2\gamma}} \ \text{or} 
\ 
\|\upsilon(\omega) \| > h^{-\frac{1}{2\gamma}} 
\right)
\bigg)^{\frac{1}{q'}} \\
& \leq  
\bigg( 
\mathbb{P} 
\left( 
\omega \in \Omega :
\|\zeta(\omega) \| > h^{-\frac{1}{2\gamma}}  
\right)
+ 
\mathbb{P} 
\left( 
\omega \in \Omega :
\|\upsilon(\omega) \| > h^{-\frac{1}{2\gamma}}  
\right)
\bigg)^{\frac{1}{q'}} \\
& \leq  
h^{\frac{\beta}{2\gamma q'}}
\Big(
\|\zeta \|^{\frac{\beta}{q'}}_{L^{\beta} (\Omega, \mathbb{R}^{d})}
+ 
\|\upsilon \|^{\frac{\beta}{q'}}_{L^{\beta} (\Omega, \mathbb{R}^{d})}
\Big).
\end{aligned}
\end{equation}
By choosing $q=4\gamma+1$, $q'=1+\tfrac{1}{4\gamma}$,  $\beta=8\gamma+2$, the condition \textit{(6)} is fulfilled
with $\textbf{b}=4\gamma+1$ and all conditions in 
Assumption \ref{assumption:condition-on-the-MM} 
are validated for the  proposed projected Milstein scheme.

Before proceeding further, we would like to give some useful estimates which follows from Assumption \ref{assumption:condition-on-the-MM}. In view of conditions
$\textit{(1)}$, $\textit{(4)}$ and the Cauchy-Schwarz inequality, we obtain a prior estimate of 
{\color{black}
$\|\sigma_{h}(\mathscr{P}(x)) \|_{F}$ 
}as follows:
\begin{equation} \label{equation:upper-bound-of-diffusion}
\begin{aligned}
{\color{black}
\| \sigma_{h}(\mathscr{P}(x)) \|_{F}^{2} 
}
&\leq C  \left(1+\|\mathscr{P}(x)\|^{2} \right) 
+ \| \mathscr{P}(x) \|\|\mu_{h}(\mathscr{P}(x)) \|   
\leq
Ch^{-1/2} \left(1+\|\mathscr{P}(x)\|^{2} \right).
\end{aligned}
\end{equation}
Moreover, combining condition \textit{(3)}, \eqref{equation:growth-of-drift}-\eqref{equation:growth-of-milstein-diffusion-term} with the H\"older inequality yields, for some $p\in [1,\infty)$, $j_{1}, j_{2}\in \{1, \dots,m \}$ and some random variable $\zeta \in \mathbb{R}^{d}$,
\begin{equation}
\begin{aligned}
\|
\mu(\zeta) - \mu(\mathscr{P}(\zeta)) 
\|_{L^{2p}(\Omega, \mathbb{R}^{d}) } 
&\leq Ch^{2} \|\zeta \|_{L^{2p(\mathbf{a} + \gamma -1)}(\Omega, \mathbb{R}^{d})}, \\
\|
\sigma(\zeta) - \sigma(\mathscr{P}(\zeta)) 
\|_{L^{2p}(\Omega, \mathbb{R}^{\color{black}d \times m}) } 
&\leq Ch^{2} \|\zeta \|_{L^{p(2\mathbf{a} + \gamma -1)}(\Omega, \mathbb{R}^{d})}, \\
\|
\mathcal{L}^{j_{1}} \sigma_{j_{2}}(\zeta) - \mathcal{L}^{j_{1}} \sigma_{j_{2}}( \mathscr{P}(\zeta) ) 
\|_{L^{2p}(\Omega, \mathbb{R}^{d}) } 
&\leq 
Ch^{2} \|\zeta \|_{L^{2p(\mathbf{a} + \gamma -1)}(\Omega, \mathbb{R}^{d})}.
\end{aligned}
\end{equation}
%
\subsection{The antithetic MLMC method and main result}

Under Assumptions \ref{assumption:coercivity-condition}, \ref{assumption:polynomial-growth-condition}, we propose 
the antithetic MLMC-MM method to approximate \eqref{introduction:expectation-of-functional-of-sde},
where time is discretized by 
{\color{black}the MM method \eqref{equation:modified-milstein-method}} and expectations are approximated by the antithetic MLMC method.
Similar to Section \ref{section: antithetic mlmc},  
with the coarser timestep $h\propto 2^{-\ell}$ we  construct the coarser path approximation $\{Y^{c}\}$ as
\begin{equation} \label{equation:MM-in-coarse-step}
\begin{aligned}
Y_{n+1}^{c}
&=\mathscr{P}(Y^{c}_{n}) 
+ \mu_{h}
\big(
\mathscr{P}(Y^{c}_{n})
\big) h
+\sigma_{h}
\big(
\mathscr{P}(Y^{c}_{n})
\big) \Delta W_{n} 
+ \sum_{j_{1},j_{2}=1}^{m} 
\left(
\mathcal{L}^{j_{1}} \sigma_{j_{2}}
\right)_{h}
\big(
\mathscr{P}(Y^{c}_{n})
\big) 
\Pi_{j_{1}, j_{2}}^{t_{n}, t_{n+1}} .\\
\end{aligned}
\end{equation}
Subsequently, 
we define the corresponding two half-timesteps of the first finer path approximation $\{Y^{f}\}$ with the finer timestep $\frac{h}{2}$ as follows,
\begin{equation} \label{equation:MM-in-first-fine-timestep}
\begin{aligned}
Y_{n+1/2}^{f}
&=
\mathscr{P}\big(Y_{n}^{f} \big) 
+ \mu_{\frac{h}{2}}
\big(
\mathscr{P}\big(Y_{n}^{f} \big)
\big) \tfrac{h}{2}
+\sigma_{\frac{h}{2}}
\big(
\mathscr{P}\big(Y_{n}^{f} \big)
\big) \delta W_{n} 
+ \sum_{j_{1},j_{2}=1}^{m} 
\left(
\mathcal{L}^{j_{1}} \sigma_{j_{2}}
\right)_{\frac{h}{2}}
\big(
\mathscr{P}\big(Y_{n}^{f} \big)
\big) 
\Pi_{j_{1}, j_{2}}^{t_{n}, t_{n+1/2}} ,\\
\end{aligned}
\end{equation}
and
\begin{equation} \label{equation:MM-in-second-fine-timestep}
\begin{aligned}
Y_{n+1}^{f}
=\mathscr{P}\big(Y_{n+1/2}^{f} \big) 
+ \mu_{\frac{h}{2}}
\big(
\mathscr{P}\big(Y_{n+1/2}^{f} \big)
\big) \tfrac{h}{2}
&+\sigma_{\frac{h}{2}}
\big(
\mathscr{P}\big(Y_{n+1/2}^{f} \big)
\big) \delta W_{n+1/2} \\
&+\sum_{j_{1},j_{2}=1}^{m} 
\left(
\mathcal{L}^{j_{1}} \sigma_{j_{2}}
\right)_{\frac{h}{2}}
\big(
\mathscr{P}\big(Y_{n+1/2}^{f} \big)
\big)
\Pi_{j_{1}, j_{2}}^{t_{n+1/2}, t_{n+1}}.
\end{aligned}
\end{equation}
And the antithetic estimator $\{Y^{a}\}$ is given by 
\begin{equation}
\label{equation:antithetic-estimator-in-first-fine-step}
\begin{aligned}
Y_{n+1/2}^{a}
&=\mathscr{P}\big(Y_{n}^{a}\big) 
+ \mu_{\frac{h}{2}}
\big(
\mathscr{P}\big(Y_{n}^{a}\big)
\big) \tfrac{h}{2}
+\sigma_{\frac{h}{2}}
\big(
\mathscr{P}\big(Y_{n}^{a}\big)
\big) \delta W_{n+1/2} 
+ \sum_{j_{1},j_{2}=1}^{m} 
\left(
\mathcal{L}^{j_{1}} \sigma_{j_{2}}
\right)_{\frac{h}{2}}
\big(
\mathscr{P}\big(Y_{n}^{a}\big)
\big)
\Pi_{j_{1}, j_{2}}^{t_{n+1/2}, t_{n+1}},\\
\end{aligned}
\end{equation}
and
\begin{equation}
\label{equation:antithetic-estimator-in-second-fine-step}
\begin{aligned}
Y_{n+1}^{a}
=\mathscr{P}\big(Y_{n+1/2}^{a}\big) 
+ \mu_{\frac{h}{2}}
\left(
\mathscr{P}\big(Y_{n+1/2}^{a}\big)
\right) \tfrac{h}{2}
&+\sigma_{\frac{h}{2}}
\left(
\mathscr{P}\big(Y_{n+1/2}^{a}\big)
\right) \delta W_{n}  \\
&+ \sum_{j_{1},j_{2}=1}^{m} 
\left(
\mathcal{L}^{j_{1}} \sigma_{j_{2}}
\right)_{\frac{h}{2}}
\left(
\mathscr{P}\big(Y_{n+1/2}^{a}\big)
\right)
\Pi_{j_{1}, j_{2}}^{t_{n}, t_{n+1/2}}.\\
\end{aligned}
\end{equation}
The main result of the paper is formulated as follows,
giving the optimal complexity $\mathcal{O}(\epsilon^{-2})$. 
\begin{theorem}\label{theorem:main-result-of-complexity}
(Complexity of the antithetic MLMC-MM method) Let Assumptions \ref{assumption:coercivity-condition}, \ref{assumption:polynomial-growth-condition}, \ref{assumption:condition-on-the-MM} hold with $p_{0} \in [\max\{\boldsymbol{\alpha}, \boldsymbol{\alpha}+ \gamma -2\}, \infty)$, where $\boldsymbol{\alpha}:= \max\{4\gamma-2, 2\alpha_{1}, 2\alpha_{2}, \alpha_{3}, 2\mathbf{a}, \mathbf{a}+\gamma-1, \mathbf{b}+\gamma-1  \}$. Moreover, let $\{Y^{c}\}$, $\{Y^{f}\}$ and $\{{Y}^{a}\}$ be defined as \eqref{equation:MM-in-coarse-step}, \eqref{equation:MM-in-first-fine-timestep}-\eqref{equation:MM-in-second-fine-timestep} and \eqref{equation:antithetic-estimator-in-first-fine-step}-\eqref{equation:antithetic-estimator-in-second-fine-step}, respectively. For some smooth payoffs $\phi \in C_{b}^{2}(\mathbb{R}^{d}, \mathbb{R})$  
we let 
\begin{equation}
\label{equation:multilevel-estimator-in-main-result}
    \hat P^{f}_{\ell}
    := \tfrac{1}{2} 
    \left( 
    \phi(Y_{2^{\ell-1}}^{f}) + \phi(Y_{2^{\ell-1}}^{a})
    \right), 
    \quad \hat P^{c}_{\ell-1}
    := \phi(Y_{2^{\ell-1}}^{c}),
\end{equation}
with the corresponding estimators $\hat{Z}_{\ell}$ defined as
\begin{equation} 
\hat{Z}_{\ell}
=\left\{
\begin{array}{ll}
\hat{N}_{0}^{-1}\sum_{i=1}^{\hat{N}_{0}}\hat{P}_{0}^{f,(i)}, & \ell=0, \\
\hat{N}_{\ell}^{-1}
\sum_{i=1}^{\hat{N}_{\ell}}
\left(
\hat{P}_{\ell}^{f,(i)}-\hat{P}_{\ell -1}^{c,(i)} 
\right), & \ell>0,
\end{array}\right.
\end{equation}
and the final estimator $\hat{Z}$ given by
\begin{equation}
    \hat{Z}= \sum_{\ell=0}^{L} \hat{Z}_{\ell}.
\end{equation}
Then, there exists a positive constant $C$ such that
\begin{equation} \label{equation:variance-in-main-result}
    \text{Var}\big[\hat Z_{\ell} \big] 
    \leq C \hat N_{\ell}^{-1} h^{2}_{\ell}.
\end{equation}
Given the mean-square-error of $\hat Z$ with bound
\begin{equation}
M S E 
\equiv 
\mathbb{E}
\big[(\hat{Z}-\mathbb{E}[P])^{2}\big]
<
\epsilon^{2}, \notag
\end{equation}
there exists a uniform constant $C$ such that the complexity of approximating \eqref{introduction:expectation-of-functional-of-sde} using the antithetic MLMC-MM method has the bound
\begin{equation}
    \mathcal{C}_{antithetic MLMC-MM} \leq C \epsilon^{-2}.
\end{equation}
\end{theorem}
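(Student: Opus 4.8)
The plan is to reduce the whole statement to the MLMC complexity theorem (Theorem~\ref{theorem:mlmc-complexity-theorem}) applied to the antithetic estimators \eqref{equation:multilevel-estimator-in-main-result}: I would verify its four hypotheses (i)--(iv) with the parameters $\alpha=\tfrac12$, $\beta=2$ and $\theta=1$. Since then $\alpha=\tfrac12\min\{\beta,\theta\}$ and $\beta>\theta$, the theorem at once yields the variance bound $\text{Var}[\hat{Z}_{\ell}]\le C\hat{N}_{\ell}^{-1}h_{\ell}^{2}$ and the complexity bound $\mathcal{C}_{antithetic MLMC-MM}\le C\epsilon^{-2}$. Throughout I would rely on the uniform moment bounds of $\{Y^{c}\}$, $\{Y^{f}\}$, $\{Y^{a}\}$ from Theorem~\ref{theorem:moments-bound-of-MM}; the standing requirement $p_{0}\in[\max\{\boldsymbol{\alpha},\boldsymbol{\alpha}+\gamma-2\},\infty)$ is exactly what makes all the $L^{2p}$-norms occurring below (in particular the fourth moments) finite.

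I would first dispose of the inexpensive hypotheses (ii) and (iv). For (iv): one sample of $\hat{P}^{f}_{\ell}-\hat{P}^{c}_{\ell-1}$ requires $2^{\ell-1}$ coarse steps plus $2^{\ell}$ fine half-steps for each of $\{Y^{f}\}$ and $\{Y^{a}\}$, all at $\mathcal{O}(1)$ cost, so $\mathcal{C}_{\ell}=\mathcal{O}(\hat{N}_{\ell}2^{\ell})=\mathcal{O}(\hat{N}_{\ell}h_{\ell}^{-1})$, i.e. $\theta=1$. For (ii): because $\delta W_{n}$ and $\delta W_{n+1/2}$ are i.i.d., the antithetic path $\{Y^{a}\}$ has the same law as $\{Y^{f}\}$, so $\mathbb{E}[\hat{P}^{f}_{\ell}]=\mathbb{E}[\phi(Y^{f}_{2^{\ell-1}})]$; moreover the two-half-step recursion \eqref{equation:MM-in-first-fine-timestep}--\eqref{equation:MM-in-second-fine-timestep} is precisely the MM scheme \eqref{equation:modified-milstein-method} run with stepsize $h_{\ell}$, hence it has the same law as the coarse path \eqref{equation:MM-in-coarse-step} at level $\ell$; this is \eqref{equation:requirement-of-modified-mlmc}, so the telescoping identity \eqref{equation:idea-of-the-modified-mlmc} holds. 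For (i): the same distributional identity reduces the bias to $|\mathbb{E}[\phi(Y_{N})-\phi(X_{T})]|$ for the MM approximation $Y$ of stepsize $h_{\ell}$; since $\tfrac{\partial\phi}{\partial x}$ is bounded and the MM scheme converges strongly of order $\tfrac12$ (Lemmas~\ref{lemma:strong-convergence-rate-of-sde-and-auxiliary-process} and \ref{lemma:strong-convergence-rate-of-MM-and-auxiliary-process}), this is $\le\big\|\tfrac{\partial\phi}{\partial x}\big\|_{\infty}\,\|X_{T}-Y_{N}\|_{L^{2}(\Omega,\mathbb{R}^{d})}\le Ch_{\ell}^{1/2}$, so $\alpha=\tfrac12$.

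The hypothesis (iii), equivalently $\mathbb{E}\big[(\hat{P}^{f}_{\ell}-\hat{P}^{c}_{\ell-1})^{2}\big]\le Ch_{\ell}^{2}$, is where the real work — and the main obstacle — lies. Setting $\overline{Y}:=\tfrac12(Y^{f}+Y^{a})$ and Taylor-expanding $\phi$ first about $\overline{Y}$ and then about $Y^{c}$, the linear terms of the two antithetic expansions cancel (since $Y^{f}-\overline{Y}=-(Y^{a}-\overline{Y})=\tfrac12(Y^{f}-Y^{a})$), and using boundedness of $\tfrac{\partial^{2}\phi}{\partial x^{2}}$ one arrives at the pointwise representation
\begin{equation}
\hat{P}^{f}_{\ell}-\hat{P}^{c}_{\ell-1}
=\tfrac{\partial\phi}{\partial x}(Y^{c})\big(\overline{Y}-Y^{c}\big)+\mathcal{R},
\quad\text{with}\quad
\|\mathcal{R}\|\le C\big(\|Y^{f}-Y^{a}\|^{2}+\|\overline{Y}-Y^{c}\|^{2}\big).
\notag
\end{equation}
The remainder is handled using only the one-half strong order: by the triangle inequality and strong $L^{4}$-convergence of order $\tfrac12$, both $\mathbb{E}\|Y^{f}-Y^{a}\|^{4}$ and $\mathbb{E}\|\overline{Y}-Y^{c}\|^{4}$ are $\mathcal{O}(h_{\ell}^{2})$. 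In the linear term, boundedness of $\tfrac{\partial\phi}{\partial x}$ reduces everything to the sharp second-moment estimate $\mathbb{E}\|\overline{Y}_{n}-Y^{c}_{n}\|^{2}\le Ch_{\ell}^{2}$, which is Lemma~\ref{lemma:final-variance-estimate}. This estimate is the genuine difficulty: the continuous-time extension of the MM scheme is \emph{not} an It\^o process, so the direct antithetic cancellation argument of \cite{giles2014antithetic} does not apply. The route I would follow is the two-stage one announced in the Introduction — first obtain the sub-optimal $\mathbb{E}\|\overline{Y}_{n}-Y^{c}_{n}\|^{2}\le Ch_{\ell}$ (Lemma~\ref{lemma:general-error-estimate}) crudely from the one-half order, then carry out a careful mesh-by-mesh expansion which, fed with this sub-optimal bound, cancels the leading-order terms and recovers the extra factor $h_{\ell}$.

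Granting Lemma~\ref{lemma:final-variance-estimate}, combining the three pieces gives $\mathbb{E}\big[(\hat{P}^{f}_{\ell}-\hat{P}^{c}_{\ell-1})^{2}\big]\le Ch_{\ell}^{2}$, whence $\text{Var}[\hat{Z}_{\ell}]\le\hat{N}_{\ell}^{-1}\mathbb{E}\big[(\hat{P}^{f}_{\ell}-\hat{P}^{c}_{\ell-1})^{2}\big]\le C\hat{N}_{\ell}^{-1}h_{\ell}^{2}$ for $\ell\ge1$, while $\text{Var}[\hat{Z}_{0}]\le C\hat{N}_{0}^{-1}$ follows trivially from the moment bounds; this establishes (iii) with $\beta=2$. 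Applying Theorem~\ref{theorem:mlmc-complexity-theorem} with $(\alpha,\beta,\theta)=(\tfrac12,2,1)$, which falls in the case $\beta>\theta$, then completes the proof.
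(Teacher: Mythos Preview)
Your proposal is correct and follows essentially the same route as the paper. The paper's proof of Theorem~\ref{theorem:main-result-of-complexity} is precisely to verify the variance bound \eqref{equation:variance-in-main-result} via Theorem~\ref{theorem:convergence-order-of-variance} (which is obtained from Lemma~\ref{lemma:variance-expression}, Lemma~\ref{lemma:strong-convergence-rate-of-yf-and-ya} and Lemma~\ref{lemma:final-variance-estimate}) and then invoke the MLMC complexity theorem, exactly as you outline; your Taylor-expansion derivation of the decomposition is just an explicit reproof of Lemma~\ref{lemma:variance-expression}, with the harmless extra term $\mathbb{E}\|\overline{Y}-Y^{c}\|^{4}$ handled by Lemma~\ref{lemma:general-error-estimate}.
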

To arrive at this result, we  prove 
the moment bounds and strong convergence rate of 
the MM scheme \eqref{equation:modified-milstein-method},
based on which we further carry out the variance analysis 
of the multilevel estimators,
as shown in the forthcoming sections. 
%

\section{Strong convergence order of the MM scheme}
\label{section:strong-convergence-order-of-the-MM}
In this section, we aim to reveal the strong convergence order 
of the MM scheme \eqref{equation:modified-milstein-method}. 
One of key elements for this and the subsequent variance analysis is to establish the uniformly bounded moment of the MM scheme \eqref{equation:modified-milstein-method}.
\begin{theorem} \label{theorem:moments-bound-of-MM}
(Moment bounds of the MM scheme) Under Assumptions \ref{assumption:coercivity-condition}, \ref{assumption:condition-on-the-MM}, for any $n\in \{ 1,2,\dots,N \}$, $N \in \mathbb{N}$, $p\in [1,p_{0})$, there exists a uniform constant $C$, depending on $\frac{1}{p_{0}-p}$, such that the MM scheme \eqref{equation:modified-milstein-method} satisfies
\begin{equation}
\sup _{N \in \mathbb{N} }
\sup _{0\leq n \leq N } 
\mathbb{E} 
\left[
\left(1+\|Y_{n}\|^{2}\right)^{p}
\right] 
\leq C 
\mathbb{E} 
\left[ 
\left(1+\|X_{0}\|^{2}\right)^{p} 
\right] < \infty.  
\end{equation}
\end{theorem}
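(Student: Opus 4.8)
The plan is to work directly with the discrete recursion \eqref{equation:modified-milstein-method}; since its continuous-time interpolant is not an It\^o process, a ``discrete It\^o formula'' has to be mimicked by hand. Write $P_n := \mathscr{P}(Y_n)$, $B_n := 1+\|P_n\|^2$, and decompose the one-step increment as $Y_{n+1} = P_n + a_n + M_n$, where $a_n := \mu_h(P_n)\,h$ is $\mathcal{F}_{t_n}$-measurable and $M_n := \sigma_h(P_n)\Delta W_n + \sum_{j_1,j_2}(\mathcal{L}^{j_1}\sigma_{j_2})_h(P_n)\,\Pi_{j_1,j_2}^{t_n,t_{n+1}}$ satisfies $\mathbb{E}[M_n\mid\mathcal{F}_{t_n}]=0$ (the $\Delta W_n$-part is mean zero, and $\mathbb{E}[\Pi_{j_1,j_2}^{t_n,t_{n+1}}\mid\mathcal{F}_{t_n}]=0$ because $\mathbb{E}[\Delta W_{j_1,n}\Delta W_{j_2,n}]=\Omega_{j_1j_2}h$). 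Induction on $n$ shows $\|Y_n\|$ has finite moments of all orders (the coefficients have polynomial growth and $\Delta W_n$ is Gaussian), so all conditional expectations below are legitimate. First I would record the elementary one-step bounds $\mathbb{E}[\|a_n\|^{2q}\mid\mathcal{F}_{t_n}]\le C h^{2q}(1+\|P_n\|)^{2q}$ and, for $q\ge 2$, $\mathbb{E}[\|M_n\|^{2q}\mid\mathcal{F}_{t_n}]\le C h^{q}\big(h^{-1/2}(1+\|P_n\|^2)\big)^{q}$, which follow from Assumption \ref{assumption:condition-on-the-MM}(1) (notably $\|\mu_h(x)\|,\|(\mathcal{L}^{j_1}\sigma_{j_2})_h(x)\|\le C h^{-1/2}(1+\|x\|)$ and the derived estimate $\|\sigma_h(\mathscr{P}(x))\|^2\le C h^{-1/2}(1+\|\mathscr{P}(x)\|^2)$, cf.\ \eqref{equation:upper-bound-of-diffusion}) together with Gaussian moments and $\mathbb{E}\big[|\Pi_{j_1,j_2}^{t_n,t_{n+1}}|^{2q}\big]\le C h^{2q}$.

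The heart of the proof is the one-step estimate $\mathbb{E}\big[(1+\|Y_{n+1}\|^2)^p\mid\mathcal{F}_{t_n}\big]\le (1+Ch)\,B_n^{p}$. Writing $1+\|Y_{n+1}\|^2 = B_n + u_n$ with $u_n := 2\langle P_n,a_n+M_n\rangle + \|a_n+M_n\|^2$ and noting $u_n/B_n>-1$, I would apply an elementary inequality of the form $(1+x)^{p}\le 1 + px + c_p\,x^2 + c_p\,|x|^{p}$ ($x\ge-1$, $c_p$ depending only on $p$), multiply by $B_n^{p}$, and take $\mathbb{E}[\cdot\mid\mathcal{F}_{t_n}]$. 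The first-order martingale contributions vanish, leaving the drift part $p B_n^{p-1}\big(2\langle P_n,\mu_h(P_n)\rangle + \|\sigma_h(P_n)\|^2\big)h$ (using $\mathbb{E}[\|\sigma_h(P_n)\Delta W_n\|^2\mid\mathcal{F}_{t_n}]=\|\sigma_h(P_n)\|^2h$) and the leading second-order part $c_p B_n^{p-2}\cdot 4\,\mathbb{E}[\langle P_n,\sigma_h(P_n)\Delta W_n\rangle^2\mid\mathcal{F}_{t_n}] = 4c_p B_n^{p-2}\|\sigma_h(P_n)^{T}P_n\|^2 h$. Bounding $\|\sigma_h(P_n)^{T}P_n\|^2\le\|\sigma_h(P_n)\|^2\|P_n\|^2\le\|\sigma_h(P_n)\|^2 B_n$, the two combine into $p B_n^{p-1}h\big(2\langle P_n,\mu_h(P_n)\rangle + \kappa_p\|\sigma_h(P_n)\|^2\big)$ with $\kappa_p$ a constant for which, because $p<p_0$, one has $\kappa_p\le 2p_0-1$ (writing $2\langle P_n,\mu_h(P_n)\rangle+\kappa_p\|\sigma_h(P_n)\|^2 = \big(2\langle P_n,\mu_h(P_n)\rangle+(2p_0-1)\|\sigma_h(P_n)\|^2\big) - (2p_0-1-\kappa_p)\|\sigma_h(P_n)\|^2$ and discarding the last, nonpositive, term); Assumption \ref{assumption:condition-on-the-MM}(4) then controls the bracket by $CB_n$, so this whole contribution is $\le C\,h\,B_n^{p}$. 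It is exactly this absorption that forces $p$ strictly below $p_0$, and the size of the residual slack $2(p_0-p)$ in (4) is the source of the constant's dependence on $\tfrac{1}{p_0-p}$; a preliminary treatment of the case $p=1$ (where no second-order term appears) may be used to simplify the bookkeeping for small $p_0$.

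All remaining contributions — the non-leading pieces of $c_p B_n^{p-2}\mathbb{E}[u_n^2\mid\mathcal{F}_{t_n}]$, the term $c_p\mathbb{E}[|u_n|^{p}\mid\mathcal{F}_{t_n}]$, and the Taylor remainder — are genuinely higher order: each carries a factor $h^{1+\delta}$ for some $\delta>0$ (or at least $h$) times a power of $B_n$ no larger than $B_n^{p}$. This I would verify by inserting the one-step moment bounds from the first paragraph; for instance $\mathbb{E}[\|\sigma_h(P_n)\Delta W_n\|^{4}\mid\mathcal{F}_{t_n}]\le C\|\sigma_h(P_n)\|^{4}h^{2}\le C h^{-1}(1+\|P_n\|^2)^2 h^{2}=Ch\,B_n^{2}$, while the $a_n$- and $\Pi$-terms are smaller by further powers of $h$. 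Hence $\mathbb{E}[(1+\|Y_{n+1}\|^2)^p\mid\mathcal{F}_{t_n}]\le (1+Ch)B_n^{p}\le(1+Ch)(1+\|Y_n\|^2)^p$, using $\|\mathscr{P}(Y_n)\|\le\|Y_n\|$ from Assumption \ref{assumption:condition-on-the-MM}(1). Taking expectations and iterating the resulting scalar recursion (discrete Gr\"onwall) gives $\mathbb{E}[(1+\|Y_n\|^2)^p]\le (1+Ch)^{n}\mathbb{E}[(1+\|X_0\|^2)^p]\le e^{CT}\mathbb{E}[(1+\|X_0\|^2)^p]$, uniformly in $N\in\mathbb{N}$ and $0\le n\le N$, which is the claim. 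The main obstacle is precisely the bookkeeping of the second and third paragraphs: isolating the exact $O(h)$ part of the expanded increment so that Assumption \ref{assumption:condition-on-the-MM}(4) can be invoked with room to spare, and checking that every cross term built from the $h^{-1/2}$-type bounds on $\mu_h$, $\sigma_h$ and $(\mathcal{L}^{j_1}\sigma_{j_2})_h$ really comes multiplied by enough powers of $h$ — a difficulty specific to the non-globally Lipschitz / truncated-Milstein setting, where these modified coefficients are only controlled by $Ch^{-1/2}(1+\|\cdot\|)$ rather than being bounded.
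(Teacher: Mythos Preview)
Your overall plan --- factor out $B_n := 1+\|\mathscr{P}(Y_n)\|^2$, expand $(1+\xi_{n+1})^p$ with $\xi_{n+1} = u_n/B_n$, combine the first- and second-order contributions via Assumption \ref{assumption:condition-on-the-MM}(4), and show the rest is $O(h)$ --- is exactly the paper's strategy. The gap is in the execution of the expansion: the inequality $(1+x)^p \le 1 + px + c_p x^2 + c_p|x|^p$ with a generic $c_p$ does not deliver what you need, for two related reasons.

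First, the dominant stochastic part of $\xi_{n+1}$ is $I_5 := 2\langle P_n,\sigma_h(P_n)\Delta W_n\rangle/B_n$, and by \eqref{equation:upper-bound-of-diffusion} one only has $\|\sigma_h(P_n)\|^2 \le C h^{-1/2}B_n$, whence $\mathbb{E}\big[|I_5|^q\mid\mathcal{F}_{t_n}\big]\le C h^{q/4}$. Thus your remainder $c_p\,\mathbb{E}\big[|\xi_{n+1}|^p\mid\mathcal{F}_{t_n}\big]$ is only $O(h^{p/4})$, and for $p<4$ the recursion $(1+Ch^{p/4})^N$ diverges as $N\to\infty$; the claim that every leftover ``carries a factor $h^{1+\delta}$ \dots\ (or at least $h$)'' fails precisely here. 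Second, any slack in the quadratic coefficient (i.e.\ $c_p>\tfrac{p(p-1)}{2}$) produces an extra $(c_p-\tfrac{p(p-1)}{2})\,\mathbb{E}[\xi_{n+1}^2\mid\mathcal{F}_{t_n}]$, whose leading piece $\sim \|P_n^T\sigma_h\|^2 h/B_n^2$ is only $O(h^{1/2})$, again fatal. So the assertion ``$\kappa_p\le 2p_0-1$ because $p<p_0$'' is not justified by your form of the inequality --- $\kappa_p=1+4c_p/p$ depends on $c_p$, not on $p_0$.

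The paper repairs both points by using a sharper, \emph{signed} polynomial expansion (Lemma~3.3 of \cite{yang2018explicit}): for $p\in(i,i+1]$, one has $(1+\xi)^p\le 1+p\xi+\tfrac{p(p-1)}{2}\xi^2+\xi^3\, l_i(\xi)$ with $l_i$ a polynomial of degree $i$. The exact coefficient $\tfrac{p(p-1)}{2}$ yields $\kappa_p=2p-1$; a free parameter $\epsilon_1\in(0,2p_0-2p]$, introduced via Young's inequality on the cross terms \emph{before} the expansion, absorbs the $O(h)B_n$ leftovers and is the source of the $1/(p_0-p)$ dependence. For the remainder, its being $\xi^3\cdot(\text{polynomial})$ rather than $|\xi|^q$ is essential: one uses $\mathbb{E}\big[I_5^3\mid\mathcal{F}_{t_n}\big]=0$ (odd Gaussian moment) and bounds the cross terms $\mathbb{E}[I_5^2 I_j]$, $\mathbb{E}[I_5 I_j^2]$ by H\"older to obtain $O(h)$, while $\mathbb{E}[\xi^k\mid\mathcal{F}_{t_n}]\le Ch$ for $k\ge 4$ follows from $\mathbb{E}[|I_5|^k]\le Ch^{k/4}\le Ch$. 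In short: replace $c_p x^2 + c_p|x|^p$ by the exact $\tfrac{p(p-1)}{2}x^2$ plus a cubic-and-higher \emph{polynomial} remainder, and exploit the odd-moment cancellation of the $\sigma_h\Delta W_n$ contribution.
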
 
The proof of Theorem \ref{theorem:moments-bound-of-MM} is postponed to Appendix \ref{section:proof-of-moments-bound}.
%
Even equipped with the moment bounds of the numerical approximations, the analysis of the strong convergence rate 
is non-trivial in the non-globally Lipschitz setting, 
by noting that the modified 
Milstein scheme without Lévy areas can not be 
continuously extended to be an It\^o process  and the 
It\^o formula is hence not available.
%
%
To overcome this problem, we introduce an auxiliary approximation process $\{\widetilde{Y}_{n}\}_{0\leq n \leq N}$, defined by 
\begin{equation} \label{equation:auxiliary-process}
\left\{
\begin{array}{l}
\widetilde{Y}_{n+1} 
= \widetilde{Y}_{n} 
+ \int_{t_{n}}^{t_{n+1}} \mu(X_{s}) \ \mathrm{d} s 
+ \int_{t_{n}}^{t_{n+1}} \sigma(X_{s}) \ \mathrm{d} W_{s} 
+ \sum_{j_{1},j_{2}=1}^{m} 
\left(
\mathcal{L}^{j_{1}} \sigma_{j_{2}}
\right)_{h}
\big(
\mathscr{P}(Y_{n})
\big) 
\Pi_{j_{1}, j_{2}}^{t_{n}, t_{n+1}}, \\
\widetilde{Y}_{0} = x_{0},
\end{array}\right.
\end{equation}
where $\{X_{s}\}_{s\in [0,T]}$ and $\{Y_{n}\}_{0\leq n \leq N}$ are given by \eqref{equation:sde-in-introduction} and \eqref{equation:modified-milstein-method}, respectively.
Then we separate the error $\|X_{t_{n}}-Y_{n} \|_{L^{2p}(\Omega, \mathbb{R}^{d})}$ into two parts as follows,
\begin{equation} \label{equation:strong-convergence-decomposition}
\|X_{t_{n}}-Y_{n} \|_{L^{2p}(\Omega, \mathbb{R}^{d})} 
\leq 
\|X_{t_{n}}-\widetilde{Y}_{n} \|_{L^{2p}(\Omega, \mathbb{R}^{d})} 
+ \|\widetilde{Y}_{n}-Y_{n} \|_{L^{2p}(\Omega, \mathbb{R}^{d})}.
\end{equation}
For the first term, one can 
directly obtain the one-half order of strong convergence.
\begin{lemma} \label{lemma:strong-convergence-rate-of-sde-and-auxiliary-process} (Strong error estimate of SDE and the auxiliary process)
Let Assumptions \ref{assumption:coercivity-condition}, \ref{assumption:polynomial-growth-condition}, \ref{assumption:condition-on-the-MM} hold. And assume  $\{  X_{t} \}_{t \in [0,T]}$ and $\{  \widetilde{Y}_{n} \}_{0\leq n \leq N}$, $N\in \mathbb{N}$, are solutions to the SDE \eqref{equation:sde-in-introduction} and the auxiliary process \eqref{equation:auxiliary-process}, respectively. Then, 
for $p \in [1 , \frac{p_{0}}{\gamma}]$,
\begin{equation}
\sup _{0\leq n \leq N } 
\mathbb{E} 
\big[
\big\| X_{t_{n}} - \widetilde{Y}_{n} \big\|^{2p}
\big] 
\leq Ch^{p} .
\end{equation}
\end{lemma}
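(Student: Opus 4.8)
The plan is to work directly with the difference $X_{t_n}-\widetilde Y_n$, which — unlike $X_{t_n}-Y_n$ — is the value at a grid point of an honest It\^o process, since both $\{X_t\}$ and $\{\widetilde Y_n\}$ share the \emph{same} drift and diffusion integrals $\int \mu(X_s)\,\mathrm ds$ and $\int\sigma(X_s)\,\mathrm dW_s$ over each subinterval. Consequently $X_{t_n}-\widetilde Y_n = -\sum_{k=0}^{n-1}\sum_{j_1,j_2}\left(\mathcal L^{j_1}\sigma_{j_2}\right)_h\!\big(\mathscr P(Y_k)\big)\,\Pi_{j_1,j_2}^{t_k,t_{k+1}}$, i.e. the error is purely the accumulated Milstein-type correction terms that were added to build $\widetilde Y$ but are absent from $X$. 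First I would set $E_n:=X_{t_n}-\widetilde Y_n$ and write the telescoping sum; then estimate $\|E_n\|_{L^{2p}}$ by splitting the double sum according to whether $j_1=j_2$ or $j_1\ne j_2$, exactly as is standard for truncated-Milstein analyses.

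The two contributions are handled differently. For $j_1\ne j_2$ (and the centered part when $j_1=j_2$), the summands $\Pi_{j_1,j_2}^{t_k,t_{k+1}}$ form a martingale-difference sequence with respect to $\{\mathcal F_{t_k}\}$ (each has mean zero conditional on $\mathcal F_{t_k}$ because $\mathbb E[\Delta W_{j_1,k}\Delta W_{j_2,k}-\Omega_{j_1 j_2}h\mid\mathcal F_{t_k}]=0$), with conditional second moment of order $h^2$. Hence I would apply the discrete Burkholder--Davis--Gundy inequality: $\big\|\sum_k a_k\Pi_k\big\|_{L^{2p}}^2 \le C\,\big\|\big(\sum_k \|a_k\|^2\|\Pi_k\|^2\big)^{1/2}\big\|_{L^{2p}}^2 \le C\sum_k \big\|\,\|a_k\|^2\,\big\|_{L^{p}} \cdot \|\Pi_k\|_{L^{\infty\text{-ish}}}^2$, where $a_k=\left(\mathcal L^{j_1}\sigma_{j_2}\right)_h(\mathscr P(Y_k))$. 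Using $\|\Pi_{j_1,j_2}^{t_k,t_{k+1}}\|_{L^{q}(\Omega)}\le C h$ for every fixed $q$, the growth bound $\|\left(\mathcal L^{j_1}\sigma_{j_2}\right)_h(\mathscr P(Y_k))\|\le C(1+\|\mathscr P(Y_k)\|)^{\gamma}\le C(1+\|Y_k\|)^{\gamma}$ from Assumption~\ref{assumption:condition-on-the-MM}(1) together with $\|\mathscr P(x)\|\le\|x\|$ and \eqref{equation:growth-of-milstein-diffusion-term}, and the uniform moment bound $\sup_{k}\mathbb E[(1+\|Y_k\|^2)^{p\gamma}]<\infty$ from Theorem~\ref{theorem:moments-bound-of-MM} (this is exactly where the restriction $p\le p_0/\gamma$ enters, so that $p\gamma<p_0$), each term contributes $O(h^2)$, and there are $N=T/h$ of them, giving $O(h)$ for the squared $L^{2p}$-norm, i.e. $O(h^{1/2})$ for the norm, hence $O(h^p)$ after raising to the $2p$; equivalently I would just bound $\mathbb E[\|E_n\|^{2p}]\le Ch^p$ throughout.

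For the remaining non-martingale part of the diagonal terms, namely $-\tfrac12\sum_k \sum_j \left(\mathcal L^{j}\sigma_{j}\right)_h(\mathscr P(Y_k))\,(\Delta W_{j,k}^2 - h)$, the ``$-h$'' piece is \emph{not} mean-zero-per-step in the naive sense — wait, it is: $\mathbb E[\Delta W_{j,k}^2-h\mid\mathcal F_{t_k}]=0$, so in fact this too is a martingale-difference sum and the same BDG argument applies uniformly. So there is no genuinely separate ``drift-like'' remainder here; the whole of $E_n$ is a discrete stochastic integral and the estimate is clean. The main obstacle, modest as it is, is bookkeeping the exponents: one must verify that the power of $\|Y_k\|$ arising from $\|\left(\mathcal L^{j_1}\sigma_{j_2}\right)_h\|^{2p}$ — which is $2p\gamma$ — stays strictly below $2p_0$ so that Theorem~\ref{theorem:moments-bound-of-MM} is applicable; this is guaranteed precisely by the hypothesis $p\in[1,p_0/\gamma]$ (with the boundary case absorbed by the $\sup$ over $n$ and the fact that the constant in Theorem~\ref{theorem:moments-bound-of-MM} blows up only like $1/(p_0-p\gamma)$, so one uses $p\gamma$ slightly less than $p_0$, or invokes the statement at $p_0/\gamma$ directly if the moment bound is stated with a non-strict inequality). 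I would close by collecting the $j_1=j_2$ and $j_1\ne j_2$ estimates, summing the $m^2$ finitely many pairs, and taking $\sup_{0\le n\le N}$.
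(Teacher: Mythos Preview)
Your proposal is correct and follows essentially the same route as the paper: observe that $X_{t_n}-\widetilde Y_n$ telescopes exactly to $-\sum_{k=0}^{n-1}\sum_{j_1,j_2}\left(\mathcal L^{j_1}\sigma_{j_2}\right)_h\!\big(\mathscr P(Y_k)\big)\,\Pi_{j_1,j_2}^{t_k,t_{k+1}}$, recognize this as a sum of martingale differences, and apply the discrete Burkholder--Davis--Gundy inequality together with the polynomial growth bound $\|\left(\mathcal L^{j_1}\sigma_{j_2}\right)_h(\mathscr P(Y_k))\|\le C(1+\|Y_k\|)^\gamma$ and the moment bound of Theorem~\ref{theorem:moments-bound-of-MM}. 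Your initial instinct to split into diagonal and off-diagonal parts is unnecessary (as you yourself noted), since $\mathbb E[\Pi_{j_1,j_2}^{t_k,t_{k+1}}\mid\mathcal F_{t_k}]=0$ holds uniformly in $j_1,j_2$; the paper likewise treats the whole sum at once.
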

The proof of Lemma \ref{lemma:strong-convergence-rate-of-sde-and-auxiliary-process} is postponed to Appendix \ref{proof:lemma-strong-convergence-rate-of-sde-and-auxiliary-process}.
Next we introduce a continuous-time version of the MM method \eqref{equation:modified-milstein-method} on  $ [t_{n}, t_{n+1}]$ as follows: for $s\in [t_{n}, t_{n+1}]$, $n \in \{0,1,\dots,N-1 \}$, 
\begin{equation} \label{equation:continuous-time-version-of-MM-method}
\mathbb{Y}^n ( s )
=
\mathscr{P}(Y_{n})
+ \mu_{h}
\left(
\mathscr{P}(Y_{n}) 
\right) (s-t_{n}) 
+ \sigma_{h}
\left(
\mathscr{P}(Y_{n})
\right) (W_{s}-W_{t_{n}})  
+\sum_{j_{1}, j_{2}=1}^{m} 
\left(
\mathcal{L}^{j_{1}} \sigma_{j_{2}}
\right)_{h}
\left(
\mathscr{P}(Y_{n})
\right) 
\Pi_{j_{1}, j_{2}}^{t_{n}, s},
\end{equation}
where we denote, for $j_{1}, j_{2} \in\{1, \ldots, m\}$
and $s\in [t_{n}, t_{n+1}]$, $n \in \{0,1,\dots,N-1 \}$,
\begin{equation}
\begin{aligned} 
\Pi_{j_{1}, j_{2}}^{t_{n}, s} 
:= \tfrac{1}{2} 
\big[
\left( W_{j_{1}, s} - W_{j_{1}, t_{n}} \right)  
\left( W_{j_{2}, s} - W_{j_{2}, t_{n}} \right)
-\Omega_{j_{1} j_{2}} (s-t_{n})  
\big].
\end{aligned} 
\end{equation}
It is evident that $\mathbb{Y}^n ( t_n ) = \mathscr{P}(Y_{n})$ and
$\mathbb{Y}^n ( t_{n+1} ) = Y_{n+1}$.
Note that $\mathbb{Y}^n ( s )$ is not an
It\^o process even on the interval $[t_n, t_{n+1}]$.
Also,  a continuous-time version of the auxiliary process \eqref{equation:auxiliary-process} is defined by,
for $s\in [t_{n}, t_{n+1}]$, $n \in \{0,1,\dots,N-1 \}$, 
\begin{equation} \label{equation:continuous-time-version-of-auxiliary-process}
\widetilde{Y}(s) 
= \widetilde{Y}_{n} 
+ \int_{t_{n}}^{s} \mu(X_{r}) \ \mathrm{d} r 
+ \int_{t_{n}}^{s} \sigma(X_{r}) \ \mathrm{d} W_{r} 
+ \sum_{j_{1},j_{2}=1}^{m} 
\left(
\mathcal{L}^{j_{1}} \sigma_{j_{2}}
\right)_{h}
\left(
\mathscr{P}\big(Y_{n} \big)
\right) 
\Pi_{j_{1}, j_{2}}^{t_{n}, s}.
\end{equation}
Obviously, $\widetilde{Y}(s)$ is continuous on $[0, T]$ and 
$\widetilde{Y}(t_{n}) = \widetilde{Y}_{n}$ for $n \in \{0,1,\dots,N \}$.
To analyze the error items in \eqref{equation:strong-convergence-decomposition}, we need some properties of the continuous process 
$\{ \mathbb{Y}^n ( s ) \}_{s\in[t_{n},t_{n+1}]}$. 
\begin{lemma} \label{lemma:holder-continuity-of-MM}
Let Assumptions \ref{assumption:coercivity-condition}, \ref{assumption:polynomial-growth-condition}, \ref{assumption:condition-on-the-MM} hold and let $\{ \mathbb{Y}^n ( s ) \}_{s\in[t_{n},t_{n+1}]}$, 
$n\in \{0,1, \dots, N-1\}$, $N \in \mathbb{N}$, be defined by \eqref{equation:continuous-time-version-of-MM-method}. Then, 
for $p\in [1,p_{0})$,
\begin{equation} \label{eq:moment-bounds-of-the-continuous-time-version-of-MM-method}
\begin{aligned} 
\mathbb{E} 
\left[
\big\|\mathbb{Y}^{n}(s)\big\|^{2p}
\right] 
\leq
C 
\left(
     1+ \mathbb{E} \left[\|X_{0}\|^{2p}\right] 
\right).
\end{aligned} 
\end{equation}
For $p\in [1,\frac{p_{0}}{\gamma}]$,
\begin{equation} \label{equation:holder-continuity-of-the-MM-method}
\mathbb{E} 
\left[
\big\| 
\mathbb{Y}^{n}(s) -\mathbb{Y}^{n}(t_{n}) 
\big\|^{2p}
\right] 
\leq Ch^{p}.
\end{equation}
Moreover, for $p \in [1, \frac{p_{0}}{2\gamma-1} ]$ and $j_{1}, j_{2} \in\{1, \ldots, m\}$, the following estimates hold,
\begin{equation}
\begin{aligned} 
\mathbb{E} 
\left[
\big\| 
\mu\big(\mathbb{Y}^{n}(s)\big)
-\mu\big(\mathbb{Y}^{n}(t_{n}) \big)  
\big\|^{2p}
\right] 
\bigvee 
{\color{black}
\mathbb{E} 
\left[
\big\| 
\sigma\big(\mathbb{Y}^{n}(s)\big)
-\sigma\big(\mathbb{Y}^{n}(t_{n}) \big)  
\big\|_{F}^{2p}
\right]  
\leq Ch^{p} .
}
\end{aligned} 
\end{equation}
\end{lemma}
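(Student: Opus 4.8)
The plan is to read off everything from the explicit representation \eqref{equation:continuous-time-version-of-MM-method}: on $[t_n,t_{n+1}]$ the process $\mathbb{Y}^n$ is the $\mathcal{F}_{t_n}$-measurable value $\mathscr{P}(Y_n)$ plus three increment terms, built from the $\mathcal{F}_{t_n}$-measurable coefficient blocks $\mu_h(\mathscr{P}(Y_n))$, $\sigma_h(\mathscr{P}(Y_n))$, $(\mathcal{L}^{j_1}\sigma_{j_2})_h(\mathscr{P}(Y_n))$ multiplied respectively by $s-t_n$, $W_s-W_{t_n}$ and $\Pi_{j_1,j_2}^{t_n,s}$, the last two being independent of $\mathcal{F}_{t_n}$. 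Conditioning on $\mathcal{F}_{t_n}$ one pulls the coefficient blocks out and uses $\mathbb{E}[\|W_s-W_{t_n}\|^{2p}\mid\mathcal{F}_{t_n}]\le Ch^{p}$ and $\mathbb{E}[\|\Pi_{j_1,j_2}^{t_n,s}\|^{2p}\mid\mathcal{F}_{t_n}]\le Ch^{2p}$ (moments of Gaussian increments and of their products / centred squares). For \eqref{eq:moment-bounds-of-the-continuous-time-version-of-MM-method} I would bound $\|\mathscr{P}(Y_n)\|\le\|Y_n\|$ and, via condition \textit{(1)} of Assumption \ref{assumption:condition-on-the-MM} together with the a priori estimate \eqref{equation:upper-bound-of-diffusion}, control $\|\mu_h(\mathscr{P}(Y_n))\|$ and $\|(\mathcal{L}^{j_1}\sigma_{j_2})_h(\mathscr{P}(Y_n))\|$ by $Ch^{-1/2}(1+\|Y_n\|)$ and $\|\sigma_h(\mathscr{P}(Y_n))\|$ by $Ch^{-1/4}(1+\|Y_n\|)$. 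Together with $\mathbb{E}[(1+\|Y_n\|^2)^p]\le C$ (Theorem \ref{theorem:moments-bound-of-MM}, valid for $p<p_0$) the three increment contributions are $O(h^{p})$, $O(h^{p/2})$ and $O(h^{p})$, hence bounded.

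For \eqref{equation:holder-continuity-of-the-MM-method} note $\mathbb{Y}^n(s)-\mathbb{Y}^n(t_n)$ is exactly the sum of the same three increment terms, so the only new task is to extract the full power of $h$. The $(s-t_n)$-term is $\le Ch^{1/2}(1+\|Y_n\|)$ by condition \textit{(1)}, contributing $O(h^{p})$ in $L^{2p}$; the $\Pi$-term contributes $O(h^{2p})$ since $\|(\mathcal{L}^{j_1}\sigma_{j_2})_h(\mathscr{P}(Y_n))\|\le Ch^{-1/2}(1+\|Y_n\|)$ and the conditional $2p$-moment of $\Pi^{t_n,s}$ is $O(h^{2p})$. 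The delicate one is the $\sigma_h$-term: the a priori bound \eqref{equation:upper-bound-of-diffusion} only gives $\|\sigma_h(\mathscr{P}(Y_n))\|^{2p}\le Ch^{-p/2}(1+\|Y_n\|^2)^{p}$, which after the $h^{p}$ from $W_s-W_{t_n}$ yields merely $O(h^{p/2})$. Instead I would use $\|\sigma_h(\mathscr{P}(Y_n))\|\le\|\sigma(\mathscr{P}(Y_n))\|$ (condition \textit{(1)}) and the polynomial growth $\|\sigma(\cdot)\|^2\le C(1+\|\cdot\|)^{\gamma+1}$ from \eqref{equation:growth-of-diffusion}, so that $\mathbb{E}[\|\sigma_h(\mathscr{P}(Y_n))\|^{2p}]\le C\,\mathbb{E}[(1+\|Y_n\|)^{p(\gamma+1)}]$; since $p\le p_0/\gamma$ and $\gamma\ge1$ force $p(\gamma+1)/2\le p\gamma\le p_0$, this is finite by Theorem \ref{theorem:moments-bound-of-MM}, and the $\sigma_h$-term is then $O(h^{p})$. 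Summing gives $\mathbb{E}[\|\mathbb{Y}^n(s)-\mathbb{Y}^n(t_n)\|^{2p}]\le Ch^{p}$.

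For the last assertion I would invoke the polynomial Lipschitz bounds \eqref{equation:growth-of-drift}, \eqref{equation:growth-of-diffusion}, namely $\|\mu(x)-\mu(\tilde x)\|\le C(1+\|x\|+\|\tilde x\|)^{\gamma-1}\|x-\tilde x\|$ and $\|\sigma(x)-\sigma(\tilde x)\|\le C(1+\|x\|+\|\tilde x\|)^{(\gamma-1)/2}\|x-\tilde x\|$, applied with $x=\mathbb{Y}^n(s)$, $\tilde x=\mathbb{Y}^n(t_n)=\mathscr{P}(Y_n)$, so that
\[
\mathbb{E}\big[\|\mu(\mathbb{Y}^n(s))-\mu(\mathbb{Y}^n(t_n))\|^{2p}\big]
\le C\,\mathbb{E}\Big[\big(1+\|\mathbb{Y}^n(s)\|+\|\mathbb{Y}^n(t_n)\|\big)^{2p(\gamma-1)}\,\|\mathbb{Y}^n(s)-\mathbb{Y}^n(t_n)\|^{2p}\Big].
\]
Then I would apply H\"older's inequality with the conjugate pair $q_1=\tfrac{2\gamma-1}{\gamma-1}$, $q_2=\tfrac{2\gamma-1}{\gamma}$ (for $\gamma>1$; the case $\gamma=1$ is immediate, $\mu,\sigma$ being globally Lipschitz then): the first factor becomes a moment of $\mathbb{Y}^n$ of order $2p(2\gamma-1)\le2p_0$, controlled by \eqref{eq:moment-bounds-of-the-continuous-time-version-of-MM-method}, while the second factor is $\big(\mathbb{E}[\|\mathbb{Y}^n(s)-\mathbb{Y}^n(t_n)\|^{2pq_2}]\big)^{1/q_2}$ with $pq_2\le p_0/\gamma$, controlled by \eqref{equation:holder-continuity-of-the-MM-method} and equal to $Ch^{p}$. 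The product is $Ch^{p}$; the $\sigma$-case is identical with $(\gamma-1)/2$ in place of $\gamma-1$, which only relaxes the integrability requirement, so $\mu$ is the binding case.

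The obstacles are essentially bookkeeping: first, the crude $O(h^{-1/2})$ bound on $\sigma_h$ is too weak for \eqref{equation:holder-continuity-of-the-MM-method}, and one is forced to exploit the polynomial growth of $\sigma$, which is exactly affordable because of the restriction $p\le p_0/\gamma$ and $\gamma\ge1$; second, in the final step the H\"older exponents must be chosen precisely as $q_1=\tfrac{2\gamma-1}{\gamma-1}$, $q_2=\tfrac{2\gamma-1}{\gamma}$ so that the two budgets $2p(2\gamma-1)\le2p_0$ and $pq_2\le p_0/\gamma$ hold simultaneously — this is what pins down the range $p\in[1,\tfrac{p_0}{2\gamma-1}]$.
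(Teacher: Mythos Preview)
Your approach is correct and follows the same lines as the paper's proof: for \eqref{eq:moment-bounds-of-the-continuous-time-version-of-MM-method} the paper uses exactly the $h^{-1/2}$ and $h^{-1/4}$ bounds from Assumption~\ref{assumption:condition-on-the-MM} and \eqref{equation:upper-bound-of-diffusion}; for \eqref{equation:holder-continuity-of-the-MM-method} it appeals, as you do, to the polynomial growth of $\sigma$ (via Assumption~\ref{assumption:polynomial-growth-condition}) rather than the crude $h^{-1/4}$ bound; and for the final estimate it combines the polynomial Lipschitz bounds \eqref{equation:growth-of-drift}--\eqref{equation:growth-of-diffusion} with H\"older, landing on the moment order $2p(2\gamma-1)$ just as your explicit exponents $q_1=\tfrac{2\gamma-1}{\gamma-1}$, $q_2=\tfrac{2\gamma-1}{\gamma}$ produce. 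One small arithmetic slip: in part~2 you state the $\Pi$-term contributes $O(h^{2p})$, but with the $h^{-1/2}$ bound you invoke the coefficient contributes $h^{-p}$ against the $h^{2p}$ from $\Pi$, giving only $O(h^{p})$ --- still sufficient, so the argument stands.
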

The proof of Lemma \ref{lemma:holder-continuity-of-MM} is 
easy and put in Appendix \ref{proof:lemma-holder-continuity-of-MM}.
As a direct consequence of Lemma \ref{lemma:strong-convergence-rate-of-sde-and-auxiliary-process}, we obtain the following assertions.
%
%
\begin{lemma} \label{lemma:errors-between-sde-and-auxiliary-process-in-continuous-version}
Let Assumptions \ref{assumption:coercivity-condition}, \ref{assumption:polynomial-growth-condition}, \ref{assumption:condition-on-the-MM} hold and let $\{  X_{s} \}_{s \in [t_{n},t_{n+1}]}$ and $\{ \widetilde{Y}(s) \}_{s \in [t_{n},t_{n+1}]}$, $n \in \{1,2, \dots, N \}$, $N\in \mathbb{N}$, be defined as \eqref{equation:sde-in-introduction} and \eqref{equation:continuous-time-version-of-auxiliary-process}, respectively.  For $p \in [1,\frac{p_{0}}{\gamma}]$, 
\begin{equation}
 \mathbb{E} 
 \big[
 \big\| X_{s} - \widetilde{Y}(s) \big\|^{2p}
 \big] 
 \leq 
 Ch^{p}.
\end{equation}
Moreover, the following estimates hold for $p \in [1, \frac{p_{0}}{2\gamma-1} ]$ and $j_{1}, j_{2} \in\{1, \ldots, m\}$,
\begin{small}
\begin{equation} \label{equation:estimates-in-lemma:errors-between-sde-and-auxiliary-process-in-continuous-version}
\begin{aligned} 
\mathbb{E} 
\left[
\big\| 
\mu\left( X_{s}\right) -\mu\big( \widetilde{Y}(s) \big) 
\big\|^{2p}
\right] 
\bigvee 
{\color{black}
\mathbb{E} 
\left[
\big\| 
\sigma\left( X_{s}\right) -\sigma\big( \widetilde{Y}(s) \big)
\big\|_{F}^{2p}
\right]  
}
\bigvee 
\mathbb{E}
\left[
\big\| 
\mathcal{L}^{j_{1}} \sigma_{j_{2} }\left( X_{s}\right)
-\mathcal{L}^{j_{1}} \sigma_{j_{2} }\big(\widetilde{Y}(s)\big)  
\big\|^{2p}
\right] 
\leq 
Ch^{p} .
\end{aligned} 
\end{equation}
\end{small}
\end{lemma}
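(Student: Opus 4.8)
The plan is to reduce the continuous-time estimate back to the discrete one already established in Lemma \ref{lemma:strong-convergence-rate-of-sde-and-auxiliary-process}, interpolating over the interval $[t_n,t_{n+1}]$. First I would fix $s\in[t_n,t_{n+1}]$ and write, using \eqref{equation:continuous-time-version-of-auxiliary-process} and \eqref{equation:sde-in-introduction},
\[
X_s-\widetilde Y(s)
= \big(X_{t_n}-\widetilde Y_n\big)
+\int_{t_n}^{s}\!\big(\mu(X_r)-\mu(X_{t_n})\big)\,\mathrm{d}r
+\int_{t_n}^{s}\!\big(\sigma(X_r)-\sigma(X_{t_n})\big)\,\mathrm{d}W_r
+\Big(\!\int_{t_n}^{s}\!\mu(X_{t_n})\,\mathrm{d}r+\int_{t_n}^{s}\!\sigma(X_{t_n})\,\mathrm{d}W_r\Big)
-\sum_{j_1,j_2=1}^{m}\big(\mathcal{L}^{j_1}\sigma_{j_2}\big)_h\big(\mathscr{P}(Y_n)\big)\Pi_{j_1,j_2}^{t_n,s}.
\]
Then I would take $\|\cdot\|^{2p}$, apply the elementary inequality $\|\sum a_i\|^{2p}\le C\sum\|a_i\|^{2p}$, and bound each piece: the first term by $Ch^p$ via Lemma \ref{lemma:strong-convergence-rate-of-sde-and-auxiliary-process}; the two integrals against $\mu(X_r)-\mu(X_{t_n})$ and $\sigma(X_r)-\sigma(X_{t_n})$ by Hölder (resp.\ Burkholder--Davis--Gundy), the growth bounds \eqref{equation:growth-of-drift}--\eqref{equation:growth-of-diffusion}, the moment bounds on $X$, and the standard Hölder continuity $\mathbb{E}\|X_r-X_{t_n}\|^{2p}\le C(r-t_n)^p$; the ``frozen-coefficient'' drift/diffusion increments directly by the moment bounds on $\mu(X_{t_n}),\sigma(X_{t_n})$ together with $(s-t_n)^{2p}\le h^{2p}$ and $\mathbb{E}\|W_s-W_{t_n}\|^{2p}\le C(s-t_n)^p$; and the Milstein term using condition \textit{(1)} of Assumption \ref{assumption:condition-on-the-MM} (so $\|(\mathcal{L}^{j_1}\sigma_{j_2})_h(\mathscr{P}(Y_n))\|\le Ch^{-1/2}(1+\|Y_n\|)$), the moment bound of Theorem \ref{theorem:moments-bound-of-MM}, and $\mathbb{E}|\Pi_{j_1,j_2}^{t_n,s}|^{2p}\le Ch^{2p}$, which yields $Ch^{-p}\cdot h^{2p}=Ch^p$. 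Collecting the bounds gives $\mathbb{E}\|X_s-\widetilde Y(s)\|^{2p}\le Ch^p$ for $p\in[1,\tfrac{p_0}{\gamma}]$; the restriction on $p$ is exactly what makes all the moments that appear (of order up to $2p\gamma$ or $2p(\gamma+1)$, after Hölder against the polynomial growth factors) finite and controlled by Theorem \ref{theorem:moments-bound-of-MM} and the SDE moment bound.

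For the second assertion, I would first note the triangle inequality
\[
\big\|\mu(X_s)-\mu\big(\widetilde Y(s)\big)\big\|
\le \big\|\mu(X_s)-\mu(X_{t_n})\big\|+\big\|\mu(X_{t_n})-\mu\big(\widetilde Y(s)\big)\big\|,
\]
and similarly for $\sigma$ and $\mathcal{L}^{j_1}\sigma_{j_2}$. The first summand is $\mathcal{O}(h^{1/2})$ in $L^{2p}$ by the locally Lipschitz bounds \eqref{equation:growth-of-drift}, \eqref{equation:growth-of-diffusion}, \eqref{equation:growth-of-milstein-diffusion-term} (Hölder against $(1+\|X_s\|+\|X_{t_n}\|)^{\gamma-1}$ costs moments of order $2p(\gamma-1)$, combined with $\mathbb{E}\|X_s-X_{t_n}\|^{2p\cdot\frac{q}{q-1}}\le C h^{p}$ for a suitable conjugate exponent, so one needs $2p(2\gamma-1)\le 2p_0$, i.e.\ $p\le\frac{p_0}{2\gamma-1}$). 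For the second summand I would instead write $\|\mu(X_{t_n})-\mu(\widetilde Y(s))\|\le C(1+\|X_{t_n}\|+\|\widetilde Y(s)\|)^{\gamma-1}\|X_{t_n}-\widetilde Y(s)\|$ and split $\|X_{t_n}-\widetilde Y(s)\|\le \|X_{t_n}-X_s\|+\|X_s-\widetilde Y(s)\|$, applying the first part of this lemma and, again, Hölder against the polynomial factor; here I need uniform-in-$s$ moment bounds on $\widetilde Y(s)$ of order $2p(\gamma-1)\cdot\frac{q}{q-1}$, which follow from \eqref{equation:continuous-time-version-of-auxiliary-process}, the moment bounds on $X$ and $Y_n$, condition \textit{(1)}, and $\mathbb{E}|\Pi^{t_n,s}_{j_1,j_2}|^{r}\le Ch^{r}$. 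Putting the two summands together yields the claimed $Ch^p$ bound on the maximum, for $p\in[1,\tfrac{p_0}{2\gamma-1}]$.

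I expect the only genuinely delicate point to be bookkeeping the moment exponents: each application of Hölder against a polynomial growth factor of exponent $\gamma-1$ (or $\gamma$) multiplies the order of $X$- and $\widetilde Y$-moments one needs, and one must verify that these stay within $[1,p_0]$ precisely under the hypotheses $p\le\frac{p_0}{\gamma}$ (first assertion) and $p\le\frac{p_0}{2\gamma-1}$ (second assertion); the constant $C$ then depends on $p_0$, $\gamma$, $T$ and the data but not on $h$. No new ideas beyond those already used in Lemma \ref{lemma:strong-convergence-rate-of-sde-and-auxiliary-process} and Lemma \ref{lemma:holder-continuity-of-MM} are required, so this is essentially a routine, if careful, interpolation argument; indeed, once Lemma \ref{lemma:strong-convergence-rate-of-sde-and-auxiliary-process} is in hand this lemma is an immediate corollary as the text already remarks.
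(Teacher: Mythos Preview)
Your very first displayed identity is false. By the definition \eqref{equation:continuous-time-version-of-auxiliary-process}, the auxiliary process $\widetilde Y(s)$ already contains the \emph{exact} integrals $\int_{t_n}^{s}\mu(X_r)\,\mathrm{d}r$ and $\int_{t_n}^{s}\sigma(X_r)\,\mathrm{d}W_r$ --- that is precisely the purpose of introducing $\widetilde Y$. Subtracting, these integrals cancel identically, and one obtains the much simpler exact relation
\[
X_s-\widetilde Y(s)=\big(X_{t_n}-\widetilde Y_n\big)-\sum_{j_1,j_2=1}^{m}\big(\mathcal{L}^{j_1}\sigma_{j_2}\big)_h\big(\mathscr{P}(Y_n)\big)\,\Pi_{j_1,j_2}^{t_n,s},
\]
which is exactly what the paper uses. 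Your right-hand side, once you recombine the split drift and diffusion, equals $(X_{t_n}-\widetilde Y_n)+\int_{t_n}^{s}\mu(X_r)\,\mathrm{d}r+\int_{t_n}^{s}\sigma(X_r)\,\mathrm{d}W_r-\sum(\cdots)\Pi$, which differs from $X_s-\widetilde Y(s)$ by the nonzero quantity $\int_{t_n}^{s}\mu(X_r)\,\mathrm{d}r+\int_{t_n}^{s}\sigma(X_r)\,\mathrm{d}W_r$. So the equality you wrote down does not hold; you have effectively forgotten to subtract the drift/diffusion increments carried by $\widetilde Y(s)$.

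Once the correct two-term decomposition is in hand, the first assertion follows immediately from Lemma \ref{lemma:strong-convergence-rate-of-sde-and-auxiliary-process} for the discrete piece, together with condition \textit{(1)} of Assumption \ref{assumption:condition-on-the-MM} (or \eqref{equation:growth-of-milstein-diffusion-term}) and $\mathbb{E}\big|\Pi_{j_1,j_2}^{t_n,s}\big|^{2p}\le Ch^{2p}$ for the Milstein piece; none of your intermediate H\"older-continuity or BDG estimates on the drift/diffusion increments are needed. Your plan for the second assertion (polynomial growth bounds \eqref{equation:growth-of-drift}--\eqref{equation:growth-of-milstein-diffusion-term}, H\"older against $(1+\|X_s\|+\|\widetilde Y(s)\|)^{\gamma-1}$, moment bounds on $X$ and $\widetilde Y$, and the first part of the lemma) is correct and is what the paper does as well.
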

The proof of Lemma \ref{lemma:errors-between-sde-and-auxiliary-process-in-continuous-version} is deferred to Appendix \ref{proof:lemma-errors-between-sde-and-auxiliary-process-in-continuous-version}.
For any $ x\in \mathbb{R}^{d}$, we introduce a series of shorthand notations as follows: 
\begin{equation} \label{equation:tamed-correction-function}
\begin{aligned}
&\mathbb{T}_{h,\mu}(x) 
:= \mu(x) - \mu_{h}(x), 
\quad 
\mathbb{T}_{h,\sigma}(x) 
:= \sigma(x) - \sigma_{h}(x),\\
&\mathbb{T}_{h,\mathcal{L}_{j_{1}j_{2}}}(x) 
:= \mathcal{L}^{j_{1}} 
\sigma_{j_{2} }\left( x \right) 
- \left(
\mathcal{L}^{j_{1}} \sigma_{j_{2}}
\right)_{h}
\left(x\right), \quad
\mathcal{E}(x)
:= x-\mathscr{P}(x),
\end{aligned}
\end{equation}
which are used frequently in the proof of Lemma \ref{lemma:strong-convergence-rate-of-MM-and-auxiliary-process} and the following section.
Combining these results above, we are fully prepared to show the strong error estimate of the MM scheme \eqref{equation:modified-milstein-method} and the auxiliary process \eqref{equation:auxiliary-process}, i.e. $\|\widetilde{Y}_{n}-Y_{n} \|_{L^{2p}(\Omega, \mathbb{R}^{d})}$ in \eqref{equation:strong-convergence-decomposition}. 
\begin{lemma} \label{lemma:strong-convergence-rate-of-MM-and-auxiliary-process} (Strong error estimate of MM and the auxiliary process)
Let Assumptions \ref{assumption:coercivity-condition}, \ref{assumption:polynomial-growth-condition}, \ref{assumption:condition-on-the-MM} hold. Let  $\{  Y_{n} \}_{0 \leq n \leq N }$ and $\{  \widetilde{Y}_{n} \}_{0\leq n \leq N}$, $n \in \{1,2, \dots, N \}$, $N\in \mathbb{N}$, be given by  \eqref{equation:modified-milstein-method} and  \eqref{equation:auxiliary-process},
respectively. Then, for $p \in [1,p_{1})\cap [1, \frac{p_{0}}{\max\{2\gamma-1, \alpha_{1}, \alpha_{2}, \mathbf{a} \}}] $, there exists a uniform constant $C$, depending on $\frac{2p_{1}-2p}{2p-1}$, such that,  
\begin{equation} 
\sup _{0\leq n \leq N } 
\mathbb{E} 
\big[
\| \widetilde{Y}_{n} - Y_{n}\|^{2p}
\big] 
\leq Ch^{p} .
\end{equation}
\end{lemma}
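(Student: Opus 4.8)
The key observation is that, unlike $\mathbb{Y}^n(s)$ itself, the difference $\widetilde{Y}(s) - \mathbb{Y}^n(s)$ \emph{is} an It\^o process on each subinterval $[t_n,t_{n+1}]$, since the non-It\^o term $\sum_{j_1,j_2}(\mathcal{L}^{j_1}\sigma_{j_2})_h(\mathscr{P}(Y_n))\Pi_{j_1,j_2}^{t_n,s}$ appears identically in both \eqref{equation:continuous-time-version-of-auxiliary-process} and \eqref{equation:continuous-time-version-of-MM-method} and cancels. Therefore the plan is to set $E(s) := \widetilde{Y}(s) - \mathbb{Y}^n(s)$ on $[t_n,t_{n+1}]$, write down its It\^o dynamics, apply the It\^o formula to $(1+\|E(s)\|^2)^p$ (or directly to $\|E(s)\|^{2p}$), and use a Gronwall argument. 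The drift part of $E(s)$ is $\int_{t_n}^s \big(\mu(X_r) - \mu_h(\mathscr{P}(Y_n))\big)\,\mathrm{d}r$ and the diffusion part is $\int_{t_n}^s \big(\sigma(X_r) - \sigma_h(\mathscr{P}(Y_n))\big)\,\mathrm{d}W_r$. Note that $E(t_n) = \widetilde{Y}_n - \mathscr{P}(Y_n) = (\widetilde{Y}_n - Y_n) + \mathcal{E}(Y_n)$, so at each grid point we pick up both the quantity we are estimating and a projection-error term controlled by condition \textit{(3)} of Assumption \ref{assumption:condition-on-the-MM} together with the moment bound Theorem \ref{theorem:moments-bound-of-MM}.

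The core step is to decompose the increments of the drift and diffusion so as to expose the coupled monotonicity structure. I would write
\[
\mu(X_r) - \mu_h(\mathscr{P}(Y_n))
= \big(\mu(\mathbb{Y}^n(r)) - \mu(\mathscr{P}(Y_n))\big)
+ \big(\mu(X_r) - \mu(\widetilde{Y}(r))\big)
+ \big(\mu(\widetilde{Y}(r)) - \mu(\mathbb{Y}^n(r))\big)
+ \mathbb{T}_{h,\mu}(\mathscr{P}(Y_n)),
\]
and analogously for $\sigma$, with the shorthand from \eqref{equation:tamed-correction-function}. The crucial pairing is the one that yields the coupled monotonicity condition from Assumption \ref{assumption:polynomial-growth-condition}: after applying It\^o's formula to $\|E(s)\|^{2p}$ we collect the inner-product term $\langle E(r), \mu(\widetilde{Y}(r)) - \mu(\mathbb{Y}^n(r))\rangle$ against $(2p_1-1)$ times the squared-norm term $\|\sigma(\widetilde{Y}(r)) - \sigma(\mathbb{Y}^n(r))\|^2$ coming from the It\^o correction, and since $\widetilde{Y}(r) - \mathbb{Y}^n(r)$ differs from $E(r)$ only by the (already cancelled) $\Pi$-term — wait, here $\widetilde{Y}(r) - \mathbb{Y}^n(r) = E(r)$ exactly — the monotonicity inequality applies directly to give a bound $\leq C\|E(r)\|^2$ absorbable into the Gronwall sum. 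The restriction $p \in [1,p_1)$ and the constant depending on $\tfrac{2p_1-2p}{2p-1}$ arise precisely from needing the It\^o-correction coefficient, which is $2p-1$ rather than $2p_1-1$, to still be dominated; one interpolates using a small parameter and the polynomial growth to push the gap $(2p_1-1)-(2p-1)$ worth of $\|\sigma\|^2$ into a higher-moment term controlled by the moment bounds.

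The remaining terms are lower order: $\mu(X_r) - \mu(\widetilde{Y}(r))$ and $\sigma(X_r) - \sigma(\widetilde{Y}(r))$ are $\mathcal{O}(h^{1/2})$ in $L^{2p}$ by Lemma \ref{lemma:errors-between-sde-and-auxiliary-process-in-continuous-version}; $\mu(\mathbb{Y}^n(r)) - \mu(\mathscr{P}(Y_n))$ and its $\sigma$-analogue are $\mathcal{O}(h^{1/2})$ by Lemma \ref{lemma:holder-continuity-of-MM} (H\"older continuity of the continuous-time scheme); the tamed-correction terms $\mathbb{T}_{h,\mu}$, $\mathbb{T}_{h,\sigma}$ are $\mathcal{O}(h)$ by condition \textit{(5)} of Assumption \ref{assumption:condition-on-the-MM} combined with the moment bounds, hence negligible against $h^{1/2}$; and the drift contributes an extra factor $h$ from the time integral $\int_{t_n}^s \mathrm{d}r$, so even the $\mathcal{O}(h^{-1/2})$ bound on $\mu_h$ in condition \textit{(1)} is harmless after the $\mathrm{d}r$ integration. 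Summing the one-step estimate over $n = 0,\dots, N-1$ and using $Nh = T$, one obtains
\[
\mathbb{E}\big[\|\widetilde{Y}_{n+1} - Y_{n+1}\|^{2p}\big]
\leq (1 + Ch)\,\mathbb{E}\big[\|\widetilde{Y}_n - Y_n\|^{2p}\big] + C h^{1+p},
\]
where the $Ch$ in front comes from the Gronwall/monotonicity term and the $Ch^{1+p}$ from the $h^{1/2}$-order terms raised to the $2p$ power times the integration factor, plus the $\mathcal{O}(h^4)$ projection terms. The discrete Gronwall lemma then yields $\sup_{0\leq n\leq N}\mathbb{E}[\|\widetilde{Y}_n - Y_n\|^{2p}] \leq Ch^p$. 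The main obstacle I anticipate is the careful bookkeeping of moment exponents: every place where $\|\mathbb{Y}^n(r)\|$, $\|X_r\|$, $\|\widetilde{Y}(r)\|$ or $\|Y_n\|$ appears at a polynomial power $\gamma-1$ or higher multiplied by $\|E(r)\|$ forces a H\"older split, and one must verify the resulting moment orders all stay below $p_0$ — this is exactly why the hypothesis reads $p \in [1,p_1)\cap[1,\tfrac{p_0}{\max\{2\gamma-1,\alpha_1,\alpha_2,\mathbf{a}\}}]$, and matching each constraint to its source term is the delicate part of the proof.
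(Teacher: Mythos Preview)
Your proposal is correct and follows essentially the same approach as the paper: defining $E(s)=\widetilde{Y}(s)-\mathbb{Y}^n(s)$, recognising that the $\Pi$-terms cancel so $E$ is an It\^o process, applying the It\^o formula, isolating the pair $\langle E,\mu(\widetilde{Y})-\mu(\mathbb{Y}^n)\rangle$ and $\|\sigma(\widetilde{Y})-\sigma(\mathbb{Y}^n)\|^2$ for the coupled monotonicity (with a Young-inequality parameter in $(0,\tfrac{2p_1-2p}{2p-1}]$ to upgrade the coefficient $2p-1$ to $2p_1-1$), bounding the residual terms via Lemmas \ref{lemma:holder-continuity-of-MM} and \ref{lemma:errors-between-sde-and-auxiliary-process-in-continuous-version} together with condition \textit{(5)}, absorbing the grid-point jump $\mathcal{E}(Y_n)$ via condition \textit{(3)} and the moment bound, and closing with Gronwall. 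The only cosmetic difference is that the paper keeps the continuous-time integral $\int_0^{t_{n+1}}\|\widetilde{e}_s\|^{2p}\,\mathrm{d}s$ and applies the integral Gronwall inequality at the end, whereas you phrase it as a discrete one-step recursion; both routes give the same result.
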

The proof of Lemma \ref{lemma:strong-convergence-rate-of-MM-and-auxiliary-process} is put in Appendix \ref{proof:lemma-strong-convergence-rate-of-MM-and-auxiliary-process}.
Combining this with Lemma \ref{lemma:strong-convergence-rate-of-sde-and-auxiliary-process}, 
one can derive the one-half order of strong convergence 
based on \eqref{equation:strong-convergence-decomposition} .

\begin{theorem} \label{theorem:strong-convergence-rates}
 ($L^{p}$-strong convergence rate of the MM scheme) Let Assumptions \ref{assumption:coercivity-condition}, \ref{assumption:polynomial-growth-condition}, \ref{assumption:condition-on-the-MM} be fulfilled. Then SDE \eqref{equation:sde-in-introduction} 
 admits  unique adapted solutions in $\mathbb{R}^{d}$, denoted by $\{  X_{t} \}_{t \in [0,T]}$.
For the timestep size $h = \frac{T}{N} $ with  $N \in \mathbb{N}$, assume $\{  Y_{n} \}_{0\leq n \leq N}$ is produced
by the MM scheme \eqref{equation:modified-milstein-method}.
Then, for $p \in [1,p_{1})\cap [1, \frac{p_{0}}{\max\{2\gamma-1, \alpha_{1}, \alpha_{2}, \mathbf{a} \}}] $, there exists a positive constant $C $ depending on $\frac{2p_{1}-2p}{2p-1}$ such that,  
\begin{equation}
\sup _{0\leq n \leq N }
\mathbb{E} 
\left[
\| X_{t_{n}} - Y_{n} \|^{2p}
\right] 
\leq Ch^{p} .
\end{equation}
\end{theorem}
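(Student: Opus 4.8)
The plan is to obtain the claim as an immediate consequence of the triangle-inequality splitting introduced in \eqref{equation:strong-convergence-decomposition}, combined with the two strong error estimates already established. First I would dispose of the existence part: under Assumptions \ref{assumption:coercivity-condition}--\ref{assumption:condition-on-the-MM} (indeed, already under Assumptions \ref{assumption:coercivity-condition} and \ref{assumption:polynomial-growth-condition}), the SDE \eqref{equation:sde-in-introduction} admits a unique $\{\mathcal{F}_t\}_{t\in[0,T]}$-adapted solution $\{X_t\}_{t\in[0,T]}$ with continuous sample paths and finite $2p_0$-th moments, exactly as recorded immediately after Assumption \ref{assumption:polynomial-growth-condition}.

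For the rate, fix $p \in [1,p_1) \cap [1,\tfrac{p_0}{\max\{2\gamma-1,\alpha_1,\alpha_2,\mathbf{a}\}}]$ and recall the auxiliary process $\{\widetilde{Y}_n\}_{0\le n\le N}$ defined in \eqref{equation:auxiliary-process}. Then, for every $0\le n\le N$,
\[
\|X_{t_n}-Y_n\|_{L^{2p}(\Omega,\mathbb{R}^d)}
\leq \|X_{t_n}-\widetilde{Y}_n\|_{L^{2p}(\Omega,\mathbb{R}^d)}
+ \|\widetilde{Y}_n-Y_n\|_{L^{2p}(\Omega,\mathbb{R}^d)}.
\]
The second summand is bounded by $Ch^{1/2}$ in $L^{2p}$ by Lemma \ref{lemma:strong-convergence-rate-of-MM-and-auxiliary-process}, with the constant depending on $\tfrac{2p_1-2p}{2p-1}$. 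For the first summand I would invoke Lemma \ref{lemma:strong-convergence-rate-of-sde-and-auxiliary-process}, which delivers $\|X_{t_n}-\widetilde{Y}_n\|_{L^{2p}(\Omega,\mathbb{R}^d)} \le Ch^{1/2}$ whenever $p \le \tfrac{p_0}{\gamma}$; here one only has to observe that this is no extra restriction, since $\gamma \le 2\gamma-1 \le \max\{2\gamma-1,\alpha_1,\alpha_2,\mathbf{a}\}$ for every $\gamma\ge 1$, hence $\tfrac{p_0}{\max\{2\gamma-1,\alpha_1,\alpha_2,\mathbf{a}\}} \le \tfrac{p_0}{\gamma}$ and the chosen $p$ automatically satisfies $p\le p_0/\gamma$. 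Raising the resulting inequality to the $2p$-th power and taking the supremum over $0\le n\le N$ yields $\sup_{0\le n\le N}\mathbb{E}[\|X_{t_n}-Y_n\|^{2p}] \le Ch^p$, with $C$ still depending on $\tfrac{2p_1-2p}{2p-1}$.

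Since the two constituent lemmas carry all of the analytical weight, I do not expect a genuine obstacle at this stage; the only point requiring care is the bookkeeping of the ranges of $p$ for which each lemma is valid, namely confirming that the stated intersection $[1,p_1)\cap[1,\tfrac{p_0}{\max\{2\gamma-1,\alpha_1,\alpha_2,\mathbf{a}\}}]$ sits inside both admissible ranges, together with tracking the dependence of the final constant on $\tfrac{2p_1-2p}{2p-1}$ inherited from Lemma \ref{lemma:strong-convergence-rate-of-MM-and-auxiliary-process}. For completeness I would also note that Theorem \ref{theorem:moments-bound-of-MM} and Lemma \ref{lemma:holder-continuity-of-MM} enter only implicitly, through the proofs of the two lemmas being combined here.
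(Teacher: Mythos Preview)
Your proposal is correct and follows essentially the same approach as the paper: the theorem is obtained directly from the triangle-inequality splitting \eqref{equation:strong-convergence-decomposition}, invoking Lemma \ref{lemma:strong-convergence-rate-of-sde-and-auxiliary-process} and Lemma \ref{lemma:strong-convergence-rate-of-MM-and-auxiliary-process} for the two pieces. Your extra remark that $\gamma \le 2\gamma-1 \le \max\{2\gamma-1,\alpha_1,\alpha_2,\mathbf{a}\}$ ensures the range of $p$ in Lemma \ref{lemma:strong-convergence-rate-of-sde-and-auxiliary-process} is automatically covered is a nice bit of bookkeeping that the paper leaves implicit.
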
 
%

\section{Variance analysis for multilevel estimators} 
\label{section:variance-analysis}
In this section we proceed to give a variance analysis 
for multilevel estimators, i.e., $\text{Var}[\hat{P}^{f}_{\ell} -\hat{P}^{c}_{\ell-1} ]$ in \eqref{equation:variance-in-main-result}. 
Recall that the estimators $\hat{Z}_{\ell}$ with $\hat{N}_{\ell}$ Monte Carlo samples are given by
\begin{equation} 
\hat{Z}_{\ell}
=\left\{
\begin{array}{ll}
\hat{N}_{0}^{-1}\sum_{i=1}^{\hat{N}_{0}}\hat{P}_{0}^{c,(i)}, & \ell=0, \\
\hat{N}_{\ell}^{-1}\sum_{i=1}^{\hat{N}_{\ell}}\left(\hat{P}_{\ell}^{f,(i)}-\hat{P}_{\ell -1}^{c,(i)} \right), & \ell>0.
\end{array}\right.
\end{equation}
Therefore, one derives
\begin{equation} 
\text{Var}[\hat{Z}_{\ell}]
=\left\{
\begin{array}{ll}
\hat{N}_{0}^{-1}\text{Var}\big[\hat{P}_{0}^{c}\big], & \ell=0, \\
\hat{N}_{\ell}^{-1}\text{Var}\big[\hat{P}_{\ell}^{f}-\hat{P}_{\ell -1}^{c}\big] , & \ell>0,
\end{array}\right.
\end{equation}
where we recall
\begin{equation} \label{equation:antithetic-multilevel-estimator-in-variance-analysis}
\hat P^{f}_{\ell}
:= 
\tfrac{1}{2} 
\left(
\phi(Y_{2^{\ell-1}}^{f}) + \phi(Y_{2^{\ell-1}}^{a}) 
\right), 
\quad 
\hat P^{c}_{\ell-1}
:= \phi(Y_{2^{\ell-1}}^{c}), 
\quad 
\phi \in C_{b}^{2}(\mathbb{R}^{d}, \mathbb{R}).
\end{equation}
It follows directly from Theorem \ref{theorem:moments-bound-of-MM} and $h_{0}=T$ that,
$$\text{Var}[\hat{Z}_{0}] \leq C \hat{N}_{0}^{-1} h^{2}_{0}.$$
Subsequently we focus on $\text{Var}[\hat{P}_{\ell}^{f}-\hat{P}_{\ell -1}^{c}]$ for $ \ell \in \{  1, 2, \dots, L\}$. For convenience, the parameter $\ell$ is sometimes omitted in the notation of $h_{\ell}$ since $h_{\ell-1}=2h_{\ell}$. 
The main result of this section is formulated as follows.
\begin{theorem} \label{theorem:convergence-order-of-variance}
Let $\hat{P}^{f}_{\ell}$ and $\hat{P}^{c}_{\ell-1}$, $ \ell \in \{  1, 2, \dots, L\}$, be defined as \eqref{equation:antithetic-multilevel-estimator-in-variance-analysis}.  
Let Assumptions \ref{assumption:coercivity-condition}, \ref{assumption:polynomial-growth-condition}, \ref{assumption:condition-on-the-MM} hold with $p_{0} \in [ \max\{\boldsymbol{\alpha}, \boldsymbol{\alpha}+\gamma -2\}, \infty)$ and $p_{1}\in (1,\infty)$, where we denote $\boldsymbol{\alpha}:= \max\{4\gamma-2, 2\alpha_{1}, 2\alpha_{2}, \alpha_{3}, 2\mathbf{a}, \mathbf{a}+\gamma-1  \}$.
Then
\begin{equation} \label{equation:variance}
\text{Var}
\,
[\hat{P}^{f}_{\ell} - \hat{P}^{c}_{\ell-1}] 
\leq Ch_{\ell}^{2}.
\end{equation}
\end{theorem}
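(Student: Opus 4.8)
Write $n:=2^{\ell-1}$ and set $\overline{Y}_{n}^{f}:=\tfrac12(Y_{n}^{f}+Y_{n}^{a})$, so that $Y_{n}^{f}-\overline{Y}_{n}^{f}=-(Y_{n}^{a}-\overline{Y}_{n}^{f})=\tfrac12(Y_{n}^{f}-Y_{n}^{a})$. Since $\phi\in C_{b}^{2}(\mathbb{R}^{d},\mathbb{R})$, I would Taylor-expand $\phi(Y_{n}^{f})$ and $\phi(Y_{n}^{a})$ about $\overline{Y}_{n}^{f}$; averaging the two expansions, the first-order terms cancel and
\[
\hat P_{\ell}^{f}-\hat P_{\ell-1}^{c}
=\big(\phi(\overline{Y}_{n}^{f})-\phi(Y_{n}^{c})\big)+R_{n},
\qquad
|R_{n}|\le C\,\|Y_{n}^{f}-Y_{n}^{a}\|^{2}.
\]
Using $\text{Var}[U+V]\le 2\,\text{Var}[U]+2\,\text{Var}[V]$, $\text{Var}[\cdot]\le\mathbb{E}[(\cdot)^{2}]$ and $\|\phi(\overline{Y}_{n}^{f})-\phi(Y_{n}^{c})\|\le C\|\overline{Y}_{n}^{f}-Y_{n}^{c}\|$ (bounded first derivative of $\phi$), the claim \eqref{equation:variance} reduces to the two moment estimates
\[
\mathbb{E}\big[\|\overline{Y}_{n}^{f}-Y_{n}^{c}\|^{2}\big]\le Ch_{\ell}^{2}
\qquad\text{and}\qquad
\mathbb{E}\big[\|Y_{n}^{f}-Y_{n}^{a}\|^{4}\big]\le Ch_{\ell}^{2}.
\]

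For the second estimate I would use that, as a time discretization, $\{Y_{n}^{a}\}$ is a copy of the fine MM scheme with timestep $h_{\ell}$, so $\|Y_{n}^{f}-Y_{n}^{a}\|_{L^{4}}\le\|Y_{n}^{f}-X_{t_{n}}\|_{L^{4}}+\|X_{t_{n}}-Y_{n}^{a}\|_{L^{4}}$, and the one-half strong rate of the MM scheme (Theorem \ref{theorem:strong-convergence-rates}, applicable with $2p=4$ since the standing hypothesis $p_{0}\ge\boldsymbol{\alpha}$ forces $\tfrac{p_{0}}{\max\{2\gamma-1,\alpha_{1},\alpha_{2},\mathbf{a}\}}\ge 2$, or else by a direct It\^o estimate on each coarse step) bounds each term by $Ch_{\ell}^{1/2}$. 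Hence $\mathbb{E}[\|Y_{n}^{f}-Y_{n}^{a}\|^{4}]\le Ch_{\ell}^{2}$ and, together with the uniform moment bounds of Theorem \ref{theorem:moments-bound-of-MM}, this already gives $\text{Var}[R_{n}]\le Ch_{\ell}^{2}$.

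The first estimate is the crux, and the main obstacle, because none of $Y^{f},Y^{a},Y^{c}$ is an It\^o process (they all carry the truncated Milstein terms $\Pi_{j_{1},j_{2}}^{\cdot,\cdot}$), so It\^o's formula is not directly available. I would proceed by a two-stage bootstrap. \emph{Stage 1:} from $\|\overline{Y}_{n}^{f}-Y_{n}^{c}\|\le\tfrac12\|Y_{n}^{f}-X_{t_{n}}\|+\tfrac12\|Y_{n}^{a}-X_{t_{n}}\|+\|X_{t_{n}}-Y_{n}^{c}\|$ and the one-half strong rate of the fine and coarse MM schemes (Theorem \ref{theorem:strong-convergence-rates}) one obtains the sub-optimal bound $\mathbb{E}[\|\overline{Y}_{n}^{f}-Y_{n}^{c}\|^{2}]\le Ch_{\ell}$. \emph{Stage 2:} forming the one-step recursion for $\overline{Y}_{n}^{f}-Y_{n}^{c}$, the structural fact — as in Giles--Szpruch — is that averaging the two fine half-steps makes the truncated Milstein contributions and the cross products $\delta W_{n}\,\delta W_{n+1/2}$ recombine so that the continuous-time interpolation of $\overline{Y}_{n}^{f}-Y_{n}^{c}$ \emph{is} an It\^o process; applying It\^o's formula to $\|\overline{Y}^{f}(s)-Y^{c}(s)\|^{2}$ and closing a discrete Gr\"onwall estimate via the coupled monotonicity condition and the polynomial-growth bounds \eqref{equation:growth-of-partial-drift-and-diffusion}--\eqref{equation:growth-of-milstein-diffusion-term} in Assumption \ref{assumption:polynomial-growth-condition}, the moment bounds of Theorem \ref{theorem:moments-bound-of-MM}, the modification and projection error estimates (conditions (3), (5), (6) of Assumption \ref{assumption:condition-on-the-MM}), and — crucially — the Stage-1 bound to absorb the superlinearly growing remainders, upgrades the rate to $\mathbb{E}[\|\overline{Y}_{n}^{f}-Y_{n}^{c}\|^{2}]\le Ch_{\ell}^{2}$. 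Combining this with the bound on $R_{n}$ yields \eqref{equation:variance}.

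I expect essentially all the difficulty to lie in Stage 2: verifying that $\overline{Y}^{f}(s)-Y^{c}(s)$ is genuinely an It\^o process in the presence of the $\Pi_{j_{1},j_{2}}^{\cdot,\cdot}$ corrections and the modifications $\mu_{h},\sigma_{h},(\mathcal{L}^{j_{1}}\sigma_{j_{2}})_{h},\mathscr{P}$, and then controlling — through the monotonicity condition together with the Stage-1 bound — the numerous superlinearly growing contributions produced by evaluating the modified coefficients at the intermediate points $\mathscr{P}(Y_{n}^{f}),\mathscr{P}(Y_{n+1/2}^{f}),\mathscr{P}(Y_{n}^{a}),\mathscr{P}(Y_{n+1/2}^{a}),\mathscr{P}(Y_{n}^{c})$, which are mutually close only at the pathwise rate $h^{1/2}$.
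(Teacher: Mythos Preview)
Your overall architecture matches the paper exactly: the Taylor decomposition of $\hat P_{\ell}^{f}-\hat P_{\ell-1}^{c}$ is the paper's Lemma \ref{lemma:variance-expression}, the bound on $\mathbb{E}[\|Y_{n}^{f}-Y_{n}^{a}\|^{4}]$ via $X_{t_n}$ and Theorem~\ref{theorem:strong-convergence-rates} is Lemma \ref{lemma:strong-convergence-rate-of-yf-and-ya}, and your two-stage bootstrap (first $\mathbb{E}[\|\overline{Y}_{n}^{f}-Y_{n}^{c}\|^{2}]\le Ch$ from strong convergence, then upgrade to $Ch^{2}$) is precisely the passage from Lemma \ref{lemma:general-error-estimate} to Lemma \ref{lemma:final-variance-estimate}.

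The gap is in Stage~2. You assert that ``the continuous-time interpolation of $\overline{Y}_{n}^{f}-Y_{n}^{c}$ \emph{is} an It\^o process'' and plan to apply It\^o's formula to $\|\overline{Y}^{f}(s)-Y^{c}(s)\|^{2}$. But the antithetic half-step \eqref{equation:antithetic-estimator-in-first-fine-step} evaluates $Y_{n+1/2}^{a}$ using $\delta W_{n+1/2}=W_{t_{n+1}}-W_{t_{n+1/2}}$, so $Y_{n+1/2}^{a}$ is \emph{not} $\mathcal{F}_{t_{n+1/2}}$-measurable. Any natural continuous interpolation of $Y^{a}$ on $[t_n,t_{n+1}]$ fails to be adapted, and therefore $\overline{Y}^{f}(s)$ has no adapted semimartingale representation to which It\^o's formula applies. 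This is not a technicality of the modifications $\mu_h,\sigma_h,\mathscr{P}$; it is intrinsic to the antithetic swap. (Giles--Szpruch do not proceed via It\^o's formula either; in the Lipschitz setting they use a direct discrete recursion.)

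The paper avoids this entirely by working discretely. One first derives one-coarse-step representations of $Y_{n+1}^{f}$, $Y_{n+1}^{a}$ and hence of $\overline{Y}_{n+1}^{f}$ (Lemmas \ref{lemma:expression-of-yf-in-a-coarse-timestep}--\ref{lemma:representation-of-the-average-process-in-a-coarse-timestep}), showing that $\overline{Y}_{n+1}^{f}$ has the same structure as $Y_{n+1}^{c}$ up to a term $M_{n+1}$ with $\mathbb{E}[M_{n+1}\mid\mathcal{F}_{t_n}]=0$, $\mathbb{E}\|M_{n+1}\|^{2}\le Ch^{3}$, and a term $B_{n+1}$ with $\mathbb{E}\|B_{n+1}\|^{2}\le Ch^{4}$. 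One then squares $e_{n+1}=\overline{Y}_{n+1}^{f}-Y_{n+1}^{c}$, takes expectations, eliminates cross terms via the conditional martingale property, applies the coupled monotonicity condition to the $\Delta\mu,\Delta\sigma$ parts, uses the Stage-1 bound $\mathbb{E}\|e_n\|^{2}\le Ch$ precisely to control the superlinear remainders $J_5,J_6$ (the terms $\|\mu(\mathscr{P}(\overline{Y}_n^f))-\mu(\mathscr{P}(Y_n^c))\|^{2}h^{2}$ and its $\mathcal{L}^{j_1}\sigma_{j_2}$ analogue), and closes by discrete Gr\"onwall. Replace your It\^o-formula step by this discrete squaring argument and the plan goes through.
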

We begin with a lemma quoted from Lemma 2.2 in \cite{giles2014antithetic}, which helps us decompose the variance $\text{Var}[\hat{P}^{f}_{\ell} - \hat{P}^{c}_{\ell-1}]$ into two parts, 
\begin{lemma} \label{lemma:variance-expression}
For $\phi \in C_{b}^{2}(\mathbb{R}^{d}, \mathbb{R})$
and any $u, v, w\in \mathbb{R}^{d}$, it holds
\begin{equation} \label{equation:estimates-of-variance-analysis-in-lemma}
\mathbb{E} 
\left[ 
\left|
\tfrac{1}{2} \left( \phi(u) + \phi(v) \right)
-\phi(w)
\right|^{2}
\right] 
\leq C 
\left(
\mathbb{E}
\left[
\left\|\tfrac{1}{2}\left( u+v \right)-w \right\|^{2}
\right] 
+ 
\mathbb{E}
\left[
\left\|u-v \right\|^{4}
\right] 
\right).
\end{equation}
\end{lemma}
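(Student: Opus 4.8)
The plan is to establish a deterministic pointwise bound and then take expectations; the only genuine idea is to Taylor-expand $\phi$ around the \emph{midpoint} $\overline{u}:=\tfrac12(u+v)$ rather than around $w$, which is exactly what forces the first-order terms to cancel. I would first set $M_{1}:=\sup_{x\in\mathbb{R}^{d}}\big\|\tfrac{\partial\phi}{\partial x}(x)\big\|$ and $M_{2}:=\sup_{x\in\mathbb{R}^{d}}\big\|\tfrac{\partial^{2}\phi}{\partial x^{2}}(x)\big\|$, both finite since $\phi\in C_{b}^{2}(\mathbb{R}^{d},\mathbb{R})$. For fixed $u,v\in\mathbb{R}^{d}$, Taylor's theorem with integral remainder gives
\begin{equation*}
\phi(u)=\phi(\overline{u})+\big\langle\tfrac{\partial\phi}{\partial x}(\overline{u}),u-\overline{u}\big\rangle+r_{u},\qquad
\phi(v)=\phi(\overline{u})+\big\langle\tfrac{\partial\phi}{\partial x}(\overline{u}),v-\overline{u}\big\rangle+r_{v},
\end{equation*}
where $|r_{u}|\le\tfrac12 M_{2}\|u-\overline{u}\|^{2}$ and $|r_{v}|\le\tfrac12 M_{2}\|v-\overline{u}\|^{2}$. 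Since $u-\overline{u}=\tfrac12(u-v)=-(v-\overline{u})$, averaging the two identities cancels the gradient contributions, and using $\|u-\overline{u}\|=\|v-\overline{u}\|=\tfrac12\|u-v\|$ one obtains
\begin{equation*}
\tfrac12\big(\phi(u)+\phi(v)\big)-\phi(w)=\big(\phi(\overline{u})-\phi(w)\big)+\tfrac12(r_{u}+r_{v}),\qquad |r_{u}|\vee|r_{v}|\le\tfrac18 M_{2}\|u-v\|^{2}.
\end{equation*}

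Next I would control $\phi(\overline{u})-\phi(w)$ by the mean value inequality, $|\phi(\overline{u})-\phi(w)|\le M_{1}\|\overline{u}-w\|=M_{1}\big\|\tfrac12(u+v)-w\big\|$. Combining this with the previous display via the triangle inequality yields the pointwise estimate
\begin{equation*}
\Big|\tfrac12\big(\phi(u)+\phi(v)\big)-\phi(w)\Big|\le M_{1}\big\|\tfrac12(u+v)-w\big\|+\tfrac18 M_{2}\|u-v\|^{2}.
\end{equation*}
Squaring, applying $(a+b)^{2}\le 2a^{2}+2b^{2}$, and taking expectations then gives \eqref{equation:estimates-of-variance-analysis-in-lemma} with $C=\max\{2M_{1}^{2},\tfrac{1}{32}M_{2}^{2}\}$.

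I do not expect any serious obstacle: the argument is a short exercise in Taylor expansion, essentially Lemma 2.2 of \cite{giles2014antithetic}. The one point that must be handled with care is the \emph{choice of expansion point}: expanding $\phi$ about $w$ instead would leave remainder terms of order $\|u-w\|^{2}+\|v-w\|^{2}$, which in general cannot be dominated by $\|u-v\|^{4}$, so the variance-reduction (antithetic) structure would be destroyed. Centring at $\overline{u}=\tfrac12(u+v)$ turns the first-order terms into an exact cancellation and isolates precisely the $\|u-v\|^{2}$ contribution that, after squaring, produces the $\mathbb{E}[\|u-v\|^{4}]$ term in the statement. Since the bound holds pointwise in $\omega$, the final passage to expectations is immediate and needs no integrability beyond that implicit in the right-hand side.
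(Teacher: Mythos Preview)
Your proposal is correct and is precisely the standard argument. Note that the paper does not actually supply its own proof of this lemma: it simply quotes it as Lemma~2.2 of \cite{giles2014antithetic}, and your Taylor expansion about the midpoint $\overline{u}=\tfrac12(u+v)$, with the first-order cancellation and the mean-value bound on $\phi(\overline{u})-\phi(w)$, is exactly how that reference proves it.
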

In what follows,
we let $\{ \overline{Y}^{f}\}$ be defined as the average process of $\{ Y^{f}\}$ and $\{ Y^{a}\}$, i.e.
\begin{equation} \label{equation:definition-of-the-average-process-bar-y}
\overline{Y}_{n}^{f}
:= 
\tfrac{1}{2} 
\left( 
Y_{n}^{f} + Y_{n}^{a}
\right),
\end{equation}
where $\{ Y^{f}\}$ and $\{ Y^{a}\}$ are determined by 
\eqref{equation:MM-in-first-fine-timestep}-\eqref{equation:MM-in-second-fine-timestep} and \eqref{equation:antithetic-estimator-in-first-fine-step}-\eqref{equation:antithetic-estimator-in-second-fine-step}, respectively.
Hence the variance $\text{Var}[\hat{P}^{f}_{\ell} - \hat{P}^{c}_{\ell-1}]$ is divided into two parts as below,
\begin{equation} \label{equation:two-key-estimates-of-the-variance-analysis}
 \text{Var}
 [
 \hat{P}^{f}_{\ell} - \hat{P}^{c}_{\ell-1}
 ] 
 \leq 
 C\left(
 \mathbb{E}
 \left[
 \big\|
 \overline{Y}_{2^{\ell-1}}^{f}   - Y_{2^{\ell-1}}^{c}
 \big\|^{2} 
 \right] 
 + 
 \mathbb{E}
 \left[ 
 \big\| 
 Y_{2^{\ell-1}}^{f} - Y_{2^{\ell-1}}^{a} 
 \big\|^{4}  
 \right] 
 \right),
\end{equation}
where Lemma \ref{lemma:variance-expression} is used with
$u=Y_{2^{\ell-1}}^{f}$, $v=Y_{2^{\ell-1}}^{a}$ 
and 
$w=Y_{2^{\ell-1}}^{c}$.
The second term in \eqref{equation:two-key-estimates-of-the-variance-analysis} can be directly obtained in the next lemma.
\begin{lemma} 
\label{lemma:strong-convergence-rate-of-yf-and-ya}
Let Assumptions \ref{assumption:coercivity-condition}, 
\ref{assumption:polynomial-growth-condition},
\ref{assumption:condition-on-the-MM} hold
and
let $\{Y^{f}\}$  and $\{{Y}^{a}\}$ be defined by \eqref{equation:MM-in-first-fine-timestep}-\eqref{equation:MM-in-second-fine-timestep} and \eqref{equation:antithetic-estimator-in-first-fine-step}-\eqref{equation:antithetic-estimator-in-second-fine-step}, respectively. Then, for $p \in [1,p_{1})\cap [1, \frac{p_{0}}{\max\{2\gamma-1, \alpha_{1}, \alpha_{2}, \mathbf{a} \}}] $, 
\begin{equation}
\sup_{0 \leq n \leq N }
\mathbb{E}
\big[ 
\big\| Y_{n}^{f} - {Y}_{n}^{a} \big\|^{2p}  
\big] 
\leq Ch^{p}.
\end{equation}
\end{lemma}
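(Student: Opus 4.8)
The plan is to avoid comparing $Y^f$ and $Y^a$ head-on (no useful one-step recursion is available, since $\{Y^a\}$ is not adapted at the intermediate times $t_{n+1/2}$) and instead to route the comparison through the coarse approximation $\{Y^c\}$ from \eqref{equation:MM-in-coarse-step}, using the strong convergence rate already established in Theorem~\ref{theorem:strong-convergence-rates}. By the triangle inequality,
\[
\big\| Y_n^f - Y_n^a \big\|_{L^{2p}(\Omega,\mathbb{R}^d)}
\le \big\| Y_n^f - Y_n^c \big\|_{L^{2p}(\Omega,\mathbb{R}^d)}
+ \big\| Y_n^c - Y_n^a \big\|_{L^{2p}(\Omega,\mathbb{R}^d)},
\]
so it is enough to show that each term on the right is of order $h^{1/2}$.

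The first key observation I would make concerns a symmetry of the construction. Denoting by $\delta W_n$ and $\delta W_{n+1/2}$ the two half-increments on each coarse interval $[t_n,t_{n+1}]$, an inspection of \eqref{equation:MM-in-first-fine-timestep}--\eqref{equation:MM-in-second-fine-timestep} and \eqref{equation:antithetic-estimator-in-first-fine-step}--\eqref{equation:antithetic-estimator-in-second-fine-step} shows that $\{Y^a\}$ is exactly $\{Y^f\}$ with the two families $\{\delta W_n\}_n$ and $\{\delta W_{n+1/2}\}_n$ interchanged (the associated $\Pi$-terms following along). By contrast, from \eqref{equation:MM-in-coarse-step} the path $\{Y^c\}$ sees the Brownian motion only through the net coarse increments $\Delta W_n=\delta W_n+\delta W_{n+1/2}$, which are invariant under that interchange. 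Since $\{\delta W_n\}_n$ and $\{\delta W_{n+1/2}\}_n$ are two independent families of i.i.d.\ $\mathcal{N}(0,\tfrac h2 I_m)$ vectors, the interchange is measure preserving, and hence the joint laws obey $(Y^a_\cdot,Y^c_\cdot)\stackrel{d}{=}(Y^f_\cdot,Y^c_\cdot)$; in particular $\|Y_n^c-Y_n^a\|_{L^{2p}(\Omega,\mathbb{R}^d)}=\|Y_n^f-Y_n^c\|_{L^{2p}(\Omega,\mathbb{R}^d)}$.

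It then remains to bound $\|Y_n^f-Y_n^c\|_{L^{2p}(\Omega,\mathbb{R}^d)}$. Both $\{Y^f\}$ (uniform stepsize $h/2$) and $\{Y^c\}$ (uniform stepsize $h$) are instances of the MM scheme \eqref{equation:modified-milstein-method} approximating $X_{t_n}$, so Theorem~\ref{theorem:strong-convergence-rates} applies to each, with precisely the admissible range of $p$ in the statement, yielding $\|X_{t_n}-Y_n^f\|_{L^{2p}(\Omega,\mathbb{R}^d)}\le Ch^{1/2}$ and $\|X_{t_n}-Y_n^c\|_{L^{2p}(\Omega,\mathbb{R}^d)}\le Ch^{1/2}$, hence $\|Y_n^f-Y_n^c\|_{L^{2p}(\Omega,\mathbb{R}^d)}\le Ch^{1/2}$. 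Plugging this into the two displays above gives $\sup_{0\le n\le N}\mathbb{E}[\|Y_n^f-Y_n^a\|^{2p}]\le Ch^p$. I do not expect a genuine analytic obstacle here: the crux is finding the decomposition, namely recognising that $Y^c$ is insensitive to the antithetic swap, which turns the awkward difference $Y^f-Y^a$ into two copies of the ordinary one-level difference $Y^f-Y^c$; the only care required is to record that all three processes are measurable functionals of the two families of half-increments so that the measure-preserving symmetry applies verbatim.
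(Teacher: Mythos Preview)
Your proposal is correct and matches the paper's proof essentially line for line: the paper also routes through $Y^c$, invokes the distributional identity $\|Y^c_n-Y^a_n\|_{L^{2p}}=\|Y^f_n-Y^c_n\|_{L^{2p}}$ (stated there as ``$Y^f-Y^c$ has the same distribution as $Y^a-Y^c$''), and then bounds $\|Y^f_n-Y^c_n\|_{L^{2p}}$ by the triangle inequality through $X_{t_n}$ together with Theorem~\ref{theorem:strong-convergence-rates}. Your justification of the distributional symmetry via the measure-preserving swap of half-increments is in fact more explicit than what the paper writes.
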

\begin{proof} [Proof of Lemma \ref{lemma:strong-convergence-rate-of-yf-and-ya}]
Based on the construction of the antithetic estimator, the Brownian increments for $\{Y^{f}\}$ and $\{Y^{a}\}$ have the same distribution, conditional on the Brownian increments for $Y^{c}$. Indeed, $Y^{f}-Y^{c}$ has the same distribution as $Y^{a}-Y^{c}$. Moreover,
according to Theorem \ref{theorem:strong-convergence-rates} and the elementary inequality, for $p \in [1,p_{1})\cap [1, \frac{p_{0}}{\max\{2\gamma-1, \alpha_{1}, \alpha_{2}, \mathbf{a} \}}] $, one gets
\begin{equation}
\begin{aligned}
\left\|  
Y^{f}_{n} - Y^{a}_{n}  
\right\|_{L^{2p}(\Omega, \mathbb{R}^{d})}  
&\leq 
\left\|  
Y^{f}_{n} - Y^{c}_{n}  
\right\|_{L^{2p}(\Omega, \mathbb{R}^{d})} 
+ 
\left\|
Y^{a}_{n} - Y^{c}_{n}  
\right\|_{L^{2p}(\Omega, \mathbb{R}^{d})}  \\
&= 
2\left\|  
Y^{f}_{n} - Y^{c}_{n}  
\right\|_{L^{2p}(\Omega, \mathbb{R}^{d})}  \\
& \leq 
2\left(
\left\|  Y^{f}_{n} - X_{t_{n}}  \right\|_{L^{2p}(\Omega, \mathbb{R}^{d})} 
+\left\|  Y^{c}_{n} - X_{t_{n}}  \right\|_{L^{2p}(\Omega, \mathbb{R}^{d})} 
\right)  \\
& \leq 
Ch^{\frac{1}{2}}.
\end{aligned}
\end{equation}
The proof is completed.
\end{proof}
Before proceeding further, we would like to point out that, 
arguments used for the desired estimate $ \E [ \| \overline{Y}^{f}_{n}- {Y}_{n}^{c}\|^2 ] \leq C h^2 $ in the globally Lipschitz setting \cite{giles2014antithetic} 
do not  work in the non-globally Lipschitz setting. 
In what follows, new arguments are developed to achieve it. 
As the first step,  
we employ the previously obtained one-half convergence order 
to easily arrive at $\mathcal{O}(h)$ bound for $\E [ \| \overline{Y}^{f}_{n}- {Y}_{n}^{c}\|^2 ]$  (see Lemma \ref{lemma:general-error-estimate}).
Further, we follow the basic lines in \cite{giles2014antithetic} to give representations of $\{ Y^{f}\}$, $\{ Y^{a}\}$ and $\{ \overline{Y}^{f}\}$ in the coarse timestep (see Lemmas \ref{lemma:expression-of-yf-in-a-coarse-timestep} - \ref{lemma:representation-of-the-average-process-in-a-coarse-timestep}), where the proof is not substantially different from the Lipschitz case and put in Appendix \ref{proof:expression-of-yf-in-a-coarse-timestep}-\ref{proof:representation-of-the-average-process-in-a-coarse-timestep} for completeness. 
This together with the sub-optimal estimate (Lemma \ref{lemma:general-error-estimate}) enables us to 
improve the convergence rate to be order $1$, i.e., 
$\mathbb{E}
 [
 \big\|
 \overline{Y}_{n}^{f}   - Y_{n}^{c}
 \big\|^{2} 
 ]
 \leq
 C h^2,$
and hence deduce the $\mathcal{O}(h^{2})$ variance
as required (see Lemma \ref{lemma:final-variance-estimate}).

%
%
\begin{lemma} \label{lemma:general-error-estimate}
Let $\{{Y}^{c}\}$, $\{Y^{f}\}$, $\{{Y}^{a}\}$ and $\{\overline{Y}^{f}\}$ be defined as \eqref{equation:MM-in-coarse-step}, \eqref{equation:MM-in-first-fine-timestep}-\eqref{equation:MM-in-second-fine-timestep}, \eqref{equation:antithetic-estimator-in-first-fine-step}-\eqref{equation:antithetic-estimator-in-second-fine-step} and \eqref{equation:expression-of-the-average-process-in-lemma}, respectively. 
Let Assumptions \ref{assumption:coercivity-condition}, \ref{assumption:polynomial-growth-condition}, \ref{assumption:condition-on-the-MM} hold. Then
for $p \in [1,p_{1})\cap [1, \frac{p_{0}}{\max\{2\gamma-1, \alpha_{1}, \alpha_{2}, \mathbf{a} \}}] $,
\begin{equation}\label{eq:lemma:suboptimal-estimate}
\sup_{0 \leq n \leq 2^{\ell -1} } 
\mathbb{E}
\left[
\big\| \overline{Y}_{n}^{f}   - {Y}_{n}^{c} \big\|^{2p}
\right] 
\leq Ch^{p}.
\end{equation}
\end{lemma}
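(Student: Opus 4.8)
The plan is to obtain the claimed $\mathcal{O}(h^{p})$ bound as an essentially immediate corollary of the one-half strong convergence rate of the MM scheme already proved in Theorem \ref{theorem:strong-convergence-rates}, combined with the exact symmetry of the antithetic coupling. This is only the \emph{sub-optimal} estimate: the genuinely delicate work — replacing $h^{p}$ by $h^{2p}$ so that $\beta = 2$ in the complexity theorem — is postponed to Lemma \ref{lemma:final-variance-estimate}, which will take the present bound as an input.

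First I would record the antithetic coupling identity that already appears at the start of the proof of Lemma \ref{lemma:strong-convergence-rate-of-yf-and-ya}: conditional on the coarse Brownian increments $\{\Delta W_{n}\}$, the fine and antithetic paths $\{Y^{f}\}$, $\{Y^{a}\}$ are produced by one and the same measurable map applied to $(\delta W_{n},\delta W_{n+1/2})_{n}$ and to its coordinate-swapped version $(\delta W_{n+1/2},\delta W_{n})_{n}$ respectively, whereas $\{Y^{c}\}$ is a function of the swap-invariant data $\{\Delta W_{n}\}$ alone; since $\delta W_{n}$ and $\delta W_{n+1/2}$ are i.i.d., the pairs $(Y^{f},Y^{c})$ and $(Y^{a},Y^{c})$ have the same law, so
\[
\big\|Y_{n}^{a}-Y_{n}^{c}\big\|_{L^{2p}(\Omega,\mathbb{R}^{d})}
=\big\|Y_{n}^{f}-Y_{n}^{c}\big\|_{L^{2p}(\Omega,\mathbb{R}^{d})}.
\]
Then, writing $\overline{Y}_{n}^{f}=\tfrac{1}{2}(Y_{n}^{f}+Y_{n}^{a})$ and using the triangle inequality, I would reduce the quantity to be estimated to $\|Y_{n}^{f}-Y_{n}^{c}\|_{L^{2p}(\Omega,\mathbb{R}^{d})}$, since
\[
\big\|\overline{Y}_{n}^{f}-Y_{n}^{c}\big\|_{L^{2p}(\Omega,\mathbb{R}^{d})}
\leq \tfrac{1}{2}\big\|Y_{n}^{f}-Y_{n}^{c}\big\|_{L^{2p}(\Omega,\mathbb{R}^{d})}
    +\tfrac{1}{2}\big\|Y_{n}^{a}-Y_{n}^{c}\big\|_{L^{2p}(\Omega,\mathbb{R}^{d})}.
\]
Finally, since $\{Y_{n}^{c}\}$ is the MM scheme \eqref{equation:modified-milstein-method} run with timestep $h$ and $\{Y_{n}^{f}\}$ is the value at $t_{n}=nh$ of the MM scheme run with timestep $h/2$ (each step of $\{Y^{f}\}$ being two ordinary MM half-steps), both driven by the same Brownian motion $W$ with solution $X$, I would apply Theorem \ref{theorem:strong-convergence-rates} twice and a further triangle inequality, $\|Y_{n}^{f}-Y_{n}^{c}\|_{L^{2p}(\Omega,\mathbb{R}^{d})}\le \|Y_{n}^{f}-X_{t_{n}}\|_{L^{2p}(\Omega,\mathbb{R}^{d})}+\|X_{t_{n}}-Y_{n}^{c}\|_{L^{2p}(\Omega,\mathbb{R}^{d})}\le Ch^{1/2}$; raising to the power $2p$ and taking the supremum over $0\le n\le 2^{\ell-1}$ gives \eqref{eq:lemma:suboptimal-estimate}. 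Note that the admissible range of $p$ in the lemma is exactly the one in Theorem \ref{theorem:strong-convergence-rates}, so nothing is lost.

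I do not expect a serious obstacle here — this is the cheap estimate. The only two points calling for a line of justification are the antithetic coupling identity $Y^{a}-Y^{c}\stackrel{d}{=}Y^{f}-Y^{c}$ and the routine remark that the dyadic sub-sample $\{Y_{n}^{f}\}_{n}$ of the fine run is itself covered by Theorem \ref{theorem:strong-convergence-rates}; all the hard analysis (the moment bounds of Theorem \ref{theorem:moments-bound-of-MM} and the auxiliary-process decomposition \eqref{equation:strong-convergence-decomposition}) is already absorbed into that theorem. The real difficulty of the variance analysis lies one lemma further on, in Lemma \ref{lemma:final-variance-estimate}, where the present $\mathcal{O}(h)$ bound is bootstrapped to $\mathcal{O}(h^{2})$ by careful pathwise cancellation on the mesh.
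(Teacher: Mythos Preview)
Your proposal is correct and follows essentially the same approach as the paper: the paper's proof also writes $\overline{Y}_{n}^{f}-Y_{n}^{c}=\tfrac{1}{2}(Y_{n}^{f}-Y_{n}^{c})+\tfrac{1}{2}(Y_{n}^{a}-Y_{n}^{c})$, invokes the distributional identity $(Y^{a},Y^{c})\stackrel{d}{=}(Y^{f},Y^{c})$ to reduce to $\|Y_{n}^{f}-Y_{n}^{c}\|_{L^{2p}}$, and then inserts the exact solution $X_{t_{n}}$ and applies Theorem \ref{theorem:strong-convergence-rates} to each piece. Your commentary on the role of this lemma (sub-optimal input to the bootstrap in Lemma \ref{lemma:final-variance-estimate}) is also exactly how the paper positions it.
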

\begin{proof}[Proof of Lemma \ref{lemma:general-error-estimate}]
As indicated in the proof of Lemma \ref{lemma:strong-convergence-rate-of-yf-and-ya}, for 
$p \in [1,p_{1})\cap [1, \frac{p_{0}}{\max\{2\gamma-1, \alpha_{1}, \alpha_{2}, \mathbf{a} \}}] $, it follows from Theorem \ref{theorem:strong-convergence-rates} that
\begin{equation}
\begin{aligned}
\Big\|  
\overline{Y}_{n}^{f} - {Y}_{n}^{c}  
\Big\|_{L^{2p}(\Omega, \mathbb{R}^{d})}  
&=   
\Big\| 
\tfrac{1}{2}\left({Y}_{n}^{f} - {Y}_{n}^{c}\right) 
+ \tfrac{1}{2}\left({Y}_{n}^{a} - {Y}_{n}^{c}\right) 
\Big\|_{L^{2p}(\Omega, \mathbb{R}^{d})} \\
&\leq  
\left\| 
{Y}_{n}^{f} - {Y}_{n}^{c} 
\right\|_{L^{2p}(\Omega, \mathbb{R}^{d})} \\
& \leq 
\left\|  
Y^{f}_{n} - X_{t_{n}}  
\right\|_{L^{2p}(\Omega, \mathbb{R}^{d})} 
+\left\|  
{Y}_{n}^{c} - X_{t_{n}}  
\right\|_{L^{2p}(\Omega, \mathbb{R}^{d})} \\
&\leq  
Ch^{\frac{1}{2}}.
\end{aligned}
\end{equation}
The proof is completed.
\end{proof}

The Taylor expansion formula, which will be used frequently throughout this section, is recalled as follows, for any differentiable functions $\psi : \R^{d} \rightarrow \R^{d}$,
\begin{small}
\begin{equation} \label{equation:taylor-expansion-in-mlmc}
\begin{aligned}
&\psi\big(Y_{n+1/2}^{f}\big) 
-\psi\big(
\mathscr{P}\big(Y_{n}^{f} \big)
\big)\\
&= 
\tfrac{\partial \psi}{\partial y}
\left( 
\mathscr{P}\big(Y_{n}^{f} \big)
\right)
\Big(
Y_{n+1/2}^{f} - \mathscr{P}\big(Y_{n}^{f} \big) 
\Big) 
+ \mathcal{R}_{\psi}
\left(
\mathscr{P}\big(Y_{n}^{f} \big) , Y_{n+1/2}^{f}
\right)\\
&= \underbrace{
\tfrac{\partial \psi}{\partial y}
\big(
\mathscr{P}\big(Y_{n}^{f} \big)
\big)
\sigma_{\frac{h}{2}}
\big(
\mathscr{P}\big(Y_{n}^{f} \big)
\big) \delta W_{n}
}_{=:\mathcal{M}_{n+1/2,f}^{(\psi)}} \\
& \ + 
\underbrace{ 
 \tfrac{\partial \psi}{\partial y}
 \big(
 \mathscr{P}\big(Y_{n}^{f} \big)
 \big)  
 \bigg( 
 \mu_{\frac{h}{2}}
 \big(
 \mathscr{P}\big(Y_{n}^{f} \big)
 \big) \tfrac{h}{2} 
 + \sum_{j'_{1},j'_{2}=1}^{m} 
 \left(
 \mathcal{L}^{j'_{1}} \sigma_{j'_{2}}
 \right)_{\frac{h}{2}}
 \big(
 \mathscr{P}\big(Y_{n}^{f} \big)
 \big)
 \Pi_{j'_{1},j'_{2}}^{t_{n}, t_{n+1/2}}
 \bigg)
 + 
 \mathcal{R}_{\psi}
 \left(
 \mathscr{P}\big(Y_{n}^{f} \big), Y_{n+1/2}^{f}
 \right) 
 }_{=:\mathcal{N}_{n+1/2,f}^{(\psi)}},
\end{aligned}
\end{equation}
\end{small}
where
\begin{equation} \label{equation:taylor-expansion-in-section-antithetic-mlmc}
\begin{aligned}
&\mathcal{R}_{\psi}
\left(
\mathscr{P}\big(Y_{n}^{f} \big), Y_{n+1/2}^{f}
\right) \\
&:=
\int_{0}^{1}
\left[
\tfrac{\partial \psi}{\partial x}
\left(
\mathscr{P}\big(Y_{n}^{f} \big)
+r\big
(Y_{n+1/2}^{f}
-\mathscr{P}\big(Y_{n}^{f} \big)
\big)
\right)
-\tfrac{\partial \psi}{\partial x}
\big(
\mathscr{P}\big(Y_{n}^{f} \big)
\big)
\right]
\left(Y_{n+1/2}^{f}-\mathscr{P}\big(Y_{n}^{f} \big)\right) 
\mathrm{d} r  .
\end{aligned}
\end{equation}
Analogously, recalling \eqref{equation:antithetic-estimator-in-first-fine-step} one can show
\begin{small}
\begin{equation} \label{equation:taking-conditional-expectation-of-ya-in-mlmc}
\begin{aligned}
&\psi\big(Y_{n+1/2}^{a}\big) 
-\psi\big(\mathscr{P}\big(Y_{n}^{a}\big)\big)\\
&= \underbrace{
\tfrac{\partial \psi}{\partial y}
\big(
\mathscr{P}\big(Y_{n}^{a}\big)
\big)
\sigma_{\frac{h}{2}}
\big(
\mathscr{P}\big(Y_{n}^{a}\big)
\big) \delta W_{n+1/2}
}_{=:\mathcal{M}_{n+1,a}^{(\psi)}} \\
&\ + 
\underbrace{ 
\tfrac{\partial \psi}{\partial y}
\big(
\mathscr{P}\big(Y_{n}^{a}\big)
\big)  
\bigg( 
\mu_{\frac{h}{2}}
\big(
\mathscr{P}\big(Y_{n}^{a}\big)
\big) \tfrac{h}{2} 
+ \sum_{j'_{1},j'_{2}=1}^{m} 
\left(
\mathcal{L}^{j'_{1}} \sigma_{j'_{2}}
\right)_{\frac{h}{2}}
\big(
\mathscr{P}\big(Y_{n}^{a}\big)
\big)
\Pi_{j'_{1},j'_{2}}^{t_{n+1/2}, t_{n+1}}
\bigg)
+ 
\mathcal{R}_{\psi}
\left(
\mathscr{P}\big(Y_{n}^{a}\big), Y_{n+1/2}^{a}
\right) 
}_{=:\mathcal{N}_{n+1,a}^{(\psi)}}.
\end{aligned}
\end{equation}
\end{small}
In the following, we define several short-hand notations.  
For any $ x,y \in \mathbb{R}^{d}$, we denote
\begin{equation}\label{equation:projected-corrected-function}
\begin{aligned}
&\mathcal{E}_{\mu}(x)
:= \mu(x)- \mu(\mathscr{P}(x)), 
\quad 
\mathcal{E}_{\sigma}(x)
:= \sigma(x)- \sigma(\mathscr{P}(x)),\\ &\mathcal{E}_{\mathcal{L}_{j_{1}j_{2}}}(x)
:= \mathcal{L}_{j_{1}}\sigma_{j_{2}}(x) 
- \mathcal{L}_{j_{1}j}\sigma_{j_{2}}(\mathscr{P}(x)), 
\quad
\Delta \mathscr{P}(x,y)
:= \tfrac{1}{2} 
\left(
\mathscr{P}(x) + \mathscr{P}(y) 
\right) 
- \mathscr{P}\big( \tfrac{1}{2}(x+y) \big).
\end{aligned}
\end{equation}
Now we give a representation of the finer level approximation $\{Y^{f}\}$   over the coarser timestep.
\begin{lemma}  \label{lemma:expression-of-yf-in-a-coarse-timestep}
Let Assumptions \ref{assumption:coercivity-condition}, \ref{assumption:polynomial-growth-condition}, \ref{assumption:condition-on-the-MM} be fulfilled,
the finer path approximation  $\{Y^{f}\}$ defined as \eqref{equation:MM-in-first-fine-timestep}-\eqref{equation:MM-in-second-fine-timestep}, can be expressed as 
\begin{small}
\begin{equation} \label{equation:expression-of-yf-in-lemma}
\begin{aligned}
Y_{n+1}^{f} 
&= 
\mathscr{P}\big(Y_{n}^{f} \big)
+\mu_{h}\big(\mathscr{P}\big(Y_{n}^{f} \big)\big)h 
+ \sigma_{h}
\big(\mathscr{P}\big(Y_{n}^{f} \big)\big) \Delta W_{n} 
+  \sum_{j_{1},j_{2}=1}^{m} 
\left(
\mathcal{L}^{j_{1}} \sigma_{j_{2}}
\right)_{h}
\big(
\mathscr{P}\big(Y_{n}^{f} \big)
\big)
\Pi_{j_{1}, j_{2}}^{t_{n}, t_{n+1}} \\
&\ -\tfrac{1}{2}\sum_{j_{1},j_{2}=1}^{m}
\left(
\mathcal{L}^{j_{1}} \sigma_{j_{2}}
\right)_{h}
\big(
\mathscr{P}\big(Y_{n}^{f} \big)
\big) 
\left(
\delta W_{j_{1}, n} \delta W_{j_{2}, n+1/2} 
- \delta W_{j_{1}, n+1/2} \delta W_{j_{2}, n} 
\right) & \\
& \ +
\underbrace{
M_{n+1,f}^{(1)} +M_{n+1,f}^{(2)} 
+ \tfrac{h}{2}
\mathcal{M}_{n+1/2,f}^{(\mu)}
}_{=:M_{n+1}^{f}} \\
&\ + 
\underbrace{
  \bigg(
  \mathcal{N}_{n+1/2,f}^{(\mu)} + \mathbb{T}_{\frac{h}{2},\mu}
  \left(
  \mathscr{P}\big(Y_{n}^{f} \big)
  \right) 
  - 
  \mathbb{T}_{\frac{h}{2},\mu}
  \Big(\mathscr{P}\big(Y_{n+1/2}^{f} \big) \Big) 
  + 2\mathbb{T}_{h,\mu}
  \big(\mathscr{P}\big(Y_{n}^{f} \big)\big)
  \bigg)\tfrac{h}{2}
  - \mathcal{E}(Y^{f}_{n+1/2})
  }_{=:B_{n+1}^{f}} ,
\end{aligned}
\end{equation}
\end{small}
where  
\begin{small}
\begin{equation} \label{equation: martingale term in yf}
\begin{aligned}
M_{n+1,f}^{(1)} 
&= \sigma_{\frac{h}{2}}
\big(
\mathscr{P}\big(Y_{n}^{f} \big)
\big) \delta W_{n} 
+ \sigma_{\frac{h}{2}}
\big(Y_{n+1/2}^{f}\big) \delta W_{n+1/2} 
- \sigma_{h}
\big(
\mathscr{P}\big(Y_{n}^{f} \big)
\big) \Delta W_{n} \\
& \hspace{15em}
- \sum_{j_{1}, j_{2}=1}^{m}
\left(
\mathcal{L}^{j_{1}} \sigma_{j_{2}}
\right)_{h}
\big(
\mathscr{P}\big(Y_{n}^{f} \big)
\big) \delta W_{j_{2}, n}
\delta W_{j_{1}, n+1/2}, \\
M_{n+1,f}^{(2)} 
&=
\sum_{j_{1},j_{2}=1}^{m} 
\bigg[
\left(
\left(  
\mathcal{L}^{j_{1}} \sigma_{j_{2}}
\right)_{\frac{h}{2}}
\big(
\mathscr{P}\big(Y_{n}^{f} \big)
\big) 
- \left(
\mathcal{L}^{j_{1}} \sigma_{j_{2}}
\right)_{h}
\big(
\mathscr{P}\big(Y_{n}^{f} \big)
\big) 
\right) \Pi_{j_{1}, j_{2}}^{t_{n}, t_{n+1/2}} \\
& \hspace{12em} 
+ \left(
\left(
\mathcal{L}^{j_{1}} \sigma_{j_{2}}
\right)_{\frac{h}{2}}
\big(
\mathscr{P}\big(Y_{n+1/2}^{f} \big)
\big)
- 
\left(
\mathcal{L}^{j_{1}} \sigma_{j_{2}}
\right)_{h}
\big(
\mathscr{P}\big(Y_{n}^{f} \big)
\big) 
\right)
\Pi_{j_{1}, j_{2}}^{t_{n+1/2}, t_{n+1}}
 \bigg].
\end{aligned}
\end{equation}
\end{small}
In addition, $\mathcal{M}_{n+1/2,f}^{(\mu)}$ and $\mathcal{N}_{n+1/2,f}^{(\mu)}$ are defined by replacing $\psi$ in \eqref{equation:taylor-expansion-in-mlmc} by $\mu$
and $\mathbb{T}_{h,\mu}(\mathscr{P}(Y_{n}^{f} ))$, $\mathbb{T}_{\frac{h}{2},\mu}(\mathscr{P}(Y_{n}^{f} ))$, $\mathbb{T}_{\frac{h}{2},\mu}(\mathscr{P}(Y_{n+1/2}^{f} ) )$ and $\mathcal{E}(Y^{f}_{n+1/2})$ are defined by \eqref{equation:tamed-correction-function}.
Moreover,
\begin{equation}
\mathbb{E} [M_{n+1}^{f} | \mathcal{F}_{t_{n}}] 
= 0,
\end{equation}
and, for $p \in [1,\frac{p_{0}}{\max\{3\gamma-2, 2\gamma, \mathbf{a}+\gamma-1, \alpha_{1}, \alpha_{2}, \alpha_{3} \}}]$, 
\begin{equation}
\sup_{0 \leq n \leq 2^{\ell -1}-1 } 
\mathbb{E} 
\left[ 
\big\|  M_{n+1}^{f}   \big\| ^{2p}
\right] 
\leq Ch^{3p},
\quad 
\sup_{0 \leq n \leq 2^{\ell -1}-1 } 
\mathbb{E} 
\left[
\big\|  B_{n+1}^{f}   \big\| ^{2p}
\right] 
\leq Ch^{4p}.
\end{equation}
\end{lemma}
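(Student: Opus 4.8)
The plan is to prove the decomposition \eqref{equation:expression-of-yf-in-lemma} by direct substitution combined with Taylor expansion, and then to control $M_{n+1}^{f}$ and $B_{n+1}^{f}$ separately: $B_{n+1}^{f}$ because every one of its summands visibly carries a factor $h^{2}$, and $M_{n+1}^{f}$ by first verifying it is $\mathcal{F}_{t_{n}}$-conditionally centred and then extracting enough cancellation that only genuinely $O(h^{3/2})$ pieces survive. First I would insert the first fine half-step \eqref{equation:MM-in-first-fine-timestep} into the second \eqref{equation:MM-in-second-fine-timestep}, writing $\mathscr{P}(Y_{n+1/2}^{f})=Y_{n+1/2}^{f}-\mathcal{E}(Y_{n+1/2}^{f})$ in the leading position and substituting the explicit form of $Y_{n+1/2}^{f}$. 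The coefficient functions evaluated at $\mathscr{P}(Y_{n+1/2}^{f})$ are then expanded about $\mathscr{P}(Y_{n}^{f})$: for $\mu$ via the Taylor formula \eqref{equation:taylor-expansion-in-mlmc}, which isolates the leading $O(h^{1/2})$ martingale part $\mathcal{M}_{n+1/2,f}^{(\mu)}=\tfrac{\partial\mu}{\partial y}(\mathscr{P}(Y_{n}^{f}))\sigma_{h/2}(\mathscr{P}(Y_{n}^{f}))\delta W_{n}$ and an $O(h)$ remainder $\mathcal{N}_{n+1/2,f}^{(\mu)}$ (the latter containing the second-order term $\mathcal{R}_{\mu}$); for $\sigma$ via the analogous expansion, which produces the constituents of $M_{n+1,f}^{(1)}$; and for each $(\mathcal{L}^{j_{1}}\sigma_{j_{2}})_{h}$ merely via a Lipschitz estimate, since these already carry a factor $\Pi^{t_{\cdot},t_{\cdot}}=O(h)$, their coefficient differences forming $M_{n+1,f}^{(2)}$. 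Throughout, modified coefficients are replaced by the true ones plus the taming corrections $\mathbb{T}_{h,\cdot}$ of \eqref{equation:tamed-correction-function}. The combinatorial core of the rearrangement is $\delta W_{n}+\delta W_{n+1/2}=\Delta W_{n}$ together with $\Pi_{j_{1},j_{2}}^{t_{n},t_{n+1}}=\Pi_{j_{1},j_{2}}^{t_{n},t_{n+1/2}}+\Pi_{j_{1},j_{2}}^{t_{n+1/2},t_{n+1}}+\tfrac{1}{2}\big(\delta W_{j_{1},n}\delta W_{j_{2},n+1/2}+\delta W_{j_{1},n+1/2}\delta W_{j_{2},n}\big)$, got by expanding $\Delta W_{j,n}=\delta W_{j,n}+\delta W_{j,n+1/2}$; these let me match the bilinear terms that arise naturally against the coarse-step quantity $\sum_{j_{1},j_{2}}(\mathcal{L}^{j_{1}}\sigma_{j_{2}})_{h}(\mathscr{P}(Y_{n}^{f}))\Pi_{j_{1},j_{2}}^{t_{n},t_{n+1}}$ and leave precisely the antisymmetric L\'evy-area-type term $-\tfrac{1}{2}\sum_{j_{1},j_{2}}(\mathcal{L}^{j_{1}}\sigma_{j_{2}})_{h}(\mathscr{P}(Y_{n}^{f}))(\delta W_{j_{1},n}\delta W_{j_{2},n+1/2}-\delta W_{j_{1},n+1/2}\delta W_{j_{2},n})$ displayed in \eqref{equation:expression-of-yf-in-lemma}; every leftover discrepancy --- between the $h$- and $\tfrac{h}{2}$-modifications, the drift remainder, the taming corrections, and the projection defect $\mathcal{E}(Y_{n+1/2}^{f})$ --- is then booked either into $M_{n+1,f}^{(1)}$, $M_{n+1,f}^{(2)}$, $\tfrac{h}{2}\mathcal{M}_{n+1/2,f}^{(\mu)}$ (order $h^{3/2}$) or into $B_{n+1}^{f}$ (order $h^{2}$).

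Next, the centring identity $\mathbb{E}[M_{n+1}^{f}\mid\mathcal{F}_{t_{n}}]=0$ will follow from repeated use of the tower property with the intermediate $\sigma$-algebra $\mathcal{F}_{t_{n+1/2}}$: in every summand the coefficient is $\mathcal{F}_{t_{n}}$- or $\mathcal{F}_{t_{n+1/2}}$-measurable, whereas $\delta W_{n}$ and $\delta W_{n+1/2}$ are centred and independent of $\mathcal{F}_{t_{n}}$, respectively $\mathcal{F}_{t_{n+1/2}}$, and $\mathbb{E}[\Pi_{j_{1},j_{2}}^{t_{n},t_{n+1/2}}\mid\mathcal{F}_{t_{n}}]=\mathbb{E}[\Pi_{j_{1},j_{2}}^{t_{n+1/2},t_{n+1}}\mid\mathcal{F}_{t_{n+1/2}}]=0$; the mixed products are handled by $\mathbb{E}[\delta W_{j_{2},n}\delta W_{j_{1},n+1/2}\mid\mathcal{F}_{t_{n}}]=\mathbb{E}[\delta W_{j_{2},n}\,\mathbb{E}[\delta W_{j_{1},n+1/2}\mid\mathcal{F}_{t_{n+1/2}}]\mid\mathcal{F}_{t_{n}}]=0$.

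Finally, for the moment bounds I would estimate each summand in $L^{2p}(\Omega,\mathbb{R}^{d})$ by H\"older's inequality together with the Gaussian moments $\mathbb{E}[\|\delta W_{n}\|^{2q}]\le Ch^{q}$ and $\mathbb{E}[|\Pi_{j_{1},j_{2}}^{t_{n},t_{n+1/2}}|^{2q}]\le Ch^{2q}$, the polynomial-growth bounds \eqref{equation:growth-of-partial-drift-and-diffusion}--\eqref{equation:growth-of-milstein-diffusion-term}, estimate \eqref{equation:growth-of-partial-drift-and-diffusion-in-assumption}, conditions \textit{(3)} and \textit{(5)} of Assumption \ref{assumption:condition-on-the-MM}, and the uniform moment bounds of Theorem \ref{theorem:moments-bound-of-MM} for $Y_{n}^{f}$ and $Y_{n+1/2}^{f}$. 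For $B_{n+1}^{f}$ each term is, up to a coefficient with finite moments, either $\tfrac{h}{2}$ times an $O(h)$ quantity --- this covers $\mathcal{N}_{n+1/2,f}^{(\mu)}$, whose pieces ($\mu_{h/2}\tfrac{h}{2}$, an $O(h)$ $\Pi$-term, and $\mathcal{R}_{\mu}$, the last controlled by \eqref{equation:growth-of-partial-drift-and-diffusion-in-assumption}) are all $O(h)$, and the $\mathbb{T}$-differences, which are $O(h)$ by condition \textit{(5)} --- or the projection defect $\mathcal{E}(Y_{n+1/2}^{f})$, which is $O(h^{2})$ by condition \textit{(3)}; hence $\mathbb{E}[\|B_{n+1}^{f}\|^{2p}]\le Ch^{4p}$. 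For $M_{n+1}^{f}$, the term $\tfrac{h}{2}\mathcal{M}_{n+1/2,f}^{(\mu)}$ is $h$ times an $O(h^{1/2})$ quantity, and $M_{n+1,f}^{(2)}$ is a difference of $\tfrac{h}{2}$- and $h$-modified $\mathcal{L}^{j_{1}}\sigma_{j_{2}}$ (size $O(h)$ by condition \textit{(5)}) against an $O(h)$ $\Pi$-term, so both are $O(h^{3/2})$ in $L^{2p}$. \textbf{The main obstacle} is $M_{n+1,f}^{(1)}$, whose crude bound is only $O(h)$: the summand $[\sigma_{h/2}(Y_{n+1/2}^{f})-\sigma_{h/2}(\mathscr{P}(Y_{n}^{f}))]\delta W_{n+1/2}$ is a product of an $O(h^{1/2})$ difference with an $O(h^{1/2})$ increment. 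The resolution --- and the one place where the algebra of the first step must be carried out precisely --- is to Taylor-expand $\sigma_{h/2}$ about $\mathscr{P}(Y_{n}^{f})$ and use $\tfrac{\partial\sigma_{j_{2}}}{\partial x}\sigma_{j_{1}}=\mathcal{L}^{j_{1}}\sigma_{j_{2}}$, so that the $O(h)$ bilinear contribution is cancelled, modulo the antisymmetric L\'evy-area piece already split off in \eqref{equation:expression-of-yf-in-lemma}, by the Milstein cross term $-\sum_{j_{1},j_{2}}(\mathcal{L}^{j_{1}}\sigma_{j_{2}})_{h}(\mathscr{P}(Y_{n}^{f}))\delta W_{j_{2},n}\delta W_{j_{1},n+1/2}$ deliberately placed in $M_{n+1,f}^{(1)}$, leaving only $O(h^{3/2})$ remainders (differences $\sigma_{h/2}-\sigma_{h}$ and $(\mathcal{L}^{j_{1}}\sigma_{j_{2}})_{h/2}-(\mathcal{L}^{j_{1}}\sigma_{j_{2}})_{h}$, each $O(h)$ times an $O(h^{1/2})$ increment, and the second-order remainder $\mathcal{R}_{\sigma}$). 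This gives $\mathbb{E}[\|M_{n+1}^{f}\|^{2p}]\le Ch^{3p}$; the stated upper bound on $p$ (equivalently the lower bound on $p_{0}$) is exactly what keeps all polynomial moments of $\mathscr{P}(Y_{n}^{f})$ and $Y_{n+1/2}^{f}$ appearing above of order at most $2p_{0}$, hence finite and uniformly bounded by Theorem \ref{theorem:moments-bound-of-MM}, and since $h_{\ell-1}=2h_{\ell}\le 2T$ the constants may be taken uniform in $n$ and $\ell$.
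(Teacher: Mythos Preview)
Your proposal is correct and follows essentially the same approach as the paper: direct substitution of the two half-steps, the identity $\Pi_{j_{1},j_{2}}^{t_{n},t_{n+1}}=\Pi_{j_{1},j_{2}}^{t_{n},t_{n+1/2}}+\Pi_{j_{1},j_{2}}^{t_{n+1/2},t_{n+1}}+\tfrac{1}{2}(\delta W_{j_{1},n}\delta W_{j_{2},n+1/2}+\delta W_{j_{1},n+1/2}\delta W_{j_{2},n})$, Taylor expansion of $\mu$ and $\sigma$ about $\mathscr{P}(Y_{n}^{f})$, and the key cancellation in $M_{n+1,f}^{(1)}$ via $\mathcal{L}^{j_{1}}\sigma_{j_{2}}=\tfrac{\partial\sigma_{j_{2}}}{\partial x}\sigma_{j_{1}}$. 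The paper's proof differs only cosmetically---it phrases the centring argument via independence of $\delta W_{n},\delta W_{n+1/2}$ from $\mathscr{P}(Y_{n}^{f}),\mathscr{P}(Y_{n+1/2}^{f})$ rather than through the tower property with $\mathcal{F}_{t_{n+1/2}}$, and it writes out the intermediate $\mathcal{E}_{\sigma}$ and $\mathcal{E}_{\mathcal{L}_{j_{1}j_{2}}}$ corrections explicitly when decomposing $M_{n+1,f}^{(1)}$ and $M_{n+1,f}^{(2)}$---but the structure and all essential estimates are the same.
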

The proof of Lemma  \ref{lemma:expression-of-yf-in-a-coarse-timestep} is deferred to Appendix \ref{proof:expression-of-yf-in-a-coarse-timestep}.
The following lemma is to show the corresponding representation for the antithetic counterpart $\{Y^{a}\}$ in the coarser step.
\begin{lemma} \label{lemma:expression-of-ya-in-a-coarse-timestep}
Let Assumptions \ref{assumption:coercivity-condition}, \ref{assumption:polynomial-growth-condition}, \ref{assumption:condition-on-the-MM} hold,
the antithetic estimator $\{{Y}^{a}\}$, defined as \eqref{equation:antithetic-estimator-in-first-fine-step}-\eqref{equation:antithetic-estimator-in-second-fine-step}, can be expressed  as 
\begin{small}
\begin{equation}
\label{equation:expression-of-ya-in-a-coarse-timestep}
\begin{aligned}
Y_{n+1}^{a} 
&= 
\mathscr{P}\big(Y_{n}^{a}\big) 
+ \mu_{h}
\big(
\mathscr{P}\big(Y_{n}^{a}\big)
\big)h 
+ 
\sigma_{h}
\big(
\mathscr{P}\big(Y_{n}^{a}\big)
\big) \Delta W_{n} 
+ 
\sum_{j_{1},j_{2}=1}^{m} 
\left(
\mathcal{L}^{j_{1}} \sigma_{j_{2}}
\right)_{h}
\big(
\mathscr{P}\big(Y_{n}^{a}\big)
\big)
\Pi_{j_{1}, j_{2}}^{t_{n}, t_{n+1}} \\
&\ +
\tfrac{1}{2}
\sum_{j_{1},j_{2}=1}^{m} 
\left(
\mathcal{L}^{j_{1}} \sigma_{j_{2}}
\right)_{h}
\big(
\mathscr{P}\big(Y_{n}^{a}\big)
\big) 
\left(
\delta W_{j_{1}, n} \delta W_{j_{2}, n+1/2} 
- \delta W_{j_{1}, n+1/2} \delta W_{j_{2}, n} 
\right) & \\
&\ +
\underbrace{
M_{n+1,a}^{(1)} 
+M_{n+1,a}^{(2)} 
+ \tfrac{h}{2}\mathcal{M}_{n+1,a}^{(\mu)}
}_{=:M_{n+1}^{a}} \\
&\ + 
\underbrace{
\left(
\mathcal{N}_{n+1,a}^{(\mu)} 
+ \mathbb{T}_{\frac{h}{2},\mu}
(
\mathscr{P}\big(Y_{n}^{a}\big)
) 
- 
\mathbb{T}_{\frac{h}{2},\mu}
\big(
\mathscr{P}\big(Y_{n+1/2}^{a}\big)
\big) 
+ 2\mathbb{T}_{h,\mu}
(
\mathscr{P}\big(Y_{n}^{a}\big)
)
\right)\tfrac{h}{2} 
-\mathcal{E}(Y^{a}_{n+1/2}) 
}_{=:B_{n+1}^{a}} ,
\end{aligned}
\end{equation}
\end{small}
where
\begin{small}
\begin{equation} \label{equation:martingale-term-in-ya}
\begin{aligned}
M_{n+1,a}^{(1)} 
&= 
\sigma_{\frac{h}{2}}
\big(
\mathscr{P}\big(Y_{n}^{a}\big)
\big) \delta W_{n+1/2} 
+ \sigma_{\frac{h}{2}}
\big(Y_{n+1/2}^{a}\big) \delta W_{n} 
- \sigma_{h}
\big(
\mathscr{P}\big(Y_{n}^{a}\big)
\big) \Delta W_{n} \\
&\hspace{15em}
- \sum_{j_{1}, j_{2}=1}^{m}
\left(
\mathcal{L}^{j_{1}} \sigma_{j_{2}}
\right)_{h}
\big(
\mathscr{P}\big(Y_{n}^{a}\big)
\big) \delta W_{j_{1}, n}\delta W_{j_{2}, n+1/2}, \\
M_{n+1,a}^{(2)} 
&=\sum_{j_{1},j_{2}=1}^{m} 
\bigg[ 
\left(
\left(  
\mathcal{L}^{j_{1}} \sigma_{j_{2}}
\right)_{\frac{h}{2}}
\big(
\mathscr{P}\big(Y_{n}^{a}\big)
\big) 
- 
\left(
\mathcal{L}^{j_{1}} \sigma_{j_{2}}
\right)_{h}
\big(
\mathscr{P}\big(Y_{n}^{a}\big)
\big) 
\right) \Pi_{j_{1}, j_{2}}^{t_{n+1/2}, t_{n+1}} \\
& \hspace{12em} 
+ 
\left( 
\left(
\mathcal{L}^{j_{1}} \sigma_{j_{2}}
\right)_{\frac{h}{2}}
\big(
\mathscr{P}\big(Y_{n+1/2}^{a}\big)
\big)
- 
\left(
\mathcal{L}^{j_{1}} \sigma_{j_{2}}
\right)_{h}
\big(
\mathscr{P}\big(Y_{n}^{a}\big)
\big) 
\right)
\Pi_{j_{1}, j_{2}}^{t_{n}, t_{n+1/2}}
 \bigg].
\end{aligned}
\end{equation}
\end{small}
In addition, $\mathcal{M}_{n+1,a}^{(\mu)}$ and $\mathcal{N}_{n+1,a}^{(\mu)}$ are defined by replacing $\psi$ in \eqref{equation:taking-conditional-expectation-of-ya-in-mlmc} by $\mu$. The notations $\mathbb{T}_{h,\mu}(\mathscr{P}(Y_{n}^{a}))$, $\mathbb{T}_{\frac{h}{2},\mu}(\mathscr{P}\big(Y_{n}^{a}\big))$, $\mathbb{T}_{\frac{h}{2},\mu}(\mathscr{P}(Y_{n+1/2}^{a}))$ and $\mathcal{E}(Y^{a}_{n+1/2})$ adopt a similar definition to \eqref{equation:tamed-correction-function}.
Moreover,
\begin{equation} \label{equation:estimate-of-martingale-term-of-ya}
\mathbb{E} [M_{n+1}^{a} | \mathcal{F}_{t_{n}}] 
= 0,
\end{equation}
and, for $p \in [1,\frac{p_{0}}{\max\{3\gamma-2, 2\gamma, \mathbf{a}+\gamma-1, \alpha_{1}, \alpha_{2}, \alpha_{3} \}}]$,
\begin{equation}
\sup_{0 \leq n \leq 2^{\ell -1}-1 } 
\mathbb{E} 
\left[ 
\big\|  M_{n+1}^{a}   \big\| ^{2p}
\right] \leq Ch^{3p},
\quad 
\sup_{0 \leq n \leq 2^{\ell -1}-1} 
\mathbb{E} 
\left[
\big\|  B_{n+1}^{a}   \big\| ^{2p}
\right] 
\leq Ch^{4p}.
\end{equation}
\end{lemma}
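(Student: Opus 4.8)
\emph{Proof strategy.} My plan is to exploit the symmetry built into the antithetic construction so that the statement reduces to Lemma~\ref{lemma:expression-of-yf-in-a-coarse-timestep}. By inspection of \eqref{equation:antithetic-estimator-in-first-fine-step}--\eqref{equation:antithetic-estimator-in-second-fine-step} against \eqref{equation:MM-in-first-fine-timestep}--\eqref{equation:MM-in-second-fine-timestep}, the recursion for $\{Y^{a}\}$ is obtained from that for $\{Y^{f}\}$ by interchanging, in every coarse interval, the two half-step Brownian increments $\delta W_{n}$ and $\delta W_{n+1/2}$. Let $T\colon\Omega\to\Omega$ be the involution that swaps $\delta W_{k}\leftrightarrow\delta W_{k+1/2}$ for every $k$; since these half-increments are i.i.d. Gaussian, $T$ is measure preserving, and an easy induction on $n$ gives $Y^{a}_{n}=Y^{f}_{n}\circ T$ and $Y^{a}_{n+1/2}=Y^{f}_{n+1/2}\circ T$ (and leaves $Y^{c}$ invariant, as it sees $W$ only through the full increments $\Delta W_{n}=\delta W_{n}+\delta W_{n+1/2}$).

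The first step is to derive \eqref{equation:expression-of-ya-in-a-coarse-timestep} by composing the representation \eqref{equation:expression-of-yf-in-lemma} of $Y^{f}_{n+1}$ with $T$ term by term. The drift term, the diffusion term $\sigma_{h}(\mathscr{P}(Y^{f}_{n}))\Delta W_{n}$ and the full-step Milstein term are $T$-invariant (they see the Brownian path only through $\Delta W_{n}$ and the symmetric quantity $\Pi^{t_{n},t_{n+1}}_{j_{1},j_{2}}$, together with the harmless relabelling $Y^{f}_{n}\mapsto Y^{a}_{n}$); the antisymmetric cross term $-\tfrac12\sum(\mathcal{L}^{j_{1}}\sigma_{j_{2}})_{h}(\mathscr{P}(Y^{f}_{n}))(\delta W_{j_{1},n}\delta W_{j_{2},n+1/2}-\delta W_{j_{1},n+1/2}\delta W_{j_{2},n})$ picks up a minus sign because the bracket is odd under the swap; and, using in addition $\Pi^{t_{n},t_{n+1/2}}_{j_{1},j_{2}}\circ T=\Pi^{t_{n+1/2},t_{n+1}}_{j_{1},j_{2}}$, one checks directly that $M^{f}_{n+1}\circ T$ and $B^{f}_{n+1}\circ T$ are exactly the quantities $M^{a}_{n+1}$ and $B^{a}_{n+1}$ listed in the statement — term by term this matches $M^{(1)}_{n+1,f},M^{(2)}_{n+1,f},\mathcal{M}^{(\mu)}_{n+1/2,f},\mathcal{N}^{(\mu)}_{n+1/2,f}$ with their $a$-counterparts in \eqref{equation:martingale-term-in-ya} and \eqref{equation:taking-conditional-expectation-of-ya-in-mlmc}. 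This yields \eqref{equation:expression-of-ya-in-a-coarse-timestep}.

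The second step is to transfer the two estimates. Since $T$ preserves $\mathbb{P}$, one has $\mathbb{E}[\|M^{a}_{n+1}\|^{2p}]=\mathbb{E}[\|M^{f}_{n+1}\|^{2p}]$ and $\mathbb{E}[\|B^{a}_{n+1}\|^{2p}]=\mathbb{E}[\|B^{f}_{n+1}\|^{2p}]$, so the bounds $Ch^{3p}$ and $Ch^{4p}$, valid on the same admissible range of $p$, are inherited verbatim (and with the same constant) from Lemma~\ref{lemma:expression-of-yf-in-a-coarse-timestep}. For the martingale property \eqref{equation:estimate-of-martingale-term-of-ya}, note that $T$ acts within each coarse interval, hence $T^{-1}\mathcal{F}_{t_{n}}=\mathcal{F}_{t_{n}}$; the identity $\mathbb{E}[X\circ T\mid\mathcal{G}]=\mathbb{E}[X\mid\mathcal{G}]\circ T$ for a measure-preserving $T$ with $T^{-1}\mathcal{G}=\mathcal{G}$, applied with $X=M^{f}_{n+1}$ and $\mathcal{G}=\mathcal{F}_{t_{n}}$, then gives $\mathbb{E}[M^{a}_{n+1}\mid\mathcal{F}_{t_{n}}]=\mathbb{E}[M^{f}_{n+1}\mid\mathcal{F}_{t_{n}}]\circ T=0$.

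With this reduction the argument is short; the genuine difficulty is hidden inside Lemma~\ref{lemma:expression-of-yf-in-a-coarse-timestep}, whose self-contained proof (carried out in the appendix) instead substitutes the first half-step into the second, Taylor-expands $\mu$ via a relation analogous to \eqref{equation:taking-conditional-expectation-of-ya-in-mlmc}, and rewrites the two half-step Milstein terms using $\Delta W_{n}=\delta W_{n}+\delta W_{n+1/2}$ together with $\Pi^{t_{n},t_{n+1}}_{j_{1},j_{2}}=\Pi^{t_{n},t_{n+1/2}}_{j_{1},j_{2}}+\Pi^{t_{n+1/2},t_{n+1}}_{j_{1},j_{2}}+\tfrac12(\delta W_{j_{1},n}\delta W_{j_{2},n+1/2}+\delta W_{j_{1},n+1/2}\delta W_{j_{2},n})$. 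The main obstacle there is the bookkeeping behind the bound $Ch^{3p}$ on $M^{a}_{n+1}$: after a first-order Taylor expansion of $\sigma(Y^{a}_{n+1/2})-\sigma(\mathscr{P}(Y^{a}_{n}))$, the $O(h)$ contributions inside $M^{(1)}_{n+1,a}$ must be shown to cancel, up to $O(h^{3/2})$ remainders, against the symmetric cross term, while the taming differences $\mathbb{T}_{\cdot,\mu}$ (Assumption~\ref{assumption:condition-on-the-MM}(5)), the projection error $\mathcal{E}(Y^{a}_{n+1/2})$ (Assumption~\ref{assumption:condition-on-the-MM}(3)) and the Taylor remainders $\mathcal{N}^{(\mu)}_{n+1,a},\mathcal{R}_{\mu}$ (the $C^{2}_{b}$ bounds of $\mu$ and the polynomial growth \eqref{equation:growth-of-drift}--\eqref{equation:growth-of-milstein-diffusion-term}) contribute $O(h^{2})$; Theorem~\ref{theorem:moments-bound-of-MM} and Lemma~\ref{lemma:holder-continuity-of-MM} keep all the polynomial moments that appear finite, which is precisely what forces the stated restriction on $p$ through $p_{0}$.
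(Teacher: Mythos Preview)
Your reduction via the measure-preserving involution $T$ is correct and is a genuinely different route from the paper's. The paper does not introduce $T$; instead it declares that the representation \eqref{equation:expression-of-ya-in-a-coarse-timestep} and the moment bounds are obtained by repeating verbatim the computation of Lemma~\ref{lemma:expression-of-yf-in-a-coarse-timestep} (substituting the first antithetic half-step into the second, Taylor-expanding, and regrouping), and it singles out the martingale identity \eqref{equation:estimate-of-martingale-term-of-ya} as the only point needing a separate argument. There the paper checks $\mathbb{E}[M_{n+1}^{a}\mid\mathcal{F}_{t_n}]=0$ term by term: for the delicate summand $\sigma_{h/2}\big(\mathscr{P}(Y_{n+1/2}^{a})\big)\,\delta W_{n}$ it writes $\sigma_{h/2}\big(\mathscr{P}(Y_{n+1/2}^{a})\big)=G\big(\mathscr{P}(Y_{n}^{a}),\delta W_{n+1/2}\big)$, notes that $\mathscr{P}(Y_{n}^{a})$ is $\mathcal{F}_{t_n}$-measurable while $(\delta W_{n},\delta W_{n+1/2})$ is independent of $\mathcal{F}_{t_n}$, and concludes via the factorization $\mathbb{E}\big[G(x,\delta W_{n+1/2})\,\delta W_{n}\big]\big|_{x=\mathscr{P}(Y_{n}^{a})}=0$.

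Your symmetry argument packages all of this at once: the identity $Y^{a}=Y^{f}\circ T$ transports the representation wholesale (with the sign flip in the antisymmetric cross term exactly as you note), and measure preservation gives the moment bounds for free, with the same constants and on the same range of $p$. For \eqref{equation:estimate-of-martingale-term-of-ya}, the claim $T^{-1}\mathcal{F}_{t_n}=\mathcal{F}_{t_n}$ is correct provided $T$ is extended to the path space so that within each coarse interval $[t_k,t_{k+1}]$ the two half-path increments are swapped; then $T$ maps $\{\omega_{s}:s\le t_{n}\}$ into itself, and the involution property gives equality. The paper implicitly sidesteps this by using the factorization lemma instead. What your route buys is economy and a conceptual explanation of why nothing new happens for $\{Y^{a}\}$; what the paper's direct check buys is that one never has to define $T$ on the full path space or invoke the change-of-variables identity for conditional expectations. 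One small slip in your final paragraph: $\mu$ is not $C^{2}_{b}$ here---the Taylor remainder $\mathcal{R}_{\mu}$ is controlled through the polynomial Lipschitz bound \eqref{equation:growth-of-partial-drift-and-diffusion-in-assumption} on $\partial\mu/\partial x$, as in \eqref{equation:moments-bound-of-remaining-term-R}.
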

The proof of Lemma \ref{lemma:expression-of-ya-in-a-coarse-timestep} can be found in Appendix \ref{proof:expression-of-ya-in-a-coarse-timestep}.
As a consequence of Lemma \ref{lemma:expression-of-yf-in-a-coarse-timestep} and Lemma \ref{lemma:expression-of-ya-in-a-coarse-timestep}, the representation of $\{\overline{Y}^{f}\}$, defined by \eqref{equation:expression-of-the-average-process-in-lemma}, in a coarser interval is derived in the following lemma.
\begin{lemma} \label{lemma:representation-of-the-average-process-in-a-coarse-timestep}
Let Assumptions \ref{assumption:coercivity-condition}, \ref{assumption:polynomial-growth-condition}, \ref{assumption:condition-on-the-MM} hold and
let $\{ Y^{f}\}$, $\{ {Y}^{a}\}$ and $\{ \overline{Y}^{f}\}$ be defined by \eqref{equation:MM-in-first-fine-timestep}-\eqref{equation:MM-in-second-fine-timestep}, \eqref{equation:antithetic-estimator-in-first-fine-step}-\eqref{equation:antithetic-estimator-in-second-fine-step} and \eqref{equation:definition-of-the-average-process-bar-y}, respectively.
Then the approximation $\{\overline{Y}^{f}\}$ has the following expression over the coarser timestep,
\begin{equation} \label{equation:expression-of-the-average-process-in-lemma}
\begin{aligned}
\overline{Y}^{f}_{n+1}  
&=
\mathscr{P}\big(\overline{Y}_{n}^{f}\big)
+ \mu_{h}
\big(
\mathscr{P}\big(\overline{Y}_{n}^{f}\big)
\big)h
+ \sigma_{h}
\big(
\mathscr{P}\big(\overline{Y}_{n}^{f}\big)
\big) \Delta W_{n}
+ \sum_{j_{1},j_{2}=1}^{m} 
\left(
\mathcal{L}^{j_{1}} \sigma_{j_{2}}
\right)_{h} 
\big(
\mathscr{P}\big(\overline{Y}_{n}^{f}\big)
\big)
\Pi_{j_{1}, j_{2}}^{t_{n}, t_{n+1}}  &\\
& \ + 
\underbrace{
M_{n+1}^{(1)} + M_{n+1}^{(2)} + M_{n+1}^{(3)} + \tfrac{1}{2}\left(M_{n+1}^{f}+ M_{n+1}^{a} \right)
}_{=:M_{n+1}}
+ \underbrace{
B_{n}^{(1)} 
+  
\tfrac{1}{2}
\left( B_{n+1}^{f} + B_{n+1}^{a}\right) 
+ \Delta \mathscr{P}\big(Y^{f}_{n},Y^{a}_{n}\big)
     }_{=:B_{n+1}},
\end{aligned}
\end{equation}
where $\Delta \mathscr{P}(Y^{f}_{n},Y^{a}_{n})$ is denoted as \eqref{equation:projected-corrected-function}. 
Further, $M_{n+1}^{f}$, $B_{n+1}^{f}$ and $M_{n+1}^{a}$, $B_{n+1}^{a}$ are defined in Lemma \ref{lemma:expression-of-yf-in-a-coarse-timestep} and Lemma \ref{lemma:expression-of-ya-in-a-coarse-timestep}, respectively, and 
\begin{equation}
\begin{aligned}
B_{n}^{(1)}
&=  
\left[
\tfrac{1}{2} 
\Big(
\mu_{h}\big(\mathscr{P}\big(Y_{n}^{f} \big)\big) 
+ \mu_{h}\big(\mathscr{P}\big(Y_{n}^{a}\big)\big) 
\Big) 
- 
\mu_{h}
\big(
\mathscr{P}\big(\overline{Y}_{n}^{f}\big)
\big)
\right]h, \\
M_{n+1}^{(1)} 
&= 
\left[
\tfrac{1}{2} 
\Big(
\sigma_{h}\big(\mathscr{P}\big(Y_{n}^{f} \big)\big) 
+ \sigma_{h}\big(\mathscr{P}\big(Y_{n}^{a}\big)\big) 
\Big) 
- \sigma_{h}
\big(
\mathscr{P}\big(\overline{Y}_{n}^{f}\big)
\big)
\right] \Delta W_{n} ,&\\
M_{n+1}^{(2)}
& =
\sum_{j_{1},j_{2}=1}^{m} 
\left[
\tfrac{1}{2} 
\Big(
\left(
\mathcal{L}^{j_{1}} \sigma_{j_{2}}
\right)_{h}
\big(\mathscr{P}\big(Y_{n}^{f} \big)\big)
+\left(
\mathcal{L}^{j_{1}} \sigma_{j_{2}}
\right)_{h}
\big(
\mathscr{P}\big(Y_{n}^{a}\big)
\big)
\Big)
-\left(
\mathcal{L}^{j_{1}} \sigma_{j_{2}}
\right)_{h}
\big(
\mathscr{P}\big(\overline{Y}_{n}^{f}\big)
\big)
\right]
\Pi_{j_{1}, j_{2}}^{t_{n}, t_{n+1}} ,&\\
M_{n+1}^{(3)} 
&=
\tfrac{1}{2}
\sum_{j_{1},j_{2}=1}^{m} 
\Big(
\left(
\mathcal{L}^{j_{1}} \sigma_{j_{2}}
\right)_{h}
\big(
\mathscr{P}\big(Y_{n}^{a}\big)
\big) 
- 
\left(
\mathcal{L}^{j_{1}} \sigma_{j_{2}}
\right)_{h}
\big(
\mathscr{P}\big(Y_{n}^{f} \big)
\big) 
\Big) 
\left(
\delta W_{j_{1}, n} \delta W_{j_{2}, n+1/2} 
- \delta W_{j_{1}, n+1/2} \delta W_{j_{2}, n} 
\right). &\\
\end{aligned}
\end{equation}
Moreover,
\begin{equation}
    \mathbb{E}\left[M_{n+1} | \mathcal{F}_{t_{n}} \right]  = 0.
\end{equation}
Then, for $p \in [1,\frac{p_{0}}{\max\{\boldsymbol{\alpha}, \boldsymbol{\alpha}+\gamma-2\}} ]$,
we have
\begin{equation}
\sup_{0 \leq n \leq 2^{\ell -1}-1 }  \mathbb{E} \left[ \|  M_{n+1}  \| ^{2p} \right]  \leq Ch^{3p},\quad \sup_{0 \leq n \leq 2^{\ell -1}-1 } \mathbb{E}\left[ \|  B_{n+1}  \| ^{2p} \right]\leq Ch^{4p},
\end{equation}
where we denote $\boldsymbol{\alpha}:= \max\{4\gamma-2, 2\alpha_{1}, 2\alpha_{2}, \alpha_{3}, 2\mathbf{a}, 2\mathbf{b}, \mathbf{a}+\gamma-1  \}$.
\end{lemma}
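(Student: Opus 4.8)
The plan is to obtain the representation \eqref{equation:expression-of-the-average-process-in-lemma} by substituting the two coarse-timestep expansions of $Y^{f}_{n+1}$ and $Y^{a}_{n+1}$ from Lemma \ref{lemma:expression-of-yf-in-a-coarse-timestep} and Lemma \ref{lemma:expression-of-ya-in-a-coarse-timestep} into $\overline{Y}^{f}_{n+1}=\tfrac12(Y^{f}_{n+1}+Y^{a}_{n+1})$ and then regrouping terms. First I would collect the leading drift, diffusion and Milstein contributions: averaging produces coefficients of the form $\tfrac12(g(\mathscr{P}(Y^{f}_{n}))+g(\mathscr{P}(Y^{a}_{n})))$ for $g\in\{\mu_{h},\sigma_{h},(\mathcal{L}^{j_1}\sigma_{j_2})_{h}\}$, and I would add and subtract $g(\mathscr{P}(\overline{Y}^{f}_{n}))$, the three resulting residuals being exactly $B^{(1)}_{n}$, $M^{(1)}_{n+1}$ and $M^{(2)}_{n+1}$. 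The crucial point is the antisymmetric iterated-increment terms $\delta W_{j_1,n}\delta W_{j_2,n+1/2}-\delta W_{j_1,n+1/2}\delta W_{j_2,n}$, which enter the two expansions with opposite signs but with the \emph{different} prefactors $(\mathcal{L}^{j_1}\sigma_{j_2})_{h}(\mathscr{P}(Y^{f}_{n}))$ and $(\mathcal{L}^{j_1}\sigma_{j_2})_{h}(\mathscr{P}(Y^{a}_{n}))$; hence they do not cancel under averaging but leave the residual $M^{(3)}_{n+1}$ (this is exactly where the antithetic construction is exploited). Finally, the mismatch $\tfrac12(\mathscr{P}(Y^{f}_{n})+\mathscr{P}(Y^{a}_{n}))-\mathscr{P}(\overline{Y}^{f}_{n})=\Delta\mathscr{P}(Y^{f}_{n},Y^{a}_{n})$, defined in \eqref{equation:projected-corrected-function}, together with $\tfrac12(B^{f}_{n+1}+B^{a}_{n+1})$, is collected into $B_{n+1}$, and $\tfrac12(M^{f}_{n+1}+M^{a}_{n+1})$ plus the three residual pieces into $M_{n+1}$.

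For the conditional centering $\mathbb{E}[M_{n+1}\mid\mathcal{F}_{t_n}]=0$ I would argue term by term. The pieces $M^{f}_{n+1}$ and $M^{a}_{n+1}$ are already conditionally centered by Lemma \ref{lemma:expression-of-yf-in-a-coarse-timestep} and Lemma \ref{lemma:expression-of-ya-in-a-coarse-timestep}. For $M^{(1)}_{n+1}$ and $M^{(2)}_{n+1}$ the prefactors are $\mathcal{F}_{t_n}$-measurable while the multipliers $\Delta W_{n}$ and $\Pi^{t_n,t_{n+1}}_{j_1,j_2}$ have vanishing conditional mean (for the latter because $\mathbb{E}[\Delta W_{j_1,n}\Delta W_{j_2,n}\mid\mathcal{F}_{t_n}]=\Omega_{j_1j_2}h$). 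For $M^{(3)}_{n+1}$, conditionally on $\mathcal{F}_{t_n}$ the increments $\delta W_{n}$ and $\delta W_{n+1/2}$ are independent, centered and independent of the $\mathcal{F}_{t_n}$-measurable coefficient, so every cross product $\delta W_{j_1,n}\delta W_{j_2,n+1/2}$ has zero conditional mean. Summation gives the claim.

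For the moment bounds I would split $M_{n+1}$ and $B_{n+1}$ along the decomposition above. The contributions $\tfrac12(M^{f}_{n+1}+M^{a}_{n+1})$ and $\tfrac12(B^{f}_{n+1}+B^{a}_{n+1})$ inherit the bounds $Ch^{3p}$ and $Ch^{4p}$ from Lemma \ref{lemma:expression-of-yf-in-a-coarse-timestep} and Lemma \ref{lemma:expression-of-ya-in-a-coarse-timestep}, whose admissible ranges of $p$ are less restrictive than $p\le p_{0}/\max\{\boldsymbol{\alpha},\boldsymbol{\alpha}+\gamma-2\}$. The term $\Delta\mathscr{P}(Y^{f}_{n},Y^{a}_{n})$ is directly of order $h^{2}$ in $L^{2p}$ by condition \textit{(6)} of Assumption \ref{assumption:condition-on-the-MM} combined with Theorem \ref{theorem:moments-bound-of-MM}. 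For the residual terms $M^{(1)}_{n+1},M^{(2)}_{n+1},M^{(3)}_{n+1},B^{(1)}_{n}$ the key observation is that each coefficient residual has the shape $\tfrac12(g(a)+g(b))-g(\tfrac{a+b}{2})+[g(\tfrac{a+b}{2})-g(c)]$ with $a=\mathscr{P}(Y^{f}_{n})$, $b=\mathscr{P}(Y^{a}_{n})$, $c=\mathscr{P}(\overline{Y}^{f}_{n})$: the first bracket is a convexity gap controlled via a second-order Taylor expansion and the polynomial-growth derivative estimates of Assumption \ref{assumption:polynomial-growth-condition} by $C(1+\|a\|+\|b\|)^{\kappa}\|a-b\|^{2}\le C(1+\|Y^{f}_{n}\|+\|Y^{a}_{n}\|)^{\kappa}\|Y^{f}_{n}-Y^{a}_{n}\|^{2}$, and the second bracket is of order $\|\tfrac{a+b}{2}-c\|=\|\Delta\mathscr{P}(Y^{f}_{n},Y^{a}_{n})\|$, hence of order $h^{2}$, by a local Lipschitz estimate. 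Then Lemma \ref{lemma:strong-convergence-rate-of-yf-and-ya} makes $\|Y^{f}_{n}-Y^{a}_{n}\|^{2}$ of order $h$ in the relevant $L^{q}$, and a H\"older splitting against the Wiener factor ($\Delta W_{n}$, $\Pi^{t_n,t_{n+1}}_{j_1,j_2}$, or the antisymmetric product, each of order $h^{1/2}$ or $h$ in $L^{q}$) yields $Ch^{3p}$ for $M^{(1)},M^{(2)},M^{(3)}$ and $Ch^{4p}$ for $B^{(1)}$ (which carries an extra factor $h$). The exponents $\kappa$ and those arising from H\"older are tracked so that all required moments of $Y^{f}_{n},Y^{a}_{n}$ remain below order $2p_{0}$, which is precisely what forces the threshold $p_{0}\ge\max\{\boldsymbol{\alpha},\boldsymbol{\alpha}+\gamma-2\}$ with the enlarged $\boldsymbol{\alpha}$ now also containing $2\mathbf{b}$.

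I expect the main obstacle to be the convexity-gap estimate for the \emph{modified} coefficients $\sigma_{h}$, $\mu_{h}$, $(\mathcal{L}^{j_1}\sigma_{j_2})_{h}$ rather than for $\mu,\sigma$ themselves: for the tamed schemes one must differentiate the tamed quotients twice and check polynomial growth of the second derivatives, while for the projected scheme the projection $\mathscr{P}$ is merely Lipschitz and not $C^{1}$, so the second-order Taylor argument must be run only on the event $\{\max\{\|Y^{f}_{n}\|,\|Y^{a}_{n}\|\}\le h^{-1/(2\gamma)}\}$, where $\mathscr{P}$ acts as the identity and $\sigma\in C^{2}$ is available, the complementary event being dispatched crudely via a Markov-inequality tail bound exactly as in the verification of condition \textit{(6)} for the projected scheme in the excerpt. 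The remaining work is then purely the bookkeeping of polynomial exponents so that the admissible range of $p$ matches the one stated in the lemma.
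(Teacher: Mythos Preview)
Your derivation of the representation \eqref{equation:expression-of-the-average-process-in-lemma} and of the conditional centering $\mathbb{E}[M_{n+1}\mid\mathcal{F}_{t_n}]=0$ is correct and coincides with the paper's argument.

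For the moment bounds, your decomposition $\tfrac12(g(a)+g(b))-g(\tfrac{a+b}{2})+[g(\tfrac{a+b}{2})-g(c)]$ is exactly the one the paper uses, but the paper sidesteps the obstacle you identify in your last paragraph. Rather than controlling the convexity gap of the \emph{modified} maps $g_h\in\{\mu_h,\sigma_h,(\mathcal{L}^{j_1}\sigma_{j_2})_h\}$---which, as you note, would force scheme-by-scheme differentiability or event-restriction arguments outside the abstract framework---the paper writes $g_h=g-\mathbb{T}_{h,g}$ (cf.\ \eqref{equation:tamed-correction-function}) and splits each coefficient residual accordingly. The convexity gap and the local Lipschitz correction are then taken for the \emph{original} smooth $g$, for which the second-order Taylor remainder $\mathcal{R}_g$ is governed directly by Assumption \ref{assumption:polynomial-growth-condition}; the $\mathbb{T}_{h,g}$ contributions are bounded \emph{crudely} term by term using condition \textit{(5)} of Assumption \ref{assumption:condition-on-the-MM}, which already supplies a full factor $h$ and thus needs no smoothness or Lipschitz property of $g_h$ at all. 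The same $g_h=g-\mathbb{T}_{h,g}$ device handles $M^{(3)}_{n+1}$ (whose prefactor is a simple difference, not a convexity gap): Lipschitz growth of $\mathcal{L}^{j_1}\sigma_{j_2}$ from \eqref{equation:growth-of-milstein-diffusion-term} combined with Lemma \ref{lemma:strong-convergence-rate-of-yf-and-ya} gives the required $O(h^{1/2})$ for the $g$-part, and the $\mathbb{T}$-part is again $O(h)$ from condition \textit{(5)}. This makes the whole estimate run uniformly under Assumptions \ref{assumption:polynomial-growth-condition}--\ref{assumption:condition-on-the-MM} without any case analysis, and is also what produces the exponents $2\alpha_1,2\alpha_2,\alpha_3$ inside $\boldsymbol{\alpha}$.
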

%
The proof of Lemma \ref{lemma:representation-of-the-average-process-in-a-coarse-timestep} is shown in Appendix \ref{proof:representation-of-the-average-process-in-a-coarse-timestep}.
Employing the above sub-optimal estimate \eqref{eq:lemma:suboptimal-estimate} and  
carrying out  more careful error estimates  on the mesh grids, 
we can improve the convergence rate to be order $1$.
\begin{lemma} \label{lemma:final-variance-estimate}
Let Assumptions \ref{assumption:coercivity-condition}, \ref{assumption:polynomial-growth-condition}, \ref{assumption:condition-on-the-MM} hold with $p_{0} \in [ \max\{\boldsymbol{\alpha}, \boldsymbol{\alpha}+\gamma -2\}, \infty)$ and $p_{1}\in (1,\infty)$, where we denote $\boldsymbol{\alpha}:= \max\{4\gamma-2, 2\alpha_{1}, 2\alpha_{2}, \alpha_{3}, 2\mathbf{a}, \mathbf{a}+\gamma-1 ,\mathbf{b}+\gamma-1  \}$.
Let $\{{Y}^{c}\}$, $\{Y^{f}\}$, $\{{Y}^{a}\}$ and $\{\overline{Y}^{f}\}$ be defined as \eqref{equation:MM-in-coarse-step}, \eqref{equation:MM-in-first-fine-timestep}-\eqref{equation:MM-in-second-fine-timestep}, \eqref{equation:antithetic-estimator-in-first-fine-step}-\eqref{equation:antithetic-estimator-in-second-fine-step} and \eqref{equation:expression-of-the-average-process-in-lemma}, respectively.  
Then
\begin{equation}
\sup_{0 \leq n \leq 2^{\ell -1} } 
\mathbb{E}
\big[ 
\big\| \overline{Y}_{n}^{f}  - {Y}_{n}^{c} \big\|^{2} 
\big] 
\leq 
Ch^{2}.
\end{equation}
\end{lemma}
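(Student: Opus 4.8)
The plan is to set up a one-step recursion for $D_n := \overline{Y}_n^f - Y_n^c$ over the coarse grid and close it with a discrete Gr\"onwall argument, using the sub-optimal bound of Lemma \ref{lemma:general-error-estimate} as a bootstrap. Starting from the coarse-timestep representation \eqref{equation:expression-of-the-average-process-in-lemma} of $\{\overline{Y}^f\}$ (Lemma \ref{lemma:representation-of-the-average-process-in-a-coarse-timestep}) and \eqref{equation:MM-in-coarse-step}, I would write
\[
D_{n+1}
= \overline{D}_n
+ \big(\mu_h(\mathscr{P}(\overline{Y}_n^f)) - \mu_h(\mathscr{P}(Y_n^c))\big)h
+ \mathcal{M}_{n+1} + B_{n+1},
\]
where $\overline{D}_n := \mathscr{P}(\overline{Y}_n^f) - \mathscr{P}(Y_n^c)$, the term $B_{n+1}$ collects the $\mathcal{O}(h^4)$ remainder of Lemma \ref{lemma:representation-of-the-average-process-in-a-coarse-timestep}, and $\mathcal{M}_{n+1}$ gathers all martingale-type increments: the $\Delta W_n$-linear term with coefficient $\sigma_h(\mathscr{P}(\overline{Y}_n^f)) - \sigma_h(\mathscr{P}(Y_n^c))$, the $\Pi_{j_1,j_2}^{t_n,t_{n+1}}$-terms with coefficient $(\mathcal{L}^{j_1}\sigma_{j_2})_h(\mathscr{P}(\overline{Y}_n^f)) - (\mathcal{L}^{j_1}\sigma_{j_2})_h(\mathscr{P}(Y_n^c))$, and $M_{n+1}$ of Lemma \ref{lemma:representation-of-the-average-process-in-a-coarse-timestep}, so that $\mathbb{E}[\mathcal{M}_{n+1}\mid\mathcal{F}_{t_n}]=0$.

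Squaring and taking $\mathbb{E}[\cdot\mid\mathcal{F}_{t_n}]$, the $\mathcal{F}_{t_n}$-measurable part is orthogonal to $\mathcal{M}_{n+1}$, so all cross terms of $\mathcal{M}_{n+1}$ against $\overline{D}_n$ and against the drift difference vanish. The surviving ``monotone'' contribution is $\|\overline{D}_n\|^2 + 2\langle\overline{D}_n,\mu_h(\mathscr{P}(\overline{Y}_n^f))-\mu_h(\mathscr{P}(Y_n^c))\rangle h + \|\sigma_h(\mathscr{P}(\overline{Y}_n^f))-\sigma_h(\mathscr{P}(Y_n^c))\|^2 h$. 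Condition (2) of Assumption \ref{assumption:condition-on-the-MM} gives $\|\overline{D}_n\|\le\|D_n\|$; after replacing $\mu_h,\sigma_h$ by $\mu,\sigma$ up to the taming errors (condition (5)) and applying the coupled monotonicity of Assumption \ref{assumption:polynomial-growth-condition} to the pair $\mathscr{P}(\overline{Y}_n^f),\mathscr{P}(Y_n^c)$ (whose distance is bounded by $\|D_n\|$), the drift inner product and the diffusion square combine into at most $C\|D_n\|^2h$, the slack $2p_1-1>1$ being exactly what absorbs the fixed constant losses incurred when splitting off the taming terms (hence the dependence of $C$ on $\tfrac{2p_1-2p}{2p-1}$ with $p=1$). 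The taming pieces themselves, paired with $\overline{D}_n$ or with $\Delta W_n$, carry an extra factor $h$, and by condition (5) together with the moment bounds of Theorem \ref{theorem:moments-bound-of-MM} (valid under the stated lower bound on $p_0$) they are $\mathcal{O}(h^3)$ per step.

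The crux is the remaining martingale-square contributions. For the $\Pi$-terms one has $\mathbb{E}[\Pi_{j_1j_2}^{t_n,t_{n+1}}\mid\mathcal{F}_{t_n}]=0$, $\mathbb{E}[\|\Pi_{j_1j_2}^{t_n,t_{n+1}}\|^2\mid\mathcal{F}_{t_n}]=\mathcal{O}(h^2)$, and odd-moment cancellation kills their cross terms with the $\Delta W_n$-linear part; the polynomial Lipschitz consequence \eqref{equation:growth-of-milstein-diffusion-term} of Assumption \ref{assumption:polynomial-growth-condition} together with condition (5) gives a bound of the form $C(1+\|Y_n^f\|+\|Y_n^a\|+\|Y_n^c\|)^{2\gamma-2}\|D_n\|^2 + Ch^2(1+\cdots)$ for the squared coefficient, so the $\Pi$-terms contribute $\le Ch^2\,\mathbb{E}[(1+\cdots)^{2\gamma-2}\|D_n\|^2]+Ch^4$ --- and \emph{this is precisely where Lemma \ref{lemma:general-error-estimate} enters}: a H\"older split with the already-available $\mathbb{E}[\|D_n\|^{2r}]\le Ch^r$ and the moment bounds upgrades $\mathbb{E}[(1+\cdots)^{2\gamma-2}\|D_n\|^2]$ to $\mathcal{O}(h)$, turning this into $\mathcal{O}(h^3)$ per step. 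The ``second-difference'' increments buried in $M_{n+1}$ --- such as $\tfrac12(\sigma_h(\mathscr{P}(Y_n^f))+\sigma_h(\mathscr{P}(Y_n^a)))-\sigma_h(\mathscr{P}(\overline{Y}_n^f))$ and the antithetic Milstein cross terms, which are $\mathcal{O}(\|Y_n^f-Y_n^a\|^2)$ modulo an $\mathcal{O}(h^2)$ projection defect controlled by condition (6) --- are $\mathcal{O}(h^2)$ in $L^2$ by Lemma \ref{lemma:strong-convergence-rate-of-yf-and-ya}, hence $\mathcal{O}(h^3)$ per step once multiplied by the $h$ from $\mathbb{E}[\|\cdot\,\Delta W_n\|^2\mid\mathcal{F}_{t_n}]$; and $\mathbb{E}[\|M_{n+1}\|^2]\le Ch^3$, $\mathbb{E}[\|B_{n+1}\|^2]\le Ch^4$ from Lemma \ref{lemma:representation-of-the-average-process-in-a-coarse-timestep} dispatch the cross terms $\langle\overline{D}_n,B_{n+1}\rangle$ and $\langle\mathcal{M}_{n+1},B_{n+1}\rangle$ after a Young/Cauchy--Schwarz split, at a cost no worse than $h\,\mathbb{E}[\|D_n\|^2]+Ch^3$.

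Collecting everything yields $\mathbb{E}[\|D_{n+1}\|^2]\le(1+Ch)\,\mathbb{E}[\|D_n\|^2]+Ch^3$ with $D_0=0$; since $n\le 2^{\ell-1}\le T/h$, the discrete Gr\"onwall inequality gives $\sup_{0\le n\le 2^{\ell-1}}\mathbb{E}[\|D_n\|^2]\le Ch^2$. The main obstacle is the $\Pi$-term: with only boundedness (not smallness) of $\mathbb{E}[\|D_n\|^2]$ it would contribute $\mathcal{O}(h^2)$ per step, summing back to the sub-optimal $\mathcal{O}(h)$ rate, so the entire argument hinges on re-injecting the $\mathcal{O}(h)$ estimate of Lemma \ref{lemma:general-error-estimate} into the H\"older split to recover the extra power of $h$. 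A secondary, more tedious difficulty is the bookkeeping required to check that every cross term involving the fine-scale martingale increments inside $M_{n+1}$ either vanishes by conditional independence or odd Gaussian moments, or is genuinely $\mathcal{O}(h^3)$ in expectation.
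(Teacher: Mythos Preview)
Your proposal is correct and follows essentially the same route as the paper: the paper likewise writes the one-step recursion for $e_n=\overline{Y}_n^f-Y_n^c$ from the coarse-step representation of Lemma~\ref{lemma:representation-of-the-average-process-in-a-coarse-timestep}, squares and expands, kills the $\mathcal{F}_{t_n}$-measurable/martingale cross terms, invokes the coupled monotonicity of Assumption~\ref{assumption:polynomial-growth-condition} after peeling off the taming corrections (using a parameter $\tilde p\in(1,p_1)$ and a small $\epsilon_3$ exactly as you anticipate with the ``slack'' $2p_1-1>1$), and then closes the recursion $(1+Ch)\mathbb{E}[\|e_n^{\mathscr{P}}\|^2]+Ch^3$ via iteration. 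The one point worth noting is that in the paper the sub-optimal bound of Lemma~\ref{lemma:general-error-estimate} is used not only on the $\Pi$-coefficient term (your focus) but equally on the squared drift difference $h^2\|\mu(\mathscr{P}(\overline{Y}_n^f))-\mu(\mathscr{P}(Y_n^c))\|^2$ (the paper's $J_5$), which you did not list explicitly; the mechanism is identical, so this is only a bookkeeping omission.
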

%
\begin{proof}[Proof of Lemma \ref{lemma:final-variance-estimate}] \label{subsection: proof of the variance analysis}
For short we denote, for $n\in \{0, 1, \dots, 2^{\ell-1} \}$ and $ \ell \in \{1, \dots, L\}$,
\begin{small}
\begin{equation}
\begin{aligned}
& e_{n}:=\overline{Y}^{f}_{n} - Y_{n}^{c} ,
\
e^{\mathscr{P}}_{n}
:= \mathscr{P}\big(\overline{Y}_{n}^{f}\big) 
- \mathscr{P}\big({Y}_{n}^{c}\big) , 
\ 
\Delta \mu_{n} 
:= \mu_{h}
\big(
\mathscr{P}(\overline{Y}_{n}^{f})
\big) 
- 
\mu_{h}\big(\mathscr{P}(Y^{c}_{n})\big) ,\\ 
&\Delta  \sigma_{n} 
:= \sigma_{h}
\big(
\mathscr{P}(\overline{Y}_{n}^{f})
\big) 
- \sigma_{h}
\left(
\mathscr{P}(Y^{c}_{n})
\right), \ 
\Delta 
\left(
{\mathcal{L}^{j_{1}}  \sigma_{j_{2}}} 
\right)_{n}
:= 
\left(
\mathcal{L}^{j_{1}} \sigma_{j_{2}}
\right)_{h}
\big(
\mathscr{P}(\overline{Y}_{n}^{f})
\big) 
- \left(
\mathcal{L}^{j_{1}} \sigma_{j_{2}}
\right)_{h}
\left(
\mathscr{P}(Y^{c}_{n})
\right). \notag
\end{aligned}
\end{equation}
\end{small}
In view of \eqref{equation:MM-in-coarse-step} and Lemma \ref{lemma:representation-of-the-average-process-in-a-coarse-timestep}, one obtains
\begin{equation}
\begin{aligned}
e_{n+1}  
=e^{\mathscr{P}}_{n} 
+ \Delta  \mu_{n} h 
+ \Delta  \sigma_{n} \Delta W_{n}  
+ \sum_{j_{1},j_{2} = 1}^{m}
\Delta 
\left(
\mathcal{L}^{j_{1}}  \sigma_{j_{2}}
\right)_{n} 
\Pi_{j_{1}, j_{2}}^{t_{n}, t_{n+1}} 
+  M_{n+1}  + B_{n+1}.
\end{aligned}
\end{equation}
Squaring both sides of the above equality yields
\begin{footnotesize}
\begin{equation} \label{equation:squared-en}
\begin{aligned}
&\|  e_{n+1} \|^{2} \\
& = 
\Big\|  
e^{\mathscr{P}}_{n} 
+ \Delta  \mu_{n} h 
+ \Delta  \sigma_{n} \Delta W_{n}  
+ \sum_{j_{1},j_{2} = 1}^{m}
\Delta 
\left(
\mathcal{L}^{j_{1}}  \sigma_{j_{2}} 
\right)_{n} 
\Pi_{j_{1}, j_{2}}^{t_{n}, t_{n+1}} 
+  M_{n+1} + B_{n+1} 
\Big\|^{2}  \\
& = 
\|
e^{\mathscr{P}}_{n} 
\|^{2} 
+
h^{2} 
\|
\Delta  \mu_{n}  
\|^{2} 
+ 
\|
\Delta  \sigma_{n} \Delta W_{n} 
\|^{2} 
+ 
\Big\|
\sum_{j_{1},j_{2} = 1}^{m} 
\Delta 
\left(
\mathcal{L}^{j_{1}}  \sigma_{j_{2}} 
\right)_{n} 
\Pi_{j_{1}, j_{2}}^{t_{n}, t_{n+1}}
\Big\| ^{2} 
+ \|M_{n+1}  \| ^{2} + \|B_{n+1} \| ^{2}   \\
&  \  
+ 2h\langle
e^{\mathscr{P}}_{n} 
,  
\Delta  \mu_{n}
\rangle 
+ 
2\langle  
e^{\mathscr{P}}_{n} 
,  
\Delta  \sigma_{n} \Delta W_{n}
\rangle 
+ 2\sum_{j_{1},j_{2} = 1}^{m} 
\left\langle  
e^{\mathscr{P}}_{n} 
, 
\Delta 
\left(
\mathcal{L}^{j_{1}}  \sigma_{j_{2}} 
\right)_{n} 
\Pi_{j_{1}, j_{2}}^{t_{n}, t_{n+1}}
\right\rangle 
+ 
2
\langle  
e^{\mathscr{P}}_{n}, M_{n+1}    
\rangle \\
&\ 
+
2\langle  
e^{\mathscr{P}}_{n}
, 
B_{n+1}  
\rangle  
+ 
2h
\langle \Delta  \mu_{n}
, 
\Delta  \sigma_{n} \Delta W_{n}  
\rangle 
+ 
2h\sum_{j_{1},j_{2} = 1}^{m}
\left\langle 
\Delta  \mu_{n}
, 
\Delta 
\left(
\mathcal{L}^{j_{1}}  \sigma_{j_{2}} 
\right)_{n} 
\Pi_{j_{1}, j_{2}}^{t_{n}, t_{n+1}} 
\right\rangle  
+ 
2h
\langle 
\Delta  \mu_{n}
,
M_{n+1}  
\rangle \\
&\ 
+ 2h
\langle
\Delta  \mu_{n}
,
B_{n+1}   
\rangle
+ 
2\sum_{j_{1},j_{2} = 1}^{m} 
\left\langle 
\Delta  \sigma_{n} \Delta W_{n}
, 
\Delta 
\left(
\mathcal{L}^{j_{1}}  \sigma_{j_{2}} 
\right)_{n} 
\Pi_{j_{1}, j_{2}}^{t_{n}, t_{n+1}}  
\right\rangle 
+
2\langle 
\Delta  \sigma_{n} \Delta W_{n}
, 
M_{n+1}   
\rangle 
+ 
2\langle 
\Delta  \sigma_{n} \Delta W_{n}
,
B_{n+1}   
\rangle \\
&  \ 
+2 \sum_{j_{1},j_{2} = 1}^{m}
\left\langle 
\Delta 
\left(
\mathcal{L}^{j_{1}}  \sigma_{j_{2}}
\right)_{n} 
\Pi_{j_{1}, j_{2}}^{t_{n}, t_{n+1}}
, 
M_{n+1}   
\right\rangle 
+2\sum_{j_{1},j_{2} = 1}^{m} 
\left\langle 
\Delta 
\left(
\mathcal{L}^{j_{1}}  \sigma_{j_{2}}
\right)_{n} 
\Pi_{j_{1}, j_{2}}^{t_{n}, t_{n+1}}
, 
B_{n+1}   
\right\rangle 
+ 
2\left\langle 
M_{n+1}
,
B_{n+1} 
\right\rangle.
\end{aligned}
\end{equation}
\end{footnotesize}
Since $\Delta W_{n}$ is independent of $\mathcal{F}_{t_{n}}$,  one can arrive at, for $j_{1}, j_{2} \in \{1,2, \dots, m \}$,
\begin{equation} \label{equation:estimate-1-in-variance-analysis}
\begin{aligned}
&\mathbb{E}
\left[
\langle  
e^{\mathscr{P}}_{n} 
, 
\Delta  \sigma_{n} \Delta W_{n}
\rangle 
\right] 
=  
\mathbb{E}
\left[
\left\langle  
e^{\mathscr{P}}_{n} 
, 
\Delta 
\left(
\mathcal{L}^{j_{1}}  \sigma_{j_{2}} 
\right)_{n} 
\Pi_{j_{1}, j_{2}}^{t_{n}, t_{n+1}}
\right\rangle 
\right] 
= 
\mathbb{E}
\left[ 
\langle 
\Delta  \mu_{n}
, 
\Delta  \sigma_{n} \Delta W_{n}
\rangle 
\right] = 0,\\
& \mathbb{E}
\left[ 
\left\langle 
\Delta  \mu_{n}
, 
\Delta 
\left(
\mathcal{L}^{j_{1}}  \sigma_{j_{2}} 
\right)_{n} 
\Pi_{j_{1}, j_{2}}^{t_{n}, t_{n+1}}
\right\rangle 
\right]
= 
\mathbb{E}
\left[
\left\langle 
\Delta  \sigma_{n} \Delta W_{n}
, 
\Delta 
\left(
\mathcal{L}^{j_{1}}  \sigma_{j_{2}} 
\right)_{n} 
\Pi_{j_{1}, j_{2}}^{t_{n}, t_{n+1}}
\right\rangle 
\right]  = 0.\\
\end{aligned}
\end{equation}
Owing to Lemma \ref{lemma:expression-of-yf-in-a-coarse-timestep}, Lemma \ref{lemma:expression-of-ya-in-a-coarse-timestep} and a conditional expectation argument, we obtain that
\begin{equation}
\begin{aligned}
\mathbb{E} 
\left[
\langle  
e^{\mathscr{P}}_{n}
, 
M_{n+1}   
\rangle 
\right] 
= \mathbb{E} 
\left[ 
\langle  
\Delta\mu_{n}
, 
M_{n+1}   
\rangle 
\right] 
= 0.
\end{aligned}
\end{equation}
Furthermore, by \eqref{equation:property-of-brownain-motion} and \eqref{equation:moments-property-of-bronwain-motion} 
we deduce that
\begin{equation} \label{equation:estimate-2-in-variance-analysis}
\begin{aligned}
\mathbb{E}
\left[ 
\|\Delta  \sigma_{n} \Delta W_{n} 
\|^{2}
\right] 
&= 
h 
{\color{black}
\mathbb{E}
\left[ 
\|\Delta  \sigma_{n} 
\|_{F}^{2}
\right],
}\\
\mathbb{E}
\Big[
\big\|
\sum_{j_{1},j_{2} = 1}^{m} 
\Delta 
\left(
\mathcal{L}^{j_{1}}  \sigma_{j_{2}} 
\right)_{n} 
\Pi_{j_{1}, j_{2}}^{t_{n}, t_{n+1}} 
\big\|^{2}
\Big] 
&\leq Ch^{2} 
\sum_{j_{1},j_{2} = 1}^{m} 
\mathbb{E} 
\Big[ 
\|
\Delta 
\left(
\mathcal{L}^{j_{1}}  \sigma_{j_{2}}
\right)_{n}
\|^{2}
\Big].
\end{aligned}
\end{equation}
Taking the mathematical expectation of \eqref{equation:squared-en} and using estimates \eqref{equation:estimate-1-in-variance-analysis}-\eqref{equation:estimate-2-in-variance-analysis} above yield
\begin{footnotesize}
\begin{equation} \label{equation:expectation-squared-en}
\begin{aligned}
&\mathbb{E}
\left[
\|  
e_{n+1} 
\|^{2}  \right]  \\
& \leq 
\mathbb{E}
\left[
\| e^{\mathscr{P}}_{n} \|^{2}
\right] 
+h^{2} 
\mathbb{E}
\left[
\|\Delta  \mu_{n}  \|^{2}
\right] 
+ 
{\color{black}
h\mathbb{E}
\left[
\|\Delta  \sigma_{n} \|_{F}^{2}
\right] 
}
+ Ch^{2} \sum_{j_{1},j_{2} = 1}^{m}
\mathbb{E}
\left[
\left\|
\Delta 
\left(
\mathcal{L}^{j_{1}}  \sigma_{j_{2}}
\right)_{n}  
\right\| ^{2}
\right] 
+ \mathbb{E}
\left[
\|M_{n+1} \| ^{2}
\right] \\
&\ + 
\mathbb{E}
\left[
\|B_{n+1}  \| ^{2}
\right]  
+ 2h\mathbb{E}
\left[
\left\langle 
e^{\mathscr{P}}_{n} 
,  
\Delta  \mu_{n}
\right\rangle  
\right] 
+ 2\mathbb{E}
\left[
\langle  
e^{\mathscr{P}}_{n}
, 
B_{n+1}  \rangle 
\right] 
+ 2h\mathbb{E}
\left[ 
\langle 
\Delta  \mu_{n}
, 
B_{n+1}   
\rangle 
\right] 
+ 2\mathbb{E}
\left[ 
\langle 
\Delta  \sigma_{n} \Delta W_{n}
,  
M_{n+1}  
\rangle
\right] \\
&\ + 
2\mathbb{E}
\left[
\langle 
\Delta  \sigma_{n} \Delta W_{n}
, 
B_{n+1}   
\rangle
\right] 
+ 2\mathbb{E} 
\left[
\left\langle 
M_{n+1 }
, 
B_{n+1} 
\right\rangle 
\right] 
+2\sum_{j_{1},j_{2} = 1}^{m}
\mathbb{E}
\left[ 
\left\langle 
\Delta \left(
\mathcal{L}^{j_{1}}  \sigma_{j_{2}} 
\right)_{n} 
\Pi_{j_{1}, j_{2}}^{t_{n}, t_{n+1}}
, 
M_{n+1}  
\right\rangle 
\right] \\
& \ + 2\sum_{j_{1},j_{2} = 1}^{m}
\mathbb{E}
\left[ 
\left\langle
\Delta \left(
\mathcal{L}^{j_{1}}  \sigma_{j_{2}} 
\right)_{n} 
\Pi_{j_{1}, j_{2}}^{t_{n}, t_{n+1}}
, 
B_{n+1}   
\right\rangle 
\right] .
\end{aligned}
\end{equation}
\end{footnotesize}
The Young inequality indicates, for some positive constant $\tilde{p} \in (1,p_{1})$,
\begin{equation} \label{equation:cauchy-schwartz-inequality-}
\begin{aligned}
2\mathbb{E}
\left[
\langle 
e^{\mathscr{P}}_{n}
, 
B_{n+1}  
\rangle 
\right] 
&\leq 
h\mathbb{E}
\left[
\| e^{\mathscr{P}}_{n} \|^{2} 
\right] 
+ \tfrac{1}{h} 
\mathbb{E}
\left[
\|B_{n+1} \|^{2}
\right]  ,\\
2h\mathbb{E}
\left[
\langle 
\Delta  \mu_{n}
, 
B_{n+1}   
\rangle 
\right] 
&\leq 
h^{2}
\mathbb{E}
\left[
\|\Delta  \mu_{n}  \|^{2}
\right] 
+  
\mathbb{E}
\left[
\|B_{n+1} \|^{2}
\right] ,\\
2\mathbb{E}
\left[
\langle 
\Delta  \sigma_{n} \Delta W_{n}
, 
M_{n+1}
\rangle
\right] 
&\leq 
(\tilde{p}-1)h
{\color{black}
\mathbb{E}
\left[
\|\Delta  \sigma_{n} \|_{F}^{2}
\right] 
}
+ \tfrac{1}{\tilde{p}-1} 
\mathbb{E}
\left[
\|M_{n+1} \|^{2}
\right] , \\
2\mathbb{E}
\left[
\langle 
\Delta  \sigma_{n} \Delta W_{n}
, 
B_{n+1}   
\rangle
\right] 
&\leq (\tilde{p}-1)h
{\color{black}
\mathbb{E}
\left[
\|\Delta  \sigma_{n} \|_{F}^{2}
\right] 
}
+ \tfrac{1}{\tilde{p}-1} 
\mathbb{E}
\left[
\|B_{n+1} \|^{2}
\right],  \\
2\mathbb{E}
\left[
\left\langle 
\Delta \left(
\mathcal{L}^{j_{1}}  \sigma_{j_{2}} 
\right)_{n} 
\Pi_{j_{1}, j_{2}}^{t_{n}, t_{n+1}}
, 
M_{n+1}   \right\rangle 
\right] 
&\leq 
Ch^{2} 
\mathbb{E}
\left[
\left\|
\Delta \left(
\mathcal{L}^{j_{1}}  \sigma_{j_{2}} 
\right)_{n}  
\right\| ^{2}
\right] 
+  
\mathbb{E}
\left[
\|M_{n+1}  \|^{2}
\right],  \\
2\mathbb{E}
\left[
\left\langle 
\Delta \left(
\mathcal{L}^{j_{1}}  \sigma_{j_{2}} 
\right)_{n} 
\Pi_{j_{1}, j_{2}}^{t_{n}, t_{n+1}}
, 
B_{n+1}   
\right\rangle 
\right] 
&\leq Ch^{2} 
\mathbb{E}
\left[
\left\| 
\Delta \left(
\mathcal{L}^{j_{1}}  \sigma_{j_{2}} 
\right)_{n}  
\right\| ^{2}
\right] 
+ \mathbb{E}
\left[
\|B_{n+1} \|^{2}
\right],  \\
2 \mathbb{E} 
\left[ 
\left\langle 
M_{n+1 }
, 
B_{n+1} 
\right\rangle 
\right] 
&\leq  
\mathbb{E}
\left[
\|M_{n+1}  \|^{2}
\right] 
+ \mathbb{E}
\left[
\|B_{n+1} \|^{2}
\right].
\end{aligned}
\end{equation}
Inserting \eqref{equation:cauchy-schwartz-inequality-} into \eqref{equation:expectation-squared-en} yields
\begin{equation}
\begin{aligned}
\mathbb{E}
\left[
\| e_{n+1} \|^{2}  
\right]  
& \leq 
(1+h)\mathbb{E}
\left[
\| e^{\mathscr{P}}_{n} \|^{2}
\right] 
+ 2h^{2} \mathbb{E}
\left[
\|\Delta \mu_{n}  \|^{2}
\right] 
+ 2h\mathbb{E}
\left[
\langle 
e^{\mathscr{P}}_{n} 
,  
\Delta  \mu_{n}
\rangle 
\right] 
+ (2\tilde{p}-1)h
{\color{black}
\mathbb{E}
\left[
\|\Delta  \sigma_{n} \|_{F}^{2}
\right] 
}\\
&\ + 
Ch^{2} \sum_{j_{1},j_{2} = 1}^{m}
\mathbb{E}
\left[
\left\| 
\Delta 
\left(
\mathcal{L}^{j_{1}} \sigma_{j_{2}} 
\right)_{n}
\right\| ^{2}
\right] 
+C_{\tilde{p}} 
\mathbb{E}
\left[
\|M_{n+1}  \| ^{2}
\right] 
+ \tfrac{C_{\tilde{p}}}{h}
\mathbb{E}
\left[
\|B_{n+1} \|^{2}
\right].  \\
\end{aligned}
\end{equation}
Noting that
\begin{equation} \label{equation:transformations-in-mlmc}
\begin{aligned}
\Delta \mu_{n} 
&= 
\mu\big(
\mathscr{P}(\overline{Y}_{n}^{f})
\big) 
- 
\mu\big(
\mathscr{P}(Y^{c}_{n})
\big) 
+ 
\mathbb{T}_{h,\mu}
(
\mathscr{P}(Y^{c}_{n})
)
-
\mathbb{T}_{h,\mu}
(
\mathscr{P}(\overline{Y}_{n}^{f})
),\\
\Delta \sigma_{n} 
&= 
\sigma
\big(
\mathscr{P}(\overline{Y}_{n}^{f})
\big) 
- 
\sigma
\big(
\mathscr{P}(Y^{c}_{n})
\big) 
+ \mathbb{T}_{h,\sigma}
(
\mathscr{P}(Y^{c}_{n})
)
-\mathbb{T}_{h,\sigma}
(
\mathscr{P}(\overline{Y}_{n}^{f})
),\\
\Delta 
\left(
\mathcal{L}^{j_{1}} \sigma_{j_{2}} 
\right)_{n} 
&= 
\mathcal{L}^{j_{1}} \sigma_{j_{2}}
\big(
\mathscr{P}(\overline{Y}_{n}^{f})
\big) 
-  
\mathcal{L}^{j_{1}} \sigma_{j_{2}} 
\left(
\mathscr{P}(Y^{c}_{n})
\right) 
+ \mathbb{T}_{h,\mathcal{L}_{j_{1}j_{2}}}
(
\mathscr{P}(Y^{c}_{n})
)
-\mathbb{T}_{h,\mathcal{L}_{j_{1}j_{2}}}
(
\mathscr{P}(\overline{Y}_{n}^{f})
),
\end{aligned}
\end{equation}
we employ the Young inequality to deduce that,
for some positive constant $\epsilon_{3} \in (0,\frac{2p_{1}-2\tilde{p}}{2\tilde{p}-1} ]$,
\begin{footnotesize}
\begin{equation}
\begin{aligned}
&\mathbb{E}
\left[
\| e_{n+1} \|^{2}  
\right]  \\
& \leq 
(1+h)
\mathbb{E}
\left[
\| e^{\mathscr{P}}_{n} \|^{2}
\right] 
+ 
h\mathbb{E}
\Bigg[
2\left\langle 
e^{\mathscr{P}}_{n} 
,  
\mu
\big(
\mathscr{P}(\overline{Y}_{n}^{f})
\big) 
- \mu
\big(
\mathscr{P}(Y^{c}_{n})
\big)  
\right\rangle 
+  (2\tilde{p}-1)(1+\epsilon_{3})
{\color{black}
\left\| 
\sigma\big(
\mathscr{P}(\overline{Y}_{n}^{f})
\big) 
- \sigma\left(
\mathscr{P}(Y^{c}_{n})
\right)
\right\|_{F}^{2} 
}
\Bigg] \\
&  \ 
+\underbrace{
2h\mathbb{E} 
\left[
\left\langle 
e^{\mathscr{P}}_{n} 
, 
\mathbb{T}_{h,\mu}
(
\mathscr{P}(Y^{c}_{n})
)
-
\mathbb{T}_{h,\mu}
(
\mathscr{P}(\overline{Y}_{n}^{f})
) 
\right\rangle
\right]
}_{=: J_{1}}
+ \underbrace{
C_{\epsilon_{3}} 
h
{\color{black}
\mathbb{E} 
\left[ 
\left\| 
\mathbb{T}_{h,\sigma}
(
\mathscr{P}(Y^{c}_{n})
)
-
\mathbb{T}_{h,\sigma}
(
\mathscr{P}(\overline{Y}_{n}^{f})
) 
\right\|_{F}^{2} 
\right]
}
}_{=:J_{2}}\\
&\ 
+\underbrace{
Ch^{2} \sum_{j_{1},j_{2} = 1}^{m}  \mathbb{E}\Big[\big\|  \mathbb{T}_{h,\mathcal{L}_{j_{1}j_{2}}}(\mathscr{P}(Y^{c}_{n}))-\mathbb{T}_{h,\mathcal{L}_{j_{1}j_{2}}}(\mathscr{P}(\overline{Y}_{n}^{f}))\big\| ^{2} \Big]
}_{=:J_{3}}
+ \underbrace{
4h^{2}   
\mathbb{E}
\left[
\left\|
\mathbb{T}_{h,\mu}
(
\mathscr{P}(Y^{c}_{n})
)
-
\mathbb{T}_{h,\mu}
(
\mathscr{P}(\overline{Y}_{n}^{f})
)  
\right\|^{2}
\right]
  }_{=:J_{4}}\\
&\ +  
\underbrace{
4h^{2}   
\mathbb{E}
\left[
\big\|
\mu\big(
\mathscr{P}(Y^{c}_{n})
\big)
-
\mu\big(
\mathscr{P}(\overline{Y}_{n}^{f}) 
\big)  
\big\|^{2}
\right]
}_{=:J_{5}}
 + 
\underbrace{
Ch^{2} 
\sum_{j_{1},j_{2} = 1}^{m}  
\mathbb{E} 
\Big[
\big\| 
\mathcal{L}^{j_{1}} \sigma_{j_{2}}
\big(
\mathscr{P}(\overline{Y}_{n}^{f})
\big) 
-  
\mathcal{L}^{j_{1}} \sigma_{j_{2}} 
\left( 
\mathscr{P}(Y^{c}_{n})
\right) 
\big\| ^{2} 
\Big]
}_{=:J_{6}}\\
&\ 
+C_{\tilde{p}} 
\mathbb{E}
\left[
\|M_{n+1}  \| ^{2}
\right] 
+ 
\tfrac{C_{\tilde{p}}}{h}
\mathbb{E}
\left[
\|B_{n+1} \|^{2}
\right].   \\
\end{aligned}
\end{equation}
\end{footnotesize}
Owing to Assumption \ref{assumption:condition-on-the-MM}, the Young inequality
and the elementary inequality, we get
\begin{equation} \label{equation:estimate-of-J1}
\begin{aligned}
J_{1} 
&\leq 
h 
\mathbb{E} 
\big[
\| e^{\mathscr{P}}_{n}  \|^{2}  
\big] 
+ h \mathbb{E} 
\Big[
\left\|
\mathbb{T}_{h,\mu}
(
\mathscr{P}(Y^{c}_{n})
)
-
\mathbb{T}_{h,\mu}
(
\mathscr{P}(\overline{Y}_{n}^{f})
)  
\right\|^{2} 
\Big]  \\
& \leq 
h \mathbb{E} 
\big[ 
\| e^{\mathscr{P}}_{n}  \|^{2}  
\big] 
+ 
Ch^{3}  
\sup_{0 \leq r \leq 2^{\ell -1} } 
\mathbb{E} 
\left[
\left(
1 + \left\|Y_{r}  \right\| 
\right)^{2\alpha_{1}}
\right].
\end{aligned}
\end{equation}
By virtue of Assumption \ref{assumption:condition-on-the-MM}, Theorem \ref{theorem:moments-bound-of-MM}, Lemma \ref{lemma:representation-of-the-average-process-in-a-coarse-timestep} and  elementary inequality, one obtains 
\begin{equation} \label{equation:estimates-of-J2-to-J4}
\begin{aligned}
J_{2} \vee J_{3}\vee J_{4} 
\leq  
Ch^{3} 
\sup_{0 \leq r \leq 2^{\ell -1} } 
\mathbb{E} 
\left[
\left(
1 + \left\|Y_{r}  \right\| 
\right)^{2\max\{\alpha_{1}, \alpha_{2}, \alpha_{3} \} }
\right].
\end{aligned}
\end{equation}
Further, by means of \eqref{equation:growth-of-drift}, \eqref{equation:growth-of-milstein-diffusion-term}, Assumption \ref{assumption:condition-on-the-MM}, Lemma \ref{lemma:general-error-estimate} and the H\"older inequality, we have, for $j_{1}, j_{2} \in \{ 1, \dots, m \}$,
\begin{equation} \label{equation:estimates-of-J5-to-J6}
\begin{aligned}
J_{5} \vee J_{6}
&\leq 
Ch^{2}\mathbb{E} 
\Big[ 
\big(
1 + \| \mathscr{P}(\overline{Y}_{n}^{f})\| + \|\mathscr{P}(Y^{c}_{n})  \| 
\big)^{2\gamma -2} 
\|
\mathscr{P}(\overline{Y}_{n}^{f}) 
- \mathscr{P}(Y^{c}_{n}) 
\|^{2} 
\Big]  \\
& \leq 
Ch^{3} 
\sup_{0 \leq r \leq 2^{\ell -1} } 
\mathbb{E} 
\left[
\left(
1 + \left\|Y_{r}  \right\| 
\right)^{
2\max\{2\gamma-1. \alpha_{1}, \alpha_{2}, \mathbf{a}  \} + 2\gamma-2
} 
\right].
\end{aligned}
\end{equation}
With the estimates \eqref{equation:estimate-of-J1}-\eqref{equation:estimates-of-J5-to-J6}, Lemma \ref{lemma:representation-of-the-average-process-in-a-coarse-timestep} and Assumption \ref{assumption:polynomial-growth-condition} at hand, we obtain that
\begin{equation}
\begin{aligned}
\mathbb{E}
\left[
\|   e_{n+1}\|^{2}  
\right]  
&
\leq (1+Ch)
\mathbb{E}
\left[
\| e^{\mathscr{P}}_{n} \|^{2}
\right] 
+ C_{\tilde{p},\epsilon_{3}}h^{3} 
\sup_{0 \leq r \leq 2^{\ell -1} }
\mathbb{E}
\left[
(1+ \|Y_{r}\| )^{
\max\{2\boldsymbol{\alpha}, 2\boldsymbol{\alpha}+ \gamma-2 \}
} 
\right],
\end{aligned}
\end{equation}
where we again recall $
\boldsymbol{\alpha}:= \max\{4\gamma-2, 2\alpha_{1}, 2\alpha_{2}, \alpha_{3}, 2\mathbf{a}, \mathbf{a}+\gamma-1, \mathbf{b}+\gamma-1 \}.
$
Noting $e_{0} = 0$ and by iteration one can obtain the desired assertion
\begin{equation}
\begin{aligned}
\mathbb{E}
\left[
\|   e_{n+1}\|^{2}  
\right]  
\leq Ch^{2}
\end{aligned}
\end{equation}
with the aid of Assumption \ref{assumption:condition-on-the-MM} and Theorem \ref{theorem:moments-bound-of-MM}.
The proof is thus completed.
\end{proof}

Thanks to Lemma \ref{lemma:strong-convergence-rate-of-yf-and-ya} and Lemma \ref{lemma:final-variance-estimate}, Theorem \ref{theorem:convergence-order-of-variance} is obtained, leading to the fact that \eqref{equation:variance-in-main-result} is verified. Therefore, in view of Theorem \ref{theorem:complexity-of-antithetic-mlmc-with-lipschitz-sde}, if the mean-square error of approximating \eqref{introduction:expectation-of-functional-of-sde} is $\epsilon^{2}$, the overall complexity of the antithetic MLMC-modified Milstein method is $\mathcal{O}(\epsilon^{-2})$ 
and Theorem \ref{theorem:main-result-of-complexity} is validated.

\section{Numerical examples} \label{section:numerical-exapmles}
In this section, some numerical results are performed to verify the theoretical analysis in this work.

\noindent
{\color{black}
\textbf{Example 1: A generalized stochastic FitzHugh-Nagumo model}
}

Let us consider the generalized stochastic FitzHugh-Nagumo (FHN) model in the form of:
\begin{equation} \label{equation:fhn-model}
\begin{array}{l}
\left(\begin{array}{c}
\mathrm{d} X_{1}(t) \\
\mathrm{d} X_{2}(t)
\end{array}\right)
=\left(
\begin{array}{cc}
\tfrac{1}{\epsilon}\big(X_{1}(t)-X_{1}^{3}(t)-X_{2}(t)\big) \\
{\color{black}\vartheta} X_{1}(t)-X_{2}(t)+\beta
\end{array}
\right) \mathrm{d} t \\ 
\hspace{4em}
+\left(
\begin{array}{cc}
c_{1} X_{1}(t)+ c_{2} X_{2}(t) + d_{1} & 0 \\
0 & c_{3} X_{1}(t)+c_{4} X_{2}(t) + d_{2}
\end{array}
\right) 
\left(\begin{array}{c}
\mathrm{d} W_{1,t} \\
\mathrm{d} W_{2,t}
\end{array}\right)
, \quad t \in(0,T],
\end{array}
\end{equation}
where the initial value $X_{0} := (0, 0 )^{T}$, $\epsilon = 0.5$, {\color{black}$\vartheta = 0.5$}, $\beta = 0.5$, $c_{1} = 0.5$, $c_{2} = 0.3$, $c_{3} = 0$, $c_{4} = 0.5$, $d_{1}=0.1$, $d_{2}= 0.1$ and $T=1$.
Evidently, such model satisfies Assumption \ref{assumption:coercivity-condition} and fails to fulfill the commutative condition.
{\color{black}
Moreover,
the polynomial growth conditions and the differentiability condition in Assumption \ref{assumption:polynomial-growth-condition} are easily verified to be satisfied with $\gamma = 3$.
To check the coupled monotonicity  condition in Assumption \ref{assumption:polynomial-growth-condition},
we remark that the drift coefficient $\mu(\cdot): \mathbb{R}^{2} \rightarrow \mathbb{R}^{2}$ of the generalized stochastic FHN model \eqref{equation:fhn-model} can be reformulated as follows,
\begin{equation}
\label{equation:decomposition-fhn}
\mu
\left(\begin{array}{c}
x_{1} \\
x_{2}
\end{array}\right)
:=
\left(
\begin{array}{cc}
\tfrac{1}{\epsilon}\big(x_{1}-x_{1}^{3}-x_{2}\big) \\
\vartheta x_{1}-x_{2}+\beta
\end{array}
\right) 
=
\underbrace{
\left(
\begin{array}{cc}
0 & -\tfrac{1}{\epsilon} \\
\vartheta & -1
\end{array}
\right) 
}_{=:A}
\left(\begin{array}{c}
x_{1} \\
x_{2}
\end{array}\right)
+
\left(\begin{array}{c}
\tfrac{1}{\epsilon} \big(x_{1}-x_{1}^{3} \big) \\
\beta
\end{array}\right),
\end{equation}
which corresponds to a semi-linear SDE.
The first term on the right of \eqref{equation:decomposition-fhn} is linear so that $\langle x - \tilde{x}, A x - A\tilde{x} \rangle \leq C \|x- \tilde{x}\|^{2}$, for $\forall x, \tilde{x} \in \mathbb{R}^{2}$, is fulfilled.
In addition, one has
\begin{equation}
\begin{aligned}
&\Bigg\langle
\left(\begin{array}{c}
x_{1} - \tilde{x}_{1}\\
x_{2} - \tilde{x}_{2}
\end{array}\right),
\left(\begin{array}{c}
\tfrac{1}{\epsilon} \big(x_{1}-x_{1}^{3} \big) \\
\beta
\end{array}\right)
-
\left(\begin{array}{c}
\tfrac{1}{\epsilon} \big(\tilde{x}_{1}-\tilde{x}_{1}^{3} \big) \\
\beta
\end{array}\right)
\Bigg\rangle \\
&=
\tfrac{1}{\epsilon}
|x_{1} - \tilde{x}_{1}|^{2}
-
\tfrac{1}{\epsilon}
|x_{1} - \tilde{x}_{1}|^{2}
(x^{2}_{1}
+
x_{1}\tilde{x}_{1}
+
\tilde{x}^{2}_{1}
)\\
& \leq
\tfrac{1}{\epsilon}
\|x-\tilde{x} \|^{2},
\quad 
\forall x, \tilde{x} \in \mathbb{R}^{2},
\end{aligned}
\end{equation}
due to the fact that $x^{2}_{1}
+
x_{1}\tilde{x}_{1}
+
\tilde{x}^{2}_{1}
=\frac{1}{2}
[(x_{1}+\tilde{x}_{1})^{2} +x^{2}_{1} + \tilde{x}^{2}_{1}] \geq 0
$.
In addition, the diffusion of model \eqref{equation:fhn-model} is globally continuous.
As a result,
coupled monotonicity condition in Assumption \ref{assumption:polynomial-growth-condition} holds with $\forall p_{1} \in (1,\infty)$
for
the generalized stochastic FHN model \eqref{equation:fhn-model}.
}
Next we use the tamed Milstein scheme \textit{(TMS1)} \eqref{equation:tamed-milstein-scheme-model-1} as an example. 

Firstly, we test the strong convergence rate of the \textit{(TMS1)} by averaging over {\color{black}$10^{4}$} numerically generated samples. Besides, the "exact" solutions are identiﬁed  numerically using a fine stepsize $h_{\text{exact}} = 2^{-15}$.  We depict the mean-square approximation errors $\|X_{T}-Y_{T/h}\|_{L^{2}(\Omega,\mathbb{R}^{2})}$ and quadratic approximation errors $\|X_{T}-Y_{T/h}\|_{L^{4}(\Omega,\mathbb{R}^{2})}$ against six different stepsizes $h = 2^{-6}, 2^{-7}, \dots, 2^{-11}$ on a log-log scale.
Also included are two reference lines with slopes of $0.5$ and $1$, which reflect order $0.5$ and order $1$, respectively.
From Figure 1 one can easily observe that both approximation errors decrease at a slope close to $0.5$ when stepsizes shrink, coinciding with the theoretical findings obtained in Theorem \ref{theorem:strong-convergence-rates}.
\begin{figure}[h]
\centering
\subfigure{
    \begin{minipage}[t]{0.45\textwidth}
    \centering
    \includegraphics[width=\textwidth]
      {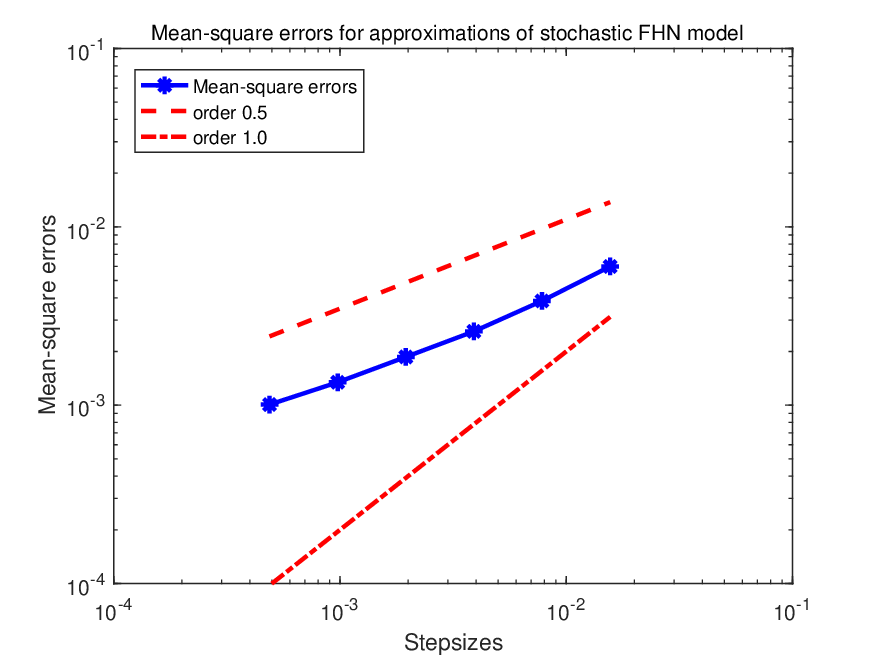}
    \end{minipage}
} 
\subfigure{
    \begin{minipage}[t]{0.45\textwidth}
    \centering
    \includegraphics[width=\textwidth]
      {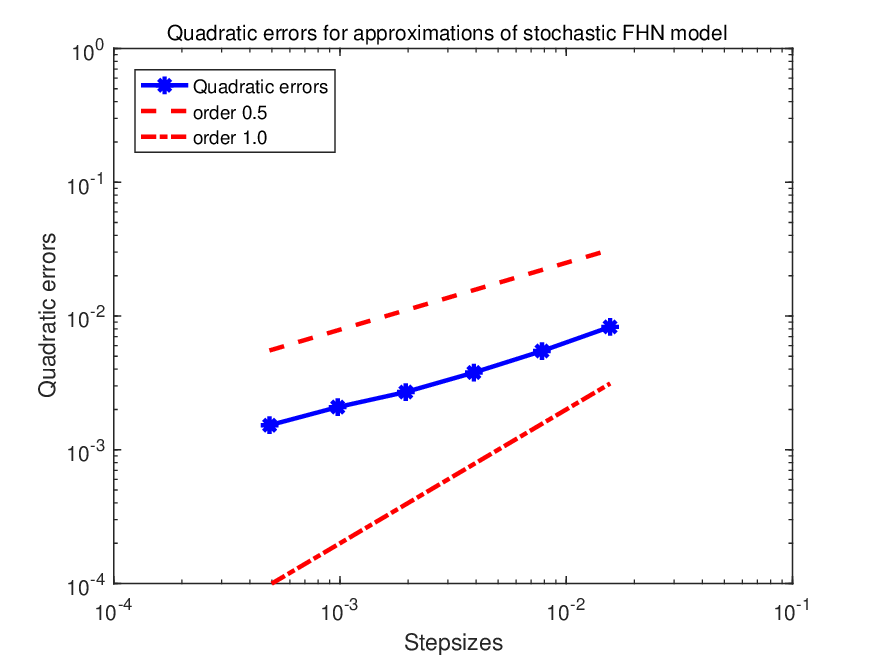}
    \end{minipage}
}

\caption{Strong convergence rates of the Tamed Milstein scheme \textit{(TMS1)}  for the generalized stochastic FHN model \eqref{equation:fhn-model} in mean-squred sense (Left) and in quadratic sense (Right)}
\end{figure}

Let us proceed by following Theorem \ref{theorem:mlmc-complexity-theorem} to investigate the convergence behavior of variance as a function of the level of approximation and the computational cost for a smooth payoff function $P = 2X_{1}(T)+\sin(X_{2}(T))$. On the left side of Figure 2, we employ  $10^{5}$ samples  to show the variance of the multilevel estimators at 10 different levels. Reference lines with slopes of $-1$ and $-2$ are given, which confirms $\beta = 1$ with the standard MLMC and  $\beta = 2$ with the antithetic MLMC, respectively.
On the right side of  Figure 2, the solid line represents the dependence of complexity as function of the desired accuracy $\epsilon$ with the antithetic MLMC technique while the dotted line represents this by the standard MLMC. We see that the complexity of antithetic MLMC is proportional to 
{\color{black}$\epsilon^{-2}$} and is much lower than standard MLMC.

\begin{figure}[h]
\centering
\subfigure{
    \begin{minipage}[t]{0.45\textwidth}
    \centering
    \includegraphics[width=\textwidth]
      {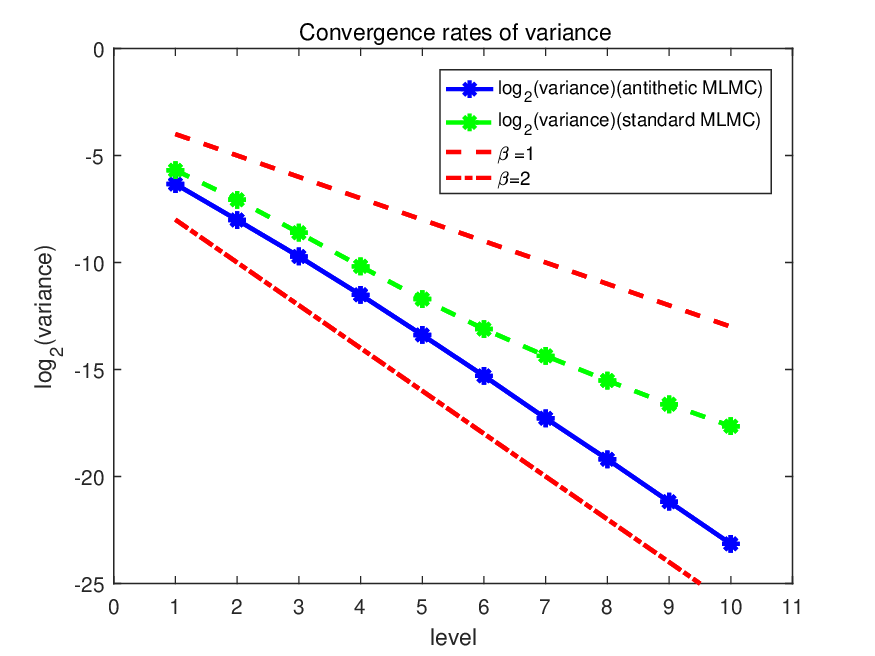}
    \end{minipage}
}
\subfigure{
    \begin{minipage}[t]{0.45\textwidth}
    \centering
    \includegraphics[width=\textwidth]
      {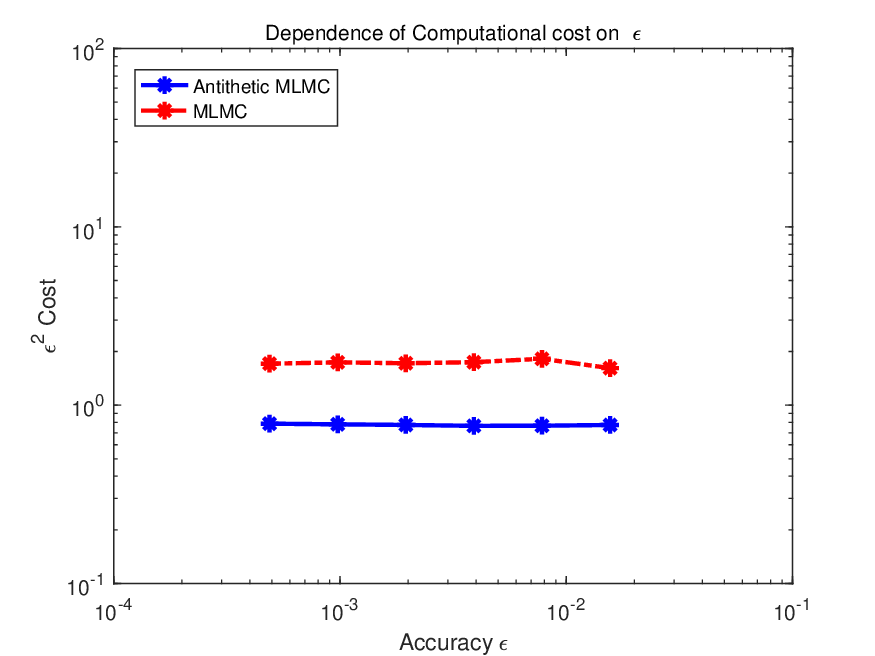}
    \end{minipage}
}

\caption{Convergence rates of variance (Left) and computational cost (Right) for a smooth payoff function $P = 2X_{1}(T)+\sin(X_{2}(T))$}
\end{figure}
{\color{black}
\noindent
\textbf{Example 2: A $3/2$-Heston model}

As the second example, let us consider the $3/2$-Heston model arising from computational finance \cite{kouarfate2021explicit}, which characterizes the dynamics of the stock price  by a two-dimensional process $\{(S_{t}, V_{t}) \}_{ t\in [0,T]}$ satisfying
\begin{equation}
\label{equation:heston-model}
\left\{\begin{array}{l}
\mathrm{d} S_t
=
r S_t \ \mathrm{d}t 
+
\sqrt{V_t } S_t \  \mathrm{d} W_{1,t} \\
\mathrm{d} V_t
=\kappa V_t
\left(
\theta-V_t
\right)
\mathrm{d} t
+\upsilon V_t ^{3 / 2} \mathrm{d} W_{2,t}.
\end{array}\right.
\end{equation}
with the initial value $S_{0} = 1$, $V_{0} = 0.15$ and $r=0.05$, $\kappa=1.5$, $\theta=0.4$. $\upsilon=0.25$, $T=1$.
To avoid the superlinear growth of the diffusion $\sqrt{V_t}S_t$,
we work with the corresponding log-$3/2$-Heston model as follows,
\begin{equation}
\label{equation:log-heston-model}
\left\{\begin{array}{l}
\mathrm{d} \log S_t
=
\left(
r - \tfrac{1}{2} V_{t} \right) \ \mathrm{d}t 
+\sqrt{V_t }  \  \mathrm{d} W_{1,t} \\
\mathrm{d} V_t
=\kappa V_t
\left(
\theta-V_t
\right)
\mathrm{d} t
+\upsilon V_t^{3 / 2} \mathrm{d} W_{2,t}.
\end{array}\right.
\end{equation}
Due to the presence of the superlinearly growing coefficients for $V_t$, the usual Milstein-type scheme is not expected to be a good candidate \cite{hutzenthaler2011strong} and we rely on the modified Milstein scheme \eqref{equation:modified-milstein-method} to tackle the superlinear growth of the coefficients.  Obviously,
such a model \eqref{equation:log-heston-model} violates the commutative condition.
In addition,
Assumption \ref{assumption:coercivity-condition} holds for any $p_{0} \in [1,\frac{49}{2}]$ and the  polynomial growth conditions in Assumption \ref{assumption:polynomial-growth-condition} are met with $\gamma=2$.
Although the coupled monotonicity condition in Assumption \ref{assumption:polynomial-growth-condition} is not satisfied,
the following test still shows that the antithetic MLMC-modified Milstein method performs well.
In the following tests, we use the projected Milstein scheme (\textit{PMS}) \eqref{equation:projected-Milstein-scheme} with $\gamma=2$ as an example.

Figure 3 illustrates the strong convergence rates of the \textit{PMS} in both the mean-square and the quadratic senses against six different stepsizes $h = 2^{-6}, 2^{-7}, \dots, 2^{-11}$ on a log-log scale, where the empirical mean 
is estimated by a Monte Carlo approximation involving $10^4$ independent trajectories and   the "exact" solutions are identiﬁed  numerically using a fine stepsize $h_{\text{exact}} = 2^{-15}$.
It is easy to see that
the strong convergence rate of the approximation errors of the \textit{PMS} decrease at a slope close to $0.5$ in both the mean-square sense and the quadratic sense.
}
\begin{figure}[h]
\centering
\subfigure{
    \begin{minipage}[t]{0.45\textwidth}
    \centering
    \includegraphics[width=\textwidth]
      {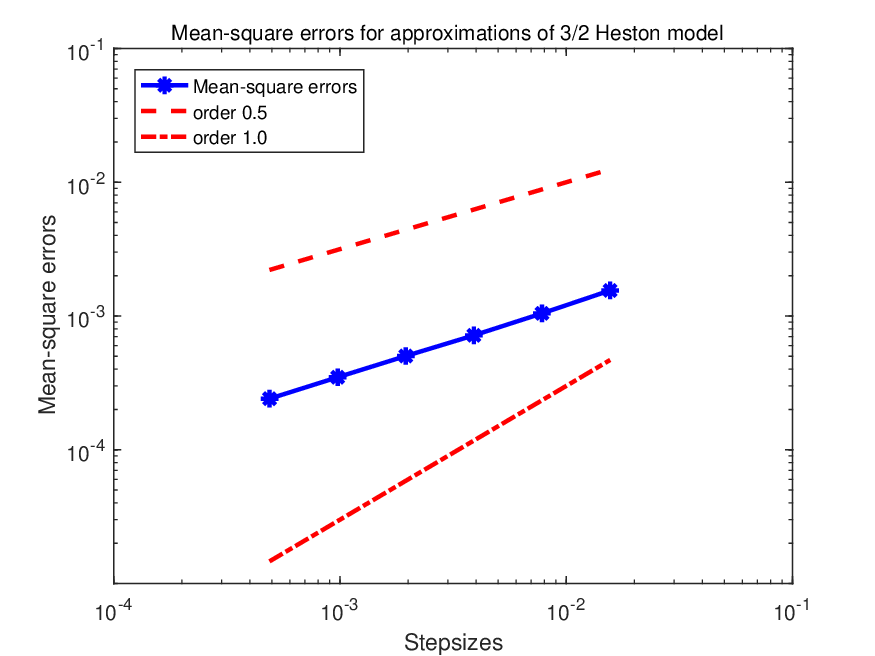}
    \end{minipage}
} 
\subfigure{
    \begin{minipage}[t]{0.45\textwidth}
    \centering
    \includegraphics[width=\textwidth]
      {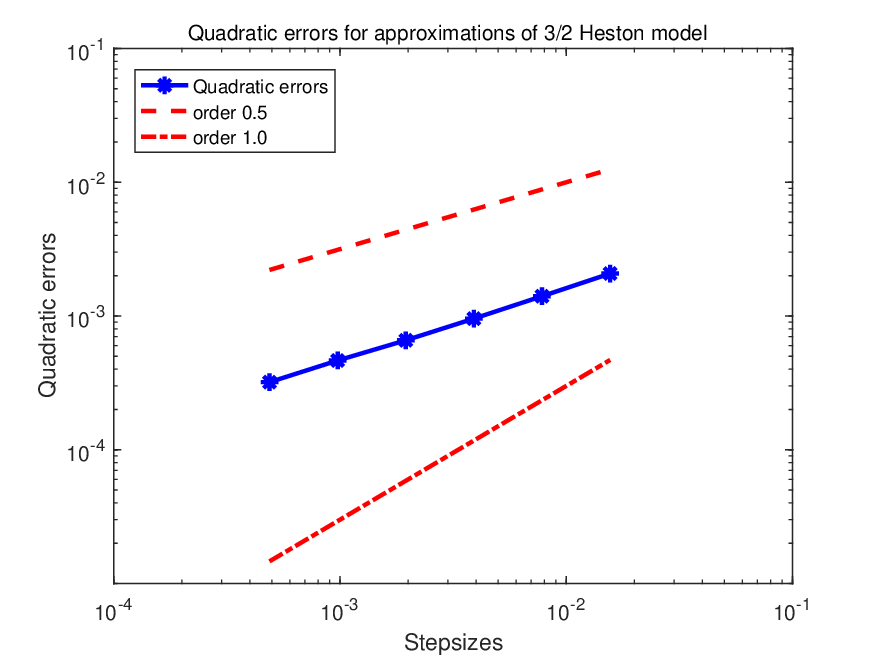}
    \end{minipage}
}

\caption{Strong convergence rates of the projected Milstein scheme (\textit{PMS})  for the 3/2 Heston model \eqref{equation:heston-model} in mean-squred sense (Left) and in quadratic sense (Right)}
\end{figure}

{\color{black}
In Figure 4 we perform numerical tests on the variance for the  3/2 Heston model \eqref{equation:heston-model} with a smooth payoff $P = S_T$. 
In the left picture of Figure 4,
two reference lines with slopes $-1$ and $-2$ are added
so that one can easily see,
the variance of the antithetic MLMC is $\mathcal{O}(h^{2}_{\ell})$,
whereas  the variance of the standard MLMC is $\mathcal{O}(h_{\ell})$  at 10 different levels.
In the  right picture of Figure 4,
one immediately observes the $\mathcal{O}(\epsilon^{-2})$ complexity of the antithetic MLMC method as well as the
computational savings it offers in comparison to the 
 standard MLMC method.
}

\begin{figure}[h]
\centering
\subfigure{
    \begin{minipage}[t]{0.45\textwidth}
    \centering
    \includegraphics[width=\textwidth]
      {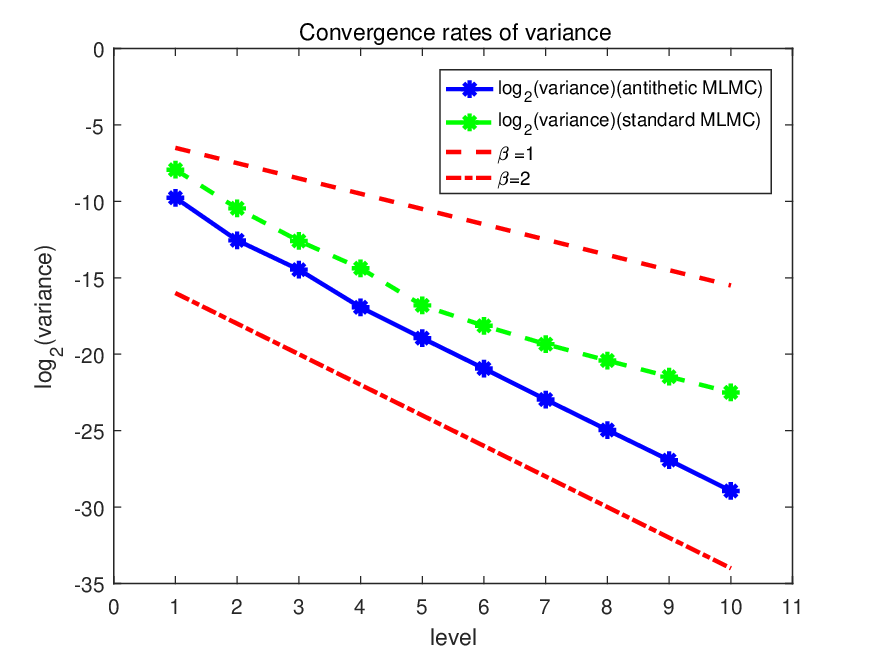}
    \end{minipage}
}
\subfigure{
    \begin{minipage}[t]{0.45\textwidth}
    \centering
    \includegraphics[width=\textwidth]
      {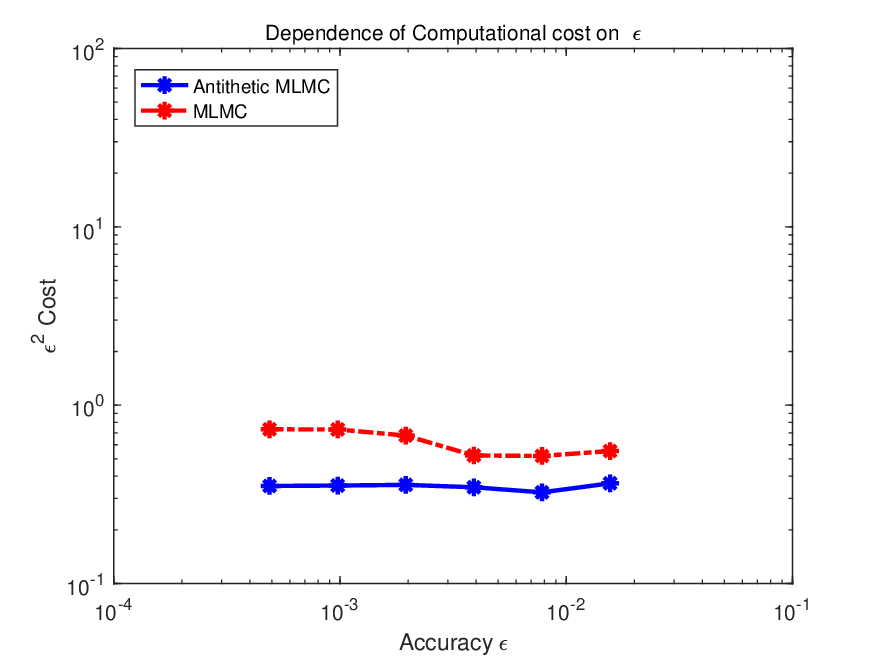}
    \end{minipage}
}

\caption{Convergence rates of variance (Left) and computational cost (Right) for a smooth payoff function $P = S_{T}$}
\end{figure}

\begin{figure}[h]
\centering
\subfigure{
    \begin{minipage}[t]{0.45\textwidth}
    \centering
    \includegraphics[width=\textwidth]
      {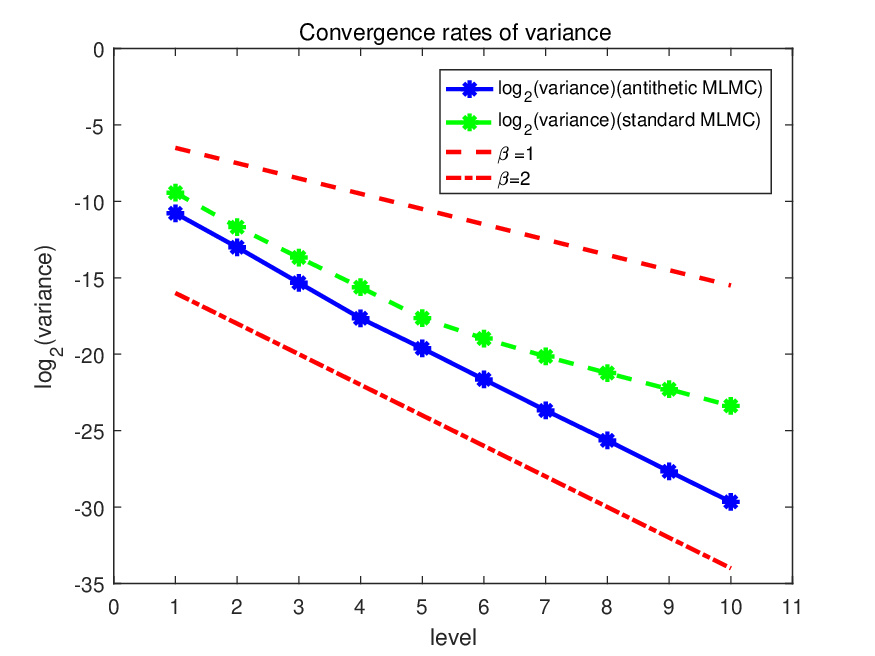}
    \end{minipage}
}
\subfigure{
    \begin{minipage}[t]{0.45\textwidth}
    \centering
    \includegraphics[width=\textwidth]
      {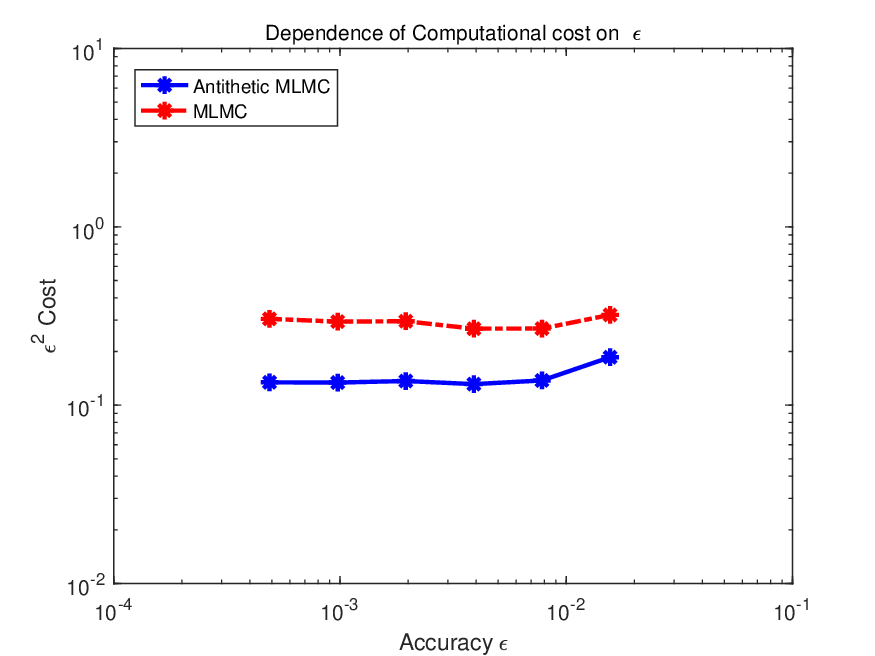}
    \end{minipage}
}

\caption{Convergence rates of variance (Left) and computational cost (Right) for a call option $P = e^{-rT} \max\{ S_{T} -K,0 \}$}
\end{figure}
{\color{black}
The final numerical results are presented in Figure 5 also for the $3/2$-Heston model \eqref{equation:heston-model}, but with the call option payoff $P = e^{-rT} \max\{ S_{T} - K, 0 \}$, where $r = 0.05$ stems from \eqref{equation:heston-model} and $K$ is known as the strike price at time $t=T$.  Here we set $K = 1.2$
and also add two reference lines with slopes $-1$ and $-2$ in the left picture of Figure 5 .
It can be easily observed that
the variance of the multilevel estimators associated with the
antithetic MLMC method is $\mathcal{O}(h^{2}_{\ell})$,
whereas the variance of the multilevel estimators associated with the standard MLMC is $\mathcal{O}(h_{\ell})$.
In addition,
the right picture of Figure 5 illustrates the computational efficiencies of the antithetic
MLMC method compared to the standard MLMC method,
which suggests that the antithetic MLMC method also performs well for some payoffs not belonging to $C^{2}_{b}(\mathbb{R}^{d}, \mathbb{R})$.
}

{\color{black}
\section{Conclusion and future research}
This paper aims to justify the feasibility of the antithetic multilevel Monte Carlo (MLMC) method for SDEs with non-globally Lipschitz continuous coefficients.
To this end, we
develop
a general framework of modified Milstein-type schemes,
which covers the tamed Milstein scheme and the projected Milstein scheme,
without simulating the computationally expensive L\'evy areas.
The study investigates the strong convergence of the scheme and derives the expected one-half order of strong convergence rate under a coupled monotonicity condition and certain polynomial growth condition (Theorem \ref{theorem:strong-convergence-rates}).
Moreover,
by combining the proposed scheme with the antithetic MLMC method,
we obtain the order 2 variance of the multilevel estimator (Theorem \ref{theorem:convergence-order-of-variance})
and thus reveal the optimal complexity $\mathcal{O}(\epsilon^{-2})$ under non-globally Lipschitz condition (Theorem \ref{theorem:main-result-of-complexity}).

Finally,
we would like to discuss briefly several potential extensions of the framework-based numerical scheme and the antithetic sampling method to more general settings. 
Note that the authors of \cite{10.1093/imanum/drad064} presented a tamed Milstein scheme with antithetic MLMC sampling for the MV-SDEs.
So one possible future direction is to study 
whether the present
study can be generalized to encompass McKean–Vlasov SDEs or stochastic interacting particle systems.
Another possible future work would be the proposed numerical scheme and the antithetic sampling method for ergodic SDEs in infinite time interval \cite{fang2019multilevel}.
}


\appendix
\section{Proof of Theorem \ref{theorem:moments-bound-of-MM}} \label{section:proof-of-moments-bound}
\begin{proof}[Proof of Theorem \ref{theorem:moments-bound-of-MM}]
Baesd on \eqref{equation:modified-milstein-method}, 
we can deduce that, for any $n\in \{ 0,1,\dots,N-1 \}$, 
$N \in \mathbb{N}$,
\begin{small}
\begin{equation} 
\begin{aligned}
&\|Y_{n+1} \|^{2}\\
&= \Big\|
\mathscr{P}(Y_{n}) 
+\mu_{h}\left(\mathscr{P}(Y_{n})\right)h 
+\sigma_{h}\left(\mathscr{P}(Y_{n})\right) \Delta W_{n} 
+\sum_{j_{1}, j_{2}=1}^{m} 
\left(
\mathcal{L}^{j_{1}} \sigma_{j_{2}}
\right)_{h}
\left(
\mathscr{P}(Y_{n})
\right) 
\Pi_{j_{1}, j_{2}}^{t_{n}, t_{n+1}} 
\Big\|^{2}  \\
& = 
\| \mathscr{P}(Y_{n}) \|^{2} 
+h^{2}\|\mu_{h}\left(\mathscr{P}(Y_{n})\right) \|^{2} 
+\| 
\sigma_{h}\left(\mathscr{P}(Y_{n})\right) \Delta W_{n} 
\|^{2} 
+\Big\| 
\sum_{j_{1}, j_{2}=1}^{m} 
\left(
\mathcal{L}^{j_{1}} \sigma_{j_{2}}
\right)_{h}
\left(
\mathscr{P}(Y_{n})
\right) 
\Pi_{j_{1}, j_{2}}^{t_{n}, t_{n+1}} 
\Big\|^{2}  \\
& \ + 2h\left\langle 
\mathscr{P}(Y_{n}) , \mu_{h}\left(\mathscr{P}(Y_{n})\right)
\right\rangle 
+ 2 \left\langle 
\mathscr{P}(Y_{n}) ,
\sigma_{h}\left(\mathscr{P}(Y_{n})\right) \Delta W_{n}
\right\rangle 
+ 2 \sum_{j_{1}, j_{2}=1}^{m} 
\left\langle 
\mathscr{P}(Y_{n}) , 
\left(
\mathcal{L}^{j_{1}} \sigma_{j_{2}}
\right)_{h}
\left(
\mathscr{P}(Y_{n})
\right) 
\Pi_{j_{1}, j_{2}}^{t_{n}, t_{n+1}}  
\right\rangle \\
& \ + 2h\left\langle  
\mu_{h}\left(\mathscr{P}(Y_{n})\right) , 
\sigma_{h}\left(\mathscr{P}(Y_{n})\right) \Delta W_{n}
\right\rangle  
+ 2 \sum_{j_{1}, j_{2}=1}^{m}  
\left\langle 
\sigma_{h}
\left(\mathscr{P}(Y_{n})\right) \Delta W_{n} , 
\left(
\mathcal{L}^{j_{1}} \sigma_{j_{2}}
\right)_{h}
\left(\mathscr{P}(Y_{n})\right) 
\Pi_{j_{1}, j_{2}}^{t_{n}, t_{n+1}} 
\right\rangle  \\
& \ + 2h \sum_{j_{1}, j_{2}=1}^{m}  
\left\langle  
\mu_{h}\left(\mathscr{P}(Y_{n})\right)  ,  
\left(
\mathcal{L}^{j_{1}} \sigma_{j_{2}}
\right)_{h}
\left(
\mathscr{P}(Y_{n})
\right) 
\Pi_{j_{1}, j_{2}}^{t_{n}, t_{n+1}} 
\right\rangle .
\end{aligned}
\end{equation}
\end{small}
Using the
Young inequality $2ab \leq \frac{\epsilon_{1}}{2}  a^{2} +  \frac{2}{\epsilon_{1}} b^{2}$, $\epsilon_{1} \in (0, 2p_{0}-2p]$
and Assumption \ref{assumption:condition-on-the-MM}
implies that, for $j_{1}, j_{2} \in \{1, \dots, m \}$,
\begin{footnotesize}
\begin{equation}
\begin{aligned}
2h\left\langle  
\mu_{h}\left(\mathscr{P}(Y_{n})\right) , 
\sigma_{h}\left(\mathscr{P}(Y_{n})\right) \Delta W_{n} 
\right\rangle 
&\leq  
\tfrac{2}{\epsilon_{1}} h^{2} 
\| \mu_{h}\left(\mathscr{P}(Y_{n})\right)\|^{2} 
+ \tfrac{\epsilon_{1}}{2} 
\|
\sigma_{h}\left(\mathscr{P}(Y_{n})\right) \Delta W_{n} 
\|^{2} , \\
& \leq 
C_{\epsilon_{1}}h 
(1+ \|\mathscr{P}(Y_{n})\|)^{2} 
+ \tfrac{\epsilon_{1}}{2} 
\|  
\sigma_{h}\left(\mathscr{P}(Y_{n})\right) \Delta W_{n}
\|^{2}, \\
2 \left\langle 
\sigma_{h}\left(\mathscr{P}(Y_{n})\right) \Delta W_{n} ,
\left(
\mathcal{L}^{j_{1}} \sigma_{j_{2}}
\right)_{h}
\left(\mathscr{P}(Y_{n})\right) 
\Pi_{j_{1}, j_{2}}^{t_{n}, t_{n+1}} 
\right\rangle  
&\leq  
\tfrac{\epsilon_{1}}{2} 
\left\| 
\sigma_{h}\left(\mathscr{P}(Y_{n})\right) \Delta W_{n}
\right\|^{2} 
+ 
\tfrac{2}{\epsilon_{1}} 
\Big\|  
\left(
\mathcal{L}^{j_{1}} \sigma_{j_{2}}
\right)_{h}
\left(\mathscr{P}(Y_{n})\right) 
\Pi_{j_{1}, j_{2}}^{t_{n}, t_{n+1}} 
\Big\|^{2}  \\
&\leq 
\tfrac{\epsilon_{1}}{2} 
\left\|  
\sigma_{h}\left(\mathscr{P}(Y_{n})\right) \Delta W_{n} 
\right\|^{2} 
+ 
\tfrac{C_{\epsilon_{1}}}{h} 
(1+ \|\mathscr{P}(Y_{n})\|)^{2}  
\left|
\Pi_{j_{1}, j_{2}}^{t_{n}, t_{n+1}} 
\right| ^{2} ,\\
2h \left\langle  
\mu_{h}\left(\mathscr{P}(Y_{n})\right)  ,
\left(
\mathcal{L}^{j_{1}} \sigma_{j_{2}}
\right)_{h}
\left(
\mathscr{P}(Y_{n})
\right) 
\Pi_{j_{1}, j_{2}}^{t_{n}, t_{n+1}} 
\right\rangle 
&\leq h^{2} 
\| 
\mu_{h}\left(\mathscr{P}(Y_{n})\right)
\|^{2} 
+ 
\Big\|
 \left(
 \mathcal{L}^{j_{1}} \sigma_{j_{2}}
 \right)_{h}
 \left(
 \mathscr{P}(Y_{n})
 \right) 
 \Pi_{j_{1}, j_{2}}^{t_{n}, t_{n+1}} 
 \Big\|^{2} \\
& \leq Ch (1+ \|\mathscr{P}(Y_{n})\|)^{2} 
+
{\color{black}
\tfrac{C}{h} 
}
(1+ \|\mathscr{P}(Y_{n})\|)^{2} 
\left|
\Pi_{j_{1}, j_{2}}^{t_{n}, t_{n+1}} 
\right| ^{2},
\end{aligned}
\end{equation}
\end{footnotesize}
{\color{black}where we also used the inequality $2ab \leq a^{2} + b^{2}$ for the third term.}
Therefore, one can derive  that
\begin{equation} \label{equation:representation-MM-as-xi}
\begin{aligned}
1 + \|Y_{n+1}  \|^{2} 
\leq 
(
1 + \| \mathscr{P}(Y_{n}) \|^{2}
) 
(
1 + \xi_{n+1}
),
\end{aligned}
\end{equation}
where
\begin{small}
\begin{equation} 
\begin{aligned}
&\xi_{n+1}\\
&:= 
\underbrace{
\tfrac{
C_{\epsilon_{1}}h (1+ \|\mathscr{P}(Y_{n})\|)^{2}
}
{
1+ \|\mathscr{P}(Y_{n}) \|^{2}
} 
}_{\leq C_{\epsilon_{1}}h} 
+ 
\underbrace{
\tfrac{
(1 + \epsilon_{1}) 
\| 
\sigma_{h}\left(\mathscr{P}(Y_{n})\right) \Delta W_{n}  
\|^{2} 
}
{
1+ \|\mathscr{P}(Y_{n}) \|^{2}
}
}_{=:I_{1}} 
+ 
\underbrace{
\tfrac{
\frac{C_{\epsilon_{1}}}{h} 
(1+ \|\mathscr{P}(Y_{n})\|)^{2}
\sum_{j_{1}, j_{2}=1}^{m} 
\left|\Pi_{j_{1}, j_{2}}^{t_{n}, t_{n+1}} \right| ^{2} 
}
{
1+ \|\mathscr{P}(Y_{n}) \|^{2}
}
}_{=:I_{2}} 
+ \underbrace{
\tfrac{
2h \left\langle 
\mathscr{P}(Y_{n}) ,
\mu_{h}\left(\mathscr{P}(Y_{n})\right) 
\right\rangle
}
{
1+ \|\mathscr{P}(Y_{n}) \|^{2}
} 
}_{=:I_{3}}\\
& \ +\underbrace{
\tfrac{ 
2 \sum_{j_{1}, j_{2}=1}^{m} 
\left\langle 
\mathscr{P}(Y_{n}) ,  
\left(
\mathcal{L}^{j_{1}} \sigma_{j_{2}}
\right)_{h}
\left(\mathscr{P}(Y_{n})\right) 
\Pi_{j_{1}, j_{2}}^{t_{n}, t_{n+1}}  
\right\rangle
}
{
1+ \|\mathscr{P}(Y_{n}) \|^{2}
} 
}_{=:I_{4}} 
+\underbrace{
\tfrac{ 
2\langle 
\mathscr{P}(Y_{n}) ,
\sigma_{h}\left(\mathscr{P}(Y_{n})\right) \Delta W_{n} 
\rangle
}
{
1+ \|\mathscr{P}(Y_{n}) \|^{2}
} 
}_{=:I_{5}} .
\end{aligned}
\end{equation}
\end{small}
Taking the $p$-th power, $ p \in [1,p_{0})$, and the conditional mathematical expectation with respect to $\mathcal{F}_{t_{n}}$ on both sides of \eqref{equation:representation-MM-as-xi} yields 
\begin{equation} 
\begin{aligned}
\mathbb{E}
\left[
\left( 1 + \|Y_{n+1}  \|^{2} \right)^{p} 
\big| 
\mathcal{F}_{t_{n}} 
\right] 
\leq 
(1 + \| \mathscr{P}(Y_{n}) \|^{2})^{p} 
\mathbb{E} 
\left[
\left( 1 + \xi_{n+1}\right)^{p} 
\big| \mathcal{F}_{t_{n}}
\right].
\end{aligned}
\end{equation}
According to Lemma 3.3 in \cite{yang2018explicit}, given any $p \in (i, i+1]$,
$i \in \mathbb{N}$, the following estimate
\begin{equation} \label{equation:taylor-expansion-of-xi}
\begin{aligned}
\mathbb{E} 
\left[
\left( 1 + \xi_{n+1}\right)^{p} 
\big| \mathcal{F}_{t_{n}}
\right] 
\leq 
1 
+ 
p
\mathbb{E} 
\left[
\xi_{n+1} \big| \mathcal{F}_{t_{n}} 
\right] 
+ 
\tfrac{p(p-1)}{2}
\mathbb{E} 
\left[ 
\xi_{n+1}^{2} \big| \mathcal{F}_{t_{n}} 
\right]  
+  
\mathbb{E} 
\left[ 
\xi_{n+1}^{3} l_{i}(\xi_{n+1}) \big| \mathcal{F}_{t_{n}}
\right] 
\end{aligned}
\end{equation}
holds true for any $\xi_{n+1} > -1$, where $ l_{i}(\xi)$ is represented as a polynomial of order $i$ with respect to $\xi$ whose coefficients rely only on $p$. We thus have to estimate
only the last three terms in \eqref{equation:taylor-expansion-of-xi}.

\textbf{Step I: estimate of $\mathbb{E} \left[ \xi_{n+1}| \mathcal{F}_{t_{n}} \right]$}

Since $\Delta W_{n}$ and $\Pi_{j_{1}, j_{2}}^{t_{n}, t_{n+1}}$ are independent of $\mathcal{F}_{t_{n}} $, we have
\begin{equation} \label{equation:property-of-brownain-motion}
\begin{aligned}
&\mathbb{E} 
\left[ 
\Delta W_{j, n} \big| \mathcal{F}_{t_{n}}  
\right] 
=  
\mathbb{E} 
\left[
\Pi_{j_{1}, j_{2}}^{t_{n}, t_{n+1}} 
\big| \mathcal{F}_{t_{n}}
\right]  
=0, 
\quad 
\mathbb{E} 
\left[
|\Delta W_{j, n}|^{2} \big| \mathcal{F}_{t_{n}}  
\right] = h, 
\quad  
\forall n \in \mathbb{N},\ j \in \{1, \ldots, m\}
\end{aligned}
\end{equation}
and thus for any $p \in [1, p_{0})$,
\begin{equation} \label{equation:moments-property-of-bronwain-motion}
\begin{aligned}
\mathbb{E} 
\left[
\left| \Delta W_{j, n} \right|^{p}  \big| \mathcal{F}_{t_{n}}
\right] 
\leq 
Ch^{p/2}, 
\quad 
\mathbb{E} 
\left[ 
\big| 
\Pi_{j_{1}, j_{2}}^{t_{n}, t_{n+1}} 
\big|^{p}  
\big| \mathcal{F}_{t_{n}} 
\right] 
\leq Ch^{p}, 
\quad 
\forall n \in \mathbb{N},\ j, j_{1}, j_{2} \in \{1, \ldots, m\}.
\end{aligned}
\end{equation}
Hence, we get
\begin{equation} \label{equation:expectation-of-I4-and-I5}
\mathbb{E} 
\left[ 
I_{4} \big| \mathcal{F}_{t_{n}}  
\right]
=
\mathbb{E} 
\left[ 
I_{5} \big| \mathcal{F}_{t_{n}}  
\right]
=0.
\end{equation}
Furthermore, due to \eqref{equation:property-of-brownain-motion}-\eqref{equation:expectation-of-I4-and-I5}, one can easily observe that
\begin{equation}  
\begin{aligned}
\mathbb{E}
\left[
\xi_{n+1} \big| \mathcal{F}_{t_{n}} 
\right] 
&\leq  
C_{\epsilon_{1}}h 
+ 
\tfrac{
2h\langle 
\mathscr{P}(Y_{n}) , \mu_{h}\left(\mathscr{P}(Y_{n})\right) 
\rangle 
+ (1 + \epsilon_{1})h
{\color{black}
\| \sigma_{h}\left(\mathscr{P}(Y_{n})\right)  \|_{F}^{2} 
}
}
{
1+ \|\mathscr{P}(Y_{n}) \|^{2}
} . \\
\end{aligned}
\end{equation}
%
%
\textbf{Step II: estimate of $\mathbb{E} \left[ \xi^{2}| \mathcal{F}_{t_{n}} \right]$}

Before proceeding further, we introduce a series of useful estimates.
%
By virtue of \eqref{equation:moments-property-of-bronwain-motion}, \eqref{equation:upper-bound-of-diffusion} and the fact that $\Delta W_{n}$ is independent from $\mathcal{F}_{t_{n}}$, one can infer that, for $k\in \mathbb{N}$,
\begin{equation} \label{equation:p-th-moment-of-I1}
\begin{aligned}
\mathbb{E}
\left[
|I_{1}|^{k}
\big| \mathcal{F}_{t_{n}} 
\right] 
&= 
\tfrac
{
(1 + \epsilon_{1})^{k}
\mathbb{E}
\left[ 
\| 
\sigma_{h}\left(\mathscr{P}(Y_{n})\right) \Delta W_{n}
\|^{2k}
\big| \mathcal{F}_{t_{n}} 
\right] 
}
{
(1+ \|\mathscr{P}(Y_{n}) \|^{2})^{k}
} 
\leq 
\tfrac{
Ch^{\frac{k}{2}}
\left(
1 + \| \mathscr{P}(Y_{n}) \|^{2} 
\right)^{k}
}
{
(1+ \|\mathscr{P}(Y_{n}) \|^{2})^{k}
}  
\leq Ch^{k/2}.
\end{aligned}
\end{equation}
Owing to \eqref{equation:moments-property-of-bronwain-motion}, we then acquire
\begin{equation} 
\begin{aligned}
\mathbb{E}
\left[
|I_{2}|^{k}
\big| \mathcal{F}_{t_{n}}
\right]  
\leq 
C_{\epsilon_{1}}h^{k}.
\end{aligned}
\end{equation}
By Assumption \ref{assumption:condition-on-the-MM} and the Cauchy-Schwarz inequality, we also obtain that,
\begin{equation} \label{equation:p-th-moment-of-I3}
\begin{aligned} 
\mathbb{E}
\left[
|I_{3}|^{k} 
\big| \mathcal{F}_{t_{n}} 
\right] 
= 
\tfrac{
2^{k}h^{k} 
\left| 
\langle 
\mathscr{P}(Y_{n}) , \mu_{h}\left(\mathscr{P}(Y_{n})\right) 
\rangle  
\right|^{k}  
}
{
(1+ \|\mathscr{P}(Y_{n}) \|^{2})^{k}
} 
\leq 
\tfrac{
Ch^{k}
\|\mathscr{P}(Y_{n}) \|^{k} 
\| \mu_{h}\left(\mathscr{P}(Y_{n})\right) \|^{k}
}
{
(1+ \|\mathscr{P}(Y_{n}) \|^{2})^{k}
} 
\leq Ch^{k/2} .
\end{aligned}
\end{equation}
Analogous to \eqref{equation:p-th-moment-of-I3},  using the Cauchy-Schwarz inequality and \eqref{equation:moments-property-of-bronwain-motion}  implies that
{\color{black}
\begin{equation} \label{equation:p-th-moment-of-I4}
\begin{aligned}
\mathbb{E}
\left[
|I_{4}|^{k} 
\big| \mathcal{F}_{t_{n}} 
\right] 
&\leq
\tfrac{
C
\sum_{j_{1}, j_{2}=1}^{m} 
\left\|
\mathscr{P}(Y_{n}) 
\right\|^{k}
\mathbb{E}
\left[
\left\|
\left(
\mathcal{L}^{j_{1}} \sigma_{j_{2}}
\right)_{h}
\left(\mathscr{P}(Y_{n})\right) 
\Pi_{j_{1}, j_{2}}^{t_{n}, t_{n+1}}  
\right\|^{k}
\big|
\mathcal{F}_{t_{n}}
\right]
}{
(1+ \|\mathscr{P}(Y_{n}) \|^{2})^{k}
}
\\
&\leq 
\tfrac{
Ch^{k} 
\sum_{j_{1}, j_{2}=1}^{m} 
\|  \mathscr{P}(Y_{n}) \|^{k} 
\big\|  
\left(\mathcal{L}^{j_{1}} \sigma_{j_{2}} \right)_{h}
\left(\mathscr{P}(Y_{n})\right)  
\big\|^{k}  
}
{
(1+ \|\mathscr{P}(Y_{n}) \|^{2})^{k}
} \\
&\leq Ch^{k/2}.
\end{aligned}
\end{equation}
}
Also, due to \eqref{equation:upper-bound-of-diffusion},\eqref{equation:moments-property-of-bronwain-motion} and the Cauchy-Schwarz inequality,
for $k \geq 4$, one observes
\begin{equation} \label{equation:p-th-moment-of-I5}
\begin{aligned}
\mathbb{E}
\left[   
|I_{5}|^{k} 
\big| \mathcal{F}_{t_{n}} 
\right] 
&\leq  
\tfrac{
2^{k} 
\| \mathscr{P}(Y_{n}) \|^{k} 
\mathbb{E} 
\left[ 
\| 
\sigma_{h}\left(\mathscr{P}(Y_{n})\right)\Delta W_{n}
\|^{k} 
\big|\mathcal{F}_{t_{n}} 
\right] 
}
{
\left(1+ \| \mathscr{P}(Y_{n}) \|^{2}\right)^{k}
}   
\leq 
\tfrac{
Ch^{k/4} 
\| \mathscr{P}(Y_{n}) \|^{k} 
\left(   
1 + \| \mathscr{P}(Y_{n}) \|^{2}    
\right)^{k/2} 
}
{
\left(1+ \| \mathscr{P}(Y_{n}) \|^{2}\right)^{k}
}  
\leq Ch^{k/4} .
\end{aligned}
\end{equation}
When it comes to the estimate of $\mathbb{E} \left[ \xi_{n+1}^{2}\big| \mathcal{F}_{t_{n}} \right]$, 
we use elementary inequalities to attain 
\begin{equation}
\label{equation:expansion-of-2nd-moment-of-xi}
\begin{aligned}
\mathbb{E}
\big[ 
\xi_{n+1}^{2} 
\big| \mathcal{F}_{t_{n}} 
\big] 
&= 
\mathbb{E}
\Big[
\Big( C_{\epsilon_{1}}h 
+ 
\sum_{i=1}^{4} I_{i} + I_{5} \Big )^{2} 
\Big| \mathcal{F}_{t_{n}} 
\Big] \\
&\leq 
C_{\epsilon_{1}}h
+ 4\sum_{i=1}^{4}
\mathbb{E}
\left[
I_{i}^{2}\big| \mathcal{F}_{t_{n}} 
\right] 
+ 
\mathbb{E}\left[
I_{5}^{2}\big| \mathcal{F}_{t_{n}} 
\right] 
+ 2 \sum_{i=1}^{4}
\mathbb{E} 
\left[
I_{i} I_{5}\big| \mathcal{F}_{t_{n}} 
\right], \\
\end{aligned}
\end{equation}
where, by \eqref{equation:property-of-brownain-motion} we know
\begin{equation} \label{equation:2-moment-bound-of-I5}
\begin{aligned}
\mathbb{E}
\left[
|I_{5}|^{2} 
\big| \mathcal{F}_{t_{n}} 
\right] 
=& 
\tfrac{
4h 
\|  
\mathscr{P}(Y_{n})^{T} 
\sigma_{h}\left(\mathscr{P}(Y_{n})\right)   
\|^{2}  
}
{
(1+ \|\mathscr{P}(Y_{n}) \|^{2})^{2}
}.
\end{aligned}
\end{equation}
Moreover, recalling some power properties of Brownian motions, for any $\overline{p} \in [1, p^{*}) \cap \mathbb{N}$ and $j, j_{1}, j_{2} \in \{ 1 , \dots, m\}$, we obtain that 
\begin{equation} \label{equation:odd-power-property-of-brownian-motion}
\mathbb{E} 
\left[ 
\Delta W_{j, n}  
\ \Pi_{j_{1}, j_{2}}^{t_{n}, t_{n+1}} 
\big| \mathcal{F}_{t_{n}} 
\right] 
= 0,\quad
\mathbb{E} 
\left[
\left( \Delta W_{j, n}  \right)^{2\overline{p}-1} 
\big| \mathcal{F}_{t_{n}} 
\right] 
= 0,\quad
\mathbb{E} 
\left[ 
\Delta W_{j, n} 
\ \big| \Pi_{j_{1}, j_{2}}^{t_{n}, t_{n+1}}\big|^{2} 
\Big| \mathcal{F}_{t_{n}} 
\right] = 0.
\end{equation}
In view of \eqref{equation:property-of-brownain-motion}, it is straightforward to see that
\begin{equation} \label{equation:crossing-term}
\begin{aligned}
\mathbb{E} 
\left[  
I_{1}I_{5}\big| \mathcal{F}_{t_{n}} 
\right] 
= 
\mathbb{E} 
\left[
I_{2}I_{5}
\big| \mathcal{F}_{t_{n}} 
\right] 
=\mathbb{E} 
\left[
I_{3}I_{5} 
\big| \mathcal{F}_{t_{n}} 
\right]
=  
\mathbb{E} 
\left[
I_{4}I_{5} 
\big| \mathcal{F}_{t_{n}} 
\right]  = 0 .
\end{aligned}
\end{equation}
Using \eqref{equation:p-th-moment-of-I1}-\eqref{equation:p-th-moment-of-I5}, \eqref{equation:2-moment-bound-of-I5} and \eqref{equation:crossing-term}, we derive from \eqref{equation:expansion-of-2nd-moment-of-xi} that
\begin{equation} 
\begin{aligned}
\mathbb{E}
\left[
\xi_{n+1}^{2} 
\big| \mathcal{F}_{t_{n}} 
\right] 
\leq 
C_{\epsilon_{1}}h 
+
\tfrac{
4h 
\|  
\mathscr{P}(Y_{n})^{T}
\sigma_{h}\left(\mathscr{P}(Y_{n})\right)  
\|^{2}  
}
{
(1+ \|\mathscr{P}(Y_{n}) \|^{2})^{2}
} .
\end{aligned}
\end{equation}
\textbf{Step III: estimate of $\mathbb{E} \left[ \xi_{n+1}^{3}| \mathcal{F}_{t_{n}} \right]$}

We begin with the following estimate from \eqref{equation:odd-power-property-of-brownian-motion}
\begin{equation} 
\begin{aligned}
\mathbb{E} 
\big[  
\left(I_{5}\right)^{3} 
\big| \mathcal{F}_{t_{n}} 
\big] 
= 0.
\end{aligned}
\end{equation}
This together with \eqref{equation:p-th-moment-of-I1}-\eqref{equation:p-th-moment-of-I5} and the elementary inequality yields
\begin{equation} 
\begin{aligned}
\mathbb{E} 
\left[
\xi_{n+1}^{3} 
\big| \mathcal{F}_{t_{n}} 
\right] 
& = 
\mathbb{E} 
\left[ 
\Big(
C_{\epsilon_{1}}h + I_{5} + \sum_{i=1}^{4} I_{i}
\Big)^{3} 
\Big| \mathcal{F}_{t_{n}} 
\right]  \\
& \leq 
C_{\epsilon_{1}}h 
+ \mathbb{E}
\left[
\Big(  I_{5} + \sum_{i=1}^{4} I_{i}\Big)^{3} 
\Big| \mathcal{F}_{t_{n}} 
\right] \\
&\leq 
C_{\epsilon_{1}}h 
+ \mathbb{E}\left[
\Big( \sum_{i=1}^{4} I_{i}\Big)^{3} 
+ 3I_{5}\Big( \sum_{i=1}^{4} I_{i}\Big)^{2} 
+ 3I_{5}^{2}\Big( \sum_{i=1}^{4} I_{i}\Big) 
\Big| \mathcal{F}_{t_{n}}
\right]\\
&\leq 
C_{\epsilon_{1}}h 
+ C \sum_{i=1}^{4}
\Big( 
\mathbb{E}
\left[
|I_{i}|^{3} 
\big| \mathcal{F}_{t_{n}} 
\right] 
+ 
\mathbb{E} 
\left[ 
|I_{5} |\cdot (I_{i})^{2} 
\big| \mathcal{F}_{t_{n}} 
\right] 
+ 
\mathbb{E} 
\left[
 |I_{i}| \cdot (I_{5})^{2}   \big| \mathcal{F}_{t_{n}} 
\right] 
\Big) . \\
\end{aligned}
\end{equation}
It follows from \eqref{equation:p-th-moment-of-I1} to \eqref{equation:p-th-moment-of-I5} and the H\"older inequality that, for $i\in \{1,2,3,4\}$,
\begin{equation}
\begin{aligned}
\mathbb{E}
\left[   
|I_{5} | \cdot (I_{i})^{2}
\big| \mathcal{F}_{t_{n}} 
\right]
\leq Ch^{5/4} 
\leq Ch , 
\quad
\mathbb{E}
\left[  
|I_{i}| \cdot (I_{5})^{2} 
\big| \mathcal{F}_{t_{n}} 
\right] 
\leq Ch .
\end{aligned}
\end{equation}
As discussed above, we get
\begin{equation} \label{equation:3rd-moment-of-xi}
\mathbb{E} 
\left[
\xi_{n+1}^{3}
\big| \mathcal{F}_{t_{n}} 
\right] 
\leq 
C_{\epsilon_{1}}h .
\end{equation}
\textbf{Step IV: estimate of $\mathbb{E} \left[ \xi_{n+1}^{k}| \mathcal{F}_{t_{n}} \right]$, $k \geq 4$}
%

By virtue of the estimates from \eqref{equation:p-th-moment-of-I1} to \eqref{equation:p-th-moment-of-I5}, one can achieve that
\begin{equation} \label{equation:4th-moment-of-xi}
\begin{aligned}
\mathbb{E} 
\left[ 
\xi_{n+1}^{k}
\big| \mathcal{F}_{t_{n}} 
\right] 
\leq 
&C_{\epsilon_{1}}h 
+
C\mathbb{E} 
\left[
\Big(\sum_{i=1}^{5} I_{i}\Big)^{k}
\Big| \mathcal{F}_{t_{n}} 
\right] 
\leq 
C_{\epsilon_{1}}h 
+C\sum_{i=1}^{5}
\mathbb{E} 
\left[ | I_{i}|^{k}
\big| \mathcal{F}_{t_{n}} 
\right]    
\leq C_{\epsilon_{1}}h .
\end{aligned}
\end{equation}
Combining theses estimates from \textbf{Step I} to \textbf{Step IV}, along with Assumption \ref{assumption:condition-on-the-MM}, 
shows that,  for any $p\in [1,p_{0})$,
\begin{footnotesize}
\begin{equation} 
\begin{aligned}
&\mathbb{E}
\left[
\left( 1 + \|Y_{n+1}\|^{2} \right)^{p} 
\big| \mathcal{F}_{t_{n}} 
\right] \\
&\leq 
\left( 1 + \| \mathscr{P}(Y_{n}) \|^{2} \right)^{p} 
\left(  
1 + C_{\epsilon_{1}}h 
+ph 
\tfrac{
(1+ \|\mathscr{P}(Y_{n}) \|^{2} )
\big(
2 \langle 
\mathscr{P}(Y_{n}) 
, \mu_{h}(\mathscr{P}(Y_{n}) ) 
\rangle 
+ (1 + \epsilon_{1} ) 
{\color{black}
\| \sigma_{h}\left(\mathscr{P}(Y_{n})\right)  \|_{F}^{2} 
}
\big) 
+ 2(p-1) 
\| 
\mathscr{P}(Y_{n})^{T} 
\sigma_{h}\left(\mathscr{P}(Y_{n})\right)   
\|^{2}   
}
{
(1+ \|\mathscr{P}(Y_{n}) \|^{2})^{2}
}   
\right)  \\
&\leq 
\left(
1 + \| \mathscr{P}(Y_{n}) \|^{2}
\right)^{p} 
\left(  
1 + C_{\epsilon_{1}}h 
+ph 
\tfrac{
(1+ \|\mathscr{P}(Y_{n}) \|^{2}) 
\big( 
2\langle 
\mathscr{P}(Y_{n}), 
\mu_{h}\left(\mathscr{P}(Y_{n})\right) 
\rangle   
+(2p-1+ \epsilon_{1}) 
{\color{black}
\| \sigma_{h}\left(\mathscr{P}(Y_{n})\right) \|_{F}^{2} 
}
\big) 
 }
{
( 1 + \|\mathscr{P}(Y_{n}) \|^{2})^{2}
}  
\right) \\
&\leq 
\left(
1 + \| Y_{n} \|^{2}
\right)^{p} 
(1+C_{\epsilon_{1}}h).  \\
\end{aligned}
\end{equation}
\end{footnotesize}
%
Taking expectations on both sides and the proof is completed by iteration.
\end{proof}
\section{Proof of lemmas in Section \ref{section:strong-convergence-order-of-the-MM} }  \label{section:strong-convergence-rate}
\subsection{Proof of Lemma \ref{lemma:strong-convergence-rate-of-sde-and-auxiliary-process}} \label{section:proof-of-lemma-and-auxiliary-lemmas}

\begin{proof} [Proof of Lemma \ref{lemma:strong-convergence-rate-of-sde-and-auxiliary-process}] \label{proof:lemma-strong-convergence-rate-of-sde-and-auxiliary-process}
We first show the moment estimate of the auxiliary process $\{\widetilde{Y}_{n}\}_{0\leq n \leq N}$ \eqref{equation:auxiliary-process}.
It follows from the iteration that, for every $n \in \{0,1,\dots,N-1 \}$,
\begin{equation}
\label{equation:auxiliary-process-after-iteration}
\begin{aligned}
\widetilde{Y}_{n+1} 
=
\underbrace{
X_{0} 
+\int_{0}^{t_{n+1}} \mu(X_{s}) \ \mathrm{d} s 
+\int_{0}^{t_{n+1}} \sigma(X_{s}) \ \mathrm{d} W_{s}
}_{=X_{t_{n+1}}} 
+ 
\underbrace{
\sum_{i=0}^{n} \sum_{j_{1},j_{2}=1}^{m} 
\left(
\mathcal{L}^{j_{1}} \sigma_{j_{2}}
\right)_{h}
\left(\mathscr{P}(Y_{i})\right) 
\Pi_{j_{1}, j_{2}}^{t_{i}, t_{i+1}}
}_{ \text{discrete}  \ \text{martingales} }.
\end{aligned}
\end{equation}
Indeed, the last term in \eqref{equation:auxiliary-process-after-iteration} is a discrete martingales because
\begin{equation}
\begin{aligned}
\mathbb{E}
\left[ 
\sum_{i=0}^{n} \sum_{j_{1},j_{2}=1}^{m} 
\left(
\mathcal{L}^{j_{1}} \sigma_{j_{2}}
\right)_{h}
\left(\mathscr{P}(Y_{i})\right) 
\Pi_{j_{1}, j_{2}}^{t_{i}, t_{i+1}}
\Big| \mathcal{F}_{t_{n}}
\right]
=
\sum_{i=0}^{n-1} \sum_{j_{1},j_{2}=1}^{m} 
\left(
\mathcal{L}^{j_{1}} \sigma_{j_{2}}
\right)_{h}
\left(\mathscr{P}(Y_{i})\right) 
\Pi_{j_{1}, j_{2}}^{t_{i}, t_{i+1}},
\end{aligned}
\end{equation}
where we have used \eqref{equation:property-of-brownain-motion} and a conditional expectation argument.
By the well-posedness of the SDE, 
the Burkholder-Davis-Gundy inequality (see Lemma 4.2 in \cite{wang2017strong}), 
condition \textit{(1)} 
in Assumption \ref{assumption:condition-on-the-MM}, 
\eqref{equation:moments-property-of-bronwain-motion} and Theorem \ref{theorem:moments-bound-of-MM},
it suffices to show that, 
\begin{equation}
\begin{aligned}
\big\|
\widetilde{Y}_{n+1}
\big\|_{L^{2p}(\Omega, \mathbb{R}^{d})} 
&
\leq 
\|X_{t_{n+1}}\|_{L^{2p}(\Omega, \mathbb{R}^{d})} 
+\Big\|  
\sum_{i=0}^{n} \sum_{j_{1},j_{2}=1}^{m} 
\left(
\mathcal{L}^{j_{1}} \sigma_{j_{2}}
\right)_{h}
\left(\mathscr{P}(Y_{i})\right) 
\Pi_{j_{1}, j_{2}}^{t_{i}, t_{i+1}}
 \Big\|_{L^{2p}(\Omega, \mathbb{R}^{d})} \\
& 
\leq 
C(1+ \|X_{0}\|_{L^{2p}(\Omega, \mathbb{R}^{d})}) 
+C\left(  \sum_{i=0}^{N} 
\Big\|
\sum_{j_{1},j_{2}=1}^{m} 
\left(
\mathcal{L}^{j_{1}} \sigma_{j_{2}}
\right)_{h}
\left(\mathscr{P}(Y_{i})\right) 
\Big\|^{2}_{L^{2p}(\Omega, \mathbb{R}^{d})} 
\right)^{\frac{1}{2}}\\
& \leq 
C(1+ \|X_{0}\|_{L^{2p}(\Omega, \mathbb{R}^{d})}).
\end{aligned}
\end{equation}
%
Now we turn to the proof of Lemma \ref{lemma:strong-convergence-rate-of-sde-and-auxiliary-process}. By iteration again, we have
\begin{equation}
\begin{aligned}
X_{t_{n+1}} - \widetilde{Y}_{n+1} 
= -\sum_{i=0}^{n}   \sum_{j_{1},j_{2}=1}^{m} 
\left(
\mathcal{L}^{j_{1}} \sigma_{j_{2}}
\right)_{h}
\left(\mathscr{P}(Y_{i})\right) 
\Pi_{j_{1}, j_{2}}^{t_{i}, t_{i+1}}.
\end{aligned}
\end{equation}
Thanks to \eqref{equation:moments-property-of-bronwain-motion}, \eqref{equation:growth-of-milstein-diffusion-term},
and the Burkholder-Davis-Gundy inequality \cite{wang2017strong}, 
one can similarly deduce
\begin{equation}
\begin{aligned}
\big\|
X_{t_{n+1}} - \widetilde{Y}_{n+1}
\big\|_{L^{2p}(\Omega, \mathbb{R}^{d})} 
&= 
\Big\|
\sum_{i=0}^{n} \sum_{j_{1},j_{2}=1}^{m}
\left(
\mathcal{L}^{j_{1}} \sigma_{j_{2}}
\right)_{h}
\left(\mathscr{P}(Y_{i})\right) 
\Pi_{j_{1}, j_{2}}^{t_{i}, t_{i+1}}
\Big\|_{L^{2p}(\Omega, \mathbb{R}^{d})} \\
&\leq 
C \left(
\sum_{i=0}^{N} 
\Big\|   
\sum_{j_{1},j_{2}=1}^{m} 
\left(
\mathcal{L}^{j_{1}} \sigma_{j_{2}}
\right)_{h}
\left(\mathscr{P}(Y_{i})\right) 
\Pi_{j_{1}, j_{2}}^{t_{i}, t_{i+1}}
\Big\|^{2}_{L^{2p}(\Omega, \mathbb{R}^{d})} 
\right)^{\frac{1}{2}} \\
& \leq 
Ch^{\frac{1}{2}}  
\Big(
1 
+ \sup_{0 \leq r \leq N} 
\left\| 
Y_{r}  
\right\|^{\gamma}_{L^{2p\gamma}(\Omega, \mathbb{R}^{d})}  
\Big)
\end{aligned}
\end{equation}
as required.
\end{proof}
\subsection{Proof of Lemma \ref{lemma:holder-continuity-of-MM}} \label{proof:lemma-holder-continuity-of-MM}
\begin{proof}[Proof of Lemma \ref{lemma:holder-continuity-of-MM}]
In view of \eqref{equation:moments-property-of-bronwain-motion}, \eqref{equation:continuous-time-version-of-MM-method},  the elementary inequality and a conditional expectation argument, we get, for $n\in \{0,1, \dots, N-1\}$, $N \in \mathbb{N}$,
\begin{equation} 
\begin{aligned} 
&\|
\mathbb{Y}^{n}(s)
\|_{L^{2p}(\Omega, \mathbb{R}^{d})}\\
&\leq 
\|
\mathbb{Y}^{n}(t_{n}) 
\|_{L^{2p}(\Omega, \mathbb{R}^{d})} 
+ 
(s-t_{n}) 
\big\|
\mu_{h}
\big(
\mathbb{Y}^{n}(t_{n}) 
\big)
\big\|_{L^{2p}(\Omega, \mathbb{R}^{d})} 
+ (s-t_{n})^{\frac{1}{2}} 
\big\|
\sigma_{h}
\big(
\mathbb{Y}^{n}(t_{n}) 
\big)
\big\|_{L^{2p}(\Omega, \mathbb{R}^{d\times m})} \\
&\ 
+C (s-t_{n}) 
\big\| 
\left(
\mathcal{L}^{j_{1}} \sigma_{j_{2}}
\right)_{h}
\big(
\mathbb{Y}^{n}(t_{n}) 
\big) 
\big\|_{L^{2p}(\Omega, \mathbb{R}^{d})}.
\end{aligned} 
\end{equation}
From \eqref{equation:upper-bound-of-diffusion}, it suffices to show that
\begin{equation}
\begin{aligned} 
(s-t_{n})^{p}  
{\color{black}
\mathbb{E} 
\left[
\left\|
\sigma_{h}
\left(
\mathbb{Y}^{n}(t_{n}) 
\right)
\right\|_{F}^{2p}
\right] 
}
&\leq 
C(s-t_{n})^{p} h^{-\frac{p}{2}}
\mathbb{E}
\left[
\left(
1 
+ 
\left\| 
\mathbb{Y}^{n}(t_{n})  
\right\|^{2} 
\right)^{p} 
\right]  \\
&\leq 
C\mathbb{E}
\left[ 
\left(
1 + 
\left\|  
Y_{n} 
\right\|^{2}  
\right)^{p}  
\right].
\end{aligned} 
\end{equation}
This together with condition \textit{(1)} in Assumption \ref{assumption:condition-on-the-MM} and Theorem \ref{theorem:moments-bound-of-MM}, leads to the estimate \eqref{eq:moment-bounds-of-the-continuous-time-version-of-MM-method} in Lemma \ref{lemma:holder-continuity-of-MM}.
Next, 
it is apparent from \eqref{equation:continuous-time-version-of-MM-method}, Assumption \ref{assumption:polynomial-growth-condition}, Assumption \ref{assumption:condition-on-the-MM} and Theorem \ref{theorem:moments-bound-of-MM} to obtain estimate \eqref{equation:holder-continuity-of-the-MM-method} in Lemma \ref{lemma:holder-continuity-of-MM}.
In addition,
employing these estimates with \eqref{equation:growth-of-drift}, \eqref{equation:growth-of-diffusion} and the H\"older inequality straightforwardly gives
\begin{equation}
\begin{aligned} 
\mathbb{E} 
\left[
\left\|
\mu\big(\mathbb{Y}^{n}(s)\big) -\mu\big(\mathbb{Y}^{n}(t_{n}) \big)  
\right\|^{2p}
\right] 
&\leq 
C\mathbb{E} 
\left[
\left(
1
+\big\|
\mathbb{Y}^{n}(s)   
\big\|
+\big\|
\mathbb{Y}^{n}(t_{n}) 
\big\| 
\right)^{2p(\gamma-1)} 
\big\|
\mathbb{Y}^{n}(s) - \mathbb{Y}^{n}(t_{n}) 
\big\|^{2p} 
\right]  \\
& 
\leq 
Ch^{p}  
\sup_{0 \leq r \leq N}
\mathbb{E}
\left[ 
\left( 1+\|Y_{r}\| \right)^{4p\gamma-2p} 
\right],
\end{aligned} 
\end{equation}
and
\begin{equation}
\begin{aligned} 
{\color{black}
\mathbb{E} 
\left[
\left\| 
\sigma\big( \mathbb{Y}^{n}(s)\big)
-\sigma\big(\mathbb{Y}^{n}(t_{n}) \big) 
\right\|_{F}^{2p}
 \right] 
 }
&\leq
C\mathbb{E} 
\left[ 
\left( 
1
+\big\|
\mathbb{Y}^{n}(s)
\big\|
+
\big\|
\mathbb{Y}^{n}(t_{n}) 
\big\| 
\right)^{p(\gamma-1)}
\big\|
\mathbb{Y}^{n}(s) -\mathbb{Y}^{n}(t_{n}) 
\big\|^{2p} 
\right]  \\
& \leq Ch^{p}  
\sup_{0 \leq r \leq N}
\mathbb{E}
\left[ 
\left(1+\|Y_{r}\|  \right)^{3p\gamma-p} 
\right].
\end{aligned} 
\end{equation}
Plugging Theorem \ref{theorem:moments-bound-of-MM}  into the estimate above with $p \in [1, \frac{p_{0}}{2\gamma-1} ]$  helps us complete the proof. 
\end{proof}
\subsection{Proof of Lemma \ref{lemma:errors-between-sde-and-auxiliary-process-in-continuous-version}} \label{proof:lemma-errors-between-sde-and-auxiliary-process-in-continuous-version}
\begin{proof}[Proof of Lemma \ref{lemma:errors-between-sde-and-auxiliary-process-in-continuous-version}]
Here we take the same argument as the proof of Lemma \ref{lemma:strong-convergence-rate-of-sde-and-auxiliary-process}. It follows that
\begin{equation}
\begin{aligned}
X_{s} - \widetilde{Y}(s) 
= X_{t_{n}} - \widetilde{Y}(t_{n}) 
- \sum_{j_{1},j_{2}=1}^{m} 
\left(
\mathcal{L}^{j_{1}} \sigma_{j_{2}}
\right)_{h}
\big(
\mathscr{P}(Y_{n})
\big) 
\Pi_{j_{1}, j_{2}}^{t_{n}, s}.
\end{aligned}
\end{equation}
Using the elementary inequality, Assumption \ref{assumption:condition-on-the-MM}, Lemma \ref{lemma:strong-convergence-rate-of-sde-and-auxiliary-process} and \eqref{equation:moments-property-of-bronwain-motion}  yields
\begin{equation}
\begin{aligned}
\big\|
X_{s} - \widetilde{Y}(s) 
\big\|_{L^{2p}(\Omega, \mathbb{R}^{d})} 
&\leq 
\big\|
X_{t_{n}} - \widetilde{Y}(t_{n}) 
\big\|_{L^{2p}(\Omega, \mathbb{R}^{d})} 
+ 
\Big\|
\sum_{j_{1},j_{2}=1}^{m} 
\left(
\mathcal{L}^{j_{1}} \sigma_{j_{2}}
\right)_{h}
\big(
\mathscr{P}(Y_{n}) 
\big) 
\Pi_{j_{1}, j_{2}}^{t_{n}, s}
\Big\|_{L^{2p}(\Omega, \mathbb{R}^{d})} \\
& \leq 
Ch^{\frac{1}{2}}  
\big(
1+\|Y_{r}\|^{\gamma}_{L^{2p\gamma}(\Omega, \mathbb{R}^{d})}  \big).
\end{aligned}
\end{equation}
To obtain the estimate \eqref{equation:estimates-in-lemma:errors-between-sde-and-auxiliary-process-in-continuous-version} in Lemma \ref{lemma:errors-between-sde-and-auxiliary-process-in-continuous-version}, one just needs to follow the same  analysis as the proof of Lemma \ref{lemma:holder-continuity-of-MM} and the proof is omitted.
\end{proof}
\subsection{Proof of Lemma \ref{lemma:strong-convergence-rate-of-MM-and-auxiliary-process}} 
\begin{proof} [Proof of Lemma \ref{lemma:strong-convergence-rate-of-MM-and-auxiliary-process}] \label{proof:lemma-strong-convergence-rate-of-MM-and-auxiliary-process}
By \eqref{equation:continuous-time-version-of-MM-method} and \eqref{equation:continuous-time-version-of-auxiliary-process}, we set $\widetilde{e}_{s} := \widetilde{Y}(s) - \mathbb{Y}^{n}(s)$ with $s \in [t_{n}, t_{n+1}]$, $n \in \{1,2, \dots, N \}$, $N\in \mathbb{N}$ and obtain
\begin{equation}
\begin{aligned}
\widetilde{e}_{t_{n+1}} 
=  
\widetilde{e}_{t_{n}} 
+ \int_{t_{n}}^{t_{n+1}} \mu(X_{s}) 
- \mu_{h}\big(\mathbb{Y}^{n}(t_{n}) \big) \mathrm{d} s 
+ \int_{t_{n}}^{t_{n+1}} 
\sigma(X_{s}) - \sigma_{h}
\big
(\mathbb{Y}^{n}(t_{n}) 
\big) 
 \mathrm{d} W_{s},
\end{aligned}
\end{equation}
which is an It\^o process. Using the It\^o formula yields, for any  positive constant $\eta > 0$, 
\begin{equation}
\begin{aligned} 
\left(
\eta + \| \widetilde{e}_{t_{n+1}} \|^{2} 
\right)^{p} 
&\leq 
\left(
\eta + \| \widetilde{e}_{t_{n}} \|^{2} 
\right)^{p} 
+ 
2p \int_{t_{n}}^{t_{n+1}}  
\left(
\eta + \| \widetilde{e}_{s} \|^{2} 
\right)^{p-1} 
\left\langle 
\widetilde{e}_{s} , 
\mu(X_{s}) 
- \mu_{h}
\big(
\mathbb{Y}^{n}(t_{n}) 
\big) 
\right\rangle \mathrm{d} s \\
& \ +p(2p-1)
\int_{t_{n}}^{t_{n+1}}  
\left(
\eta + \| \widetilde{e}_{s} \|^{2} 
\right)^{p-1}  
{\color{black}
\big\| 
\sigma(X_{s}) 
- \sigma_{h}
\big(
\mathbb{Y}^{n}(t_{n}) 
\big)
\big\|_{F}^{2}
}\mathrm{d} s . \\
\end{aligned} 
\end{equation}
Before proceeding further, it is easy to see that
\begin{small}
\begin{equation}
\begin{aligned} 
&\mu(X_{s}) 
- 
\mu_{h}
\big(
\mathbb{Y}^{n}(t_{n}) 
\big)\\
&= 
\mu\big(\widetilde{Y}(s)\big) 
- \mu\big(\mathbb{Y}^{n}(s)\big) 
+ 
\underbrace{
\mu(X_{s}) - \mu\big(\widetilde{Y}(s)\big) 
+ 
\mu\big(\mathbb{Y}^{n}(s)\big) 
- \mu\big(\mathbb{Y}^{n}(t_{n}) \big) 
+ \mathbb{T}_{h,\mu}\big(\mathbb{Y}^{n}(t_{n})  \big)
 }_{=: \tilde{R}_{s}^{\mu}}, \\
\end{aligned} 
\end{equation}
\end{small}
and 
\begin{small}
\begin{equation}
\begin{aligned} 
&\sigma(X_{s}) 
- \sigma_{h}\big(\mathbb{Y}^{n}(t_{n}) \big) \\
&= 
\sigma\big(\widetilde{Y}(s)\big) 
- \sigma\big(\mathbb{Y}^{n}(s)\big) 
+ 
\underbrace{
\sigma(X_{s}) - \sigma\big(\widetilde{Y}(s)\big) 
+ \sigma\big(\mathbb{Y}^{n}(s)\big) 
- \sigma\big(\mathbb{Y}^{n}(t_{n}) \big) 
+ \mathbb{T}_{h,\sigma}
\big(
\mathbb{Y}^{n}(t_{n}) 
\big)
 }_{=: \tilde{R}_{s}^{\sigma}}.
\end{aligned} 
\end{equation}
\end{small}
Taking expectations on both sides and using the Young inequality imply,
for some $\epsilon_{2} \in (0, \frac{2p_{1}-2p}{2p-1}]$,
\begin{footnotesize}
\begin{equation} \label{equation:ito-formula-1st-step}
\begin{aligned} 
&\mathbb{E}
\left[
\left(
\eta + \| \widetilde{e}_{t_{n+1}} \|^{2} 
\right)^{p}
\right] \\
&\leq 
\mathbb{E} 
\left[
\left(
\eta + \| \widetilde{e}_{t_{n}} \|^{2} 
\right)^{p}
\right] \\
&\ + p 
\mathbb{E} 
\left[
\int_{t_{n}}^{t_{n+1}}  
\left(
\eta + \| \widetilde{e}_{s} \|^{2} 
\right)^{p-1} 
\Big(
2\big\langle 
\widetilde{e}_{s} , \mu\big(\widetilde{Y}(s)\big) - \mu\big(\mathbb{Y}^{n}(s)\big) 
\big\rangle 
+ (2p_{1}-1) 
{\color{black}
\big\| 
\sigma\big(\widetilde{Y}(s)\big) 
 - \sigma\big(\mathbb{Y}^{n}(s)\big)
\big\|_{F}^{2}
}
\Big) \mathrm{d} s 
 \right]\\
&\ + 2p 
\mathbb{E} 
\left[
\int_{t_{n}}^{t_{n+1}}  
\left(
\eta + \| \widetilde{e}_{s} \|^{2} 
\right)^{p-1}
\big\langle 
\widetilde{e}_{s} , 
\widetilde{R}_{s}^{\mu} 
\big\rangle \mathrm{d} s 
\right] 
+ p(2p-1)
(
1+ \tfrac{1}{\epsilon_{2}}
)
\mathbb{E}
\left[
\int_{t_{n}}^{t_{n+1}}  
\left(
 \eta + \| \widetilde{e}_{s} \|^{2} 
\right)^{p-1} 
{\color{black}
\big\| 
\widetilde{R}_{s}^{\sigma} 
\big\|_{F}^{2} 
}
\mathrm{d} s 
\right] .
\end{aligned} 
\end{equation}
\end{footnotesize}
Due to  Assumption \ref{assumption:polynomial-growth-condition}, the Cauchy-Schwarz inequality and the Young inequality, one obtains
\begin{footnotesize}
\begin{equation} \label{equation:ito-formula-2nd-step}
\begin{aligned} 
&\mathbb{E} 
\left[
\left(
\eta + \| \widetilde{e}_{t_{n+1}} \|^{2} 
\right)^{p}
\right] 
\leq 
\mathbb{E} 
\left[
\left(
\eta + \| \widetilde{e}_{t_{n}} \|^{2} 
\right)^{p}
\right] 
+ C_{\epsilon_{2}} 
\mathbb{E} 
\left[
\int_{t_{n}}^{t_{n+1}}  
\left(
\eta + \| \widetilde{e}_{s} \|^{2} 
\right)^{p} \mathrm{d} s
\right] 
+ C_{\epsilon_{2}} 
\int_{t_{n}}^{t_{n+1}}  
\mathbb{E}
\left[
\big\|\widetilde{R}_{s}^{\mu} \big\|^{2p} 
\right] 
+ 
{\color{black}
\mathbb{E}
\left[ 
\big\|\widetilde{R}_{s}^{\sigma} \big\|_{F}^{2p} 
\right] 
}
\mathrm{d} s. \\ 
\end{aligned} 
\end{equation}
\end{footnotesize}
Bearing Assumption \ref{assumption:condition-on-the-MM}, Lemma \ref{lemma:holder-continuity-of-MM} and Lemma \ref{lemma:errors-between-sde-and-auxiliary-process-in-continuous-version} in mind, it is straightforward to derive the moment estimates for $\widetilde{R}_{s}^{\mu}$ and $\widetilde{R}_{s}^{\sigma}$, that is, for $p \in [1, \frac{p_{0}}{\max\{2\gamma-1, \alpha_{1}, \alpha_{2}\} } ] $,
\begin{equation} \label{equation:estimate-of-the-remaining-term-Rf-and-Rg}
\begin{aligned} 
&\mathbb{E}
\left[ 
 \big\|\widetilde{R}_{s}^{\mu} \big\|^{2p} 
\right] 
\bigvee 
{\color{black}
\mathbb{E}
\left[ 
\big\|\widetilde{R}_{s}^{\sigma} \big\|_{F}^{2p} 
\right] 
}
\leq Ch^{p}.
\end{aligned} 
\end{equation}
Letting $\eta \rightarrow 0^{+}$ and 
recalling that  
$$\widetilde{e}_{t_{n+1}} = \widetilde{Y}(t_{n+1}) - \mathbb{Y}^{n}(t_{n+1}) = \widetilde{Y}_{n+1} - Y_{n+1}$$ 
and  
$$\widetilde{e}_{t_{n}} = \widetilde{Y}(t_{n}) - \mathbb{Y}^{n}(t_{n}) = \widetilde{Y}_{n} - \mathscr{P}(Y_{n}) = \widetilde{Y}_{n} - Y_{n} + \mathcal{E}(Y_{n}),$$
one can derive from \eqref{equation:ito-formula-2nd-step} that
\begin{small}
\begin{equation}
\begin{aligned}
&\mathbb{E}
\left[ 
\big\|\widetilde{Y}_{n+1} - Y_{n+1} \big\|^{2p} 
\right] \\
&\leq 
\mathbb{E}
\left[ 
 \big\|
 \widetilde{Y}_{n} - Y_{n} + \mathcal{E}(Y_{n})
 \big\|^{2p} 
\right] 
+ C_{\epsilon_{2}}  
\mathbb{E} 
\left[\int_{t_{n}}^{t_{n+1}}  \left(\eta + \| \widetilde{e}_{s} \|^{2} \right)^{p} \mathrm{d} s 
\right] 
+C_{\epsilon_{2}} h^{p+1} \\
& \leq  
\mathbb{E}
\left[ 
\big\|\widetilde{Y}_{n} - Y_{n} \big\|^{2p} 
\right] 
+ 
\sum_{i=1}^{2p} \mathrm{C}_{2p}^{i}   
\mathbb{E}
\left[ 
\big\|
\widetilde{Y}_{n} - Y_{n}
\big\|^{2p-i} 
\cdot \|\mathcal{E}(Y_{n})\|^{i} 
\right]
+ C_{\epsilon_{2}}  
\mathbb{E} 
\left[
\int_{t_{n}}^{t_{n+1}}  
\left(
\eta + \| \widetilde{e}_{s} \|^{2} 
\right)^{p} \mathrm{d} s 
\right] 
+ C_{\epsilon_{2}} h^{p+1},
\end{aligned}
\end{equation}
\end{small}
where $\mathrm{C}_{2p}^{i}:= \frac{2p!}{i! (2p-i)!}$, $i \in \{1, \dots, 2p \}$. Using the Young inequality and Assumption \ref{assumption:condition-on-the-MM} gives
\begin{equation}
\begin{aligned}
\mathbb{E}
\left[ 
\big\|
\widetilde{Y}_{n} - Y_{n}
\big\|^{2p-i} 
\cdot 
\|\mathcal{E}(Y_{n})\|^{i} 
\right] 
&= 
\mathbb{E}
\left[  
h^{\frac{2p-i}{2p}}
\big\|\widetilde{Y}_{n} - Y_{n}\big\|^{2p-i} 
\cdot 
h^{-\frac{2p-i}{2p}}
\|\mathcal{E}(Y_{n})\|^{i} 
\right] \\
& \leq 
Ch \mathbb{E}
\left[ 
\big\|\widetilde{Y}_{n} - Y_{n}\big\|^{2p}
\right] 
+ Ch^{-\frac{2p-i}{i}} 
\mathbb{E}
\left[
\|\mathcal{E}(Y_{n})\|^{2p} 
\right] \\
& \leq 
Ch \mathbb{E}
\left[ 
\big\|\widetilde{Y}_{n} - Y_{n}\big\|^{2p}
\right] 
+ Ch^{1+4p-\frac{2p}{i}}  
\sup_{0 \leq r \leq N}
\mathbb{E}
\left[ 
\left(1+\|Y_{r}\|  \right)^{2\mathbf{a}p}
\right].
\end{aligned}
\end{equation}
Plugging the estimate \eqref{equation:estimate-of-the-remaining-term-Rf-and-Rg} into \eqref{equation:ito-formula-2nd-step} yields,
for $p \in [1,p_{1})\cap [1, \frac{p_{0}}{\max\{2\gamma-1, \alpha_{1}, \alpha_{2}, \mathbf{a}\} } ]  $,
\begin{equation}
\begin{aligned}
\mathbb{E}
\left[ 
\big\|\widetilde{Y}_{n+1} - Y_{n+1} \big\|^{2p} 
\right] 
\leq (1+Ch) 
\mathbb{E}
\left[ 
\big\|\widetilde{Y}_{n} - Y_{n} \big\|^{2p} 
\right] 
+ C_{\epsilon_{2}}  
\mathbb{E} 
\left[
\int_{t_{n}}^{t_{n+1}}  
\| \widetilde{e}_{s} \|^{2p}  \mathrm{d} s 
\right] 
+ C_{\epsilon_{2}} h^{p+1}.
\end{aligned}
\end{equation}
Since $\widetilde{Y}_{0} - Y_{0}=0$, we deduce
\begin{equation}
\begin{aligned}
\mathbb{E}
\left[ 
\big\|\widetilde{Y}_{n+1} - Y_{n+1} \big\|^{2p} 
\right] 
= 
\mathbb{E}
\left[ 
\big\|\widetilde{e}_{t_{n+1}} \big\|^{2p} 
\right] 
\leq  
C_{\epsilon_{2}}  
\mathbb{E} 
\left[
\int_{0}^{t_{n+1}}  
\| \widetilde{e}_{s} \|^{2p} \mathrm{d} s 
\right] 
+ C_{\epsilon_{2}} h^{p}.
\end{aligned}
\end{equation}
The proof  is completed with the help of the Gronwall inequality. 
\end{proof}
\section{Proof of lemmas in Section \ref{section:variance-analysis}}  \label{section:analysis-of-antithetic-MLMC-method}
\subsection{Proof of Lemma \ref{lemma:expression-of-yf-in-a-coarse-timestep}} \label{proof:expression-of-yf-in-a-coarse-timestep}
\begin{proof} [Proof of Lemma \ref{lemma:expression-of-yf-in-a-coarse-timestep}]
Combining \eqref{equation:MM-in-first-fine-timestep} with \eqref{equation:MM-in-second-fine-timestep}, we can then get a representation of $\{Y^{f} \}$ within the coarser timestep:
\begin{footnotesize}
\begin{equation} \label{equation:prior-expansion-of-yf}
\begin{aligned}
&Y_{n+1}^{f} \\
&= \mathscr{P}\big(Y_{n}^{f} \big) 
+ \mu_{h}
\big(
\mathscr{P}\big(Y_{n}^{f} \big)
\big)h 
+ \sigma_{h}
\big(
\mathscr{P}\big(Y_{n}^{f} \big)
\big) \Delta W_{n} 
+\sum_{j_{1},j_{2}=1}^{m} 
\left(
\mathcal{L}^{j_{1}} \sigma_{j_{2}}
\right)_{h}
\big(
\mathscr{P}\big(Y_{n}^{f} \big)
\big)
\Pi_{j_{1}, j_{2}}^{t_{n}, t_{n+1}} \\
& \ + 
\Big( 
 \mu_{\frac{h}{2}}
 \big(
 \mathscr{P}\big(Y_{n+1/2}^{f} \big)
 \big)
 +\mu_{\frac{h}{2}}
 \big(
 \mathscr{P}\big(Y_{n}^{f} \big)
 \big) 
\Big)\tfrac{h}{2} 
- \mu_{h}
\big(
\mathscr{P}\big(Y_{n}^{f} \big)
\big)h - \mathcal{E}(Y^{f}_{n+1/2}) \\
& \ + \sigma_{\frac{h}{2}}
\big(
\mathscr{P}\big(Y_{n}^{f} \big)
\big) \delta W_{n} 
+ \sigma_{\frac{h}{2}}
\big(
\mathscr{P}\big(Y_{n+1/2}^{f} \big)
\big) \delta W_{n+1/2} 
- \sigma_{h}
\big(
\mathscr{P}\big(Y_{n}^{f} \big)
\big) \Delta W_{n} \\
& \ 
+ \sum_{j_{1},j_{2}=1}^{m}
\left(
\left(
\mathcal{L}^{j_{1}} \sigma_{j_{2}}
\right)_{\frac{h}{2}}
\big(
\mathscr{P}\big(Y_{n}^{f} \big)
\big) 
\Pi_{j_{1}, j_{2}}^{t_{n}, t_{n+1/2}} 
+ 
\left(
\mathcal{L}^{j_{1}} \sigma_{j_{2}}
\right)_{\frac{h}{2}}
\big(
\mathscr{P}\big(Y_{n+1/2}^{f} \big)
\big) 
\Pi_{j_{1}, j_{2}}^{t_{n+1/2}, t_{n+1}}
- 
\left(
\mathcal{L}^{j_{1}} \sigma_{j_{2}}
\right)_{h}
\big(
\mathscr{P}\big(Y_{n}^{f} \big)
\big) 
\Pi_{j_{1}, j_{2}}^{t_{n}, t_{n+1}} 
\right).
\end{aligned}
\end{equation}
\end{footnotesize}
Since
\begin{equation} \label{equation:delta-and-Delta-brownian-increments}
\begin{aligned}
\Delta W_{n} = \delta W_{n} +  \delta W_{n+1/2}, 
\end{aligned}
\end{equation}
we have
\begin{equation}
\begin{aligned}
\Pi_{j_{1}, j_{2}}^{t_{n}, t_{n+1}} 
= \Pi_{j_{1}, j_{2}}^{t_{n}, t_{n+1/2}} 
+ \Pi_{j_{1}, j_{2}}^{t_{n+1/2}, t_{n+1}} 
+ \tfrac{1}{2} 
\left(
\delta W_{j_{1}, n} \delta W_{j_{2}, n+1/2} 
+ \delta W_{j_{1}, n+1/2}\delta W_{j_{2}, n} 
\right).
\end{aligned}
\end{equation}
Plugging these estimates into \eqref{equation:prior-expansion-of-yf} yields
\begin{footnotesize}
\begin{equation} \label{equation:corrected-expansion-of-yf}
\begin{aligned}
&Y_{n+1}^{f} \\
&= \mathscr{P}\big(Y_{n}^{f} \big) 
+ \mu_{h}
\big(
\mathscr{P}\big(Y_{n}^{f} \big)
\big)h 
+ \sigma_{h}
\big(
\mathscr{P}\big(Y_{n}^{f} \big)
\big) \Delta W_{n} 
+ \sum_{j_{1},j_{2}=1}^{m} 
\left(
\mathcal{L}^{j_{1}} \sigma_{j_{2}}
\right)_{h}
\big(
 \mathscr{P}\big(Y_{n}^{f} \big)
\big)
\Pi_{j_{1}, j_{2}}^{t_{n}, t_{n+1}} \\
& \ -\tfrac{1}{2}\sum_{j_{1},j_{2}=1}^{m}
\left(
\mathcal{L}^{j_{1}} \sigma_{j_{2}}
\right)_{h}
\big(
\mathscr{P}\big(Y_{n}^{f} \big)
\big) 
\left(
\delta W_{j_{1}, n} \delta W_{j_{2}, n+1/2} 
- \delta W_{j_{1}, n+1/2} \delta W_{j_{2}, n} 
\right) \\
& \ 
+ \underbrace{
\Big( 
\mu_{\frac{h}{2}}
\big(
\mathscr{P}\big(Y_{n+1/2}^{f} \big)
\big)
+\mu_{\frac{h}{2}}
\big(
\mathscr{P}\big(Y_{n}^{f} \big)
\big) 
\Big)\tfrac{h}{2} 
- \mu_{h}
\big(
\mathscr{P}\big(Y_{n}^{f} \big)
\big)h
}_{=:R_{n+1/2,f}} 
- \mathcal{E}(Y^{f}_{n+1/2}) \\
& \
+ \underbrace{
\sigma_{\frac{h}{2}}
\big(
\mathscr{P}\big(Y_{n}^{f} \big)
\big) \delta W_{n} 
+ 
\sigma_{\frac{h}{2}}
\big(
\mathscr{P}\big(Y_{n+1/2}^{f} \big)
\big) \delta W_{n+1/2} 
- \sigma_{h}
\big(
\mathscr{P}\big(Y_{n}^{f} \big)
\big) \Delta W_{n} 
- \sum_{j_{1}, j_{2}=1}^{m}
\left(
\mathcal{L}^{j_{1}} \sigma_{j_{2}}
\right)_{h}
\big(
\mathscr{P}\big(Y_{n}^{f} \big)
\big) 
\delta W_{j_{2}, n}\delta W_{j_{1}, n+1/2} 
}_{=:M_{n+1,f}^{(1)}} \\
& \ 
+ \sum_{j_{1},j_{2}=1}^{m} 
\bigg[
\left( 
\left(  
\mathcal{L}^{j_{1}} \sigma_{j_{2}}
\right)_{\frac{h}{2}}
\big(
\mathscr{P}\big(Y_{n}^{f} \big)
\big) 
- 
\left(
\mathcal{L}^{j_{1}} \sigma_{j_{2}}
\right)_{h}
\big(
\mathscr{P}\big(Y_{n}^{f} \big)
\big) 
 \right) 
 \Pi_{j_{1}, j_{2}}^{t_{n}, t_{n+1/2}}  \\
& \underbrace{
\hspace{12em} 
+ \left(
\left(
\mathcal{L}^{j_{1}} \sigma_{j_{2}}
\right)_{\frac{h}{2}}
\big(
\mathscr{P}\big(Y_{n+1/2}^{f} \big)
\big)
- 
\left(
\mathcal{L}^{j_{1}} \sigma_{j_{2}}
\right)_{h}
\big(
\mathscr{P}\big(Y_{n}^{f} \big)
\big) 
\right)
\Pi_{j_{1}, j_{2}}^{t_{n+1/2}, t_{n+1}}
\bigg] 
}_{=:M_{n+1,f}^{(2)}}.
\end{aligned}
\end{equation}
\end{footnotesize}
For the term $R_{n+1/2,f}$, we recall the notations in \eqref{equation:tamed-correction-function} and \eqref{equation:projected-corrected-function} and use 
the Taylor expansion  \eqref{equation:taylor-expansion-in-mlmc} to show
\begin{equation} \label{equation: remaining term in yf}
\begin{aligned}
&R_{n+1/2,f} \\
&=  
\left(
\mu\big(Y_{n+1/2}^{f} \big)
-\mu\big(\mathscr{P}\big(Y_{n}^{f} \big)\big) 
+ \mathbb{T}_{\frac{h}{2}, \mu}
(
\mathscr{P}\big(Y_{n}^{f} \big)
) 
- 
\mathbb{T}_{\frac{h}{2},\mu}(Y_{n+1/2}^{f}) 
+ 2\mathbb{T}_{h, \mu}
(
\mathscr{P}\big(Y_{n}^{f} \big)
) 
- 
\mathcal{E}_{\mu}
(
Y^{f}_{n+1/2}
)
\right) \tfrac{h}{2} \\
&= 
\left(
\mathcal{M}_{n+1/2,f}^{(\mu)} 
+ \mathcal{N}_{n+1/2,f}^{(\mu)}
+ \mathbb{T}_{\frac{h}{2}, \mu}
(
\mathscr{P}\big(Y_{n}^{f} \big)
) 
- 
\mathbb{T}_{\frac{h}{2},\mu}
(
Y_{n+1/2}^{f}
) 
+ 2\mathbb{T}_{h, \mu}
(
\mathscr{P}\big(Y_{n}^{f} \big)
)  
- \mathcal{E}_{\mu}(Y^{f}_{n+1/2}) 
\right)\tfrac{h}{2}.
\end{aligned} 
\end{equation}
This together with \eqref{equation:corrected-expansion-of-yf}, completes the proof of \eqref{equation:expression-of-yf-in-lemma} in Lemma \ref{lemma:expression-of-yf-in-a-coarse-timestep}.
Based on \eqref{equation:growth-of-partial-drift-and-diffusion}, \eqref{equation:growth-of-diffusion}, we obtain that
\begin{equation}\label{equation:estimate-of-remaining-term-M(mu)}
\begin{aligned}
\mathbb{E} 
\left[ 
\big\|\mathcal{M}_{n+1/2,f}^{(\mu)} \big\|^{2p}  
\right] 
= 
\mathbb{E} 
\left[
\big\| 
\tfrac{\partial \mu}{\partial y}
\big(
\mathscr{P}\big(Y_{n}^{f} \big)
\big)
\sigma_{\frac{h}{2}}
\big(
\mathscr{P}\big(Y_{n}^{f} \big)
\big) \delta W_{n} 
\big\|^{2p}  
\right]  
\leq 
Ch^{p}  
\sup_{0 \leq r \leq 2^{\ell -1}} 
\mathbb{E}
\left[ 
(1+ \|Y_{r}\| )^{3p\gamma-p} 
\right].
\end{aligned}
\end{equation}
Also, in light of Assumption \ref{assumption:polynomial-growth-condition}
 and Lemma \ref{lemma:holder-continuity-of-MM}, one can further use the H\"older inequality to get
\begin{equation} \label{equation:moments-bound-of-remaining-term-R}
\begin{aligned}
&\mathbb{E}
\left[
\big\|
\mathcal{R}_{\mu}
\big(
\mathscr{P}\big(Y_{n}^{f} \big), Y_{n+1/2}^{f}
\big) 
\big\|^{2p}
\right] \\
&\leq 
\int_{0}^{1}
\mathbb{E}
\left[
\Big\|
\Big[
\tfrac{\partial \mu}{\partial x}
\Big(
\mathscr{P}\big(Y_{n}^{f} \big)
+r
\big(
Y_{n+1/2}^{f}-\mathscr{P}\big(Y_{n}^{f} \big)
\big)
\Big)
-\tfrac{\partial \mu}{\partial x}
\big(
\mathscr{P}\big(Y_{n}^{f} \big)
\big) 
\Big]
\big(
Y_{n+1/2}^{f}-\mathscr{P}\big(Y_{n}^{f} \big)
\big) 
\Big\|^{2p}
\right]\mathrm{d} r \\
&\leq 
C \int_{0}^{1} 
\mathbb{E} 
\left[
\left(
1 + 
\big\|
r Y_{n+1/2}^{f} + (1-r)\mathscr{P}\big(Y_{n}^{f} \big)
\big\| 
+ 
\big\| 
\mathscr{P}\big(Y_{n}^{f} \big)  
\big\| 
\right)^{2p(\gamma-2)} 
\big\| 
Y_{n+1/2}^{f} - \mathscr{P}\big(Y_{n}^{f} \big)  
\big\| ^{4p}  
\right] \mathrm{d} r \\
& \leq Ch^{2p} \sup_{0 \leq r \leq 2^{\ell -1} } \mathbb{E} \left[ \left(1 + \|Y_{r}  \| \right)^{\max\{4p\gamma,6p\gamma-4p\} } \right]. 
\end{aligned} 
\end{equation}
That is to say, the estimate \eqref{equation:moments-bound-of-remaining-term-R}, together with \eqref{equation:growth-of-partial-drift-and-diffusion}, \eqref{equation:growth-of-drift} and \eqref{equation:growth-of-milstein-diffusion-term}, shows that
\begin{equation} 
\begin{aligned}
\mathbb{E} 
\left[
\big\|
\mathcal{N}_{n+1/2,f}^{(\mu)} 
\big\|^{2p}  
\right]  
\leq Ch^{2p} 
\sup_{0 \leq r \leq 2^{\ell -1} } 
\mathbb{E}
\left[ 
(1+ \|Y_{r}\| )^{\max\{4p\gamma,6p\gamma-4p\}} 
\right].
\end{aligned} 
\end{equation}
By Assumption \ref{assumption:condition-on-the-MM} and elementary inequalities, it follows that
%
\begin{equation} \label{equation:tamed-correction-in-mlmc}
\begin{aligned}
&\mathbb{E} 
\left[ 
\big\|
\mathbb{T}_{\frac{h}{2}, \mu}
\big(
\mathscr{P}\big(Y_{n}^{f} \big)
\big) 
- 
\mathbb{T}_{\frac{h}{2},\mu}
\big(
\mathscr{P}\big(Y_{n+1/2}^{f} \big)
\big) 
+ 2\mathbb{T}_{h, \mu}
\big(
\mathscr{P}\big(Y_{n}^{f} \big)
\big) 
- 
\mathcal{E}_{\mu}\big(Y^{f}_{n+1/2}\big) 
\big\|^{2p}  
\right]  \\
& \hspace{16em}
\leq 
Ch^{2p} 
\sup_{0 \leq r \leq 2^{\ell -1} } 
\mathbb{E}
\left[
(1+ \|Y_{r}\| )^{
2p\max\{\alpha_{1}, \mathbf{a}+\gamma-1 \}
}
\right].
\end{aligned}
\end{equation}
Next we focus on the estimate of the term $M_{n+1,f}^{(1)}$. 
Thanks to \eqref{equation:milstein-term}, \eqref{equation:MM-in-first-fine-timestep}, the Taylor expansion \eqref{equation:taylor-expansion-in-mlmc},  \eqref{equation:delta-and-Delta-brownian-increments} and a similar argument as \eqref{equation: remaining term in yf}, $M_{n+1,f}^{(1)}$ can be rewritten as 
\begin{footnotesize}
\begin{equation}
\begin{aligned}
&M_{n+1,f}^{(1)} \\
&= 
\sum_{j_{1}=1}^{m}
\Big(
\sigma_{j_{1}}\big(Y_{n+1/2}^{f}\big)
-\sigma_{j_{1}}
\big(
\mathscr{P}\big(Y_{n}^{f} \big)
\big)
-\sum_{j_{2}=1}^{m}
\mathcal{L}^{j_{1}} \sigma_{j_{2}}
\big(
\mathscr{P}\big(Y_{n}^{f} \big)
\big)
\delta W_{j_{2}, n}  
\Big)
\delta W_{j_{1}, n+1/2} 
+ 
\mathbb{T}_{h, \sigma}
\big(
\mathscr{P}\big(Y_{n}^{f} \big)
\big) \Delta W_{n}  \\
& \ - 
\left(
\mathbb{T}_{\frac{h}{2}, \sigma}
\big(
\mathscr{P}\big(Y_{n}^{f} \big)
\big) 
+ 
\mathbb{T}_{\frac{h}{2}, \sigma}
\big(
\mathscr{P}\big(Y_{n+1/2}^{f} \big)
\big)
+\mathcal{E}_{\sigma}(Y^{f}_{n+1/2}) 
\right) \delta W_{n+1/2} 
+ \sum_{j_{1},j_{2}=1}^{m} 
\mathbb{T}_{\frac{h}{2}, \mathcal{L}_{j_{1} j_{2}}}
\big(
\mathscr{P}\big(Y_{n}^{f} \big)
\big)
\delta W_{j_{2}, n} \delta W_{j_{1}, n+1/2}   \\
& = \sum_{j_{1}=1}^{m} 
\Bigg[
\tfrac{\partial \sigma_{j_{1}}}{\partial y}
\big(
\mathscr{P}\big(Y_{n}^{f} \big)
\big)  
\bigg(
\mu_{\frac{h}{2}}
\big(
\mathscr{P}\big(Y_{n}^{f} \big)
\big) \tfrac{h}{2} 
+ \sum_{j'_{1}, j'_{2}=1}^{m} 
\left(
\mathcal{L}^{j'_{1}} \sigma_{j'_{2}}
\right)_{\frac{h}{2}}
\big(
\mathscr{P}\big(Y_{n}^{f} \big)
\big)
\Pi_{j'_{1}, j'_{2}}^{t_{n}, t_{n+1/2}} 
\bigg) \\
&\ + \mathcal{R}_{\sigma_{j_{1}}}
\left(
\mathscr{P}\big(Y_{n}^{f} \big), Y_{n+1/2}^{f}
\right) 
\Bigg] \delta W_{j_{1}, n+1/2} 
+ \mathbb{T}_{h, \sigma}
\big(
\mathscr{P}\big(Y_{n}^{f} \big)
\big) \Delta W_{n}
+  
\sum_{j_{1},j_{2}=1}^{m} 
\mathbb{T}_{\frac{h}{2}, \mathcal{L}_{j_{1} j_{2}}}
\big(
\mathscr{P}\big(Y_{n}^{f} \big)
\big)
\delta W_{j_{2}, n} \delta W_{j_{1}, n+1/2}\\
&\ - 
\left(
\mathbb{T}_{\frac{h}{2}, \sigma}
\big(
\mathscr{P}\big(Y_{n}^{f} \big)
\big) 
+ 
\mathbb{T}_{\frac{h}{2}, \sigma}
\big(
\mathscr{P}\big(Y_{n+1/2}^{f} \big) 
\big) 
+ \mathcal{E}_{\sigma}(Y^{f}_{n+1/2})
\right) \delta W_{n+1/2}\\
& = \sum_{j_{1}=1}^{m}
\mathcal{N}_{n+1/2,f}^{(\sigma_{j_{1}})} \delta W_{j_{1}, n+1/2} 
+ \mathbb{T}_{h, \sigma}
\big(
\mathscr{P}\big(Y_{n}^{f} \big)
\big) \Delta W_{n}
+ \sum_{j_{1},j_{2}=1}^{m} 
\mathbb{T}_{\frac{h}{2}, \mathcal{L}_{j_{1} j_{2}}}
\big(
\mathscr{P}\big(Y_{n}^{f} \big)
\big)
\delta W_{j_{2}, n} \delta W_{j_{1}, n+1/2}\\
&\ - 
\left(
\mathbb{T}_{\frac{h}{2}, \sigma}
\big(
\mathscr{P}\big(Y_{n}^{f} \big)
\big) 
+ \mathbb{T}_{\frac{h}{2}, \sigma}
\big(
\mathscr{P}\big(Y_{n+1/2}^{f} \big) 
\big) 
+ \mathcal{E}_{\sigma}(Y^{f}_{n+1/2})
\right) \delta W_{n+1/2}. \\
\end{aligned}
\end{equation}
\end{footnotesize}
Following the same arguments as \eqref{equation:moments-bound-of-remaining-term-R}-\eqref{equation:tamed-correction-in-mlmc} and using \eqref{equation:moments-property-of-bronwain-motion} imply
\begin{equation}
\begin{aligned}
\mathbb{E}
\left[
\big\| M_{n+1,f}^{(1)}  \big\|^{2p}
\right] 
\leq 
Ch^{3p} 
\sup_{0 \leq r \leq 2^{\ell -1} } 
\mathbb{E}
\left[ 
\left(1+ \|Y_{r}\| \right)^{
p\max\{5\gamma-3, 2\mathbf{a}+\gamma-1, 2\alpha_{2}, 2\alpha_{3} \}
} 
\right].
\end{aligned}
\end{equation}
In a similar way, $M_{n+1,f}^{(2)} $ can be divided into several parts as follows:
\begin{footnotesize}
\begin{equation}
\begin{aligned}
M_{n+1,f}^{(2)} 
&= \sum_{j_{1},j_{2}=1}^{m} 
\bigg[ 
\left(
\mathbb{T}_{h,\mathcal{L}_{j_{1} j_{2}}}
\big(
\mathscr{P}\big(Y_{n}^{f} \big)
\big)
-
\mathbb{T}_{\frac{h}{2},\mathcal{L}_{j_{1} j_{2}}}
\big(
\mathscr{P}\big(Y_{n}^{f} \big) 
\big) 
\right) 
\Pi_{j_{1}, j_{2}}^{t_{n}, t_{n+1/2}} \\
&\ + 
\left(
\mathbb{T}_{h,\mathcal{L}_{j_{1} j_{2}}}
\big(
\mathscr{P}\big(Y_{n}^{f} \big)
\big)
-\mathbb{T}_{\frac{h}{2},\mathcal{L}_{j_{1} j_{2}}}
\big(
\mathscr{P}\big(Y_{n+1/2}^{f} \big)
\big)
- \mathcal{E}_{\mathcal{L}_{j_{1}j_{2}}}
\big(Y_{n+1/2}^{f}\big)
\right) \Pi_{j_{1}, j_{2}}^{t_{n+1/2}, t_{n+1}} \\
&\ 
+ \left(
\mathcal{L}^{j_{1}} \sigma_{j_{2}} 
\big( Y_{n+1/2}^{f} \big)
-\mathcal{L}^{j_{1}} \sigma_{j_{2}}
\big(
\mathscr{P}\big(Y_{n}^{f} \big)
\big) \right) 
\Pi_{j_{1}, j_{2}}^{t_{n+1/2}, t_{n+1}} 
\bigg]. 
\end{aligned}
\end{equation}
\end{footnotesize}
Obviously, \eqref{equation:moments-property-of-bronwain-motion}, Lemma \ref{lemma:holder-continuity-of-MM} and the analysis in \eqref{equation:tamed-correction-in-mlmc} help us to get
\begin{equation}
\begin{aligned}
\mathbb{E} 
\left[
\big\|   M_{n+1,f}^{(2)}  \big\|^{2p} 
\right] 
\leq Ch^{3p} 
\sup_{0 \leq r \leq 2^{\ell -1} } 
\mathbb{E}
\left[
\left(1+ \|Y_{r}\| \right)^{2p\max\{2\gamma-1, \alpha_{3} \}}
\right].
\end{aligned}
\end{equation}
Recalling
\begin{equation}
\begin{aligned}
M_{n+1}^{f} 
&= 
M_{n+1,f}^{(1)}
+M_{n+1,f}^{(2)}
+\tfrac{h}{2}
\mathcal{M}_{n+1/2,f}^{(\mu)},\\
B_{n+1}^{f} 
&=
\left(
\mathcal{N}_{n+1/2,f}^{(\mu)} 
+ \mathbb{T}_{\frac{h}{2},\mu}
\big(
\mathscr{P}\big(Y_{n}^{f} \big)
\big) 
- \mathbb{T}_{\frac{h}{2},\mu}(Y_{n+1/2}^{f}) 
+ 2\mathbb{T}_{h,\mu}(\mathscr{P}\big(Y_{n}^{f} \big))
- \mathcal{E}_{\mu}(Y^{f}_{n+1/2})
\right)\tfrac{h}{2} 
- \mathcal{E}( Y^{f}_{n+1/2} ),
\end{aligned}
\end{equation}
we can see $\mathbb{E} [M_{n+1}^{f} | \mathcal{F}_{t_{n}}] = 0$ due to the fact that $\delta W_{n}$  and $\delta W_{n+1/2}$ are independent of $\mathscr{P}\big(Y_{n}^{f} \big)$ and $\mathscr{P}\big(Y_{n+1/2}^{f} \big)$, respectively.
To sum up, in light of Assumption \ref{assumption:condition-on-the-MM} and Theorem \ref{theorem:moments-bound-of-MM}, for any $p \in [1,\frac{p_{0}}{\max\{3\gamma-2, 2\gamma, \mathbf{a}+\gamma-1, \alpha_{1}, \alpha_{2}, \alpha_{3} \}}]$,  we have
\begin{equation}
\begin{aligned}
\mathbb{E}
\left[
\big\| M^{f}_{n+1} \big\|^{2p} 
\right] 
\leq Ch^{3p}, 
\quad 
\mathbb{E}
\left[
\big\| B^{f}_{n+1} \big\|^{2p} 
\right] 
\leq  Ch^{4p}.
\end{aligned}
\end{equation}
The proof is completed.
\end{proof}
\subsection{Proof of Lemma \ref{lemma:expression-of-ya-in-a-coarse-timestep}} \label{proof:expression-of-ya-in-a-coarse-timestep}
\begin{proof}[Proof of Lemma \ref{lemma:expression-of-ya-in-a-coarse-timestep}]
We only check \eqref{equation:estimate-of-martingale-term-of-ya} in Lemma \ref{lemma:expression-of-ya-in-a-coarse-timestep} since the main proof and the notations are almost the same as Lemma \ref{lemma:expression-of-yf-in-a-coarse-timestep}.
Take the estimate of $M_{n+1,a}^{(1)}$ as an example. It is clear that  $\delta W_{n}$, $\delta W_{n+1/2}$ and $\Delta W_{n}$ are independent of $\mathcal{F}_{t_{n}}$. It follows
\begin{equation}
\mathbb{E} 
\left[
\sigma_{\frac{h}{2}}
\big(
\mathscr{P}\big(Y_{n}^{a}\big)
\big) \delta W_{n+1/2} 
| \mathcal{F}_{t_{n}}
\right] 
=         
\mathbb{E} 
\left[
\sigma_{h}
\big(
\mathscr{P}\big(Y_{n}^{a}\big)
\big) \Delta W_{n} | \mathcal{F}_{t_{n}}
\right]  
=0,
\end{equation}
and
\begin{equation}
\sum_{j_{1}, j_{2}=1}^{m}
\mathbb{E} 
\left[
\left(
\mathcal{L}^{j_{1}} \sigma_{j_{2}}
\right)_{h}
\big(
\mathscr{P}\big(Y_{n}^{a}\big)
\big) 
\delta W_{j_{1}, n}\delta W_{j_{2}, n+1/2}
\big| \mathcal{F}_{t_{n}}
\right]=0.
\end{equation}
For the remaining term $\sigma_{\frac{h}{2}}(\mathscr{P}(Y_{n+1/2}^{a})) \delta W_{n}$ in $M_{n+1,a}^{(1)}$, in view of \eqref{equation:antithetic-estimator-in-first-fine-step}, $\sigma_{\frac{h}{2}}(\mathscr{P}(Y_{n+1/2}^{a}))$ can be regarded as a function $G$ of $\mathscr{P}(Y_{n}^{a})$ and $\delta W_{n+1/2}$, i.e.,
\begin{equation}
\sigma_{\frac{h}{2}}
\big(
\mathscr{P}\big(Y_{n+1/2}^{a}\big)
\big) \delta W_{n} 
:= G
\big(
\mathscr{P}\big(Y_{n}^{a}\big),  \delta W_{n+1/2}
\big) \delta W_{n}.
\end{equation}
Note that $\mathscr{P}\big(Y_{n}^{a}\big)$ is measurable of $\mathcal{F}_{t_{n}}$ and $\delta W_{n}$ is independent of $\delta W_{n+1/2}$, we use a conditional expectation argument (see Lemma 5.1.12 in \cite{jentzen2016stochastic}) to show that
\begin{small}
\begin{equation}
\mathbb{E} 
\left[
\sigma_{\frac{h}{2}}
\big(
\mathscr{P}\big(Y_{n+1/2}^{a}\big)
\big) 
\delta W_{n} | \mathcal{F}_{t_{n}}
\right] 
= \mathbb{E} 
\big[
G\big(\mathscr{P}\big(Y_{n}^{a}\big),  \delta W_{n+1/2}\big) \delta W_{n} | \mathcal{F}_{t_{n}}
\big] 
= 
\mathbb{E} 
\big[
G\big(x,  \delta W_{n+1/2}\big) \delta W_{n}
\big] 
\Big|_{
x=\mathscr{P}\big(Y_{n}^{a}\big)
} =0.
\end{equation}
\end{small}
Following the same idea, one can get
\begin{equation}
\mathbb{E}
\big[
M_{n+1}^{a} | \mathcal{F}_{t_{n}}
\big] 
= 
\mathbb{E} 
\left[
M_{n+1,a}^{(1)}
+M_{n+1,a}^{(2)}
+\tfrac{h}{2}
\mathcal{M}_{n+1,a}^{(\mu)} 
\Big| \mathcal{F}_{t_{n}} 
\right]  = 0.
\end{equation}
In addition, it is noteworthy that the sign change in \eqref{equation:expression-of-ya-in-a-coarse-timestep} arises from the swapping of the Brownian motion increments  when constructing $\{{Y}^{a}\}$.
\end{proof}
\subsection{Proof of Lemma \ref{lemma:representation-of-the-average-process-in-a-coarse-timestep}} \label{proof:representation-of-the-average-process-in-a-coarse-timestep}
\begin{proof}[Proof of Lemma \ref{lemma:representation-of-the-average-process-in-a-coarse-timestep}]
%
Thanks to Theorem \ref{theorem:moments-bound-of-MM} and the fact that $Y^{f}$ and ${Y}^{a}$ have the same distribution,
it is obvious to show, 
for $p \in [1, p_{0})$, 
\begin{equation} \label{equation:moments-bound-of-the-average-process-ybar}
 \sup_{0 \leq n \leq N } 
 \mathbb{E} 
 \left[
 \big\|  \overline{Y}_{n}^{f}  \big\| ^{2p} 
 \right] 
 \leq C 
 \left(
 1+ \mathbb{E} 
 \left[
 \|X_{0}\|^{2p}
 \right] 
 \right) .
\end{equation}
The expression \eqref{equation:expression-of-the-average-process-in-lemma} is direct to be verified by combining Lemma \ref{lemma:expression-of-yf-in-a-coarse-timestep} and Lemma \ref{lemma:expression-of-ya-in-a-coarse-timestep}.
Set $\overline{\mathscr{P}}(x,y):= \frac{1}{2}(\mathscr{P}(x)+ \mathscr{P}(y))$ for $\forall x, y\in \mathbb{R}^{d}$.
Thereafter, the Taylor expansion is used for $\mu\big(\mathscr{P}\big(Y_{n}^{f} \big)\big)$ and $\mu\big(\mathscr{P}\big(Y_{n}^{a}\big)\big)$ at $\overline{\mathscr{P}}(Y_{n}^{f},Y_{n}^{a})$,  respectively, to show that
\begin{footnotesize}
\begin{equation}
\begin{aligned}
B_{n}^{(1)} 
&= 
\left[
\tfrac{1}{2} 
\Big(
\mu
\big(
\mathscr{P}\big(Y_{n}^{f} \big)
\big)
+
\mu
\big(
\mathscr{P}\big(Y_{n}^{a}\big)
\big) 
\Big) 
- 
\mu
\big(
\overline{\mathscr{P}}(Y_{n}^{f},Y_{n}^{a})
\big) 
\right]h 
+ 
\left(
\mu
\big(
\overline{\mathscr{P}}(Y_{n}^{f},Y_{n}^{a})
\big) 
-\mu
\big(
\mathscr{P}(\overline{Y}_{n}^{f})
\big) 
\right)h \\
& \ 
+ 
\left[
\mathbb{T}_{h,\mu}
\big(
\mathscr{P}\big(\overline{Y}_{n}^{f}\big)
\big) 
- \tfrac{1}{2}
\Big(
\mathbb{T}_{h,\mu}
\big(
\mathscr{P}\big(Y_{n}^{f} \big)
\big) 
+ 
\mathbb{T}_{h,\mu}
\left(
\mathscr{P}\big(Y_{n}^{a}\big)
\right)
\Big)
\right]h \\
&=\tfrac{1}{2}
\left(
\mathcal{R}_{\mu}
\Big(
\overline{\mathscr{P}}
(
Y_{n}^{f},Y_{n}^{a}
)
, 
\mathscr{P}\big(Y_{n}^{f} \big)
\Big) 
+ \mathcal{R}_{\mu}
\Big(
\overline{\mathscr{P}}
(Y_{n}^{f},Y_{n}^{a})
, 
\mathscr{P}\big(Y_{n}^{a}\big)
\Big)
\right)h 
+\left(
\mu
\big(
\overline{\mathscr{P}}(Y_{n}^{f},Y_{n}^{a})
\big) 
- 
\mu
\big(
\mathscr{P}(\overline{Y}_{n}^{f})
\big) 
\right)h\\
&\ 
+ \left[
\mathbb{T}_{h,\mu}
(
\mathscr{P}(\overline{Y}_{n}^{f})
) 
- \tfrac{1}{2}
\Big(
\mathbb{T}_{h,\mu}
\big(
\mathscr{P}\big(Y_{n}^{f} \big)
\big) 
+ \mathbb{T}_{h,\mu}
\big(
\mathscr{P}\big(Y_{n}^{a}\big)
\big)
\Big)
\right]h .
\end{aligned} 
\end{equation}
\end{footnotesize}
In light of Lemma \ref{lemma:strong-convergence-rate-of-yf-and-ya} and the analysis in \eqref{equation:moments-bound-of-remaining-term-R}, we further attain that
\begin{equation} \label{equation:moment-estimate-of-the-remaining-term-in-the-average-process}
\begin{aligned}
&\mathbb{E}
\left[ 
\left\|
\mathcal{R}_{\mu}
\Big(
\overline{\mathscr{P}}(Y_{n}^{f},Y_{n}^{a})
, 
\mathscr{P}\big(Y_{n}^{f} \big)
\Big) 
\right\|^{2p}
\right] 
\ 
\bigvee 
\ 
\mathbb{E}
\left[ 
\left\|
\mathcal{R}_{\mu}
\Big(
\overline{\mathscr{P}}(Y_{n}^{f},Y_{n}^{a})
, 
\mathscr{P}\big(Y_{n}^{a}\big)
\Big) 
\right\|^{2p} 
\right] \\
&\leq Ch^{2p} 
\times
\left\{
\begin{array}{l}
\sup_{0 \leq r \leq 2^{\ell -1} } 
\mathbb{E}
\big[ 
\left(1+ \|Y_{r}\| \right)^{
4p\max\{2\gamma-1, \alpha_{1}, \alpha_{2}, \mathbf{a} \}
} 
\big], 
\quad \gamma \in [1,2], \\
\sup_{0 \leq r \leq 2^{\ell -1} } 
\mathbb{E}
\big[
\left(1+ \|Y_{r}\| \right)^{
4p\max\{2\gamma-1, \alpha_{1}, \alpha_{2}, \mathbf{a} \} + 2p(\gamma-2)
} 
\big], 
\quad \gamma \in (2,\infty).
\end{array} \right.
\end{aligned}
\end{equation}
Using Assumption \ref{assumption:polynomial-growth-condition}, Assumption \ref{assumption:condition-on-the-MM} and the H\"older inequality, we have
\begin{equation}
\begin{aligned}
\mathbb{E}
\left[ 
\big\|
\mu
\big(
\overline{\mathscr{P}}(Y_{n}^{f},Y_{n}^{a})
\big) 
- 
\mu
\big(
\mathscr{P}(\overline{Y}_{n}^{f})
\big) 
\big\|^{2p}
\right]
\leq 
Ch^{4p} 
\sup_{0 \leq r \leq 2^{\ell -1} } 
\mathbb{E}
\big[ 
\left(1+ \|Y_{r}\| \right)^{2p(\gamma-1+\textbf{b})} 
\big].
\end{aligned}
\end{equation}
The following estimate can be derived by the elementary inequality, \eqref{equation:moment-estimate-of-the-remaining-term-in-the-average-process} and Assumption \ref{assumption:condition-on-the-MM}
\begin{small}
\begin{equation}
\begin{aligned}
&\mathbb{E}
\left[ 
\|  B_{n}^{(1)}  \|^{2p} 
\right]  
\leq 
Ch^{4p} \times 
\left\{
\begin{array}{l}
\sup_{0 \leq r \leq 2^{\ell -1} } 
\mathbb{E}\big[
\left(1+ \|Y_{r}\| \right)^{
2p\max\{4\gamma-2, 2\alpha_{1}, 2\alpha_{2}, 2\mathbf{a}, \mathbf{b}+\gamma-1 \}
} 
\big], 
\quad \gamma \in [1,2], \\
\sup_{0 \leq r \leq 2^{\ell -1} } 
\mathbb{E}
\big[ 
\left(
1+ \|Y_{r}\| 
\right)^{
2p\max\{4\gamma-2, 2\alpha_{1}, 2\alpha_{2}, 2\mathbf{a}, \mathbf{b}+\gamma-1 \} + 2p(\gamma-2)
} 
\big], 
\quad \gamma \in (2,\infty).
\end{array} \right.
\end{aligned}
\end{equation}
\end{small}
In a similar way and also bearing \eqref{equation:moments-property-of-bronwain-motion} in mind, one obtains
\begin{small}
\begin{equation}
\begin{aligned}
&\mathbb{E}
\left[ 
\|  M_{n+1}^{(1)}  \|^{2p} 
\right] 
\bigvee 
\mathbb{E}
\left[ 
\|  M_{n+1}^{(2)}  \|^{2p} 
\right] \\
&\leq Ch^{3p} 
\times 
\left\{
\begin{array}{l}
\sup_{0 \leq r \leq 2^{\ell -1} } 
\mathbb{E}
\big[ 
\left(
1+ \|Y_{r}\| 
\right)^{
2p\max\{4\gamma-2, 2\alpha_{1}, 2\alpha_{2},  2\mathbf{a}, \mathbf{b}+\gamma-1 \}
} 
\big], 
\quad \gamma \in [1,2], \\
\sup_{0 \leq r \leq 2^{\ell -1} } 
\mathbb{E}
\big[ 
\left(
1+ \|Y_{r}\| 
\right)^{
2p\max\{
4\gamma-2, 2\alpha_{1}, 2\alpha_{2},  2\mathbf{a}, \mathbf{b}+\gamma-1  
\} 
+ 2p(\gamma-2)
} 
\big], 
\quad \gamma \in (2,\infty).
\end{array} \right.
\end{aligned}
\end{equation}
\end{small}
Before turning to the estimate of $M_{n+1} ^{(3)}$, we note that $\delta W_{n}$ and $\delta W_{n+1/2}$ are independent of $\mathcal{F}_{t_{n}}$, and, for any $p \in [1, \infty)$,
\begin{equation}
\mathbb{E} 
\left[ 
\left\| \delta W_{j_{1}, n} \delta W_{j_{2}, n+1/2}
- \delta W_{j_{1}, n+1/2} \delta W_{j_{2}, n} 
\right\|^{2p} 
\right] 
\leq Ch^{2p}, 
\quad j_{1}, j_{2} \in\{1, \ldots, m\}. 
\end{equation}
Owing to \eqref{equation:growth-of-milstein-diffusion-term},  Assumption \ref{assumption:condition-on-the-MM}, Lemma \ref{lemma:strong-convergence-rate-of-yf-and-ya} and the H\"older inequality, we derive
 \begin{footnotesize}
 \begin{equation}
\begin{aligned}
&\mathbb{E}
\left[ 
\|  M_{n+1}^{(3)}  \|^{2p} 
\right]\\
&\leq 
Ch^{2p}\sum_{j_{1},j_{2}=1}^{m} 
\bigg( 
\mathbb{E}
\left[
\left\|
\mathcal{L}^{j_{1}} \sigma_{j_{2}}
\big(
\mathscr{P}\big(Y_{n}^{a}\big)
\big) 
- 
\mathcal{L}^{j_{1}} \sigma_{j_{2}}
\big(
\mathscr{P}\big(Y_{n}^{f} \big)
\big) 
\right\|^{2p} 
\right] 
+ \mathbb{E}
\left[
\left\|
\mathbb{T}_{h,\mathcal{L}_{j_{1}j_{2}}}
\big(
\mathscr{P}\big(Y_{n}^{f} \big)
\big) 
- 
\mathbb{T}_{h,\mathcal{L}_{j_{1}j_{2}}}
\big(
\mathscr{P}\big(Y_{n}^{a}\big)
\big) 
\right\|^{2p} 
\right] 
\bigg)\\
& \leq Ch^{3p}  
\sup_{0 \leq r \leq 2^{\ell -1} } 
\mathbb{E}
\left[
\left(
1+ \|Y_{r}\| 
\right)^{
2p\max\{
2\gamma-1, \alpha_{1}, \alpha_{2}, \alpha_{3}, \mathbf{a} 
\} 
+ 2p(\gamma-1)
} 
\right] .
\end{aligned}
\end{equation}
\end{footnotesize}
Combining the above estimates with remaining terms $M_{n+1}^{f}$, $B_{n+1}^{f}$ and $M_{n+1}^{a}$, $B_{n+1}^{a}$, which has been derived in Lemma \ref{lemma:expression-of-yf-in-a-coarse-timestep} and Lemma \ref{lemma:expression-of-ya-in-a-coarse-timestep}, we can then get the desired assertion.
\end{proof}

\bibliographystyle{abbrv}


\bibliography{refer}

\end{document}